\documentclass[11pt,reqno]{amsart}
\textwidth 160mm \textheight 240mm
\usepackage{amscd,amsmath,amsopn,amssymb,amsthm,multicol}
\usepackage[backref=page, breaklinks=true,colorlinks=true,linkcolor=blue,citecolor=blue,urlcolor=blue]{hyperref}

\usepackage{enumerate,stmaryrd}

\voffset -1.7cm
\hoffset -2.0cm

\DeclareMathOperator{\ad}{ad}
\DeclareMathOperator{\Id}{Id}

\DeclareMathOperator{\diag}{diag}

\DeclareMathOperator{\spann}{span}

\DeclareMathOperator{\Der}{Der}
\DeclareMathOperator{\Cl}{Cl}
\DeclareMathOperator{\rank}{rank}

\renewenvironment{proof}[1][Proof]{\textbf{#1.} }
{\ \rule{0.5em}{0.5em}}
\newtheorem{theorem}{Theorem}
\newtheorem{prop}{Proposition}
\newtheorem{lemma}{Lemma}
\newtheorem{corollary}{Corollary}

\theoremstyle{definition}
\newtheorem{definition}{Definition}
\newtheorem{remark}{Remark}

\begin{document}

\title
[Geodesic orbit pseudo-Riemannian nilmanifolds] {Geodesic orbit pseudo-Riemannian $H$-type nilmanifolds: case of minimal admissible Clifford modules}

\author[K.~Furutani, I.~Markina, Yu.G.~Nikonorov]{Kenro Furutani, Irina Markina, and Yurii Nikonorov}

\address{K.~Furutani. Osaka Central Advanced Mathematical Institute, Osaka Metropolitan University,
Sugimoto, Sumiyoshi-ku, Osaka 558-8585, Japan}
\email{kf46089@gmail.com}
\address{I.~Markina. Department of Mathematics, University of Bergen, P.O.~Box 7803,
Bergen N-5020, Norway}
\email{irina.markina@uib.no}
\address{Yu.G.~Nikonorov. Southern Mathematical Institute of VSC RAS,
53 Vatutina St., Vladikavkaz, 362025, Russia}
\email{nikonorov2006@mail.ru}

%\thanks{}

\subjclass[2010]{Primary 53C50, 53C30, 53C22; Secondary 53B30, 22E25}

\keywords{Homogeneous geodesic, geodesic orbit manifold,  naturally reductive manifold, pseudo-Riemannian manifold, two step nilpotent Lie group}

\begin{abstract}
    We investigate the geodesic orbit property of pseudo-Riemannian nilmanifolds, specifically those known in the literature as pseudo $H$-type
    Lie groups -- i.e., 2-step nilpotent Lie groups of Heisenberg type equipped with a left invariant pseudo-Riemannian metric. The study of homogeneous
    geodesics on Riemannian $H$-type Lie groups was completed by C.~Riehm in 1984. In this work, we extend these results to the pseudo-Riemannian $H$-type
    Lie groups and provide a complete characterization of the geodesic orbit property for the case where the underlying Lie algebras are constructed from
    the admissible Clifford modules of minimal dimension.
\end{abstract}

\maketitle

\tableofcontents

%\today
\section{Introduction and main results}

A Riemannian manifold $(M,g)$ is called {\it a  manifold with
homogeneous geodesics} or {\it a geodesic orbit manifold} (shortly,  {\it GO-manifold}) if any
geodesic $\gamma $ of $M$ is an orbit of a 1-parameter subgroup of
the full isometry group of $(M,g)$. A Riemannian manifold $(M=G/H,g)$, where $H$ is a compact subgroup
of a Lie group $G$ and $g$ is a $G$-invariant Riemannian metric,
is called  {\it a geodesic orbit space}
(shortly,  {\it GO-space}) if any geodesic $\gamma $ of $M$ is an orbit of a
1-parameter subgroup of the group $G$.
Hence, a Riemannian manifold $(M,g)$ is  a geodesic orbit Riemannian manifold,
if it is a geodesic orbit space with respect to its full connected isometry group. This terminology was introduced in
\cite{KV} by O.~Kowalski and L.~Vanhecke, who initiated a systematic study of such spaces.
In the same paper, O.~Kowalski and L.~Vanhecke classified all geodesic orbit Riemannian manifolds
of dimension $\leq 6$.

We refer to \cite{KV}, \cite{Arv17}, \cite{Du1}, \cite{Nik2017}, and \cite{BerNik20} for expositions on general properties of geodesic
orbit Riemannian manifolds and historical surveys.
One can find many interesting results  about geodesic orbit Riemannian spaces
and its subclasses in \cite{Gor96, Zil96, Tam03, GorNik2018, CN2019, CNN2023, Nik2024, NikZilW25}, and in the references
therein. It should be noted that symmetric spaces, weakly symmetric spaces, naturally reductive homogeneous spaces, normal homogeneous spaces,
generalized normal homogeneous spaces (but not only) are subclasses of the class of geodesic orbit Riemannian spaces.
\smallskip

This paper is devoted to the study of one special and important class of geodesic orbit pseudo-Riemannian spaces, namely,
pseudo $H$-type nilpotent Lie groups.
On the other hand, many of the results obtained below can also be used for more general classes of geodesic orbit pseudo-Riemannian nilmanifolds.

Some important results on geodesic orbit pseudo-Riemannian spaces were obtained in \cite{DK2007,DK2007n,Du2009,Bar,DBO,WC22,CWZ2022,NW23,CNWZ2024}.
Here we recall some important results related to homogeneous geodesics of pseudo-Riemannian manifolds.
We note that weakly symmetric pseudo-Riemannian spaces and naturally reductive homogeneous pseudo-Riemannian spaces
are important subclasses of geodesic orbit pseudo-Riemannian spaces~\cite{Ov2013, WC22}.

\begin{definition}\label{def:GO-pseudo}
A pseudo-Riemannian homogeneous reductive manifold $(G/H,g)$ is called  geodesic orbit (GO) if
a geodesic through the point $eH$ with any initial vector $\xi$ is an orbit of some 1-parameter isometry group of $(G/H,g)$´.
\end{definition}

We recall the following useful criterion for the property to be a geodesic orbit pseudo-Riemannian manifold.

\begin{prop}[Geodesic Lemma \cite{DK2007}]\label{pr.geodlem1}
Let $(M= G/H, g)$ be a homogeneous reductive pseudo-Riemannian
manifold, with the corresponding reductive decomposition $\mathfrak{g} = \mathfrak{h}\oplus \mathfrak{m}$. Then $M$
is a $G$-geodesic orbit space if and only if, for any $T\in  \mathfrak{m}$, there exist $P = P(T) \in \mathfrak{h}$
and $k = k(T) \in \mathbb{R}$ such that
\begin{equation} \label{geodlem1}
\langle[T + P, Q]_{\mathfrak{m}}, T \rangle = k \,\langle T , Q \rangle\quad \text{for any}\quad Q \in \mathfrak{m},
\end{equation}
where  $\langle \cdot\,, \cdot \rangle$ denotes the inner product on  $\mathfrak{m}$ defined by $g$, and the subscript $\mathfrak{m}$ in \eqref{geodlem1}
means taking the $\mathfrak{m}$-component in $\mathfrak{g} = \mathfrak{h}\oplus \mathfrak{m}$.
Note that $k(T) = 0$ unless $T$ is a null vector {\rm(}it suffices to substitute $Q = T$ in \eqref{geodlem1}{\rm)}.
\end{prop}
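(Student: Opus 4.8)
The plan is to reduce the statement to a pointwise criterion for a single orbit and then compute the acceleration of that orbit algebraically. Fix $X = P + T$ with $P \in \mathfrak{h}$, $T \in \mathfrak{m}$, and consider the orbit $\gamma(t) = \exp(tX)\cdot o$, $o = eH$, which satisfies $\gamma(0)=o$ and $\dot\gamma(0) = X_{\mathfrak m} = T$ under the identification $T_oM\cong\mathfrak m$. The central claim I would isolate is: $\gamma$ is a geodesic up to affine reparametrization -- equivalently $\nabla_{\dot\gamma}\dot\gamma = k\,\dot\gamma$ for a constant $k$ -- if and only if $\langle[X,Q]_{\mathfrak m},T\rangle = k\langle T,Q\rangle$ holds for every $Q\in\mathfrak m$. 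Granting this claim, the Geodesic Lemma follows: if $M$ is $G$-GO then every geodesic through $o$ with velocity $T$ is such an orbit, producing the required $P$ and $k$; conversely the claim yields, for each $T$, an orbit through $o$ with initial velocity $T$ that is a (reparametrized) geodesic, and translating by $G$ covers all geodesics.

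The key is to evaluate $\nabla_{\dot\gamma}\dot\gamma$ at $o$. Since $\dot\gamma$ is the restriction to the orbit of the Killing field $K = X^*$ generated by $X$, metric compatibility together with the Killing equation $\langle\nabla_Y K,Z\rangle + \langle\nabla_Z K,Y\rangle = 0$ gives, for every $Z$, the identity $\langle\nabla_K K,Z\rangle = -\langle\nabla_Z K,K\rangle = -\tfrac12\,Z\langle K,K\rangle$; this holds verbatim in the pseudo-Riemannian setting. To compute $Z\langle K,K\rangle$ at $o$ I would use $G$-invariance: at $gH$ the vector $K_{gH}$ corresponds under the natural identification to $(\Ad(g^{-1})X)_{\mathfrak m}$, so $|K|^2_{\exp(sZ)o} = \langle(\Ad(\exp(-sZ))X)_{\mathfrak m},(\Ad(\exp(-sZ))X)_{\mathfrak m}\rangle$ for $Z\in\mathfrak m$. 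Differentiating at $s=0$ yields $Z\langle K,K\rangle|_o = 2\langle[X,Z]_{\mathfrak m},T\rangle$, whence $\langle\nabla_{\dot\gamma}\dot\gamma,Z\rangle_o = -\langle[X,Z]_{\mathfrak m},T\rangle$ for all $Z\in\mathfrak m$. Because the metric on $\mathfrak m$ is nondegenerate, the pregeodesic condition $\nabla_{\dot\gamma}\dot\gamma = k\,\dot\gamma = kT$ is equivalent to $-\langle[X,Z]_{\mathfrak m},T\rangle = k\langle T,Z\rangle$ for all $Z$, i.e.\ (after absorbing the sign into $k$) to the displayed identity \eqref{geodlem1}.

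It remains to propagate and assemble. Homogeneity finishes the geometric side: since $\gamma(t+t_0) = \exp(tX)\cdot\gamma(t_0)$ and $G$ acts by isometries, the relation $\nabla_{\dot\gamma}\dot\gamma = k\dot\gamma$ valid at $o$ holds along the whole orbit with the same $k$, so $\gamma$ is a genuine pregeodesic; when $k=0$ it is already an affinely parametrized geodesic, and when $k\neq0$ an affine reparametrization turns it into one. Uniqueness of geodesics from their initial data then identifies this curve with the geodesic determined by $T$. Finally, the last remark is immediate from \eqref{geodlem1} with $Q=T$: since $[T+P,T]_{\mathfrak m} = [P,T] = \ad(P)T$ and $\ad(P)$ is skew-symmetric on $\mathfrak m$ (as $H$ acts by isometries), the left-hand side vanishes, giving $k\langle T,T\rangle = 0$, hence $k=0$ whenever $T$ is non-null. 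The main obstacle I anticipate is the acceleration computation at $o$: one must correctly justify the Killing identity and the $\Ad$-formula for $K_{gH}$ in the pseudo-Riemannian reductive setting and track the $\mathfrak m$-projection and the signs carefully, relying on nondegeneracy of $\langle\cdot,\cdot\rangle$ rather than positive-definiteness throughout.
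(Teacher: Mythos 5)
Your argument is correct; note, however, that the paper does not prove this Proposition at all --- it is quoted verbatim from Du\v{s}ek--Kowalski \cite{DK2007} --- so there is nothing internal to compare against. Your route (identify the orbit velocity with the Killing field $X^*$, use $\langle\nabla_{X^*}X^*,Z\rangle=-\tfrac12 Z\langle X^*,X^*\rangle$ together with the $\Ad$-formula for $X^*_{gH}$ to get $\langle\nabla_{\dot\gamma}\dot\gamma,Z\rangle_o=-\langle[X,Z]_{\mathfrak m},T\rangle$, then propagate by the flow of $X^*$) is exactly the standard derivation in the cited source, and your handling of the pseudo-Riemannian subtleties --- nondegeneracy in place of positive-definiteness, the reparametrization constant $k$, and the vanishing of $k$ for non-null $T$ via skew-symmetry of $\ad(P)|_{\mathfrak m}$ --- is sound.
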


We recall also the following definition.

\begin{definition}\label{def:naturaly-reductive}
Let $(M= G/H, g)$ be a homogeneous reductive pseudo-Riemannian
manifold.  Then $M$ is said to be a naturally reductive if there is a reductive decomposition $\mathfrak{g} = \mathfrak{h}\oplus \mathfrak{m}$ such that
\begin{equation} \label{eq:naturaly-reductive}
\langle[T, Q]_{\mathfrak{m}}, R \rangle =\langle Q,[T ,R]_m\rangle
\end{equation}
for the corresponding scalar product and any $T,Q,R\in\mathfrak m$.

\end{definition}
\smallskip
All naturally reductive $H$-type Lie groups, endowed with left invariant Riemannian metrics were classified  by A.~Kaplan in \cite{Kap83}.
In the same paper, the first examples of geodesic orbit but not naturally reductive Riemannian manifolds were obtained:
these are $H$-type groups with $2$-dimensional center (a minimal dimension of such groups is $6$).
The complete classification of geodesic orbit $H$-type groups with left invariant Riemannian metrics was obtained by C.~Riehm in \cite{Rie84}:

\begin{theorem}[\cite{Rie84}]\label{th.class.riemmannain}
Let $N$ be $H$-type Lie group {\rm(}supplied with a left invariant Riemannian metric{\rm)}
with the $H$-type Lie algebra $\mathfrak{n}=\mathfrak{z}\oplus \mathfrak{v}$, $m=\dim(\mathfrak{z})$, $n=\dim(\mathfrak{v})$, where $\mathfrak z$ is the centre.
Then $N$ is geodesic orbit if and only if one of the following three conditions holds:

{\rm 1)} $m = 1, 2, 3$ and $n$ is any possible;

{\rm 2)} $m = 5, 6$ and $n = 8$;

{\rm 3)} $m = 7$, $n =8, 16, 24$ and $\mathfrak{v}$ is an isotypic Clifford module {\rm(}in this case it is equivalent to the following property:
if $Z_1,Z_2,\dots,Z_7$ is an orthonormal basis for $\mathfrak{z}$, then
the linear transformation $X \mapsto J_{Z_1}J_{Z_2}\cdots J_{Z_7}(X)$ of $\mathfrak{v}$ is either $\Id$ or $-\Id${\rm)}.

Moreover, $N$ is naturally reductive if and only if $m=1$ or $m=3$.
\end{theorem}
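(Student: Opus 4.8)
The plan is to realize $N$ as $\Isom_0(N)/K$ and to apply the Geodesic Lemma (Proposition~\ref{pr.geodlem1}) with the reductive decomposition coming from $\Isom_0(N)=N\rtimes K$, so that $\mathfrak m\cong\mathfrak n$. First I would identify the isotropy algebra $\mathfrak h=\mathfrak k$ with the Lie algebra $\Der_0(\mathfrak n)$ of skew-symmetric derivations; since every such derivation preserves the center $\mathfrak z$ and its orthogonal complement $\mathfrak v$, each $D\in\mathfrak k$ is a pair $(A,C)$ with $A\in\mathfrak{so}(\mathfrak v)$ and $C\in\mathfrak{so}(\mathfrak z)$ subject to the compatibility $AJ_Z-J_ZA=J_{CZ}$ for all $Z$. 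This exhibits $\mathfrak k=\mathfrak g_0\oplus\mathfrak{spin}(\mathfrak z)$, where $\mathfrak g_0=\{A:[A,J_Z]=0\ \forall Z\}$ is the skew part of the commutant of the Clifford action, whose isomorphism type ($\mathfrak{so}$, $\mathfrak u$ or $\mathfrak{sp}$ of a rank set by the multiplicity) is read off from the classification of $C\ell_m$-modules, and $\mathfrak{spin}(\mathfrak z)$ is the image of $\mathfrak{so}(\mathfrak z)$ under $Z_i\wedge Z_j\mapsto\tfrac12 J_{Z_i}J_{Z_j}$.

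Because the metric is Riemannian there are no null vectors, so $k(T)\equiv0$ in \eqref{geodlem1}. Writing $T=X+Z$, $Q=X'+Z'$ and using $[T,Q]_{\mathfrak m}=[X,X']\in\mathfrak z$ together with the skew-symmetry of $(A,C)$, the Geodesic Lemma collapses to the purely algebraic \emph{reachability} criterion: $N$ is GO iff for every $X\in\mathfrak v$ and every unit $Z\in\mathfrak z$ there is $(A,C)\in\mathfrak k$ with $CZ=0$ and $AX=J_ZX$. Fixing an orthonormal basis $Z=Z_1,\dots,Z_m$, the constraint $CZ_1=0$ restricts $A$ to $\mathfrak g_0\oplus\mathfrak{spin}(Z_1^\perp)$ with $\mathfrak{spin}(Z_1^\perp)=\spann\{J_{Z_i}J_{Z_j}:2\le i<j\le m\}$, so the theorem reduces to determining for which pairs $(m,n)$ the inclusion
$$J_{Z_1}X\in\mathfrak g_0\,X+\mathfrak{spin}(Z_1^\perp)\,X$$
holds for every $X\in\mathfrak v$.

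For the positive direction I would verify this inclusion family by family. The cases $m=1,2,3$ are immediate: for $m=1$ one has $J_{Z_1}\in\mathfrak g_0$; for $m=3$ the identity $J_{Z_1}=\mp J_{Z_2}J_{Z_3}$ on each isotypic component (where $J_{Z_1}J_{Z_2}J_{Z_3}=\pm\Id$) places $J_{Z_1}X$ in $\mathfrak{spin}(Z_1^\perp)X$; for $m=2$ one produces the required $A$ inside the quaternionic commutant $\mathfrak g_0$, now genuinely depending on $X$. The remaining positive cases $m=5,6$ with $n=8$ and $m=7$ with $n\in\{8,16,24\}$ hinge on the pseudoscalar $\omega'=J_{Z_2}\cdots J_{Z_m}$ of $Z_1^\perp$: for the minimal module one checks that $J_{Z_1}X$ equals, up to the central element $\omega=J_{Z_1}\cdots J_{Z_m}$, the action of $\omega'$, and that $\omega'$ — the complex or quaternionic structure distinguishing the even Clifford algebra — is itself realized on $X$ by $\mathfrak{spin}(Z_1^\perp)$ (for $m=7$, $n=8$ this is precisely the statement that $\mathfrak{su}(4)=\mathfrak{spin}(6)$ has, for every spinor $X$, an element sending $X$ to $\omega'X$). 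For $n=16,24$ the extra room in $\mathfrak g_0=\mathfrak{so}(k)$, $k=2,3$, corrects the mismatch between the copies, and the isotypic hypothesis for $m=7$ is exactly what guarantees $\omega=\pm\Id$, so that this correction stays inside $\mathfrak g_0\oplus\mathfrak{spin}(Z_1^\perp)$.

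The main obstacle is the negative direction: showing the inclusion fails for every other $(m,n)$ — for $m=4$ and all $n$, for $m=5,6$ with $n>8$, for $m=7$ either non-isotypic or with $n\ge32$, and for all $m\ge8$. Dimension counting only shows $\mathfrak g_0X+\mathfrak{spin}(Z_1^\perp)X$ to be a proper subspace of $X^\perp$; to actually exclude $J_{Z_1}X$ I would exhibit, for a carefully chosen $X$ generically mixing the irreducible summands, a vector orthogonal to the reachable subspace but not to $J_{Z_1}X$, reducing the claim to a finite list of explicit inner-product identities inside the minimal $C\ell_m$-modules. Equivalently, one argues that validity for all $X$ would force an operator identity expressing $J_{Z_1}$ through degree-two elements and the commutant, and the Atiyah--Bott--Shapiro classification shows this holds only in the listed cases. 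Finally, for the naturally reductive claim I would invoke Kaplan's classification \cite{Kap83}, observing that natural reductivity corresponds to choosing the lift $A$ \emph{independently of} $X$, i.e. to the operator identity $J_{Z_1}\in\mathfrak g_0\oplus\mathfrak{spin}(Z_1^\perp)$, which holds exactly for $m=1$ and $m=3$ and already fails for $m=2$, where the reaching element was forced to depend on $X$.
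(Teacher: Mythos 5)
This theorem is stated in the paper as a citation of Riehm \cite{Rie84} (with the naturally reductive part going back to Kaplan \cite{Kap83}); the paper contains no proof of it, so there is no internal argument to compare against. That said, your reduction is exactly the framework the paper develops for its own (pseudo-Riemannian) purposes: the reachability criterion you derive is Proposition~\ref{gonil1n.2}, and your splitting $\mathfrak k=\mathfrak g_0\oplus\mathfrak{spin}(\mathfrak z)$ is the decomposition $\mathbf N=\mathbf Z\oplus[\mathbf V,\mathbf V]$ of Proposition~\ref{pr.triple.1}. Up to and including the cases $m=1,3$ (where $J_{Z_1}$ lies in $\mathfrak g_0$, resp.\ in $\mathfrak{spin}(Z_1^\perp)$ on each isotypic piece) your argument is sound.

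The genuine gap is that everything beyond $m\le 3$ is a plan rather than a proof. For the positive cases $m=5,6,7$ you assert, without verification, that $\mathfrak{spin}(Z_1^\perp)$ realizes the pseudoscalar $\omega'$ on every spinor $X$ and that $\mathfrak g_0=\mathfrak{so}(k)$ ``corrects the mismatch between the copies''; for the negative cases you only state that you \emph{would} exhibit an obstructing vector. Worse, the heuristic you offer cannot be the right mechanism as stated: if ``extra room in $\mathfrak{so}(k)$'' were what makes $n=16,24$ work at $m=7$, then $n=32$ (where $\mathfrak{so}(4)$ is larger still) should work too, whereas the theorem asserts it fails. The actual cutoff at $k=3$ comes from a delicate interplay between the number of independent components $u_1,\dots,u_k$ of a generic $X\in\mathbb R^8\otimes\mathbb R^k$ and the constraints $J_{Z_1}u_i=Au_i+\sum_j c_{ji}u_j$ with a \emph{single} $A\in\mathfrak{spin}(6)$, and no step of your outline isolates this. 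The same applies to excluding $m=4$ and all $m\ge 8$: ``reducing the claim to a finite list of explicit inner-product identities'' names the task but does not perform it, and this case analysis is precisely the substance of Riehm's paper.
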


The main result of the present work is as follows.

\begin{theorem}\label{th.class.pseudo}
Let $N_{r,s}$ be a pseudo $H$-type Lie group, where $(r,s)$, $s\geq 1$, is the signature of the left invariant pseudo-Riemannian metric restricted to the centre of the group. Let $\mathfrak{n}_{r,s}=\mathfrak{z}\oplus \mathfrak{v}$ be the Lie algebra of $N_{r,s}$, where $\mathfrak z$ is the centre and $\mathfrak v$ is a minimal admissible module for the Clifford algebra $\Cl(\mathbb R^{r,s})$.
Then the following four assertions hold:

{\rm 1)} $N_{r,s}$ is naturally reductive {\rm(}hence, geodesic orbit{\rm)} if and only if $(r,s)\in\{(0,1), (1,2)\}$;
% and~$\mathfrak{v}$ is any admissible module.

{\rm 2)} If $N_{r,s}$ is geodesic orbit but not naturally reductive, then $(r,s)=(3,4)$;

{\rm 3)} $N_{3,4}$ is a geodesic orbit pseudo-Riemannian manifold;

{\rm 4)} $N_{r,s}$ with $(r,s)\notin \{(0,1), (1,2), (3,4)\}$ is not geodesic orbit pseudo-Riemannian manifold.
\end{theorem}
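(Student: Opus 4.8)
The plan is to run the Geodesic Lemma (Proposition~\ref{pr.geodlem1}) on the reductive decomposition $\mathfrak g=\mathfrak h\oplus\mathfrak m$ with $\mathfrak m=\mathfrak n_{r,s}$ and $\mathfrak h=\mathfrak a$, the Lie algebra of skew-symmetric derivations of $\mathfrak n_{r,s}$. The first step is to pin down $\mathfrak a$: a pair $(A,C)$ with $A\in\mathfrak{so}(\mathfrak v)$ and $C\in\mathfrak{so}(\mathfrak z)$ lies in $\mathfrak a$ exactly when $[A,J_W]=J_{CW}$ for every $W\in\mathfrak z$, so that as a vector space $\mathfrak a=\sigma(\mathfrak{so}(\mathfrak z))\oplus\mathfrak c$, where $\sigma(\mathfrak{so}(\mathfrak z))=\spann\{J_{Z_i}J_{Z_j}:i<j\}$ is the image of the spin lift and $\mathfrak c=\{A:[A,J_W]=0\ \forall W\}$ is the skew part of $\End_{\Cl(\mathbb R^{r,s})}(\mathfrak v)$. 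Writing $T=X+Z$ and $Q=Y+W$ and using $\langle[X,Y],W\rangle=\langle J_WX,Y\rangle$ together with the skew-symmetry of $A$ and $C$, the identity~\eqref{geodlem1} decouples into its $\mathfrak v$- and $\mathfrak z$-parts and becomes the system
\begin{equation}\label{eq:reduction}
AX=(J_Z-k\,\Id)X,\qquad CZ=-kZ,\qquad k\,\langle T,T\rangle=0 .
\end{equation}
Writing $A=\sigma(C)+A_0$ with $A_0\in\mathfrak c$, the GO property of $N_{r,s}$ is then equivalent to the reachability statement: for all $X\in\mathfrak v$ and $Z\in\mathfrak z$ there exist $k\in\mathbb R$ and $C\in\mathfrak{so}(\mathfrak z)$ with $CZ=-kZ$ (and $k=0$ whenever $T=X+Z$ is non-null) such that $J_ZX-kX-\sigma(C)X\in\mathfrak cX$, where $\mathfrak cX:=\{A_0X:A_0\in\mathfrak c\}$.

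For assertion~(1), observe that $(0,1)$ and $(1,2)$ are the minimal-module signatures with $\dim\mathfrak z=1$ and $\dim\mathfrak z=3$, the pseudo-Riemannian counterparts of Kaplan's naturally reductive cases $m=1$ and $m=3$. For these I would exhibit the naturally reductive structure directly, verifying \eqref{eq:naturaly-reductive} on a tilted complement $\mathfrak m\subset\mathfrak n_{r,s}\rtimes\mathfrak k$ for a suitable $\mathfrak k\subseteq\mathfrak a$; the identities involved are algebraic in the operators $J_W$ and Kaplan's verification adapts directly to the pseudo-Riemannian $J$-operators. The converse---that these are the only naturally reductive signatures---then reduces, via the GO classification, to checking that $(3,4)$ is not naturally reductive, which proceeds exactly as for Riehm's case $m=7$.

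The decisive part is the split between assertions~(3) and~(4), both controlled by the reachability statement. For the non-GO direction~(4) the obstruction is produced by fixing a non-null $Z$ and choosing $X$ with $T=X+Z$ non-null, so that $k=0$ and $CZ=0$; then $C$ must fix $Z$ and $\sigma(C)X$ ranges only over $\spann\{J_{Z_i}J_{Z_j}X:Z_i,Z_j\perp Z\}$. One exhibits such an $X$ with $J_ZX$ linearly independent of
\begin{equation}\label{eq:Udef}
U:=\mathbb RX+\spann\{J_{Z_i}J_{Z_j}X:Z_i,Z_j\perp Z\}+\mathfrak cX,
\end{equation}
the independence being read off a dimension count in the explicit minimal module; since the type of $\mathfrak c$, and hence $\dim\mathfrak cX$, is dictated by $r-s\bmod8$, the count is organized by residue classes. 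The role of $(3,4)$, where $\dim\mathfrak v=8$ and $\dim\mathfrak z=7$, is that the minimal module is isotypic with volume element $\omega=J_{Z_1}\cdots J_{Z_7}=\pm\Id$---the pseudo-Riemannian form of Riehm's condition in Theorem~\ref{th.class.riemmannain}(3). This makes $\{X\}\cup\{J_{Z_i}J_{Z_j}X:Z_i,Z_j\perp Z\}$ span $\mathfrak v$, so that $J_ZX\in U$ for every $X$ automatically, and the failure of exactly this spanning property is what produces~(4).

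The main obstacle is uniformity together with the null-cone analysis. On the one hand, $J_ZX\notin U$ must be confirmed simultaneously for all signatures outside the three exceptional ones, and since both $\dim\mathfrak c$ and the behaviour of $\omega$ jump with $r-s\bmod8$, the realistic route is a finite case analysis anchored by the periodicity of $\Cl(\mathbb R^{r,s})$ and the known dimensions of minimal admissible modules, rather than one uniform identity. On the other hand---and this is genuinely new relative to Riehm's Riemannian argument---are the null directions: when $T=X+Z$ is null one must allow $k\neq0$, which by~\eqref{eq:reduction} requires $C$ to possess the real eigenvalue $-k$ on the line $\mathbb RZ$, something available only because $\langle\cdot,\cdot\rangle$ is indefinite on $\mathfrak z$. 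Controlling this extra freedom---verifying that it neither rescues a non-GO signature nor interferes with~(3)---is where the pseudo-Riemannian problem departs most sharply from the classical one and constitutes the technical core of the proof.
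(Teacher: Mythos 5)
Your reduction to the reachability statement is the same framework the paper uses (its ``transitive normalizer condition'', Proposition~\ref{gonil1n.2}), and your organization of assertion~(4) by periodicity and residue classes is broadly consistent with how the paper proceeds (via volume-element eigenspace arguments for $r+s\equiv 0\bmod 4$, positive involutions for $(0,s)$ and $(r,1)$, and a totally geodesic submanifold principle --- Theorems~\ref{th_ps_totgeod} and~\ref{pr_totgeod_h1} --- to propagate non-GO upward, a tool you do not mention but would need). The two genuine gaps are in assertions~(3) and in the null-cone issue.

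For~(3), your claim that the isotypic property $\omega=J_{Z_1}\cdots J_{Z_7}=\pm\Id$ makes $\{X\}\cup\{J_{Z_i}J_{Z_j}X:Z_i,Z_j\perp Z\}$ span $\mathfrak v$, ``so that $J_ZX\in U$ for every $X$ automatically'', does not survive the passage to indefinite signature. Since every $B\in\mathbf N$ is skew-symmetric for $\langle\cdot,\cdot\rangle_{4,4}$, the whole orbit $\mathbf N^Z(X)$ lies in the hyperplane $X^\perp$, which for non-null $X$ has dimension $7=\dim\mathfrak v-1$; the target $J_ZX$ also lies in $X^\perp$, so the question is whether a $15$-dimensional family of vectors spans a specific $7$-dimensional hyperplane, and in the indefinite setting the rank of the relevant $8\times 15$ system genuinely drops below $7$ on a nontrivial algebraic subset of $\mathfrak v$. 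The paper's proof of Theorem~\ref{the.0case} has to identify these degenerate vectors exactly (Lemmas~\ref{le.2case.0}--\ref{le.2case.2}: the families $Y=(y_1,y_2,y_3,y_4,\pm y_1,\pm y_4,\pm y_3,\pm y_2)$ for $Z=J_{Z_4}$) and then produce explicit solutions $B$ for each of them separately; Riehm's Riemannian spanning argument gives no such conclusion here. Separately, the $k\neq 0$ freedom on null directions, which you correctly flag as the point of departure from the classical case but leave unresolved, is in fact closed off early: the paper shows (Proposition~\ref{prop:k=0}) that for these groups any homogeneous reparametrization is linear, forcing $k=0$ identically, so no extra analysis on the null cone is needed --- but this must be proved, not assumed.
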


It is clear that the last assertion of Theorem~\ref{th.class.pseudo} easily follows from the three first assertions.

The paper is organized as follows. In Section \ref{se.auiliary}, we recall some important results on
pseudo-Riemannian geodesic orbit metrics on nilpotent Lie groups. The main role here is played by the notion of the transitive normalizer condition, which
Riemannian version was used by C.~Gordon in order to study geodesic orbit Riemannian metric on nilpotent Lie groups.
In Section~\ref{se.rseudo.htnlg},
we recall some important properties of pseudo $H$-type Lie groups, their isometry and automorphism groups.
In Section \ref{sec.example}, we check the geodesic orbit property for pseudo-Riemannian H-type groups with small dimension of the centre.
More exactly, the groups ${N}_{0,1}$, ${N}_{1,1}$,  ${N}_{0,2}$,  ${N}_{1,2}$,  ${N}_{2,1}$, and  ${N}_{0,3}$
are studied, after which the classification of naturally reductive pseudo-Riemannian $H$-type Lie groups is obtained, see Proposition~\ref{pr.natred.1}.
Section~\ref{se.tot.geod} is devoted to the description of some important properties of geodesic orbit pseudo-Riemannian manifolds.
As in the case of Riemannian manifolds, it is proved that any geodesically complete totally geodesic submanifold of a given
geodesic orbit pseudo-Riemannian manifold is geodesic orbit itself, see Theorem~\ref{th_ps_totgeod}. This result is further refined for the case of two-step nilpotent
pseudo-Riemannian groups. see Theorem~\ref{pr_totgeod_h1} and Corollary~\ref{co.totgeod.1}.
In Section~\ref{sec:mod4}, we obtain some auxiliary results on the geodesic orbit property for  pseudo-Riemannian $H$-type manifolds, that
allow us to consistently check all pseudo-Riemannian $H$-type manifolds,
except of ${N}_{3,4}$, for the property to be geodesic orbit.
Finally, in Section~\ref{sec.N3.4}, we prove that the $15$-dimensional pseudo $H$-type nilmanifold ${N}_{3,4}$ is geodesic orbit (Theorem~\ref{the.0case}).

\section{Auxiliary results}\label{se.auiliary}

Here we recall some important results related to
$2$-step nilpotent Lie groups supplied by  left-invariant pseudo-Riemannian metrics. We call such groups $2$-step pseudo-Riemannian nilmanifolds.
Finally, we formulate some useful statements on GO properties of $2$-step  pseudo-Riemannian nilmanifolds.

Let $(N,g)$ be a 2-step pseudo-Riemannian nilmanifold with the Lie algebra $\mathfrak{n}$ and  the scalar product  $\langle \cdot\,,\cdot \rangle$ (a symmetric non-degenerate bilinear form) generating the pseudo-Riemannian left invariant metric $g$.
If the centre $\mathfrak{z}$ of $\mathfrak{n}$  is non-degenerate with respect to $\langle \cdot\,,\cdot \rangle$, then we write $\mathfrak{n}=\mathfrak{z}\oplus \mathfrak{v}$,
where
$\mathfrak{v}=\mathfrak{z}^\perp$ relative to $\langle \cdot\,,\cdot \rangle$.
In this case, $\mathfrak{v}$ is also non-degenerate, see~\cite[Lemma 2.60]{Lee}.

Whenever $(N,g)$ is connected simply connected, we do not distinguish between the
group of automorphisms of the nilmanifold $(N,g)$ and of its Lie algebra $\mathfrak{n}=\mathfrak{z} \oplus \mathfrak{v}$.
Note that each skew-symmetric derivation of $\mathfrak{n}$ leaves each of $\mathfrak{v}$ and $\mathfrak{z}$ invariant.
For any $Z\in \mathfrak{z}$, we consider the operator
\begin{equation}\label{eq_j_z_1}
J_Z:\mathfrak{v} \rightarrow \mathfrak{v}, \mbox{\,\,\,\, such that\,\,\,\,}
\langle J_Z(X),Y \rangle=\langle [X,Y],Z\rangle, \quad X,Y\in \mathfrak{v}.
\end{equation}
The map $J_Z$ is skew-symmetric and $J_Z(Y)=(\ad Y)'(Z)$, where $(\ad Y)'$ is adjoint to $\ad Y$ with respect to $\langle \cdot\,, \cdot \rangle$. The map $J\colon Z \rightarrow \mathfrak{so}(\mathfrak v, \langle \cdot\,, \cdot \rangle_{\mathfrak v})$, sending $Z\mapsto J_Z$ is linear.
We denote $\mathbf{V}=J(\mathfrak{z})$ the linear subspace of $\mathfrak{so}(\mathfrak{v},\langle \cdot\,, \cdot \rangle _{\mathfrak{v}})$.
The group of isometries of the nilmanifold $(N,g)$ is given by

\begin{equation}\label{isotgr1}
H = \{(\varphi,\psi)\in  O(\mathfrak{z},\langle \cdot\,, \cdot \rangle _{\mathfrak{z}})\times O(\mathfrak{v},  \langle \cdot\,, \cdot \rangle _{\mathfrak{v}})
\,|\, \psi J_Z \psi^{-1} = J_{\varphi(Z)},\  Z \in  \mathfrak{z}\},
\end{equation}
while its Lie algebra is
\begin{equation}\label{isotalg1}
\mathfrak{h}= \Der(\mathfrak{n}) \cap \mathfrak{so}(\mathfrak{n},\langle \cdot\,, \cdot \rangle)=
\{D=(C,A)\in  \mathfrak{so}(\mathfrak{z}, \langle \cdot\,, \cdot \rangle _{\mathfrak{z}})\times\mathfrak{so}(\mathfrak{v}, \langle \cdot\,, \cdot \rangle _{\mathfrak{v}})
\,|\, [A,J_Z]= J_{C(Z)},\  Z \in  \mathfrak{z}\}.
\end{equation}

The next result is well known, see e.g. Corollary 3.5 in \cite{CP2009} or Proposition 2.3 in \cite{Ov2013}.

\begin{prop}\label{pr.isgr.1}
Let $(N,g)$ be a pseudo-Riemannian nilmanifold with
non-degenerate center. Then the connected isometry group of $(N,g)$ is
$N\rtimes H$,
where $N$ is the set of left translations by elements of $N$ and the isotropy
subgroup $H$ is given by the isometric automorphisms \eqref{isotgr1} with Lie algebra $\mathfrak{h}$ as in \eqref{isotalg1}.
\end{prop}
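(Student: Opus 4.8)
The plan is to build the connected isometry group $I_0(N,g)$ from the transitive action of left translations together with the isotropy subgroup at the identity, and then to identify that isotropy subgroup explicitly. First, since $g$ is left invariant by construction, every left translation $L_n$, $n\in N$, is an isometry, and these translations form a subgroup isomorphic to $N$; being connected, it lies in $I_0(N,g)$ and acts simply transitively on $N$. In particular $I_0(N,g)$ acts transitively, so $I_0(N,g)=N\cdot H$, where $H$ is the isotropy subgroup fixing the identity $e$. Because the translation action is simply transitive, any $L_n$ fixing $e$ satisfies $n=e$, so $N\cap H=\{e\}$; moreover, from the fibration $H\to I_0\to N$ with $N$ simply connected and $I_0$ connected, $H$ is connected.

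The core step is the rigidity statement that every isometry $\phi\in H$ is an automorphism of the Lie group $N$. This is the Wilson-type result for nilmanifolds, established in the Riemannian case and extended to the pseudo-Riemannian setting precisely in the references cited above. Granting it, each $\phi\in H$ conjugates left translations to left translations, $\phi L_n\phi^{-1}=L_{\phi(n)}$, so $H$ normalizes $N$ inside $I_0$; combined with $I_0=N\cdot H$ this makes $N$ normal and yields the semidirect product $I_0=N\rtimes H$.

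Next I would determine which automorphisms of $\mathfrak{n}$ are isometries, thereby recovering the explicit description \eqref{isotgr1}. Since the center $\mathfrak{z}$ is characteristic, every isometric automorphism preserves $\mathfrak{z}$; as $\langle\cdot\,,\cdot\rangle$ is non-degenerate on $\mathfrak{z}$, its orthogonal complement $\mathfrak{v}=\mathfrak{z}^{\perp}$ is well-defined and also preserved. Hence each element of $H$ is block-diagonal, $(\varphi,\psi)\in O(\mathfrak{z},\langle\cdot\,,\cdot\rangle_{\mathfrak{z}})\times O(\mathfrak{v},\langle\cdot\,,\cdot\rangle_{\mathfrak{v}})$. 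Feeding the automorphism property $\psi([X,Y])=[\psi X,\psi Y]$ into the defining identity \eqref{eq_j_z_1} for $J_Z$ converts it into the compatibility relation $\psi J_Z\psi^{-1}=J_{\varphi(Z)}$, which is exactly \eqref{isotgr1}. Differentiating this relation along a one-parameter subgroup of $H$ produces the infinitesimal condition $[A,J_Z]=J_{C(Z)}$ for a skew-symmetric derivation $D=(C,A)$, giving the Lie algebra \eqref{isotalg1}.

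I expect the main obstacle to be the rigidity statement of the second paragraph, namely that an isometry fixing $e$ is automatically a group automorphism. In the Riemannian case this rests on Wilson's analysis identifying $N$ as the nilradical of the isometry group, but in the pseudo-Riemannian setting one must contend with the possibly incomplete behaviour of geodesics and the exponential map, so I would invoke \cite{CP2009} and \cite{Ov2013} directly rather than reprove it; the remaining arguments are then essentially formal.
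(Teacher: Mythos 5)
The paper offers no proof of this proposition at all: it is stated as well known, with references to Corollary~3.5 of \cite{CP2009} and Proposition~2.3 of \cite{Ov2013}. Your sketch is a correct reconstruction of the standard argument behind those references, and it correctly isolates the one genuinely nontrivial ingredient --- that an isometry fixing $e$ is automatically a Lie group automorphism of $N$ --- which, like the paper, you ultimately delegate to the same sources; the remaining steps (left translations are isometries and act simply transitively, automorphisms preserve $\mathfrak{z}$ and hence $\mathfrak{v}=\mathfrak{z}^{\perp}$, and the automorphism property transforms \eqref{eq_j_z_1} into $\psi J_Z\psi^{-1}=J_{\varphi(Z)}$ with infinitesimal version $[A,J_Z]=J_{C(Z)}$) are routine and you carry them out correctly. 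The only point where some care is warranted is the identification of the isotropy subgroup of the \emph{connected} isometry group with the full group \eqref{isotgr1} of isometric automorphisms, which need not be connected a priori; your fibration argument shows the isotropy of $I_0(N,g)$ is connected, so strictly one obtains the identity component of \eqref{isotgr1} --- but this matches the (slightly loose) phrasing of the proposition itself and is immaterial for the Lie algebra \eqref{isotalg1}, which is all that is used later.
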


In this case,
the isotropy group of $(N,g)$ of the identity element $e$ is exactly $H$ with the embedding $a \in H \mapsto (e,a)\in N\rtimes H$.

\bigskip

Applying Proposition~\ref{pr.geodlem1} we note the following. If a nilmanifold $(N,g)$ is geodesic orbit, then for any $X\in\mathfrak{v}$ and
any ${Z}\in \mathfrak{z}$
there exist $D\in\mathfrak{h}$ and $k \in \mathbb{R}$ such that
$$
\langle [X+Z+D,\widetilde{X}+\widetilde{Z}], X+Z\rangle=k\langle X+Z,\widetilde{X}+\widetilde{Z}\rangle=
k\left(\langle X,\widetilde{X}\rangle+\langle Z,\widetilde{Z}\rangle\right)
$$
for all $\widetilde{X}\in \mathfrak{v}$ and all $\widetilde{Z}\in \mathfrak{z}$.
It is easy to see that
$$
[X+Z+D,\widetilde{X}+\widetilde{Z}]=[X,\widetilde{X}]+[D,\widetilde{X}]+[D,\widetilde{Z}],
$$
where $[D,\widetilde{X}]\in \mathfrak{v}$ and $[X,\widetilde{X}]+[D,\widetilde{Z}]\in \mathfrak{z}$.
Hence,
$$
k\left(\langle X,\widetilde{X}\rangle+\langle Z,\widetilde{Z}\rangle\right)=
\langle [D,\widetilde{X}], X \rangle+\langle [X,\widetilde{X}]+[D,\widetilde{Z}], Z\rangle=-\langle \widetilde{X}, [D,X] \rangle+\langle J_Z X, \widetilde{X}\rangle
-\langle \widetilde{Z}, [D,Z] \rangle.
$$
Since $\widetilde{X}\in \mathfrak{v}$ and $\widetilde{Z}\in \mathfrak{z}$ are arbitrary, then
\begin{equation}\label{eq.geodl.2}
[D,X]+k\, X=J_Z X, \qquad [D,Z]+k\,Z=0.
\end{equation}
Note, that $[D,Z]=0$ implies $[D,J_Z]=0$ according to~\eqref{isotalg1}.
If, in addition, $X+Z$ is not a null vector, then $k=0$.

If we write $D=(C,A)\in\mathfrak h$, equations~\eqref{eq.geodl.2} can be written in the form
$$
(C+k\Id)\,Z=0,\quad (A+k\Id)\,X=J_ZX.
$$

On the other hand, equalities~\eqref{eq.geodl.2} imply Proposition ~\ref{pr.geodlem1} for all $X\in\mathfrak{v}$ and ${Z}\in \mathfrak{z}$.
These observations allow us to rewrite Proposition~\ref{pr.geodlem1} for $2$-step nilpotent pseudo-Riemannian groups in the spirit of work~\cite{Gor96},
where this idea was used for Riemannian metrics on nilpotent Lie groups.

\begin{prop}[\cite{Bar}, Theorem 3.3]\label{gonil1n.2}
In the above notations, $(N,g)$ is a geodesic orbit 2-step pseudo-Riemannian nilmanifold if and only if for any $Z\in \mathfrak{z}$ and
$X\in \mathfrak{v}$ there is $D=(C,A)\in\mathfrak h$ such that
$(C+k\Id)\,Z=0$, $(A+k\Id)\, X=J_Z(X)$.
\end{prop}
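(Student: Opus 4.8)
The plan is to apply the Geodesic Lemma (Proposition~\ref{pr.geodlem1}) to the reductive decomposition $\mathfrak g=\mathfrak h\oplus\mathfrak m$ with $\mathfrak m=\mathfrak n$ and $\mathfrak h$ the isotropy algebra from \eqref{isotalg1}, and to reduce the defining identity \eqref{geodlem1} to the two purely algebraic equations in the statement. Writing $T=X+Z$ with $X\in\mathfrak v$, $Z\in\mathfrak z$ and taking $P=D\in\mathfrak h$, $Q=\widetilde X+\widetilde Z$, the first step is to expand $[T+D,Q]$ exactly as computed before the statement: two-step nilpotency and the centrality of $\mathfrak z$ kill all brackets except $[X,\widetilde X]\in\mathfrak z$, while $D$, being a skew-symmetric derivation, preserves $\mathfrak v$ and $\mathfrak z$, so $[D,\widetilde X]=D(\widetilde X)\in\mathfrak v$ and $[D,\widetilde Z]=D(\widetilde Z)\in\mathfrak z$. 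In particular the bracket lies entirely in $\mathfrak m$, so taking the $\mathfrak m$-component in \eqref{geodlem1} is harmless.

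Pairing with $T$, discarding the terms that vanish by $\mathfrak v\perp\mathfrak z$, and using the skew-symmetry of $D$ together with the defining relation $\langle[X,\widetilde X],Z\rangle=\langle J_Z X,\widetilde X\rangle$ of $J_Z$, identity \eqref{geodlem1} becomes $\langle J_ZX-D(X)-kX,\widetilde X\rangle-\langle D(Z)+kZ,\widetilde Z\rangle=0$ for all $\widetilde X\in\mathfrak v$ and $\widetilde Z\in\mathfrak z$. Since $\langle \cdot\,,\cdot\rangle$ is nondegenerate on each of $\mathfrak v$ and $\mathfrak z$, and these are mutually orthogonal, I can separate the two components and conclude that \eqref{geodlem1} holds for a given $D$ and $k$ if and only if $D(X)=J_ZX-kX$ and $D(Z)=-kZ$.

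It then remains to eliminate the scalar $k$. Substituting $Q=T$ and invoking the skew-symmetry of $D$ gives $\langle[T+D,T],T\rangle=\langle D(X),X\rangle+\langle D(Z),Z\rangle=0$, hence $k\langle T,T\rangle=0$ and $k=0$ whenever $T$ is non-null. In fact one can do slightly better, and I would record this reduction explicitly: writing $D=(C,A)\in\mathfrak{so}(\mathfrak z)\times\mathfrak{so}(\mathfrak v)$, the skew-symmetry of $C$ and $A$ applied to $D(Z)=-kZ$ and $D(X)=J_ZX-kX$, together with $\langle J_ZX,X\rangle=\langle[X,X],Z\rangle=0$, forces $k\langle Z,Z\rangle=0$ and $k\langle X,X\rangle=0$ \emph{separately}. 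Thus $k=0$ is automatic unless both $X$ and $Z$ are null, and on this remaining ``doubly null'' locus I would approach $X+Z$ by non-null vectors $\widetilde X+Z$ with the same $Z$ and pass to the limit, as indicated before the statement and recorded in Proposition~\ref{prop:k=0}. This limiting step is the main obstacle: solvability of the linear system $D(X)=J_ZX$, $D(Z)=0$ in $D\in\mathfrak h$ is not a closed condition under perturbation of $X$ in general, so the passage to the limit is not soft and must genuinely exploit the finite dimensionality of $\mathfrak h$ and the structure of the maps $J_Z$.

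Finally, the converse is immediate and needs no case distinction: if for every $X\in\mathfrak v$ and $Z\in\mathfrak z$ there is $D\in\mathfrak h$ with $D(Z)=0$ and $D(X)=J_ZX$, then taking $P=D$ in the reduced form of \eqref{geodlem1} derived above makes its left-hand side vanish identically in $Q$, i.e.\ \eqref{geodlem1} holds with $k=0$. By the Geodesic Lemma this is precisely the geodesic orbit property, which completes the equivalence.
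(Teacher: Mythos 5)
Your reduction of \eqref{geodlem1} to the pair $D(X)=J_ZX-kX$, $D(Z)=-kZ$ is correct and is essentially the computation the paper carries out just before the statement; the converse direction is also handled the same way. Your observation that skew-symmetry of $D=(C,A)$ kills $k$ componentwise, so that $k\langle X,X\rangle=0$ and $k\langle Z,Z\rangle=0$ hold \emph{separately} and the only problematic initial vectors are those with both $X$ and $Z$ individually null, is a genuine (small) sharpening of the paper's blanket remark that $k(T)=0$ for non-null $T$.

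The gap is the one you flag yourself: you never dispose of the doubly-null case, and you are right that the limiting argument (``approach $X+Z$ by non-null $\widetilde X+Z$ and pass to the limit'') is not soft. For fixed $Z$, the set of $X$ such that $J_ZX$ lies in the image of the evaluation map $D\mapsto D(X)$ on $\mathfrak h^Z=\{D\in\mathfrak h\,:\,D(Z)=0\}$ is the projection of an algebraic set and need not be closed, and the derivations $D_{\widetilde X}$ need not stay bounded as $\widetilde X\to X$. The paper's own sentence about approximation is equally terse, but the paper supplies the missing ingredient elsewhere, and that is what you should invoke: Proposition~\ref{prop:k=0} and its corollary show, by comparing the explicit formulas \eqref{eq:one-0}--\eqref{eq:one-3} for geodesics of $N_{r,s}$ with the orbits of one-parameter subgroups of $N_{r,s}\rtimes H$, that any homogeneous geodesic of a pseudo $H$-type group has trivial reparametrization $l(t)=t$, hence $k=-\ddot l(t)/\dot l(t)=0$ for \emph{every} initial vector, null or not. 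With $k=0$ known a priori, your separation argument applies verbatim to null $T$ and yields $D(Z)=0$, $D(X)=J_Z(X)$ with no limit to take. (Strictly speaking this closes the gap only for the pseudo $H$-type groups to which the proposition is applied in the paper; for a general $2$-step nilmanifold with non-degenerate center one would need an analogue of Proposition~\ref{prop:k=0}, and your instinct that the null case requires a genuine argument rather than a formal limit is sound.)
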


Consider an action $\rho$ of the isotropy algebra $\mathfrak h$ in~\eqref{isotalg1} on $\mathfrak{so}(\mathfrak{v},\langle \cdot\,, \cdot \rangle _{\mathfrak{v}})$; that is $\rho\colon\mathfrak{h} \to \mathfrak{so}(\mathfrak{v},\langle \cdot\,, \cdot \rangle _{\mathfrak{v}})$. We may reformulate the condition
of Proposition \ref{gonil1n.2} as follows.
We know that $\mathbf{V}=J(\mathfrak{z})$ is a linear subspace in $\mathfrak{so}(\mathfrak{v},\langle \cdot\,, \cdot \rangle _{\mathfrak{v}})$.
Further, for every $D\in \mathfrak{h}$ and $Z\in \mathfrak{z}$ we get
$J_{[D,Z]}=[\rho(D),J_Z]$ (it easy follows from the condition on $D$ to be a skew-symmetric derivation), hence,
the subspace $\mathbf{V}=J(\mathfrak{z})$ is normalized by the subalgebra $\mathbf{N}:=\rho(\mathfrak{h})$ in
$\mathfrak{so}(\mathfrak{v},\langle \cdot\,, \cdot \rangle _{\mathfrak{v}})$ by the fact that
$$
[\rho(D),J_Z]=J_{[D,Z]}\in \mathbf{V}.
$$
The equality $J_{[D,Z]}=[\rho(D),J_Z]$ implies that
the representation $\rho\colon\mathfrak{h} \to \mathfrak{so}(\mathfrak{v},\langle \cdot\,, \cdot \rangle _{\mathfrak{v}})$ is faithful
(otherwise, some non-trivial $D\in \mathfrak{h}$ acts trivially both on $\mathfrak{v}$
and on $\mathfrak{z}$, hence, on $\mathfrak{n}$). Moreover, any element of the normalizer $\mathbf{N}$ of $\mathbf{V}=J(\mathfrak{z})$ in $\mathfrak{so}(\mathfrak{v},\langle \cdot\,, \cdot \rangle _{\mathfrak{v}})$
can be considered as an image of some element $D\in \mathfrak{h}$ under the map  $\rho\colon\mathfrak{h} \to \mathfrak{so}(\mathfrak{v},\langle \cdot\,, \cdot \rangle _{\mathfrak{v}})$,
which follows from~\eqref{isotalg1}.
Therefore, we have the following statement.

\begin{corollary}\label{cor.isom.1}
If $(N,g)$ is a geodesic orbit pseudo-Riemannian nilmanifold, then the Lie algebra~$\mathfrak{h}$ is isomorphic to the normalizer $\mathbf{N}$ of $\mathbf{V}=J(\mathfrak z)$
in $\mathfrak{so}(\mathfrak{v},\langle \cdot\,, \cdot \rangle _{\mathfrak{v}})$ under the representation $\rho\colon\mathfrak h\to \mathfrak{so}(\mathfrak{v},\langle \cdot\,, \cdot \rangle _{\mathfrak{v}})$.
\end{corollary}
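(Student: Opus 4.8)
The plan is to realise the claimed isomorphism through the map $\rho\colon\mathfrak h\to\mathfrak{so}(\mathfrak v,\langle\cdot\,,\cdot\rangle_{\mathfrak v})$ sending a skew-symmetric derivation $D=(C,A)$ to its restriction $A=D|_{\mathfrak v}$, and to verify in turn three things: that $\rho$ is a Lie algebra homomorphism, that it is injective, and that its image is exactly the normalizer $\mathbf N$ of $\mathbf V=J(\mathfrak z)$. The first point is immediate: since every $D\in\mathfrak h$ preserves both $\mathfrak z$ and $\mathfrak v$, the bracket in $\mathfrak h$ acts componentwise on the product $\mathfrak{so}(\mathfrak z,\langle\cdot\,,\cdot\rangle_{\mathfrak z})\times\mathfrak{so}(\mathfrak v,\langle\cdot\,,\cdot\rangle_{\mathfrak v})$, so $\rho$ is the projection onto the second factor and is therefore automatically a homomorphism.

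For injectivity I would use the identity $J_{[D,Z]}=[\rho(D),J_Z]$ already established above. If $\rho(D)=A=0$, then $J_{C(Z)}=[A,J_Z]=0$ for every $Z\in\mathfrak z$, and since $J$ is injective (which holds because $[\mathfrak v,\mathfrak v]=\mathfrak z$ together with the nondegeneracy of $\langle\cdot\,,\cdot\rangle_{\mathfrak z}$ forces $J_Z=0\Rightarrow Z=0$), this gives $C(Z)=0$ for all $Z$, hence $D$ acts trivially on all of $\mathfrak n=\mathfrak z\oplus\mathfrak v$. Thus $\rho$ is faithful and $\mathfrak h\cong\rho(\mathfrak h)$.

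It then remains to identify $\rho(\mathfrak h)$ with the full normalizer $\mathbf N$. The inclusion $\rho(\mathfrak h)\subseteq\mathbf N$ follows at once from $[\rho(D),J_Z]=J_{[D,Z]}\in J(\mathfrak z)=\mathbf V$. For the reverse inclusion I would take $A\in\mathbf N$, so that $[A,J_Z]\in\mathbf V$ for every $Z$; by injectivity of $J$ there is then a unique $C(Z)\in\mathfrak z$ with $[A,J_Z]=J_{C(Z)}$, and the assignment $C\colon\mathfrak z\to\mathfrak z$ is linear. If I can show $C\in\mathfrak{so}(\mathfrak z,\langle\cdot\,,\cdot\rangle_{\mathfrak z})$, then $(C,A)$ satisfies the defining relation of \eqref{isotalg1}, hence lies in $\mathfrak h$ with $\rho((C,A))=A$, and the proof is complete.

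The main obstacle is exactly this skew-symmetry of $C$. The natural computation only delivers skew-symmetry with respect to the trace form $\beta(Z,Z')=\trace(J_ZJ_{Z'})$: because $S\mapsto[A,S]$ is skew-symmetric for the trace form on $\mathfrak{so}(\mathfrak v,\langle\cdot\,,\cdot\rangle_{\mathfrak v})$ when $A$ is itself skew-symmetric, one obtains $\beta(C(Z),Z')=\trace([A,J_Z]J_{Z'})=-\trace(J_Z[A,J_{Z'}])=-\beta(Z,C(Z'))$. To pass from $\beta$ to $\langle\cdot\,,\cdot\rangle_{\mathfrak z}$ I would invoke the $H$-type structure: the Clifford relation $J_ZJ_{Z'}+J_{Z'}J_Z=-2\langle Z,Z'\rangle_{\mathfrak z}\,\Id$ yields, upon taking traces, $\beta(Z,Z')=-\dim(\mathfrak v)\,\langle Z,Z'\rangle_{\mathfrak z}$, so $\beta$ is a nonzero scalar multiple of $\langle\cdot\,,\cdot\rangle_{\mathfrak z}$ and the two notions of skew-symmetry coincide. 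This forces $C\in\mathfrak{so}(\mathfrak z,\langle\cdot\,,\cdot\rangle_{\mathfrak z})$, gives $\rho(\mathfrak h)=\mathbf N$, and, combined with faithfulness, establishes $\mathfrak h\cong\mathbf N$ via $\rho$.
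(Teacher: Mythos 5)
Your proof follows essentially the same route as the paper's: $\rho$ is the restriction $D=(C,A)\mapsto A$, faithfulness comes from the identity $J_{[D,Z]}=[\rho(D),J_Z]$ together with injectivity of $J$, the inclusion $\rho(\mathfrak h)\subseteq\mathbf N$ is immediate from that same identity, and the reverse inclusion is obtained by recovering $C$ from $A$ via $J_{C(Z)}=[A,J_Z]$ — exactly the content of Remark~\ref{re.rec.oper.1}. The one place where you go beyond the paper is in actually verifying that the recovered $C$ is skew-symmetric with respect to $\langle\cdot\,,\cdot\rangle_{\mathfrak z}$, a point the paper dismisses with ``follows from \eqref{isotalg1}''. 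Your trace-form argument for this step is correct, but be aware that it invokes the Clifford relation $J_ZJ_{Z'}+J_{Z'}J_Z=-2\langle Z,Z'\rangle_{\mathfrak z}\Id$, i.e.\ the $H$-type hypothesis, whereas the corollary is nominally stated for an arbitrary $2$-step GO pseudo-Riemannian nilmanifold with non-degenerate center. In that generality $\beta(Z,Z')=\trace(J_ZJ_{Z'})$ need not be proportional to $\langle\cdot\,,\cdot\rangle_{\mathfrak z}$, and the surjectivity of $\rho$ onto $\mathbf N$ genuinely requires some extra input: for instance, with $\mathfrak z=\mathbb R^2$ Euclidean, $\mathfrak v=\mathbb H$ Euclidean, $J_{Z_1}=L_i$ and $J_{Z_2}=\lambda L_j$ (left quaternion multiplications) with $\lambda\neq\pm1$, the element $L_k$ lies in the normalizer of $\mathbf V$ but its induced $C$ satisfies $C(Z_1)=(2/\lambda)Z_2$, $C(Z_2)=-2\lambda Z_1$, which is not skew, so $L_k\notin\rho(\mathfrak h)$. (That example is not GO, so it does not contradict the corollary; it shows that either the GO hypothesis or the $H$-type structure must enter this step.) Your argument uses the latter, which covers every situation in which the paper actually applies the corollary, so I regard the proof as correct in context — just flag the scope of the skew-symmetry step.
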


Therefore, if $(N,g)$ is a geodesic orbit pseudo-Riemannian nilmanifold, then we have
a Lie subalgebra $\mathbf{N}\subset \mathfrak{so}(\mathfrak{v},\langle \cdot\,, \cdot \rangle _{\mathfrak{v}})$ and
an $\ad(\mathbf{N})$-invariant module $\mathbf{V}$ in $\mathfrak{so}(\mathfrak{v},\langle \cdot\,, \cdot \rangle _{\mathfrak{v}}))$,
such that
for every $X\in \mathfrak{v}$ and $W\in \mathbf{V}$  there is $B \in \mathbf{N}$ with the following properties:
$[B,W]=0$ and $B(X)=W(X)$.

\begin{remark}\label{re.rec.oper.1}
Let $\mathfrak h$ be as in~\eqref{isotalg1} and $\mathbf{V}=J(\mathfrak{z})$. Then every element of $\mathfrak{h}$
is determined by a skew-symmetric operator $A \in \mathfrak{so}(\mathfrak{v}, \langle \cdot\,, \cdot \rangle _{\mathfrak{v}})$ with the property
$[A,\mathbf{V}]\subset \mathbf{V}$ (i.e., $A$ is from the normalizer $\mathbf{N}$ of $\mathbf{V}$ in $\mathfrak{so}(\mathfrak{v}, \langle \cdot\,, \cdot \rangle _{\mathfrak{v}})$).
In this case the operator $C\in \mathfrak{so}(\mathfrak{z}, \langle \cdot\,, \cdot \rangle _{\mathfrak{z}})$ can be recovered from the equality
$J_{C(Z)}=[A,J_Z]$ for all $Z\in  \mathfrak{z}$.
\end{remark}

\smallskip

A special case of geodesic orbit pseudo-Riemannian spaces are naturally reductive homogeneous pseudo-Riemannian spaces (Definition~\ref{def:naturaly-reductive}),
see also~\cite{Ov2013} and references therein.
The naturally reductive Riemannian homogeneous spaces are generalizations of normal homogeneous Riemannian spaces
and symmetric spaces, see, e.g., \cite{DZ}, \cite{KV85}, \cite{Gor85}, \cite{AFF}, \cite{Storm19}, \cite{Storm20}.
It should be noted that a complement $\mathfrak{m}$ in the definition of naturally reductive (pseudo) Riemannian manifold is not unique in general.
For instance, any invariant Riemannian metric on the Ledger--Obata space $(F\times F\times F)/\diag(F)$,
where $F$ is any simple compact Lie group, is naturally reductive with respect to a suitable reductive complement~\cite{NN2019}.

The following result gives us a useful criterion of a 2-step Lie group with pseudo-Riemannian left-invariant metric to be naturally reductive.

\begin{theorem}~\cite[Theorem 3.2]{Ov2013}\label{th:Ovanto}
Let $(N,g)$ denote a 2-step pseudo-Riemannian nilmanifold with a non-degenerate center. Assume that the map $J\colon \mathfrak{z}\to \mathfrak{so}(\mathfrak{v})$ is injective, see~\eqref{eq_j_z_1}.
Then the metric is naturally reductive with respect to
$G= N\rtimes H$ (see Proposition~\ref{pr.isgr.1}), if and only if
\begin{itemize}
\item[(i)] $\mathbf {V}=J(\mathfrak{z})$ is a Lie subalgebra of $\mathfrak{so}(\mathfrak{v})$ and
\item[(ii)] $[J(Z_1), J(Z_2)] = J\big(\tau_{Z_1}(Z_2)\big)$ where $\tau_{Z_1}\in\mathfrak{so}(\mathfrak{z})$ for any $Z_1\in \mathfrak{z}$.
\end{itemize}
\end{theorem}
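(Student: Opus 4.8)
The plan is to realise every reductive complement as a graph and to reduce natural reductivity to a pointwise skew-symmetry condition, after which both implications become computations with the operators $J_Z$. Write $\mathfrak g=\mathfrak h\ltimes\mathfrak n$ for the Lie algebra of $G=N\rtimes H$ (Proposition~\ref{pr.isgr.1}), where $\mathfrak h$ acts on $\mathfrak n$ by the skew derivations $D=(C,A)$ of \eqref{isotalg1}. Since $\dim\mathfrak m=\dim\mathfrak n$ and $\mathfrak m\cap\mathfrak h=0$, any $\mathrm{ad}(\mathfrak h)$-invariant reductive complement is the graph $\mathfrak m_\sigma=\{T+\sigma(T):T\in\mathfrak n\}$ of a linear map $\sigma\colon\mathfrak n\to\mathfrak h$, and $\mathrm{ad}(\mathfrak h)$-invariance is equivalent to the equivariance $\sigma(D(T))=[D,\sigma(T)]$ for all $D\in\mathfrak h$. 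Writing $\sigma(T)=(C_T,A_T)$ and computing the $\mathfrak n$-component of $[T+\sigma(T),Q+\sigma(Q)]$ I would obtain the twisted bracket $B_\sigma(T,Q)=[T,Q]_{\mathfrak n}+\sigma(T)(Q)-\sigma(Q)(T)$, which is skew in $T,Q$ and represents $[\,\cdot\,,\cdot\,]_{\mathfrak m_\sigma}$ under the identification $\mathfrak m_\sigma\cong\mathfrak n$. Recalling that natural reductivity (Definition~\ref{def:naturaly-reductive}) is equivalent to the total skew-symmetry of the trilinear form $F_\sigma(T,Q,R):=\langle B_\sigma(T,Q),R\rangle$, the theorem amounts to deciding for which data such a $\sigma$ exists.

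For the necessity of (i) and (ii), assume $F_\sigma$ is totally skew for some equivariant $\sigma$, and let $A_Z$ denote the $\mathfrak v$-part of $\sigma(Z)$, $Z\in\mathfrak z$. Evaluating on $X,Y\in\mathfrak v$ and $Z\in\mathfrak z$ gives, using \eqref{eq_j_z_1}, $F_\sigma(X,Y,Z)=\langle[X,Y],Z\rangle=\langle J_Z X,Y\rangle$, while $F_\sigma(X,Z,Y)=-\langle A_Z X,Y\rangle$ because the remaining $C$- and bracket-terms land in the orthogonal complement. Total skew-symmetry forces $F_\sigma(X,Z,Y)=-F_\sigma(X,Y,Z)$, hence $A_Z=J_Z$ on $\mathfrak v$. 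Thus $\sigma(Z)=(C_Z,J_Z)\in\mathfrak h$, and \eqref{isotalg1} yields $[J_Z,J_W]=[A_Z,J_W]=J_{C_Z(W)}\in\mathbf V$ for all $W$, which is precisely (i), while $C_Z\in\mathfrak{so}(\mathfrak z)$ together with $J_{C_Z(W)}=[J_Z,J_W]$ identifies $C_Z$ with the map $\tau_Z$ of the statement and shows $\tau_Z\in\mathfrak{so}(\mathfrak z)$, which is (ii). Injectivity of $J$ is what makes $\tau_Z$ unique here.

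For sufficiency, assume (i) and (ii); then $\mathbf V$ is a subalgebra, so $\tau_{Z_1}(Z_2):=J^{-1}\big([J_{Z_1},J_{Z_2}]\big)$ is a well-defined element of $\mathfrak z$, and by (ii) $\tau_{Z}\in\mathfrak{so}(\mathfrak z)$. I would set $\sigma(X+Z)=(\tau_Z,J_Z)$ for $X\in\mathfrak v$, $Z\in\mathfrak z$; by Remark~\ref{re.rec.oper.1} together with (i)--(ii) this indeed lands in $\mathfrak h$. Equivariance of $\sigma$ reduces, via \eqref{isotalg1}, to the identity $\tau_{C(Z)}=[C,\tau_Z]$ for $D=(C,A)\in\mathfrak h$, which I would prove by applying $\mathrm{ad}(A)$ to $[J_{Z_1},J_{Z_2}]=J_{\tau_{Z_1}(Z_2)}$, expanding by the Jacobi identity and $[A,J_Z]=J_{C(Z)}$, and invoking injectivity of $J$. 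It then remains to check that $F_\sigma$ is totally skew. Splitting $\mathfrak n=\mathfrak v\oplus\mathfrak z$ into the four types (writing $X_\bullet,Z_\bullet$ for the $\mathfrak v$- and $\mathfrak z$-parts), the $(\mathfrak v\mathfrak z\mathfrak z)$- and $(\mathfrak v\mathfrak v\mathfrak v)$-components vanish, the $(\mathfrak z\mathfrak z\mathfrak z)$-component equals $2\langle\tau_{Z_1}Z_2,Z_3\rangle$, which is totally skew because $\tau_{Z_1}(Z_2)=-\tau_{Z_2}(Z_1)$ (from injectivity of $J$) and $\tau_{Z_1}\in\mathfrak{so}(\mathfrak z)$, and in the $(\mathfrak v\mathfrak v\mathfrak z)$-component the coefficient $1$ in front of $J_Z$ is exactly what cancels the otherwise non-skew remainder $\langle[X_T,X_Q],Z_R\rangle+\langle[X_T,X_R],Z_Q\rangle$.

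The main obstacle is the sufficiency computation: one must guess the correct normalization $\sigma(Z)=(\tau_Z,J_Z)$ — the naive factor $\tfrac12$ familiar from canonical connections fails here — and verify that this single choice simultaneously produces an $\mathrm{ad}(\mathfrak h)$-invariant complement (the equivariance identity $\tau_{C(Z)}=[C,\tau_Z]$) and makes all four type-components of $F_\sigma$ skew. The necessity direction, by contrast, is forced in essentially one line by testing skewness on a single $(\mathfrak v,\mathfrak z,\mathfrak v)$ configuration, which pins down $A_Z=J_Z$ and thereby both (i) and (ii).
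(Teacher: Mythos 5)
Your argument is correct. Note that the paper itself gives no proof of this statement — it is quoted verbatim from \cite{Ov2013} — so there is no in-paper argument to compare against; your graph-of-$\sigma$ parametrization of reductive complements and the reduction of natural reductivity to total skew-symmetry of $\langle[\cdot,\cdot]_{\mathfrak m_\sigma},\cdot\rangle$ is essentially the standard route of Gordon and Ovando. All the key steps check out: testing skewness on a $(\mathfrak v,\mathfrak z,\mathfrak v)$ triple does pin down $A_Z=J_Z$, membership of $(C_Z,J_Z)$ in $\mathfrak h$ via \eqref{isotalg1} then yields (i) and (ii), and in the converse direction the choice $\sigma(X+Z)=(\tau_Z,J_Z)$ gives an $\ad(\mathfrak h)$-equivariant complement (the identity $\tau_{C(Z)}=[C,\tau_Z]$ follows from Jacobi and injectivity of $J$, as you indicate) for which the four type-components of $F_\sigma$ are skew. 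One small point worth flagging: Definition~\ref{def:naturaly-reductive} as printed in the paper is missing a minus sign (as written it is vacuous for $Q=R$); you correctly work with the standard formulation, namely total skew-symmetry of the trilinear form.
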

Since the map $J$ is supposed to be injective, the map $\tau$ can be easily recovered from ${\rm(ii)}$.
%%%%%%%%%%%%%%%%%%%%%%%%%%%%%%%%%%%%%%%%%%%%%

\section{Pseudo-Riemannian $H$-type nilmanifolds}\label{se.rseudo.htnlg}

Let $(N,g)$ be a 2-step pseudo-Riemannian nilmanifold and $\mathfrak n=\mathfrak z\oplus\mathfrak v$ be its Lie algebra endowed with a scalar product $\langle.\,,.\rangle$
making the center non-degenerate. We identify $(\mathfrak z,\langle.\,,.\rangle_{\mathfrak z})$ with the pseudo Euclidean vector space
$\mathbb{R}^{r,s}=(\mathbb{R}^{r+s},\langle.\,,.\rangle_{r,s})$, where
$$
\langle Z, W \rangle_{r,s} =
\sum_{i=1}^r z_i\, w_i-\sum_{j=1}^s z_{r+j}\, w_{r+j}, \quad Z=(z_1,\ldots,z_{r+s}),\ W=(w_1,\ldots,w_{r+s}).
$$
If the linear operator $J\colon\mathfrak{z} \rightarrow \mathfrak{so}(\mathfrak{v},\langle\cdot\,,\cdot\rangle_{\mathfrak v})$ defined by
\begin{equation}\label{eq:JJ}
\langle J_Z(X),Y \rangle_{\mathfrak v}=\langle [X,Y],Z\rangle_{r,s}, \quad X,Y\in \mathfrak{v},\ Z\in \mathfrak{z}=\mathbb{R}^{r,s},
\end{equation}
satisfies $J_Z^2 (X)=-\langle Z,Z \rangle_{r,s} X$ for any $Z \in \mathbb{R}^{r,s}$ and all $X \in \mathfrak{v}$,
then $\mathfrak{n}=\mathfrak n_{r,s}$ is called the pseudo $H$(eisenberg)-type Lie algebra. It is easy to check that
this definition %the definition of {\color{blue}pseudo} $H$-type Lie algebra
implies
\begin{equation}\label{eq:JJJ}
\langle J_Z(X),J_W(X) \rangle_{\mathfrak v}=\langle Z,W \rangle_{r,s}\langle X,X \rangle_{\mathfrak v}.
\end{equation}
We denote by ${N}_{r,s}$ the connected simply connected Lie group, whose Lie algebra is the pseudo $H$-type Lie algebra $\mathfrak n_{r,s}$. The $H$-type Lie algebras $N_{r,0}$ with a positive definite scalar product $\langle\cdot\,,\cdot\rangle_{r,0}$ on the centre were introduced in~\cite{Kap80}
and with an arbitrary indefinite scalar product $\langle\cdot\,,\cdot\rangle_{r,s}$ on $\mathfrak z$ in~\cite{Cia}, see also~\cite{GKM13}.

The
Lie algebras $\mathfrak n_{r,s}$ are related to the representations of the
Clifford algebras in the following way. Let $J\colon\operatorname{Cl}(\mathbb{R}^{r,s}) \rightarrow \operatorname{End} (\mathfrak{v})$ be a representation of
the Clifford algebra $\operatorname{Cl}(\mathbb{R}^{r,s})$ generated by the pseudo Euclidean vector space $\mathbb{R}^{r,s}$.
If there is a scalar product $\langle \cdot\,,\cdot \rangle_{\mathfrak{v}}$ on the representation space $\mathfrak{v}$ (Clifford module)
such that the linear map $J_Z$ is skew-symmetric for any $Z\in \mathbb{R}^{r,s}$; that is
$$
\langle J_Z(X),Y \rangle_{\mathfrak v}=-\langle X,J_Z(Y) \rangle_{\mathfrak v},\quad Z\in \mathbb{R}^{r,s},\quad X,Y\in\mathfrak v,
$$
then we get a pseudo $H$-type Lie algebra with the commutators defined in~\eqref{eq:JJ},
see details in~\cite{Kap81, Cia, KapTir, GMKMV, FuMa16, FuMa17, FuMa19, FuMa21}. The scalar product $\langle \cdot\,,\cdot \rangle_{\mathfrak{v}}$ in this case is called admissible and $\mathfrak v$ is called admissible (Clifford) module.

Note that, see e.g.~\cite{Cia}, or~\cite[Propososition 2.2.2]{FuMa21})
the signature of the scalar product space $(\mathfrak{v},\langle \cdot\,,\cdot \rangle_{\mathfrak{v}})$
is neutral and $(\mathfrak{v},\langle \cdot\,,\cdot \rangle_{\mathfrak{v}})$ is isometric to $\mathbb{R}^{l,l}$ for some $l \in \mathbb{N}$ if $s>0$,
whereas the corresponding signature is either $(l,0)$ or $(0,l)$ for some $l \in {\color{blue}2}\mathbb{N}$ if $s=0$.
We use the symbol $^t$ for the transposition according to the standard Euclidean product on $\mathfrak{v}=\mathbb{R}^{2l}$.
On the other hand, we use the symbol $ ^{\tau}$ for the transposition according to scalar product $\langle \cdot\,,\cdot \rangle_{l,l}$
on $\mathfrak{v}=\mathbb{R}^{l,l}$ or $\langle \cdot\,,\cdot \rangle_{r,s}$
on $\mathfrak{z}=\mathbb{R}^{r,s}$.
In particular, for any operator $A$ on $\mathfrak{v}$, we get $\langle A(X),Y \rangle=\langle X, A^{\tau}(Y) \rangle$, $X,Y \in \mathfrak{v}$, and
$J_Z^{\tau}=-J_Z$ for any $Z\in \mathfrak{z}$.
If $\eta=\diag(\Id_l,-\Id_l)$, then $J_Z^{\tau}=\eta J_Z^t \eta$ and
$A^{\tau}=\eta A^{t} \eta$.

Thus we use the identification $\mathbf{V}=J(\mathfrak{z}) \subset \mathfrak{so}(\mathfrak{v},,\langle\cdot\,,\cdot\rangle_{\mathfrak v})\cong\mathfrak{so}(l,l)$.
Recall that $\dim \mathbf{V}=\dim \mathfrak{z}=r+s$.

\begin{prop}\label{pr.triple.1}
Let $\mathfrak{n}_{r,s}$ be a pseudo $H$-type Lie algebra, $\mathbf{N}$ and $\mathbf{Z}$ the normalizer and the centralizer of $\mathbf{V}=J(\mathfrak z)$ in $\mathfrak{so}(l,l)$. Then we have the following properties:

1. $[\mathbf{V},\mathbf{V}]$ and $\mathbf{L}:=[\mathbf{V},\mathbf{V}]+\mathbf{V}$ are Lie subalgebras in $\mathfrak{so}(l,l)$;
\smallskip

2. the Lie algebra $[\mathbf{V},\mathbf{V}]$ is isomorphic to $\mathfrak{so}(r,s)$;
\smallskip

3. $[\mathbf{V},\mathbf{V}]\subset \mathbf{N}$ and $\dim \mathbf{N} \geq (r+s)(r+s-1)/2$;
\smallskip

4. $\Bigl(\mathbf{L},[\mathbf{V},\mathbf{V}]\Bigr)$ is a symmetric pair,
i.e., $\mathbf{V}$ is a Lie triple system;
\smallskip

5. the Lie algebra $\mathbf{L}~\text{is commutative if}~
(r, s)\in\left\{(1,0), (0,1)\right\}$,
it is simple if\\ %\linebreak
\hspace{0.8cm} $(r, s)\not\in \left\{(1,0), (0,1),(3,0), (1,2)\right\}$,
and it is semisimple if $(r, s)\in \left\{(3,0), (1,2)\right\}$;
\smallskip

6. $\mathbf{N}=[\mathbf{V},\mathbf{V}] \oplus \mathbf{Z}$ {\rm(}a direct sum of Lie algebras{\rm)}.
\end{prop}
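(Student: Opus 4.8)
The plan is to reduce every assertion to explicit bracket computations among the skew-symmetric operators $J_i:=J_{e_i}$, where $e_1,\dots,e_{r+s}$ is an orthonormal basis of $\mathfrak z=\mathbb R^{r,s}$ with $\langle e_i,e_i\rangle_{r,s}=\varepsilon_i\in\{\pm1\}$. The defining identity $J_Z^2=-\langle Z,Z\rangle_{r,s}\Id$ polarizes to the Clifford relations $J_iJ_j+J_jJ_i=-2\varepsilon_i\delta_{ij}\Id$; in particular $J_i^2=-\varepsilon_i\Id$ and $J_iJ_j=-J_jJ_i$ for $i\neq j$, so that $[J_i,J_j]=2J_iJ_j$ and $[\mathbf V,\mathbf V]=\spann\{J_iJ_j:i<j\}$. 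First I would establish, by one-line manipulations of these relations, the formulas $[J_iJ_j,J_k]=0$ for $k\notin\{i,j\}$, $[J_iJ_j,J_i]=2\varepsilon_iJ_j$, $[J_iJ_j,J_j]=-2\varepsilon_jJ_i$, together with the analogous formula for $[J_iJ_j,J_kJ_l]$. These at once give $[[\mathbf V,\mathbf V],\mathbf V]\subseteq\mathbf V$ and $[[\mathbf V,\mathbf V],[\mathbf V,\mathbf V]]\subseteq[\mathbf V,\mathbf V]$, which is exactly assertion 1 (both $[\mathbf V,\mathbf V]$ and $\mathbf L$ are subalgebras) together with the symmetric-pair/Lie-triple structure of assertion 4, using also $[\mathbf V,\mathbf V]\subseteq[\mathbf V,\mathbf V]$ and $[\mathbf V,\mathbf V]\cap\mathbf V=0$ by parity in the Clifford algebra. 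The inclusion $[[\mathbf V,\mathbf V],\mathbf V]\subseteq\mathbf V$ is moreover precisely the statement $[\mathbf V,\mathbf V]\subseteq\mathbf N$ of assertion 3.

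For assertion 2 I would write the linear map $\Phi\colon\mathfrak{so}(r,s)\to[\mathbf V,\mathbf V]$ sending the standard generator $E_{ij}$ to $\tfrac12 J_iJ_j$, and check via the bracket formulas above that it reproduces the structure constants of $\mathfrak{so}(r,s)$. Injectivity then reduces to the linear independence of $\{J_iJ_j:i<j\}$, which is where the minimality and admissibility of the module enter (equivalently, faithfulness of the even Clifford subalgebra on $\mathfrak v$); this same independence gives $\dim[\mathbf V,\mathbf V]=\binom{r+s}{2}$ and hence the dimension bound in assertion 3. I regard this independence as a secondary technical point that must be justified from the structure of the minimal admissible module.

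Assertion 5 I would obtain by upgrading assertion 2 to an isomorphism $\mathbf L\cong\mathfrak{so}(r+1,s)$. Realizing $\mathbb R^{r+1,s}=\mathbb R e_0\perp\mathbb R^{r,s}$ with $e_0$ spacelike, I extend $\Phi$ by sending $J_i$ to the generator rotating $e_0$ into $e_i$ and verify the brackets; the sign $[J_iJ_j,J_i]=+2\varepsilon_iJ_j$, equivalently $[[J_i,J_j],J_j]=-4\varepsilon_jJ_i$, is exactly what forces the adjoined direction to be spacelike (confirmed on the smallest cases, e.g.\ $(2,0)$ yields the compact $\mathfrak{so}(3)$ and $(0,2)$ the split $\mathfrak{so}(2,1)$). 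Assertion 5 then follows from the elementary classification of the real orthogonal Lie algebras of rank one higher: $\mathfrak{so}(2,0),\mathfrak{so}(1,1)$ are abelian (cases $(1,0),(0,1)$); $\mathfrak{so}(4,0)$ and $\mathfrak{so}(2,2)$ are semisimple but not simple (cases $(3,0),(1,2)$); every other $\mathfrak{so}(r+1,s)$ with $r+s\ge2$ is simple. I expect this sign bookkeeping to be the main obstacle: pinning down $\mathbf L\cong\mathfrak{so}(r+1,s)$ rather than $\mathfrak{so}(r,s+1)$ is precisely what separates the semisimple cases $(3,0),(1,2)$ from the simple cases $(2,1),(0,3)$ of equal dimension.

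Finally, for assertion 6 I would analyse the adjoint action $\mathbf N\to\mathfrak{gl}(\mathbf V)$, $A\mapsto\ad_A|_{\mathbf V}$. Transporting through $J$ and writing $[A,J_Z]=J_{C(Z)}$ as in Remark~\ref{re.rec.oper.1}, the computation $0=[A,J_Z^2]=J_{C(Z)}J_Z+J_ZJ_{C(Z)}=-2\langle C(Z),Z\rangle\Id$ shows $C\in\mathfrak{so}(r,s)$, so the image lies in $\mathfrak{so}(r,s)$, while $[\mathbf V,\mathbf V]$ already surjects onto $\mathfrak{so}(r,s)$ through its standard representation. Hence $\mathbf N=[\mathbf V,\mathbf V]+\mathbf Z$; the sum is direct because $[\mathbf V,\mathbf V]\cap\mathbf Z=0$ (the standard representation of $\mathfrak{so}(r,s)$ is faithful), and $[\mathbf Z,[\mathbf V,\mathbf V]]=0$ by the Jacobi identity since $[\mathbf Z,\mathbf V]=0$. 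Thus both summands are ideals and $\mathbf N=[\mathbf V,\mathbf V]\oplus\mathbf Z$ as Lie algebras.
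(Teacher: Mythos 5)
Your overall strategy (reduce everything to the Clifford relations $J_iJ_j+J_jJ_i=-2\varepsilon_i\delta_{ij}\Id$ and explicit bracket formulas) is a legitimate, self-contained alternative to the paper's proof, which instead outlines the argument by citing the derivations $\Phi_{Z',Z''}$ of \cite{CiaCo,Rie82}, Lemma~5.1, Proposition~5.2 and Theorem~5.1 of \cite{AFMV}, and \cite[Corollary 2.6]{CiaCo}. Your bracket formulas, the surjection of $[\mathbf V,\mathbf V]$ onto $\mathfrak{so}(r,s)$, and the treatment of assertion 6 via the homomorphism $\mathbf N\to\mathfrak{so}(r,s)$, $A\mapsto C$, are all sound (granting, as you flag, the linear independence of $\{J_iJ_j\}_{i<j}$, which the paper takes from \cite{AFMV}).

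There is, however, one genuine error: the claim that $[\mathbf V,\mathbf V]\cap\mathbf V=0$ ``by parity in the Clifford algebra.'' Parity separates $\Cl^0$ from $\Cl^1$ only in the abstract Clifford algebra; the map $J$ is a representation whose kernel is a two-sided ideal, and when the volume element acts as a scalar the images of odd and even elements collide. The paper warns of exactly this immediately after the proposition and devotes Lemma~\ref{le.ideal_h1} to the case $K=\mathbf V\cap[\mathbf V,\mathbf V]\neq 0$. Concretely, for the paper's own example $N_{1,2}$ one has $[J_{Z_1},J_{Z_2}]=2J_{Z_3}$, so $\mathbf V=[\mathbf V,\mathbf V]=\mathbf L$ is $3$-dimensional; the same happens for the isotypic modules of $\Cl(\mathbb R^{3,0})$ and $\Cl(\mathbb R^{7,0})$. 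This does not hurt assertion 4, since a Lie triple system only requires $[[\mathbf V,\mathbf V],\mathbf V]\subseteq\mathbf V$, but it invalidates the isomorphism $\mathbf L\cong\mathfrak{so}(r+1,s)$ on which you base assertion 5: for $(1,2)$ your claim would give the $6$-dimensional $\mathfrak{so}(2,2)$. What your construction actually yields is a surjective homomorphism $\mathfrak{so}(r+1,s)\twoheadrightarrow\mathbf L$ (and you must set the map up in this direction, since in the degenerate cases the assignment $J_i\mapsto E_{0i}$, $J_iJ_j\mapsto 2E_{ij}$ is not well defined on $\mathbf L$). Fortunately this surjection is enough: a nonzero quotient of the simple $\mathfrak{so}(r+1,s)$ is simple, a quotient of the abelian $\mathfrak{so}(2,0)$ or $\mathfrak{so}(1,1)$ is abelian, and a quotient of the semisimple $\mathfrak{so}(4,0)$ or $\mathfrak{so}(2,2)$ is semisimple, which reproduces the trichotomy of assertion 5; but you need to replace ``isomorphism'' by ``nonzero quotient'' and drop the parity argument for this to be correct.
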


\begin{proof} We give an outline of the proof.
For every pair of orthogonal vectors $Z', Z'' \in \mathfrak{z}$, the map $\Phi_{Z',Z''}$, defined by
\begin{equation}\label{eq:Phi}
\Phi_{Z',Z''}(X + Z) = J_{Z'}J_{Z''}(X)+2 \langle Z',Z\rangle Z''-2 \langle Z'',Z\rangle Z',\quad Z\in \mathfrak{z},\quad X\in \mathfrak{v},
\end{equation}
is a skew-symmetric derivation of $(\mathfrak{n}_{r,s}, \langle \cdot\,, \cdot \rangle)$,
see e.g. Lemma 2.2 in~\cite{CiaCo} or \cite{Rie82}.
Since $[J_{Z'},J_{Z''}]=2J_{Z'}J_{Z''}$, then $[\mathbf{V},\mathbf{V}]\subset \mathbf{N}$, due to the fact that $\Phi_{Z',Z''}$ and, hence, $[J_{Z'},J_{Z''}]$
is in the isotropy Lie algebra $\mathfrak{h}$, see \eqref{isotalg1} and Proposition \ref{pr.isgr.1}. Moreover,
$[\mathbf{V},\mathbf{V}]$ is a Lie subalgebra in $\mathbf{N}$, that is isomorphic to  $\mathfrak{so}(r,s)$.
Indeed, by Lemma 5.1 in~\cite{AFMV}, we can choose an orthonormal basis $Z_i$, $i=1,\dots, r+s$, for  $\mathfrak{z}=\mathbb{R}^{r,s}$, such that
$\mathbf{V} =J(\mathfrak{z})=J(\mathbb{R}^{r,s}) \subset \mathfrak{so}(l,l)$ has a basis of the following type:
$\{J_{Z_i}\}$, $i=1,\dots, r+s$, while
$\{J_{Z_j}J_{Z_k}\}$, $j,k = 1,\dots, r+s$, $j<k$, constitute a basis in
$[\mathbf{V},\mathbf{V}]\subset \mathbf{N}$. Therefore,
$[\mathbf{V},\mathbf{V}]$ is isomorphic to  $\mathfrak{so}(r,s)$, then $\dim  \mathbf{N}\geq \dim [\mathbf{V},\mathbf{V}]=\dim \mathfrak{so}(r,s)=(r+s)(r+s-1)/2$.
This result also follows from \cite[Proposition 3.2.4]{FuMa21}.

One can easily check that
$[[\mathbf{V},\mathbf{V}],\mathbf{V}]\subset \mathbf{V}$, which means that $[\mathbf{V},\mathbf{V}]\subset \mathbf{N}$.
Moreover, $\mathbf{L}=[\mathbf{V},\mathbf{V}]+\mathbf{V}$ is a Lie algebra
and $\mathbf{V}$ is a Lie triple system, see Proposition 5.2 in~\cite{AFMV}.
\smallskip

Finally, the Lie algebra
$\mathbf{L}$ is commutative and $1$-dimensional if $(r, s)\in \left\{(1,0), (0,1)\right\}$, it is simple if $(r, s)\not\in \left\{(1,0), (0,1), (3,0), (1,2)\right\}$,
and it is semisimple if $(r, s)\in \left\{(3,0), (1,2)\right\}$
by Theorem 5.1 in \cite{AFMV}.

It is known that  each $B\in \mathbf{N}$ (i.e., each skew-symmetric derivation of $\mathfrak{z}$)
decomposes as a sum $B_0 + B_1$, where $B_0\in \mathbf{Z}$ and $B_1 \in [\mathbf{V},\mathbf{V}]$,
see details in~\cite[Corollary 2.6]{CiaCo}. Therefore, $\mathbf{N}=[\mathbf{V},\mathbf{V}] \oplus \mathbf{Z}$.
On the level of automorphism groups, similar results were obtained in~\cite[Subsection 3.2]{FuMa21}.
\end{proof}

It should be noted that the intersection of $\mathbf{V}$ and $[\mathbf{V},\mathbf{V}]$ can be non-trivial.
We have the following result.

\begin{lemma}[see e.g. Lemma 6 in \cite{BerNik09}]\label{le.ideal_h1}
Suppose that $K:=\mathbf{V} \cap [\mathbf{V},\mathbf{V}]$ is non-trivial. Then $K$ is an ideal in $\mathbf{L}= \mathbf{V}+[\mathbf{V},\mathbf{V}]$.
If, in addition, $\mathbf{L}$ is simple, then $K=\mathbf{V}=[\mathbf{V},\mathbf{V}]=\mathbf{L}$.
\end{lemma}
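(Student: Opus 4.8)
The plan is to run the whole argument off the symmetric pair structure recorded in parts~1 and~4 of Proposition~\ref{pr.triple.1}. Writing $\mathfrak{k}:=[\mathbf{V},\mathbf{V}]$ and $\mathfrak{p}:=\mathbf{V}$, these parts supply the three bracket inclusions
$$
[\mathfrak{k},\mathfrak{k}]\subseteq\mathfrak{k},\qquad [\mathfrak{k},\mathfrak{p}]\subseteq\mathfrak{p},\qquad [\mathfrak{p},\mathfrak{p}]=[\mathbf{V},\mathbf{V}]=\mathfrak{k},
$$
the first because $[\mathbf{V},\mathbf{V}]$ is a subalgebra, the second because $\mathbf{V}$ is a Lie triple system, and the third by definition of $\mathfrak{k}$. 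Here $\mathbf{L}=\mathfrak{p}+\mathfrak{k}$, but this sum need not be direct once $K\neq 0$; the entire proof is then a short linear-algebraic bookkeeping on top of these inclusions together with the defining double membership $K\subseteq\mathbf{V}$ and $K\subseteq[\mathbf{V},\mathbf{V}]$.

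For the first assertion I would fix $k\in K$ and an arbitrary $x\in\mathbf{L}$, and write $x=v+w$ with $v\in\mathbf{V}=\mathfrak{p}$ and $w\in[\mathbf{V},\mathbf{V}]=\mathfrak{k}$. Then $[x,k]=[v,k]+[w,k]$, and I would check that each summand already lies in $K$, exploiting that $k$ belongs simultaneously to $\mathfrak{p}$ and to $\mathfrak{k}$. For the first summand, $v,k\in\mathfrak{p}$ gives $[v,k]\in[\mathfrak{p},\mathfrak{p}]\subseteq[\mathbf{V},\mathbf{V}]$, while reading $k\in\mathfrak{k}$ gives $[v,k]\in[\mathfrak{p},\mathfrak{k}]\subseteq\mathbf{V}$; hence $[v,k]\in\mathbf{V}\cap[\mathbf{V},\mathbf{V}]=K$. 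For the second summand, reading $k\in\mathfrak{p}$ gives $[w,k]\in[\mathfrak{k},\mathfrak{p}]\subseteq\mathbf{V}$, while $k\in\mathfrak{k}$ gives $[w,k]\in[\mathfrak{k},\mathfrak{k}]\subseteq[\mathbf{V},\mathbf{V}]$, so again $[w,k]\in K$. Adding them yields $[x,k]\in K$, whence $[\mathbf{L},K]\subseteq K$; since $K\subseteq\mathbf{V}\subseteq\mathbf{L}$, this exhibits $K$ as an ideal of $\mathbf{L}$.

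For the second assertion I would simply invoke simplicity: a simple Lie algebra has no proper non-zero ideals, so the non-trivial ideal $K$ must coincide with $\mathbf{L}$. Then $\mathbf{L}=K\subseteq\mathbf{V}\subseteq\mathbf{L}$ forces $\mathbf{V}=\mathbf{L}$, and likewise $\mathbf{L}=K\subseteq[\mathbf{V},\mathbf{V}]\subseteq\mathbf{L}$ forces $[\mathbf{V},\mathbf{V}]=\mathbf{L}$, giving the full chain $K=\mathbf{V}=[\mathbf{V},\mathbf{V}]=\mathbf{L}$. There is no serious obstacle in either part; the only point that deserves care is the non-uniqueness of the splitting $x=v+w$ in the first assertion, and the remedy is exactly that $[v,k]$ and $[w,k]$ land in $K$ independently by the double membership of $k$, so the conclusion $[x,k]\in K$ is insensitive to the chosen decomposition.
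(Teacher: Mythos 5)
Your proof is correct and follows essentially the same route as the paper: both arguments establish $[K,\mathbf{V}]\subseteq K$ and $[K,[\mathbf{V},\mathbf{V}]]\subseteq K$ by exploiting the double membership $K\subseteq\mathbf{V}$ and $K\subseteq[\mathbf{V},\mathbf{V}]$ together with the inclusions $[[\mathbf{V},\mathbf{V}],\mathbf{V}]\subseteq\mathbf{V}$ and $[[\mathbf{V},\mathbf{V}],[\mathbf{V},\mathbf{V}]]\subseteq[\mathbf{V},\mathbf{V}]$ coming from the symmetric pair structure of Proposition~\ref{pr.triple.1}. The simplicity step is handled the same obvious way, so there is nothing to add.
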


\begin{proof}
We see that
$$
[K,\mathbf{V}]\subset [\mathbf{V},\mathbf{V}], \quad [K,\mathbf{V}]\subset [[\mathbf{V},\mathbf{V}],\mathbf{V}] \subset \mathbf{V},
$$
$$
[K,[\mathbf{V},\mathbf{V}]]\subset[\mathbf{V},[\mathbf{V},\mathbf{V}]] \subset \mathbf{V}, \quad
[K,[\mathbf{V},\mathbf{V}]]\subset [[\mathbf{V},\mathbf{V}],[\mathbf{V},\mathbf{V}]]\subset [\mathbf{V},\mathbf{V}].
$$
Therefore,
$$
[K,\mathbf{V}]\subset\mathbf{V} \cap [\mathbf{V},\mathbf{V}]= K, \quad [K,[\mathbf{V},\mathbf{V}]]\subset\mathbf{V} \cap [\mathbf{V},\mathbf{V}]= K,
\quad [K,\mathbf{L}]\subset K.
$$
Hence, $K$ is an ideal in $\mathbf{L}$.
\end{proof}

Note that the constant $k=k(T)$ in  Proposition~\ref{pr.geodlem1} can be different from zero for pseudo-Riemannian manifolds, see~\cite{Bar},
where it is shown that $k$ depends on the reparametrization of a homogeneous geodesic and it could be $k(T)\neq 0$ for a null initial velocity $T$ of such a geodesic.
\smallskip

Recall that the isometry group of the pseudo $H$-type Lie group $N_{r,s}$ with a given left invariant pseudo Riemannian metric is identified with the group
$G=N_{r,s}\rtimes H$, where $H$ is defined in~\eqref{isotgr1} and its Lie algebra $\mathfrak h$ is given by
~\eqref{isotalg1}.

\subsection{Integral basis, periodicity, and the automorphism groups of pseudo $H$-type Lie algebras}
The pseudo $H$-type Lie algebras are closely related to Clifford algebras $\operatorname{Cl}(\mathbb{R}^{r,s})$ and their representation spaces $\mathfrak v$. Now we describe a convenient basis for pseudo $H$-type Lie algebras. We fix an orthonormal basis $B_{r,s}=\{Z_1,\ldots, Z_r,Z_{r+1},\ldots,Z_{r+s}\}$ for $\mathbb R^{r,s}$, where
\begin{equation}\label{eq:brs}
    \begin{cases}
        &Z_1,\ldots,Z_r\quad \text{are positive, i.e.,}~\langle Z_{i},\,Z_{i}\rangle_{r,s}=1,\ \ i=1,\ldots,r,
        \\
        &Z_{r+1},\ldots,Z_{r+s}~\text{are negative, i.e.,}~\langle Z_{i},\,Z_{i}\rangle_{r,s}=-1,\ \ j=r+1,\ldots,r+s.
    \end{cases}
\end{equation}
Consider a finite subgroup $G(B_{r,s})$ of the Pin group in $\operatorname{Cl}(\mathbb{R}^{r,s})$ generated by the basis $B_{r,s}$:
\begin{equation*}\label{group generated by basis}
        G(B_{r,s})=\big\{\pm {\mathbf 1},\ \pm Z_{i_1}\cdots Z_{i_k}\mid\
      1\leq i_1<\cdots<i_k\leq r+s, \quad k=1,\ldots, r+s\big\}.
\end{equation*}
In the present work we will consider only {\it \bf minimal admissible modules}, which are pairs $(\mathfrak v,\langle \cdot\,,\cdot\rangle_{\mathfrak v})$ of the representation space $\mathfrak v$ of minimal dimension, where an (non-degenerate) admissible scalar product can be constructed. The construction of admissible scalar products and their description can be found in~\cite{Cia} and~\cite[Section 2]{FuMa19}.
\begin{definition}\label{invariant basis}
Fix an orthonormal basis $B_{r,s}$ of $\mathbb R^{r,s}$. An orthonormal basis $\mathfrak{B}(\mathfrak v)$ of a minimal admissible module $\mathfrak v$ is called invariant basis if it is invariant under the action of
$G(B_{r,s})$; that is for any $X_i\in\mathfrak{B}(\mathfrak v)$ and $Z_j\in B_{r,s}$, there exists $X_k\in \mathfrak{B}(\mathfrak v)$ such that $J_{Z_j}(X_i)=\pm X_{k}$.
\end{definition}
According to Definition~\ref{invariant basis} the maps $J_{Z_j}$, $Z_j\in B_{r,s}$ act on an invariant basis $\mathfrak{B}(\mathfrak v)$ by permutations up to the sign $\pm$. To construct an invariant basis for $\mathfrak v$ we consider a maximal subgroup $\mathcal S$ of $G(B_{r,s})$ consisting of elements $p\in G(B_{r,s})$ satisfying
\begin{itemize}
\item[P1.] $p^2=1\in \operatorname{Cl}(\mathbb{R}^{r,s})$;
\item[P2.] if $\langle X,X\rangle_{\mathfrak v}>0$ \big($\langle X,X\rangle_{\mathfrak v}<0$\big) then $\langle J_p(X),J_p(X)\rangle_{\mathfrak v}>0$ \big($\langle J_p(X),J_p(X)\rangle_{\mathfrak v}<0$\big).
\end{itemize}
Elements $p\in\mathcal S$ are called {\it positive involutions}. We denote the generating set for the maximal subgroup $\mathcal S$ by $PI$ and number of elements in $PI$ by $\ell=\ell(r,s)$. The set of generators $PI$ for $\mathcal S$ is not unique, but the number of involutions $\ell=\ell(r,s)$ in $PI$ is unique for the maximal subgroup $\mathcal S$. As an example of a set $PI$ for the purpose of the present work we list the minimal length positive involutions, which can be  classified in the following types.
\begin{equation}\label{eqT1T2}
    \begin{array}{lll}
        &\text{type}\  T_1\,\,
        \begin{cases}
            p=Z_{i_{1}}Z_{i_{2}}Z_{i_{3}}Z_{i_{4}}, ~\text{where all $Z_{i_{k}}$ are positive basis vectors;}\\
            p=Z_{i_{1}}Z_{i_{2}}Z_{i_{3}}Z_{i_{4}}, ~\text{where all $Z_{i_{k}}$ are negative basis vectors;}\\
            p=Z_{i_{1}}Z_{i_{2}}Z_{i_{3}}Z_{i_{4}},
            ~\text{where two $Z_{i_{k}}$ are positive and two $Z_{i_{l}}$}\\
            \text{\qquad\qquad  \qquad\qquad\qquad\qquad\qquad\qquad
                are negative basis vectors;}
        \end{cases}\\%\end{equation}%\begin{equation}
        &\text{type}\ T_2\,\,
        \begin{cases}
            q=Z_{i_{1}}Z_{i_{2}}Z_{i_{3}},~\text{where all $Z_{i_{k}}$ are positive basis vectors;}\\
            q=Z_{i_{1}}Z_{i_{2}}Z_{i_{3}},~\text{where one $Z_{i_{k}}$ is positive and two $Z_{i_{l}}$}\\
            \text{\qquad\qquad \qquad\qquad\qquad \qquad\qquad\qquad
                are negative basis vectors.}
        \end{cases}
    \end{array}
\end{equation}
Here we always assume that $i_k\neq i_m$ for $k\neq m$.
A combinatorial computation shows that generally positive involutions contain either $3 \mod 4$ or $4\mod 4$ basis vectors,~\cite{FuMa23}.
\begin{prop}\label{prop:basis}\cite[Section 3]{FuMa23}
Let $\mathcal{S}$, $PI=\{p_{1},\ldots,p_{\ell}\}$ and $E^{+1}(p_k)=\{X\in\mathfrak v\mid J_{p_k}(X)=X\}$ be given. Then the intersection $E^{+1}_{r,s}=\bigcap_{k=1}^{\ell}E^{+1}(p_k)$ over $p_k\in\mathcal S$
    contains a non-null vector $v$. Moreover, there is a set
    $\Sigma\subset G(B_{r,s})$ such that the family
    $\{J_{\sigma}v\}_{\sigma\in \Sigma}$, $\|v\|^2=1$, is an orthogonal invariant basis for $\mathfrak v$.
\end{prop}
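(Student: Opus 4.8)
The plan is to exploit two structural features of $\mathcal S$: that it is an elementary abelian $2$-group of \emph{symmetric} involutions, and that it is \emph{maximal}. First I would record the algebraic meaning of P1--P2. Since every $p\in\mathcal S$ has $p^2=1$ and $\mathcal S$ is a group, $(pq)^2=1$ for $p,q\in\mathcal S$; combined with the Clifford relation $pq=\pm qp$ this forces $pq=qp$, so $\mathcal S$ is abelian and $J_{p_1},\dots,J_{p_\ell}$ commute. As $J$ is a Clifford representation, $J_{p_k}^2=J_{p_k^2}=\Id$, so each $J_{p_k}$ is an involution, and the transpose rule $J_Z^{\tau}=-J_Z$ applied to a product of length $3$ or $4$ as in \eqref{eqT1T2} gives $J_{p_k}^{\tau}=J_{p_k}$; this symmetry is exactly the content of P2. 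Commuting symmetric involutions are simultaneously diagonalizable with pairwise orthogonal, non-degenerate common eigenspaces, so $\mathfrak v=\bigoplus_{\epsilon\in\{\pm1\}^{\ell}}\mathfrak v_\epsilon$ orthogonally, where $\mathfrak v_\epsilon$ is the simultaneous $\epsilon$-eigenspace of the $J_{p_k}$.

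Next I would prove that the common $+1$-eigenspace $E^{+1}_{r,s}=\mathfrak v_{(+,\dots,+)}$ is nonzero and contains a non-null vector. The orthogonal projector onto it is $\prod_{k}\tfrac12(\Id+J_{p_k})=2^{-\ell}\sum_{S\subseteq\{1,\dots,\ell\}}J_{p_S}$ with $p_S=\prod_{k\in S}p_k$. Every nontrivial $p_S$ anticommutes with some basis $J_{Z_j}$, hence $J_{Z_j}J_{p_S}J_{Z_j}^{-1}=-J_{p_S}$ and $\trace J_{p_S}=0$; only $S=\emptyset$ survives, so $\dim E^{+1}_{r,s}=2^{-\ell}\dim\mathfrak v>0$. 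Because $\mathfrak v$ is non-degenerate and the decomposition into the $\mathfrak v_\epsilon$ is orthogonal, each summand, in particular $E^{+1}_{r,s}$, is non-degenerate and therefore contains a non-null vector; P2 ensures one may be chosen of positive norm, which I normalize to $\langle v,v\rangle_{\mathfrak v}=1$.

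Finally I would build $\Sigma$ as a transversal of the subgroup $\langle\mathcal S,-\mathbf 1\rangle$ in $G(B_{r,s})$ and consider the family $\{J_\sigma v\}_{\sigma\in\Sigma}$. Invariance under $G(B_{r,s})$ up to sign is automatic: for $Z_j\in B_{r,s}$ the element $Z_j\sigma$ lies in a unique coset with representative $\sigma'\in\Sigma$, so $J_{Z_j}(J_\sigma v)=\pm J_{\sigma'}v$, matching Definition~\ref{invariant basis}. Each $J_\sigma$ is a $\pm$-isometry, so $\langle J_\sigma v,J_\sigma v\rangle=\pm 1$ and the vectors are non-null of unit norm. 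For orthogonality, write $\langle J_\sigma v,J_{\sigma'}v\rangle=\pm\langle v,J_\tau v\rangle$ with $\tau=\sigma^{-1}\sigma'\notin\langle\mathcal S,-\mathbf 1\rangle$; the key point is that such a $\tau$ anticommutes with some $p_k$, whence $J_{p_k}(J_\tau v)=-J_\tau(J_{p_k}v)=-J_\tau v$ places $J_\tau v$ in $E^{-1}(p_k)$, which is orthogonal to $v\in E^{+1}(p_k)$, giving $\langle v,J_\tau v\rangle=0$. Orthogonal non-null vectors are linearly independent; that their number equals $\dim\mathfrak v$, so that $\{J_\sigma v\}_{\sigma\in\Sigma}$ is actually a basis, follows from the combinatorial count of cosets in $G(B_{r,s})$ together with the dimension of the minimal module (cf.\ \cite{FuMa23}).

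The main obstacle is the key lemma that every $\tau\notin\langle\mathcal S,-\mathbf 1\rangle$ anticommutes with some generator $p_k$, equivalently that $\mathcal S$ is self-centralizing in $G(B_{r,s})$ up to sign. This is where the maximality of $\mathcal S$ among positive involutions must be converted into a statement about the commutation (extraspecial $2$-group) structure of $G(B_{r,s})$: were $\tau$ to commute with all of $\mathcal S$, one must rule out, using P1--P2 and the classification of minimal-length involutions in~\eqref{eqT1T2}, that $\langle\mathcal S,\tau\rangle$ could be enlarged to a strictly larger group of positive involutions, contradicting maximality. I expect the careful verification of this self-centralizing property, together with the sign bookkeeping needed to pass from the raw trace computation to the stated normalization $\langle v,v\rangle_{\mathfrak v}=1$, to be the technical heart of the argument.
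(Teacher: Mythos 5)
The paper does not actually prove this proposition: it is imported verbatim from \cite[Section 3]{FuMa23}, so there is no internal argument to compare yours against. Judged on its own, your reconstruction follows the standard Furutani--Markina strategy (simultaneous diagonalization of the commuting symmetric involutions $J_{p_k}$, a character/trace count for the common $+1$-eigenspace, and a coset transversal of $\langle\mathcal S,-\mathbf 1\rangle$ to generate the invariant basis), and most of the individual steps are sound. However, there are two genuine gaps.

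First, the trace argument is not correct as stated. You claim every nontrivial $p_S\in\mathcal S$ anticommutes with some $Z_j$ and hence $\trace J_{p_S}=0$, giving $\dim E^{+1}_{r,s}=2^{-\ell}\dim\mathfrak v$. This fails exactly when $p_S$ is proportional to the volume element $\omega$ with $r+s$ odd, since $\omega$ is then central (Proposition~\ref{prop:3page22}). This is not a vacuous case: for $(r,s)=(1,2)$ one has $\ell=1$ and the single generator $Z_1Z_2Z_3$ \emph{is} $\omega$, and similarly for $(3,0)$. There $J_{\omega}$ acts as $\pm\Id$ on an isotypic module, so $\trace J_{p_S}=\pm\dim\mathfrak v$; with the wrong sign convention in $PI$ the common eigenspace is zero and the statement itself would fail. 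So the existence of a non-null $v$ genuinely depends on a correct choice of signs of the generators, which your argument does not address. Second, the ``self-centralizing'' lemma that every $\tau\notin\langle\mathcal S,-\mathbf 1\rangle$ anticommutes with some $p_k$ --- which you correctly identify as the technical heart --- is not established by the maximality argument you sketch: an element $\tau$ centralizing $\mathcal S$ need not be adjoinable to $\mathcal S$, because $\tau^2$ may equal $-\mathbf 1$ or $\tau$ may violate P2, and in those cases maximality gives no contradiction. For such $\tau$ you would instead need a direct argument (e.g.\ that $J_\tau$ is skew-symmetric for the relevant lengths, forcing $\langle v,J_\tau v\rangle=0$, or a separate analysis of central $\tau$), and the count $|\Sigma|=2^{r+s-\ell}=\dim\mathfrak v$ likewise requires knowing that no such $\tau$ fixes $v$ up to sign. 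Both points are fixable along the lines of \cite{FuMa23}, but as written the proof is incomplete precisely at the places where the case analysis on $(r,s)$ and on the parity and sign of the monomials matters.
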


Consider the following example of pseudo $H$-type Lie algebras $\mathfrak n_{\mu,\nu}$, $(\mu,\nu)\in\{(8,0),(0,8),(4,4)\}$ with the minimal admissible module $\mathfrak v_{\mu,\nu}$. Let us choose the orthonormal basis $B_{\mu,\nu}=\{\zeta_k\}_{k=1}^{8}$ on the center of $\mathfrak n_{\mu,\nu}$ such that
\begin{equation}\label{eq:zeta-basis}
    \begin{array}{lllll}
&\langle \zeta_k,\zeta_k\rangle_{8,0}=-\langle \zeta_k,\zeta_k\rangle_{0,8}=1,\quad &k=1,\ldots,8,
\\
&\langle \zeta_k,\zeta_k\rangle_{4,4}=-\langle \zeta_{k+4},\zeta_{k+4}\rangle_{4,4}=1,\quad &k=1,\ldots,4.
\end{array}
\end{equation}

The set $PI_{\mu,\nu}$ generating the maximal subgroup $\mathcal S\subset G(B_{\mu,\nu})$ of positive involutions  consists of four elements and it is given by
\begin{equation}\label{mcpi2}
p_{1}=\zeta_1\zeta_2\zeta_3\zeta_4,\quad p_{2}=\zeta_1\zeta_2\zeta_5\zeta_6,\quad
p_{3}=\zeta_1\zeta_2\zeta_7\zeta_8,\quad p_{4}=\zeta_1\zeta_3\zeta_5\zeta_7.
\end{equation}
%{\color{magenta} It would be better to recall what are $\zeta_i$. Is it any (or some special) basis in the center? Its elements are considered as operators via the map $J$?}

The dimension of minimal admissible modules $\mathfrak v_{\mu,\nu}$ equals 16 and the modules
are decomposed into $16$ one dimensional common eigenspaces of
four involutions $p_{k}$, $k=1,2,3,4$ under the action of the maps $J_{p_k}$. We denote
$$
E^{+1}_{\mu,\nu}=\{X\in \mathfrak v_{\mu,\nu}:\ J_{p_k}(X)=X,\ k=1,2,3,4\}.
$$
Let $v\in E^{+1}_{\mu,\nu}$ be such that $\langle v, v\rangle_{\mathfrak v_{\mu,\nu}}=1$. Then other common
eigenspaces
are spanned by $J_{\zeta_{i}}(v)$, $i=1,\ldots, 8$,
and $J_{\zeta_1}J_{\zeta_j}(v)$, $j=2,\ldots,8$. Hence we have
\begin{equation}\label{eq:decom08}
\mathfrak v_{\mu,\nu}=E^{+1}_{\mu,\nu}\bigoplus_{i=1}^{8}\,J_{\zeta_i}(E^{+1}_{\mu,\nu})
    \bigoplus_{j=2}^{8} J_{\zeta_1}J_{\zeta_{j}}(E^{+1}_{\mu,\nu}).
\end{equation}
The basis
\begin{equation}\label{eq:basis-munu}
\begin{array}{lllllll}
&v_1=v,\quad &v_2=J_{\zeta_{1}}v,\quad &v_3=J_{\zeta_{2}}v,\quad &v_4=J_{\zeta_{3}}v,
\\
&v_5=J_{\zeta_{4}}v,\quad & v_6=J_{\zeta_{5}}v,\quad &v_7=J_{\zeta_{6}}v,\quad &v_8=J_{\zeta_{7}}v,
\\
&v_9=J_{\zeta_{8}}v,\quad & v_{10}=J_{\zeta_{1}}J_{\zeta_{2}}v,\quad &v_{11}=J_{\zeta_{1}}J_{\zeta_{3}}v,\quad &v_{12}=J_{\zeta_{1}}J_{\zeta_{4}}v,
\\
&v_{13}=J_{\zeta_{1}}J_{\zeta_{5}}v,\quad &v_{14}=J_{\zeta_{1}}J_{\zeta_{6}}v,\quad &v_{15}=J_{\zeta_{1}}J_{\zeta_{7}}v,\quad
&v_{16}=J_{\zeta_{1}}J_{\zeta_{8}}v
\end{array}
\end{equation}
is an orthonormal invariant basis for $\mathfrak v_{\mu,\nu}$. The set $\Sigma$ mentioned in Proposition~\ref{prop:basis} is the following
$$
\Sigma=\{\zeta_k,\ k=1,\ldots,8,\ \ \zeta_1\zeta_i,\ i=2,\ldots,8\}.
$$

It is well known that the Clifford algebras $\operatorname{Cl}(\mathbb{R}^{r,s})$ admit the Atiyah--Boot periodicity~\cite{ABS64}:
$$
\operatorname{Cl}(\mathbb{R}^{r+8,s})=\operatorname{Cl}(\mathbb{R}^{r,s})\otimes\operatorname{Cl}(\mathbb{R}^{8,0}),\quad
\operatorname{Cl}(\mathbb{R}^{r,s+8})=\operatorname{Cl}(\mathbb{R}^{r,s})\otimes\operatorname{Cl}(\mathbb{R}^{0,8}).
$$
$$
\operatorname{Cl}(\mathbb{R}^{r+4,s+4})=\operatorname{Cl}(\mathbb{R}^{r,s})\otimes\operatorname{Cl}(\mathbb{R}^{4,4}).
$$
This periodicity has the following affect on the structure of the $H$-type Lie algebras. Let $\mathfrak v_{r,s}$ be a minimal admissible module in $\mathfrak n_{r,s}=\mathfrak v_{r,s}\oplus \mathfrak z$ and $\ell(r,s)$ the number of positive involutions in the generating set for the maximal subgroup $\mathcal S\subset G(B_{r,s})$. Then
$$
\dim(\mathfrak v_{r+\mu,s+\nu})=16\dim(\mathfrak v_{r,s}),\quad (\mu,\nu)\in\{(8,0),(0,8),(4,4)\},
$$
$$
\ell(r+\mu,s+\nu) = \ell(r,s)+\ell(\mu,\nu)=\ell(r,s)+4.
$$
The tensor product
\begin{equation}\label{tensor product rep to direct sum}
\mathfrak v_{r,s}\otimes \mathfrak v_{\mu,\nu}
    =(\mathfrak v_{r,s}\otimes E_{\mu,\nu}^{+1})\bigoplus_{i=1}^{8}\big(\mathfrak v_{r,s}\otimes J_{\zeta_i}(E_{\mu,\nu}^{+1})\big)\bigoplus_{j=2}^{8}\big(\mathfrak v_{r,s}\otimes J_{\zeta_1}J_{\zeta_{j}}(E_{\mu,\nu}^{+1})\big)
\end{equation}
is a minimal admissible module $\mathfrak v_{r+\mu,s+\nu}$ of the Clifford algebra $\operatorname{Cl}(\mathbb{R}^{r+\mu,s+\nu})$.

Conversely, if $\mathfrak v_{r+\mu,s+\nu}$ is a minimal admissible module of
$\operatorname{Cl}(\mathbb{R}^{r+\mu,s+\nu})$, then the common 1-eigenspace $E^{+1}\subset \mathfrak v_{r+\mu,s+\nu}$ of the involutions
$J_{p_{k}}$, $k=1,2,3,4$ from~\eqref{mcpi2} can be considered as a minimal admissible module $\mathfrak v_{r,s}$ of the algebra
$\operatorname{Cl}(\mathbb{R}^{r,s})$. The action of the Clifford algebra $\operatorname{Cl}(\mathbb{R}^{r,s})$ on
$E^{+1}$ is the restricted action of $\operatorname{Cl}(\mathbb{R}^{r+\mu,s+\nu})$ obtained by the natural inclusion
$\operatorname{Cl}(\mathbb{R}^{r,s})\subset \operatorname{Cl}(\mathbb{R}^{r+\mu,s+\nu})$.

The group of automorphisms and the isometries of the $H$-type Lie algebras also has periodic structure, see~\cite{Rie82,Saal96,FuMa21}.

\section{Pseudo-Riemannian $H$-type nilmanifolds $N_{r,s}$ with $1\leq r+s\leq 3$}\label{sec.example}

To find out whether a pseudo $H$-type Lie group $N_{r,s}$, $1\leq r+s\leq 3$ is geodesic orbit we apply Propositions~\ref{gonil1n.2} to its Lie
algebra $\mathfrak{n}_{r,s}=\mathfrak{z}\oplus \mathfrak{v}$. We choose an orthonormal basis $Z_1,Z_2,\dots,Z_{r+s}$ for $\mathfrak z$ and compute the operators $J_{Z_k}$, $1\leq k \leq r+s$, which constitute the basis for $\mathbf{V}=J(\mathfrak z)\subset \mathfrak{so}(l,l)$.
Then the products $J_{Z_k}J_{Z_l}$,  $1\leq k <l \leq r+s$, form a basis for the subalgebra $[\mathbf{V},\mathbf{V}]$. We find also a basis for the centralizer $\mathbf{Z}$ of $\mathbf{V}$ in $\mathfrak{so}(l,l)$ and obtain the base for the normalizer $\mathbf{N}= [\mathbf{V},\mathbf{V}]\oplus\mathbf{Z}$ of $\mathbf{V}$ in $\mathfrak{so}(l,l)$.
After this auxiliary computation we check the {\it transitive normalizer condition} for $\mathbf{V}$, i.e.
for any $X\in \mathfrak{v}$ and any $Z\in \mathbf{V}$, we are looking for $B\in \mathbf{N}$ such that $[B,Z]=0$ and $B(X)=Z(X)$.

Note that up to similarity and the action of the isotropy subgroup, we have only three classes in the center $\mathfrak{z}$ of $\mathfrak{n}$.
They are represented by some  $Z_1$ (a positive vector with $\langle Z_1,Z_1\rangle >0$), $Z_2$ (a negative vector with $\langle Z_2,Z_2\rangle <0$),
and $Z_3$ (a null vector $\langle Z_3,Z_3\rangle =0$).

Indeed, the group $O(r,s)$ with the Lie algebra $\mathfrak{so}(r,s)$, that is a linear span of the operators $[J_{Z_i},J_{Z_j}]$, $1\leq i <j \leq r+s$,
acts naturally by automorphisms on the centre $\mathfrak{z}$. It is well known that $O(r,s)$ acts transitively on every hyper-quadric
$Q(r)=\{Z \in \mathfrak{z} \,|\,\langle Z, Z\rangle_{r,s}=\rho\}$ with $\rho \neq 0$, see e.g.~\cite[page 239]{ONeill} or~\cite[Theorem 2.4.4]{Wolf2011}.

Let $\mathfrak{z}= \mathfrak{z}_p \oplus \mathfrak{z}_n$ be an orthogonal decomposition, where the restriction of the scalar product on $\mathfrak{z}_p$ and $\mathfrak{z}_n$ are positive and negative definite, respectively. Let us suppose that $Z,\overline{Z} \in \mathfrak{z}$ and $\langle Z, Z\rangle_{r,s}=\langle \overline{Z}, \overline{Z} \rangle_{r,s}=0$. If
$\langle Z\vert_{\mathfrak z_p}, Z\vert_{\mathfrak z_p}\rangle_{r,s}=\langle \overline{Z}\vert_{\mathfrak z_p}, \overline{Z}\vert_{\mathfrak z_p} \rangle_{r,s}=\rho\neq 0$ or, equivalently,
$\langle Z\vert_{\mathfrak z_n}, Z\vert_{\mathfrak z_n}\rangle_{r,s}=\langle \overline{Z}\vert_{\mathfrak z_n}, \overline{Z}\vert_{\mathfrak z_n} \rangle_{r,s}=-\rho\neq 0$, where $Z\vert_{\mathfrak z_p}$ and $Z\vert_{\mathfrak z_p}$ are the components
of vector $Z$ in $\mathfrak{z}_p$ and $\mathfrak{z}_n$,
then it easy to see  that there is $Q\in O(r,0)\times O(0,s) \subset O(r,s)$ such that $Q(Z\vert_{\mathfrak z_p})=\overline{Z}\vert_{\mathfrak z_p}$ and $Q(Z\vert_{\mathfrak z_n})=\overline{Z}\vert_{\mathfrak z_n}$.

Hence, up to a similarity, it suffices to consider only one point in every of the following hyperquadrics: $Q(1)$, $Q(-1)$, $Q(0)$ (non-trivial in the last case).

Suppose that we fix $Z \in \mathbf{V}$, then we do not need to check all $X \in \mathfrak{v}$. Indeed,
if $Q \in \exp(\mathbf{N})$ (or, more generally, $Q$ is an isometric automorphism of the Lie algebra $\mathfrak{n}$) such that $[Q,Z]=0$,
then the equality $B(X)=Z(X)$ is equivalent to the following one: $QBQ^{-1}(QX)=QZQ^{-1}(QX)=Z(QX)$.
Note that $[B,Z]=0$ implies $[QBQ^{-1},Z]=0$, therefore, we can find $B \in \mathbf{N}$ such that $[B,Z]=0$ and $B(X)=Z(X)$ if and only if we can find
$\widetilde{B} \in \mathbf{N}$ such that $[B,Z]=0$ and $B(QX)=Z(QX)$.

Consider a Lie  subalgebra $\mathbf{N}^Z=\left\{B \in \mathbf{N}\,|\, [B,Z]=0\right\}$ of $\mathbf N$. Any orbit $\operatorname{Orb}$
in $\mathfrak{v}$
under the action of  $\mathbf{N}^Z$ consists of equivalent elements in the sense that if $X_1,X_2 \in \operatorname{Orb}$, then there is $B_1\in \mathbf{N}$ such that $[B_1,Z]=0$ and $B_1(X_1)=Z(X_1)$ if and only if there is $B_2\in \mathbf{N}$ such that $[B_2,Z]=0$ and $B_2(X_2)=Z(X_2)$.

In what follows, we omit all classical cases with the signature $(r,0)$.

\subsection{Pseudo-Riemannian $H$-type nilmanifold $N_{0,1}$}
We denote by $N_{0,1}$ the $H$-type nilmanifold with the Lie algebra isometric to
$\mathbb{R}^{0,1}\oplus\mathbb{R}^{n,n}$. This will be the only example of $H$-type Lie algebra having the module $(\mathfrak v, \langle.\,,.\rangle_{n,n})\cong \mathbb{R}^{n,n}$ which is not minimal dimensional, but rather the direct sum of $n$ minimal dimensional modules isometric to $\mathbb R^{1,1}$.
We choose an orthonormal basis $\{X_1,\ldots X_n,Y_1\ldots,Y_n,Z\}$ satisfying
$$
\langle X_i,X_i\rangle_{n,n}=-\langle Y_i,Y_i\rangle_{n,n}=1,\quad \langle Z,Z\rangle_{0,1}=-1.
$$
%Recall that $A\in\mathfrak{so}(n,n)$ if
%$$
%\eta A^T\eta=-A,\quad\text{where}\quad\eta=\begin{pmatrix}\Id_n&0\\0&-\Id_n\end{pmatrix}.
%$$
Then
$$
J_Z=\begin{pmatrix}0&\Id_n\\\Id_n&0\end{pmatrix}\in \mathfrak{so}(n,n),\quad J_Z^2=-\langle Z,Z\rangle_{1,0} \Id_{2n}=\Id_{2n}.
$$
It is clear that $J_Z$
spans the one dimensional subspace $\mathbf{V}=J(\mathfrak{z})\subset \mathfrak{so}(n,n)$. The space $\mathbf{V}$ is an abelian subalgebra of
$\mathfrak{so}(n,n)$ and
$$
[J(Z_1), J(Z_2)] = [J(aZ), J(bZ)] =ab[J(Z), J(Z)] =0=J\big(\tau_{Z_1}(Z_2)\big),\quad a,b\in\mathbb{R},\ a\neq 0,\ b\neq 0,
$$
for the operator $\mathfrak{so}(\mathfrak{z})\ni\tau_{Z_1}\equiv 0$ for any $Z_1\in\mathfrak{z}$.
Therefore, we see that $N_{0,1}$ is naturally reductive by Theorem~\ref{th:Ovanto}, hence it is a geodesic orbit nilmanifold.

\subsection{Pseudo-Riemannian $H$-type nilmanifold $N_{1,1}$}

The pseudo-Riemannian $H$-type nilmanifold $N_{1,1}$ is the $H$-type Lie group of dimension $6$ with the Lie algebra isometric to $\mathbb{R}^{1,1}\oplus\mathbb{R}^{2,2}$ and satisfying
$$
[X_1,X_2]=[X_3,X_4]=Z_1,\quad [X_1,X_3]=[X_2,X_4]=-Z_2
$$
with respect to an orthonormal basis $\{X_1,\ldots, X_4,Z_1,Z_2\}$ such that
$$
\langle X_k,X_k\rangle_{2,2}=-\langle X_i,X_i\rangle_{2,2}=1,\quad k=1,2,\ \ i=3,4,\quad
\langle Z_1,Z_1\rangle_{1,1}=-\langle Z_2,Z_2\rangle_{1,1}=1.
$$
The maps
$$
J_{Z_1}=\begin{pmatrix}
0&-1&0&0
\\
1&0&0&0
\\
0&0&0&-1
\\
0&0&1&0
\end{pmatrix},\qquad
J_{Z_2}=
\begin{pmatrix}
0&0&1&0
\\
0&0&0&-1
\\
1&0&0&0
\\
0&-1&0&0
\end{pmatrix}
$$
span a 2-dimensional subspace $\mathbf{V}\subset\mathfrak{so}(2,2)$.
We calculate
$$
[aJ_{Z_1}+bJ_{Z_2}, cJ_{Z_1}+dJ_{Z_2}]=(ad-bc)[J_{Z_1},J_{Z_2}]=(ad-bc)
\begin{pmatrix}
0&0&0&2
\\
0&0&2&0
\\
0&2&0&0
\\
2&0&0&0
\end{pmatrix}
\notin \operatorname{span}\{J_{Z_1},J_{Z_2}\}.
$$
Thus the vector space $\mathbf{V}$ is not a Lie subalgebra of $\mathfrak{so}(2,2)$ and therefore $N_{1,1}$ is not a naturally reductive pseudo-Riemannian nilmanifold.
It is easy to check that the normalizer $\mathbf{N}=\mathbf{Z} \oplus [\mathbf{V},\mathbf{V}]$ consists of matrices of the following type:
\begin{equation}\label{eq:BB}
B=\begin{pmatrix}
0&-b_3&b_2&2a-b_1
\\
b_3&0&2a+b_1&b_2
\\
b_2&2a+b_1&0&b_3
\\
2a-b_1&b_2&-b_3&0
\end{pmatrix}, \qquad  a,b_1,b_2,b_3 \in \mathbb{R}.
\end{equation}
Here, the variable $a$ corresponds to $[\mathbf{V},\mathbf{V}]$, whereas $b_1,b_2,b_3$ parameterize $\mathbf{Z}$.

Let us consider
$Y=X_1+X_2+X_3+X_4\in \mathfrak{v}$ and $Z=J_{Z_1}\in \mathbf{V}$. Assume that $N_{1,1}$ is geodesic orbit. Then
there is $B \in \mathbf{N}$ such that $[B,Z]=[B,J_{Z_1}]=0$ and $B(Y)=Z(Y)$. Since $\langle Y,Y \rangle+\langle Z_1,Z_1 \rangle=\langle Z_1,Z_1 \rangle \neq 0$, then we set $k=0$, see Proposition~\ref{gonil1n.2}).
The first condition implies $a=0$.
The second condition is equivalent to the linear system of equations:
$$
\left(\begin{array}{rrrr}
2&-1&1&-1
\\
2&1&1&1
\\
2&1&1&1
\\
2&-1&1&-1
\\
\end{array}\right)
\left(\begin{array}{c}
a
\\
b_1
\\
b_2
\\
b_3
\end{array}\right)=
\left(\begin{array}{r}
-1
\\
1
\\
-1
\\
1
\end{array}\right).
$$
Since the system has no solution (it suffices to compare the second and the third equations in the system), $N_{1,1}$ is not geodesic orbit.

\begin{remark}\label{re.N_1.1}
The pseudo-Riemannian $H$-type nilmanifold $N_{1,1}$ was studied in \cite{Bar}. It was proved in~\cite[Theorem 4.4]{Bar} that
$N_{1,1}$ is an almost geodesic orbit pseudo-Riemannian space
which is not geodesic orbit; and it admits null homogeneous
geodesics with non-zero parameter~$k$.

We are going to reproduce this result in our notation.
If we consider a vector $Z=\alpha_1 Z_1+\alpha_2 Z_2 \in \mathfrak{z}$ and a vector $Y=y_1X_1+y_2X_2+y_3X_3+y_4X_4 \in \mathfrak{v}$ with
$\alpha_j, y_k \in \mathbb{R}$, such that $y_1^2 + y_2^2 - y_3^2 - y_4^2 \neq 0$, then we easily
can find $B \in \mathbf{N}$ such that $[B,J_Z]=0$ and $B(Y)=Z(Y)$. Indeed, it suffices to set $k=0$ and  in the matrix $B$ in~\eqref{eq:BB} to choose
$a=0$, and
$$b_1 = 2 \cdot \frac{\alpha(y_1y_3+y_2y_4) - \beta(y_1y_2+y_3y_4)}{y_1^2 + y_2^2 - y_3^2 - y_4^2},\quad
b_2 = \frac{2\alpha(y_1y_4 - y_2y_3) - \beta(y_1^2 -y_2^2 -y_3^2 +y_4^2)}{y_1^2 + y_2^2 - y_3^2 - y_4^2},
$$
$$
b_3 = \frac{2\beta (y_1y_4 + y_2y_3)-\alpha(y_1^2 + y_2^2 + y_3^2 + y_4^2)}{y_1^2 + y_2^2 - y_3^2 - y_4^2}.
$$
Therefore, we conclude that $N_{1,1}$ is almost geodesic orbit, since a unique restriction for the existence of a solution to $[B,J_Z]=0$ and $B(Y)=Z(Y)$ is $y_1^2 + y_2^2 - y_3^2 - y_4^2 \neq 0$.

On the other hand, let us take a null initial vector, by setting $\alpha_1=-\alpha_2 = y_1=y_3=1$, and $y_2=y_4 = t$ for any $t\in\mathbb R$, and set $k = -2 \cdot \frac{t - 1}{t + 1}$, $t\neq-1$. It is not difficult to see that the matrix $B$ in~\eqref{eq:BB}, where
$$a = \frac{t - 1}{2(t + 1)},\quad
b_1=s,\quad b_2 = 3\cdot \frac{t - 1}{t + 1},\quad
b_3=-s,\quad s\in\mathbb R
$$
solves the equations
$[B,J_Z]+kJ_Z=0$ and $B(Y)+kY=Z(Y)$ for $Z=Z_1-Z_2$, $Y=Y_1+tY_2+Y_3+tY_4$. Thus, if $|t|\neq 1$, we get a homogeneous geodesic with $k \neq 0$.
\end{remark}

\subsection{Pseudo-Riemannian $H$-type nilmanifold $N_{0,2}$}

The pseudo-Riemannian $H$-type nilmanifold $N_{0,2}$ is the pseudo $H$-type Lie group of dimension $6$ whose Lie algebra carries a scalar product isometric to $\mathbb{R}^{0,2}\times\mathbb{R}^{2,2}$. The Lie algebra has the commutation relations
$$
[X_1,X_3]=[X_2,X_4]=-Z_1,\quad [X_1,X_4]=-[X_2,X_3]=-Z_2
$$
with respect to an orthonormal basis $\{X_1,\ldots, X_4,Z_1,Z_2\}$
such that
$$
\langle X_k,X_k\rangle_{2,2}=-\langle X_i,X_i\rangle_{2,2}=1,\quad k=1,2,\ \ i=3,4,\quad
\langle Z_1,Z_1\rangle_{0,2}=\langle Z_2,_2\rangle_{0,2}=-1.
$$
We calculate
$$
J_{Z_1}=\begin{pmatrix}
0&0&1&0
\\
0&0&0&1
\\
1&0&0&0
\\
0&1&0&0
\end{pmatrix}
\in\mathfrak{so}(2,2)\quad \mbox{ and } \quad
J_{Z_2}=
\begin{pmatrix}
0&0&0&1
\\
0&0&-1&0
\\
0&-1&0&0
\\
1&0&0&0
\end{pmatrix}
\in\mathfrak{so}(2,2),
$$
$$
[aJ_{Z_1}+bJ_{Z_2}, cJ_{Z_1}+dJ_{Z_2}]=(ad-bc)
\begin{pmatrix}
0&-2&0&0
\\
2&0&0&0
\\
0&0&0&2
\\
0&0&-2&0
\end{pmatrix}
\notin \operatorname{span}\{J_{Z_1},J_{Z_2}\}.
$$
Thus $N_{0,2}$ is not a naturally reductive manifold.
The normalizer $\mathbf{N}=\mathbf{Z} \oplus [\mathbf{V},\mathbf{V}]$ consists of:
$$
\begin{pmatrix}
0&-2a+b_3&-b_2&b_1
\\
2a-b_3&0&b_1&b_2
\\
-b_2&b_1&0&2a+b_3
\\
b_1&b_2&-2a-b_3&0
\end{pmatrix}, \qquad  a,b_1,b_2,b_3 \in \mathbb{R}.
$$
Here, the variable $a$ corresponds to $[\mathbf{V},\mathbf{V}]$, whereas $b_1,b_2,b_3$ parameterize $\mathbf{Z}$.

Assume $N_{0,2}$ is geodesic orbit and pick up
$Y=y_1X_1+y_2X_2+y_3X_3+y_4X_4\in \mathfrak{v}$. Moreover, without loss of generality,
we take $Z= J_{Z_1}\in \mathbf{V}$.
The condition $[B,Z]=0$ for some $B \in \mathbf{N}$ implies $a =0$.
Then the condition $B(Y)=Z(Y)$ is equivalent to the linear system of equations:
$$
\left(\begin{array}{rrr}
y_2&-y_1&y_4
\\
y_1&y_2&-y_3
\\
y_4&-y_3&y_2
\\
y_3&y_4&-y_1
\\
\end{array}\right)
\left(\begin{array}{c}
b_1
\\
b_2
\\
b_3
\end{array}\right)=
\left(\begin{array}{r}
y_1
\\
y_2
\\
y_3
\\
y_4
\end{array}\right).
$$
If $y_1^2 + y_2^2 \neq y_3^2 + y_4^2$, then we obtain the following solution of this system:
$$
b_1 = 2\frac{y_1y_2 - y_3y_4}{y_1^2 + y_2^2 - y_3^2 - y_4^2}, \quad
b_2 = -\frac{y_1^2 - y_2^2 - y_3^2 + y_4^2}{y_1^2 + y_2^2 - y_3^2 - y_4^2},\quad
b_3 = -2\frac{y_1y_4 - y_2y_3}{y_1^2 + y_2^2 - y_3^2 - y_4^2}.
$$
On the other hand, if $(y_1,y_2,y_3,y_4)=(3,4,5,0)$, then we obtain the system
$$
\left(\begin{array}{rrr}
4&-3&0
\\
3&4&-5
\\
0&-5&4
\\
5&0&-3
\\
\end{array}\right)
\left(\begin{array}{c}
b_1
\\
b_2
\\
b_3
\end{array}\right)=
\left(\begin{array}{r}
3
\\
4
\\
5
\\
0
\end{array}\right),
$$
that have no solution. The rank of the basic matrix of the system is 2, and the rank of the extended matrix of the system is 3.
In this case $k=0$ since we consider a non-null vector.
We conclude that $N_{0,2}$ is not geodesic orbit.

\subsection{Pseudo-Riemannian $H$-type nilmanifold $N_{1,2}$}

The $H$-type nilmanifold $N_{1,2}$ is the $H$-type group of dimension $7$, the Lie algebra is isometric to $\mathbb{R}^{1,2}\times\mathbb{R}^{2,2}$ and has non-vanishing commutation relations
$$
[X_1,X_2]=-[X_3,X_4]=Z_1,\quad [X_1,X_3]=-[X_2,X_4]=Z_2,\quad [X_1,X_4]=[X_2,X_3]=Z_3
$$
in an orthonormal basis satisfying
$$
\langle X_k,X_k\rangle_{2,2}=-\langle X_i,X_i\rangle_{2,2}=1,\quad k=1,2,\ \ i=3,4,\quad
$$
$$
\langle Z_1,Z_1\rangle_{1,2}=-\langle Z_2,Z_2\rangle_{1,2}=-\langle Z_3,Z_3\rangle_{1,2}=1.
$$
Then, in this basis we obtain the matrix representations
$$
J_{Z_1}=
\begin{pmatrix}
0&-1&0&0
\\
1&0&0&0
\\
0&0&0&-1
\\
0&0&1&0
\end{pmatrix},\quad
J_{Z_2}=
\begin{pmatrix}
0&0&1&0
\\
0&0&0&-1
\\
1&0&0&0
\\
0&-1&0&0
\end{pmatrix},
\quad
J_{Z_3}=
\begin{pmatrix}
0&0&0&1
\\
0&0&1&0
\\
0&1&0&0
\\
1&0&0&0
\end{pmatrix}.
$$
Due to the commutation relations
$$
[J_{Z_1},J_{Z_2}]=2J_{Z_3}, \quad [J_{Z_1},J_{Z_3}]=-2J_{Z_2},\quad [J_{Z_2},J_{Z_3}]=-2J_{Z_1},
$$
we conclude that $\mathbf {V}=\operatorname{span}\{J_{Z_1},J_{Z_2},J_{Z_3}\}\subset \mathfrak{so}(2,2)$ is a Lie subalgebra by Theorem~\ref{th:Ovanto}.
Moreover, the operators $\tau_{Z_i}\in\mathfrak{so}(\mathfrak{z})$ for $i,j=1,2,3$ (recall that $[J(Z_i), J(Z_j)] = J\big(\tau_{Z_i}(Z_j)\big)$,
are given by:
$$
\tau_{Z_1}=
2\begin{pmatrix}
0&0&0
\\
0&0&-1
\\
0&1&0
\end{pmatrix},\quad
\tau_{Z_2}=
2\begin{pmatrix}
0&0&-1
\\
0&0&0
\\
-1&0&0
\end{pmatrix},\quad
\tau_{Z_3}=
2\begin{pmatrix}
0&1&0
\\
1&0&0
\\
0&0&0
\end{pmatrix}.
$$
We conclude that $H$-type nilmanifold $N_{1,2}$ is naturally reductive, hence, geodesic orbit.

\subsection{Pseudo-Riemannian $H$-type nilmanifold $N_{2,1}$}

The $H$-type nilmanifold $N_{2,1}$ is the $H$-type group of dimension $11$ with a Lie algebra $\mathfrak n_{2,1}=\mathfrak z\oplus\mathfrak v$ isometric to
 $\mathbb R^{2,1}\times\mathbb R^{4,4}$.
We define an orthonormal basis $\{V_1,\ldots, V_8,Z_1,Z_2,Z_3\}$ by taking $v\in \mathbb R^{4,4}$ with $\|v\|^2=1$ and setting
$$
\begin{array}{lllllll}
&V_1=v,\quad &V_2=J_{Z_1}v,\quad &V_3=J_{Z_2}v,\quad &V_4=J_{Z_1}J_{Z_2}v,\quad
\\
&V_5=J_{Z_3}v,\quad &V_6=J_{Z_1}J_{Z_3}v,\quad &V_7=J_{Z_2}J_{Z_3}v,\quad &V_8=J_{Z_1}J_{Z_2}J_{Z_3}v.
\end{array}
$$
Writing the operators
$$
J_{Z_1}=\left(\begin{smallmatrix}
    0&-1&0&0&0&0&0&0
    \\
    1&0&0&0&0&0&0&0
    \\
    0&0&0&-1&0&0&0&0
    \\
    0&0&1&0&0&0&0&0
    \\
    0&0&0&0&0&-1&0&0
    \\
    0&0&0&0&1&0&0&0
    \\
    0&0&0&0&0&0&0&-1
    \\
    0&0&0&0&0&0&1&0
\end{smallmatrix}\right),\quad
J_{Z_2}=
\left(\begin{smallmatrix}
    0&0&-1&0&0&0&0&0
    \\
    0&0&0&1&0&0&0&0
    \\
    1&0&0&0&0&0&0&0
    \\
    0&-1&0&0&0&0&0&0
    \\
    0&0&0&0&0&0&-1&0
    \\
    0&0&0&0&0&0&0&1
    \\
    0&0&0&0&1&0&0&0
    \\
    0&0&0&0&0&-1&0&0
\end{smallmatrix}\right),
\quad
J_{Z_3}=
\left(\begin{smallmatrix}
    0&0&0&0&1&0&0&0
    \\
    0&0&0&0&0&-1&0&0
    \\
    0&0&0&0&0&0&-1&0
    \\
    0&0&0&0&0&0&0&1
    \\
    1&0&0&0&0&0&0&0
    \\
    0&-1&0&0&0&0&0&0
    \\
    0&0&-1&0&0&0&0&0
    \\
    0&0&0&1&0&0&0&0
\end{smallmatrix}\right)
$$
and calculating their commutators, we see that
$\mathbf{V}=\spann\{J_{Z_1},J_{Z_2},J_{Z_3}\}\subset \mathfrak{so}(4,4)$ is not a Lie subalgebra of $\mathfrak{so}(4,4)$ and, therefore,
$N_{2,1}$ is not a naturally reductive manifold.

The normalizer $\mathbf{N}=\mathbf{Z} \oplus [\mathbf{V},\mathbf{V}]$ consists of matrices of the following type:
{\small
$$
\left(\begin{matrix}
0&b_6&-b_5&-2a_1+b_4&b_3&2a_2+b_2&2a_3-b_1&0
\\
-b_6&0&-2a_1+b_4&-b_5&2a_2-b_2&b_3&0&2a_3-b_1
\\
b_5&2a_1-b_4&0&-b_6&2a_3+b_1&0&b_3&-2a_2-b_2
\\
2a_1+b_4&b_5&b_6&0&0&2a_3+b_1&-2a_2+b_2&b_3
\\
b_3&2a_2-b_2&2a_3+b_1&0&0&-b_6&b_5&-2a_1-b_4
\\
2a_2+b_2&b_3&0&2a_3+b_1&b_6&0&-2a_1+b_4&2b_5
\\
2a_3-b_1&0&b_3&-2a_2+b_2&-b_5&2a_1-b_4&0&b_6
\\
0&2a_3-b_1&-2a_2-b_2&b_3&2a_1+b_4&-b_5&-b_6&0
\end{matrix}\right),
$$}
Here, the variables $a_1,a_2,a_3$ correspond to $[\mathbf{V},\mathbf{V}]$, whereas $b_1,b_2,b_3,b_4,b_5,b_6$ parameterize $\mathbf{Z}$.

Assume that $N_{2,1}$ is geodesic orbit and choose $Y=V_1+V_5\in \mathfrak{v}$ and $Z= J_{Z_1}\in \mathbf{V}$.
In this case $k=0$ since we consider a non-null vector.
If $B \in \mathbf{N}$ then $[B,Z]=0$ implies $a_1=a_2=0$. The second condition $B(Y)=Z(Y)$ gives
$$
(b_3,-b_2-b_6,b_1+b_5+2a_3,b_4,b_2,b_2+b_6,-b_1-b_5+2a_3,b_4)^t=(0,1,0,0,0,1,0,0,0)^t,
$$
which leads to a contradiction: it suffices to compare the second and  and the sixth  entries in these columns.
Therefore, $N_{2,1}$ is not geodesic orbit.

\subsection{Pseudo-Riemannian $H$-type nilmanifold $N_{0,3}$}

The $H$-type Lie algebra $\mathfrak n_{0,3}$ isometric to $\mathbb{R}^{0,3}\times\mathbb{R}^{4,4}$
and has the following non-vanishing commutation relations
$$
[V_1,V_5]=[V_2,V_6]=[V_3,V_7]=[V_4,V_8]=Z_1,\quad
[V_1,V_6]=-[V_2,V_5]=-[V_3,V_8]=[V_4,V_7]=Z_2,\quad
$$
$$
[V_1,V_7]=[V_2,V_8]=-[V_3,V_5]=[V_4,V_6]=Z_3.
$$
It implies that the matrices
$$
J_{Z_1}=\left(\begin{smallmatrix}
0&0&0&0&1&0&0&0
\\
0&0&0&0&0&1&0&0
\\
0&0&0&0&0&0&1&0
\\
0&0&0&0&0&0&0&1
\\
1&0&0&0&0&0&0&0
\\
0&1&0&0&0&0&0&0
\\
0&0&1&0&0&0&0&0
\\
0&0&0&1&0&0&0&0
\end{smallmatrix}\right)
,\quad
J_{Z_2}=
\left(\begin{smallmatrix}
0&0&0&0&0&1&0&0
\\
0&0&0&0&-1&0&0&0
\\
0&0&0&0&0&0&0&-1
\\
0&0&0&0&0&0&1&0
\\
0&-1&0&0&0&0&0&0
\\
1&0&0&0&0&0&0&0
\\
0&0&0&1&0&0&0&0
\\
0&0&-1&0&0&0&0&0
\end{smallmatrix}\right),\quad
J_{Z_3}=
\left(\begin{smallmatrix}
0&0&0&0&0&0&1&0
\\
0&0&0&0&0&0&0&1
\\
0&0&0&0&-1&0&0&0
\\
0&0&0&0&0&-1&0&0
\\
0&0&-1&0&0&0&0&0
\\
0&0&0&-1&0&0&0&0
\\
1&0&0&0&0&0&0&0
\\
0&1&0&0&0&0&0&0
\end{smallmatrix}\right)
$$
form the basis of the space $\mathbf V\subset\mathfrak{so}(4,4)$.
Calculating their commutators, we show that
$\mathbf{V}$ is not a Lie subalgebra of $\mathfrak{so}(4,4)$, and therefore $N_{0,3}$ is not a naturally reductive manifold.
The normalizer $\mathbf{N}=\mathbf{Z} \oplus [\mathbf{V},\mathbf{V}]$ consists of matrices of the following type:
{\small
$$
\left(\begin{matrix}
0&-2a_1+b_6&-2a_2-b_5&-2a_3+b_4&b_3&-b_2&b_1&0
\\
2a_1-b_6&0&2a_3+b_4&-2a_2+b_5&-b_2&-b_3&0&b_1
\\
2a_2+b_5&-2a_3-b_4&0&2a_1+b_6&b_1&0&-b_3&b_2
\\
2a_3-b_4&2a_2-b_5&-2a_1-b_6&0&0&b_1&b_2&b_3
\\
b_3&-b_2&b_1&0&0&2a_1+b_6&2a_2-b_5&-2a_3+b_4
\\
-b_2&-b_3&0&b_1&-2a_1-b_6&0&2a_3+b_4&2a_2+b_5
\\
b_1&0&-b_3&b_2&-2a_2+b_5&-2a_3-b_4&0&-2a_1+b_6
\\
0&b_1&b_2&b_3&2a_3-b_4&-2a_2-b_5&2a_1-b_6&0
\end{matrix}\right),
$$}
where the variables $a_1,a_2,a_3$ correspond to $[\mathbf{V},\mathbf{V}]$, and $b_1,b_2,b_3,b_4,b_5,b_6$ parameterize $\mathbf{Z}$.

Assuming that $N_{0,3}$ is geodesic orbit, and taking $Y=\sum_{i=1}^8 y_i V_i\in \mathfrak{v}$, $Z=J_{Z_1}\in \mathbf{V}$, we obtain that for an arbitrary
$B\in \mathbf{N}$ the condition $[B,Z]=0$ implies $a_1=a_2=0$.
Now, we choose $(y_1,y_2,y_3,y_4,y_5,y_6,y_7,y_8)=(3,4,0,0,5,0,0,0)$ and the condition $B(Y)=Z(Y)$ (assuming $a_1=a_2=0$) is equivalent to the linear system of equations
$$
\left(
\begin{array}{rrrrrrr}
  0 & 3 & 0 & 0 & 0 & 5 & 0 \\
  0 & 0 & 5 & 0 & 0 & 0 & 3 \\
  -6 & 0 & 0 & 0 & 3 & 4 & 0 \\
  10 & 4 & 0 & 0 & -5 & 0 & 0 \\
  8 & -5 & 0 & 0 & 4 & -3 & 0 \\
  0 & 0 & -4 & 3 & 0 & 0 & 0 \\
  0 & 0 & -3 & -4 & 0 & 0 & -5 \\
  0 & 0 & 0 & 5 & 0 & 0 & 4 \\
\end{array}\right)
\left(\begin{array}{c}
a_3
\\
b_1
\\
b_2
\\
b_3
\\
b_4
\\
b_5
\\
b_6
\end{array}\right)=
\left(\begin{array}{r}
0\\
0\\
0\\
0\\
0\\
3
\\
4
\\
5
\end{array}\right),
$$
that does not have solutions. The rank of the basic matrix of the system is 5, and the rank of the extended matrix of the system is 6.
In this case $k=0$ since we consider a non-null vector.
Therefore, $N_{0,3}$ is not geodesic orbit.

\begin{remark}
We note that there are shorter arguments to prove that
$N_{2,1}$ and $N_{0,3}$ are not geodesic orbit.
One can use Corollary~\ref{co.totgeod.1} and the ideas of Section~\ref{se.tot.geod} to check that
$N_{1,1}$ is a totally geodesic submanifold in $N_{2,1}$
as well as  $N_{0,2}$ is a totally geodesic submanifold in $N_{0,3}$.
Recall that $N_{1,1}$ and $N_{0,2}$ are not geodesic orbit by the above considerations.
Nevertheless, we decided to show the details of working with these pseudo $H$-type groups to prepare readers
for a more sophisticated study of the $H$-type group $N_{3,4}$ in Section~\ref{sec.N3.4}.
\end{remark}

\subsection{On naturally reductive pseudo-Riemannian $H$-type nilmanifolds}

The above examples allow us to obtain a complete classification of naturally reductive pseudo $H$-type nilmanifolds~$N_{r,s}$.

\begin{prop}\label{pr.natred.1}
An $H$-type nilmanifold $N_{r,s}$ is naturally reductive if and only if \linebreak $(r,s)\in\{(1,0),(0,1),(3,0),(1,2)\}$.
\end{prop}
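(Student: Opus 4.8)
The plan is to reduce natural reductivity to a single algebraic condition via Ovando's criterion (Theorem~\ref{th:Ovanto}). Because the module is minimal admissible, the map $J$ is injective, so $N_{r,s}$ is naturally reductive with respect to $G=N_{r,s}\rtimes H$ exactly when $\mathbf{V}=J(\mathfrak z)$ is a Lie subalgebra of $\mathfrak{so}(\mathfrak v)$ (condition~(i)) and the induced map $\tau$ takes values in $\mathfrak{so}(\mathfrak z)$ (condition~(ii)). First I would eliminate every signature with a large center by a dimension count: if $\mathbf V$ were a subalgebra then $[\mathbf V,\mathbf V]\subseteq\mathbf V$, whence part~2 of Proposition~\ref{pr.triple.1} would force
\[
\tfrac{(r+s)(r+s-1)}{2}=\dim\mathfrak{so}(r,s)=\dim[\mathbf V,\mathbf V]\le\dim\mathbf V=r+s,
\]
that is, $r+s\le 3$. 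Thus for every $(r,s)$ with $r+s\ge 4$ the space $\mathbf V$ fails to be bracket-closed, condition~(i) is violated, and $N_{r,s}$ is not naturally reductive.

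Next I would settle the finitely many signatures with $r+s\le 3$ one at a time. For the Riemannian signatures $(1,0),(2,0),(3,0)$ it suffices to quote Riehm's classification (Theorem~\ref{th.class.riemmannain}), which gives natural reductivity precisely for $\dim\mathfrak z\in\{1,3\}$; this makes $(1,0)$ and $(3,0)$ naturally reductive and excludes $(2,0)$. For the remaining, genuinely pseudo-Riemannian signatures the necessary computations are already carried out in Section~\ref{sec.example}: the worked examples $N_{0,1}$ and $N_{1,2}$ exhibit $\mathbf V$ as a Lie subalgebra whose structure constants yield a skew-symmetric $\tau$, so both Ovando conditions hold and these are naturally reductive, whereas $N_{1,1}$, $N_{0,2}$, $N_{2,1}$, and $N_{0,3}$ each produce an explicit bracket $[J_{Z_i},J_{Z_j}]\notin\mathbf V$, so condition~(i) already fails.

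Collecting the two steps yields natural reductivity exactly for $(r,s)\in\{(1,0),(0,1),(3,0),(1,2)\}$. I do not anticipate a serious obstacle: the only new input is the elementary dimension inequality of the first step, and everything else is an accounting of results established earlier. The one point I would treat with care is that, in the two pseudo-Riemannian naturally reductive cases, establishing~(i) alone is not enough---one must also verify Ovando's condition~(ii), namely that the recovered $\tau_{Z}$ is skew-symmetric on $\mathfrak z$; this is precisely the content of the explicit commutator relations recorded for $N_{0,1}$ and $N_{1,2}$.
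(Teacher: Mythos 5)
Your proof is correct, and it reaches the dimension bound by a genuinely shorter route than the paper. The paper also starts from the observation that natural reductivity forces $\mathbf V=J(\mathfrak z)$ to be a Lie subalgebra, but it then argues that $\mathbf V$ is an ideal of $\mathbf L=\mathbf V+[\mathbf V,\mathbf V]$, invokes the simplicity of $\mathbf L$ for all signatures outside $\{(1,0),(0,1),(3,0),(1,2)\}$ (Proposition~\ref{pr.triple.1}(5)) together with Lemma~\ref{le.ideal_h1} to conclude $\mathbf V=[\mathbf V,\mathbf V]=\mathbf L$, and only then compares $\dim\mathbf V=r+s$ with $\dim[\mathbf V,\mathbf V]=(r+s)(r+s-1)/2$ to force $r+s=3$, leaving just $N_{2,1}$ and $N_{0,3}$ to be excluded by the Section~\ref{sec.example} computations. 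You instead observe that $[\mathbf V,\mathbf V]\subseteq\mathbf V$ alone already gives $(r+s)(r+s-1)/2\le r+s$, i.e.\ $r+s\le 3$, using only Proposition~\ref{pr.triple.1}(2); this bypasses the simplicity classification of $\mathbf L$ and the ideal lemma entirely, at the mild cost of having to dispose of all nine signatures with $r+s\le 3$ individually (which the paper's Theorem~\ref{th.class.riemmannain} and Section~\ref{sec.example} do anyway). Your closing remark that condition~(ii) of Theorem~\ref{th:Ovanto} must also be checked in the two naturally reductive pseudo-Riemannian cases is a point the paper glosses over but which is indeed settled by the explicit $\tau$'s computed for $N_{0,1}$ and $N_{1,2}$. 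Two cosmetic quibbles: injectivity of $J$ follows from the $H$-type identity $\langle J_ZX,J_WX\rangle=\langle Z,W\rangle\langle X,X\rangle$ rather than from minimality of the module (and note $N_{0,1}$ is precisely the case where the module used is not minimal), and neither affects the argument.
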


\begin{proof}
We know that $N_{1,0}$, $N_{0,1}$, $N_{3,0}$, and $N_{1,2}$  are naturally reductive
(see Theorem~\ref{th.class.riemmannain} and Section~\ref{sec.example}).

Now, suppose that $N_{r,s}$ is naturally reductive and $r+s >1$ (if $r+s =1$, then it is isometric to $N_{1,0}$ or to $N_{0,1}$).
Then the linear space $\mathbf{V}=J(\mathfrak{z})$ is a Lie subalgebra in
$\mathbf{L}= \mathbf{V}+[\mathbf{V},\mathbf{V}]$. Since $[\mathbf{V},[\mathbf{V},\mathbf{V}]] \subset \mathbf{V}$,
then $\mathbf{V}$ is an ideal in $\mathbf{L}$. Therefore, either $\mathbf{L}$ is not simple, or $\mathbf{V}=[\mathbf{V},\mathbf{V}]$ is a simple Lie algebra.
If $(r,s)\not\in \{(1,0),(0,1),(3,0),(1,2)\}$, then $\mathbf{L}$ is simple  by Proposition \ref{pr.triple.1} and $\mathbf{V}=[\mathbf{V},\mathbf{V}]$
by Lemma \ref{le.ideal_h1}.

Since $\dim(\mathbf{V})=r+s$ and  $\dim([\mathbf{V},\mathbf{V}])=(r+s)(r+s-1)/2$ (see Proposition \ref{pr.triple.1}), then $r+s=3$. Now, it suffices to note that
$N_{2,1}$ and $N_{0,3}$ are not geodesic orbit, hence, are not naturally reductive.
\end{proof}

It {\color{blue}will be} proved in Section~\ref{sec.N3.4}, that the pseudo $H$-type nilmanifold $N_{3,4}$ is geodesic orbit,
although it is not naturally reductive.

\section{On totally geodesic submanifolds of geodesic orbit pseudo-Riemannian manifolds}\label{se.tot.geod}

In this section we consider the relation between the geodesic orbit submanifolds and totally geodesic submanifolds and apply this to
 pseudo $H$-type Lie groups.
P.~Eberlein~\cite{Eber94.1, Eber94.2} studied totally geodesic subalgebras and totally
geodesic subgroups in nonsingular 2-step nilpotent Lie groups endowed with left-invariant Riemannian metrics.
The pseudo $H$-type Lie algebras are examples of nonsingular 2-step nilpotent Lie algebras.
It was proved in~\cite[Theorem 11]{BerNik08} that
every closed totally geodesic submanifold of a
GO Riemannian manifold is
GO itself. Let us consider a version of this result for pseudo-Riemannian manifolds.
Note that any GO pseudo-Riemannian manifold is geodesically complete.

\begin{theorem}\label{th_ps_totgeod}
Every geodesically complete totally geodesic submanifold of a
GO pseudo-Riemannian manifold is
geodesic orbit itself.
\end{theorem}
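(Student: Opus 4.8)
The plan is to reduce the geodesic orbit property of a geodesically complete totally geodesic submanifold $S$ to that of the ambient space $M$ by projecting Killing fields tangentially to $S$. Fix a point $p\in S$ and a geodesic $\gamma$ of $(S,g|_S)$ with $\gamma(0)=p$ and $\dot\gamma(0)=v\in T_pS$. Since $S$ is totally geodesic, $\gamma$ is simultaneously a geodesic of $M$. As $M$ is a geodesic orbit pseudo-Riemannian manifold, there is a Killing field $X$ on $M$ (an element of the Lie algebra of $\Isom(M)$) whose orbit through $p$ is $\gamma$; thus $X_p=v$ and $\gamma(t)=\exp(tX)\cdot p$, possibly after an affine reparametrization which does not affect the argument. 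In particular $\dot\gamma=X\circ\gamma$ along the whole curve.

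Next I would project $X$ onto $S$. Along $S$ write $X=X^{\top}+X^{\perp}$, the tangential and normal parts. The central claim is that $X^{\top}$ is a Killing field of $(S,g|_S)$, and this is exactly where total geodesy enters. For tangent fields $U,W$ on $S$ one has $\langle X^{\perp},W\rangle=0$; differentiating and using that the second fundamental form vanishes, so that $\nabla_U W=\nabla^S_U W$ is again tangent to $S$, gives $\langle\nabla_U X^{\perp},W\rangle=-\langle X^{\perp},\nabla_U W\rangle=0$. Hence $\langle\nabla^S_U X^{\top},W\rangle=\langle\nabla_U X,W\rangle$ for all tangent $U,W$, and the Killing equation for $X$ on $M$ descends to $\langle\nabla^S_U X^{\top},W\rangle+\langle\nabla^S_W X^{\top},U\rangle=0$, so $X^{\top}$ is Killing on $S$.

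I would then identify the orbit. Since $\gamma$ lies in $S$, its velocity $\dot\gamma=X\circ\gamma$ is tangent to $S$, so $X^{\perp}$ vanishes along $\gamma$ and $X=X^{\top}$ there. Consequently $\gamma$ is an integral curve of $X^{\top}$, that is, the orbit of $p$ under the flow generated by the Killing field $X^{\top}$ of $S$. This exhibits $\gamma$ as an orbit of a one-parameter group of isometries of $S$, which is precisely the geodesic orbit condition; as $p$ and $v$ are arbitrary, $S$ is geodesic orbit.

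The main obstacle is completeness: to promote the local flow of $X^{\top}$ to a genuine one-parameter subgroup $\{\exp(tX^{\top})\}$ of $\Isom(S)$ one must know that $X^{\top}$ is a complete vector field. In the Riemannian case of \cite{BerNik08} this is automatic, since a closed, hence metrically complete, submanifold carries only complete Killing fields. In the pseudo-Riemannian setting there is no distance function to invoke, and this is exactly why the hypothesis is strengthened to geodesic completeness of $S$. The plan is to derive completeness of $X^{\top}$ from geodesic completeness of $S$, knowing already that its integral curve through $p$ is the complete geodesic $\gamma$. This is the delicate point, since geodesic completeness does not a priori bound the integral curves of a Killing field, and I expect this step — rather than the purely algebraic projection argument — to carry the main technical weight of the proof.
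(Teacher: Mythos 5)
Your argument is correct and shares its core mechanism with the paper's proof: both take the Killing field $X$ on $M$ realizing the geodesic as an orbit and pass to its tangential part $X^{\top}$ along $S$, which is Killing on $S$ precisely because total geodesy makes $\nabla_U W$ tangent for tangent $U,W$ (the paper cites O'Neill's Exercise~7 for exactly the computation you write out). Where you diverge is in how the orbit property is transferred. The paper never uses that the orbit of $X$ equals $\gamma$ pointwise; instead it invokes the characterization that the integral curve of a Killing field $Y$ through $x$ is a geodesic iff $x$ is a critical point of $\|Y\|^2$, and verifies criticality for $X^{\top}$ via the decomposition $\|X^{\top}\|^2=\|X\|^2-\|X^{\perp}\|^2$, both summands being critical at $x$ (the second because $X^{\perp}(x)=0$ forces $d\langle X^{\perp},X^{\perp}\rangle=2\langle\nabla X^{\perp},X^{\perp}\rangle$ to vanish there). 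Your route is more direct: since $\gamma\subset S$ and $X$ is tangent to $\gamma$ along $\gamma$, you get $X^{\perp}\equiv 0$ on $\gamma$ and hence $\gamma$ is an integral curve of $X^{\top}$ outright, with no critical-point lemma needed. Two remarks. First, for a null geodesic the GO property only guarantees that the orbit of $X$ is $\gamma$ up to a possibly non-affine reparametrization (cf.\ the constant $k$ in the Geodesic Lemma), so $\dot\gamma=X\circ\gamma$ should be weakened to proportionality along the curve; this does not affect your tangency argument or the conclusion that the orbit of $p$ under the flow of $X^{\top}$ traces out $\gamma$. Second, the completeness issue you flag as the ``main technical weight'' is in fact already discharged by your own construction: you have exhibited the maximal integral curve of $X^{\top}$ through $p$ as the geodesic $\gamma$, which is defined for all $t$ by geodesic completeness of $S$, so the orbit of $p$ is all of $\gamma$; the paper handles this point with exactly the same appeal to geodesic completeness and no more, so no further work is required there.
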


\begin{proof}
Let $N$ be a geodesically complete totally geodesic submanifold of a
GO pseudo-Riemannian manifold  $M$ (which is also
geodesically complete).  Let $U\neq 0$ be a tangent vector at some
point $x\in N$. It is enough to prove that there is a Killing vector
field $Y$ on $N$ with the following properties:

1) the value $Y(x)=U(x)$;

2) $x$ is a critical point of $\|Y\|^2$ on
$N$.

Indeed, in this case a geodesic passing through $x$ in the
direction of $U$ is an orbit of a one-dimensional isometry group
generated by the Killing field $Y$ (this one-parameter group is
defined because of the geodesic completeness of $N$), see e.g.~\cite[Exercise 10, page~259]{ONeill}.

It is known that a point $z\in M$ is a critical point of $\|Z\|^2$ for a Killing vector field $Z$ on $M$ if and only if
the integral curve of $Z$ through the point $z$ is a geodesic in $M$, see e.g.~\cite[Exercises 9 and 10, page~259]{ONeill}.
Since $M$ is a GO manifold, there is a
Killing vector field $X$ on $M$, such that $X(x)=U(x)$ with
$x$ being a critical point of $\|X\|^2$.
Now we define $Y$ to be the tangent component $\widetilde{X}$ of the Killing vector field $X$ to $N$.
According to~\cite[page~259, Exercise 7 ]{ONeill}), $\widetilde{X}$ is a Killing vector field on $N$,
and, moreover, $\widetilde{X}(x)=X(x)$.

Now we need only to prove that $x$ is a critical point of $\|\widetilde{X}\|^2$ on $N$. Let $Z=X-\widetilde{X}$ be the
normal component of the vector field $X$ on the manifold $N$. It is clear that
$\|\widetilde{X}\|^2=\|X\|^2-\|Z\|^2$ on $M$. The point $x$
is a zero point for $\|Z\|^2$, therefore, $x$ is a critical point of $\|Z\|^2$ on $N$. Consequently, $x$ is a critical
point for both functions: $\|X\|^2$ and $\|Z\|^2$
on the manifold $N$. But in this case $x$ is a critical point for
$\|\widetilde{X}\|^2$, since $\widetilde{X}(x)=U(x)\neq 0$.
Theorem is proved.
\end{proof}

\begin{theorem}\label{pr_totgeod_h1}
Let $(N,g)$ be a 2-step pseudo-Riemannian nilmanifold with a non degenerate centre. Let us consider non-degenerate subspaces
$\mathfrak{z}_1\subset \mathfrak{z}$ and $\mathfrak{v}_1 \subset \mathfrak{v}$ and
the corresponding
orthogonal decompositions $\mathfrak{z}=\mathfrak{z}_1\oplus \mathfrak{z}_2$ and $\mathfrak{v}=\mathfrak{v}_1\oplus \mathfrak{v}_2$.
Then the following assertions hold:

{\rm 1)} If $J_Z(\mathfrak{v}_1)\subset \mathfrak{v}_2$ for any $Z\in \mathfrak{z}_2$,
then $\mathfrak{n}_1:=\mathfrak{z}_1\oplus \mathfrak{v}_1$ is a Lie subalgebra of $\mathfrak{n}$.

{\rm 2)} If $J_Z(\mathfrak{v}_1)\subset \mathfrak{v}_2$ for any $Z\in \mathfrak{z}_2$ and $J_Z(\mathfrak{v}_1)\subset \mathfrak{v}_1$ for any $Z\in \mathfrak{z}_1$,
then the Lie subalgebra $\mathfrak{n}_1$ {\rm(}with the induced scalar product{\rm)} generates
a totally geodesic submanifold $(N_1,g_1)$ of $(N,g)$.
\end{theorem}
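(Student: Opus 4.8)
The plan is to prove both assertions directly from the structure of the bracket and the Levi-Civita connection of a $2$-step nilpotent pseudo-Riemannian group, using only the defining relation~\eqref{eq_j_z_1} of the operators $J_Z$, their skew-symmetry, and the orthogonality of the two given decompositions. Throughout I write $\mathfrak{n}_2=\mathfrak{z}_2\oplus\mathfrak{v}_2$ for the orthogonal complement of $\mathfrak{n}_1$ in $\mathfrak{n}$.

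For assertion 1), since $\mathfrak{n}$ is $2$-step nilpotent with $\mathfrak{z}$ central and $[\mathfrak{v},\mathfrak{v}]\subset\mathfrak{z}$, the only bracket that needs control is $[X,Y]$ for $X,Y\in\mathfrak{v}_1$, and I must show it lies in $\mathfrak{z}_1$. First I would observe that, because $\mathfrak{z}=\mathfrak{z}_1\oplus\mathfrak{z}_2$ is an orthogonal sum with $\mathfrak{z}_1$ (hence also $\mathfrak{z}_2$) non-degenerate, one has $\mathfrak{z}_1=\{W\in\mathfrak{z}\mid \langle W,\mathfrak{z}_2\rangle=0\}$; so it suffices to check $\langle[X,Y],Z\rangle=0$ for every $Z\in\mathfrak{z}_2$. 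By~\eqref{eq_j_z_1}, $\langle[X,Y],Z\rangle=\langle J_Z(X),Y\rangle$, and the hypothesis $J_Z(\mathfrak{v}_1)\subset\mathfrak{v}_2$ for $Z\in\mathfrak{z}_2$, combined with $\mathfrak{v}_1\perp\mathfrak{v}_2$, forces $\langle J_Z(X),Y\rangle=0$. Hence $[X,Y]\in\mathfrak{z}_1$ and $\mathfrak{n}_1=\mathfrak{z}_1\oplus\mathfrak{v}_1$ is a Lie subalgebra.

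For assertion 2), I would use that the connected subgroup $N_1$ with the induced left-invariant metric $g_1$ is totally geodesic in $(N,g)$ precisely when the Levi-Civita connection $\nabla$ of $(N,g)$ leaves $\mathfrak{n}_1$ invariant, i.e. $\nabla_U V\in\mathfrak{n}_1$ for all left-invariant $U,V$ valued in $\mathfrak{n}_1$; in that case the second fundamental form $\mathrm{II}(U,V)=(\nabla_U V)^\perp$ vanishes. The Koszul formula for left-invariant fields, where all inner products are constant, yields the standard expression
\begin{equation*}
\nabla_{X+Z}(Y+W)=\tfrac12[X,Y]-\tfrac12 J_W(X)-\tfrac12 J_Z(Y),\qquad X,Y\in\mathfrak{v},\ Z,W\in\mathfrak{z},
\end{equation*}
which I would derive by evaluating $2\langle\nabla_\cdot\,\cdot,\cdot\rangle$ separately against $\mathfrak{v}$ and $\mathfrak{z}$ and invoking the skew-symmetry of $J_Z$ (this is the pseudo-Riemannian analogue of Eberlein's formula, cf.~\cite{Eber94.1}).

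Applying this formula to $X,Y\in\mathfrak{v}_1$ and $Z,W\in\mathfrak{z}_1$, the term $\tfrac12[X,Y]$ lies in $\mathfrak{z}_1$ by assertion 1), while the two terms $J_W(X)$ and $J_Z(Y)$ lie in $\mathfrak{v}_1$ by the additional hypothesis $J_Z(\mathfrak{v}_1)\subset\mathfrak{v}_1$ for $Z\in\mathfrak{z}_1$; thus $\nabla_U V\in\mathfrak{z}_1\oplus\mathfrak{v}_1=\mathfrak{n}_1$, the second fundamental form vanishes, and $(N_1,g_1)$ is totally geodesic. I expect the only delicate point to be the justification that $\nabla$-invariance of $\mathfrak{n}_1$ is equivalent to total geodesy in the indefinite setting and that it is enough to verify it at the identity $e$; both follow from the left-invariance of $g$ and the invariance of $N_1$ under left translations by its own elements, exactly as in the Riemannian case, the non-degeneracy assumptions on $\mathfrak{z}_1$ and $\mathfrak{v}_1$ guaranteeing that $(N_1,g_1)$ is a genuine pseudo-Riemannian manifold.
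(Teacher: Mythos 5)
Your proposal is correct and follows essentially the same route as the paper: assertion 1) via $\langle[X,Y],Z\rangle=\langle J_Z(X),Y\rangle=0$ for $Z\in\mathfrak{z}_2$ using the orthogonality $\mathfrak{v}_1\perp\mathfrak{v}_2$, and assertion 2) via the Koszul-derived formula $2\nabla_{U}V=[X_1,X_2]-J_{Z_2}(X_1)-J_{Z_1}(X_2)$ together with the invariance hypotheses. The only difference is that you spell out two routine points the paper leaves implicit (that $\mathfrak{z}_1$ is recovered as the orthogonal complement of $\mathfrak{z}_2$ in $\mathfrak{z}$, and that $\nabla$-invariance of $\mathfrak{n}_1$ at $e$ suffices for total geodesy), which is harmless.
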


\begin{proof} The first assertion is almost obvious. Indeed, $[\mathfrak{z}_1,\mathfrak{n}_1]\subset [\mathfrak{z},\mathfrak{n}]=0$ and we have
$\langle [X,Y],Z\rangle=\langle J_Z(X),Y \rangle=0$ for all $X,Y \in \mathfrak{v}_1$ and any  $Z\in \mathfrak{z}_2$ by the condition in 1).
Hence, $[\mathfrak{n}_1,\mathfrak{n}_1]=[\mathfrak{v}_1,\mathfrak{v}_1]\subset \mathfrak{z}_1 \subset \mathfrak{n}_1$, that proves 1).

Let us show the second assertion.
If $\nabla$ is the Levi-Civita connection on $(N,g)$, then we need to prove that $\nabla_X Y \in \mathfrak{n}_1$ for all $X,Y \in \mathfrak{n}_1$, where
$X,Y$ are identified with left-invariant vector fields.
Recall the Koszul formula
$$
2\langle \nabla_XY,W\rangle=\langle [X,Y],W\rangle +\langle [W,X],Y\rangle+\langle X,[W,Y]\rangle,\quad X,Y,W \in \mathfrak{n}.
$$
If $U=Z_1+X_1$ and $V=Z_2+X_2$, with $Z_1,Z_2 \in  \mathfrak{z}$, $X_1,X_2 \in \mathfrak{v}$, then
$$
2\nabla_{U} V=[X_1,X_2]-J_{Z_2} (X_1)-J_{Z_1} (X_2),
$$
see details, e.g.~\cite[page 813]{Eber94.2}.

Now, if $Z_1,Z_2 \in  \mathfrak{z}_1$, $X_1,X_2 \in \mathfrak{v}_1$,
then $[X_1,X_2]\in  \mathfrak{n}_1$ by 1) and $J_{Z_2} (X_1), J_{Z_1} (X_2) \in \mathfrak{v}_1$ by the condition of 2).
Therefore, $\nabla_{U} V \in \mathfrak{n}_1$ for all $U,V \in \mathfrak{n}_1$. This implies 2).
\end{proof}
\smallskip

Theorems~\ref{th_ps_totgeod} and~\ref{pr_totgeod_h1} will be useful for us in the following reformulation.

\begin{corollary}\label{co.totgeod.1}
Let $N$ be a 2-step pseudo $H$-type Lie group with the Lie algebra $\mathfrak n$. Let $\mathfrak n_1\subset\mathfrak n$
be a subalgebra which generates a totally geodesic submanifold $N_1$ of $N$. If $N_1$ is not a geodesic orbit manifold, then $N$ is also not a geodesic orbit manifold.
\end{corollary}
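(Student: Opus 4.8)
The plan is to obtain this as the contrapositive of Theorem~\ref{th_ps_totgeod}, so that the only genuine work lies in checking that its hypotheses apply to the submanifold $N_1$. I would argue by contradiction: suppose that $N$ \emph{is} a geodesic orbit pseudo-Riemannian manifold. By hypothesis $\mathfrak n_1\subset\mathfrak n$ is a subalgebra generating the totally geodesic submanifold $N_1$, and since $\mathfrak n_1$ is a subalgebra of the $2$-step nilpotent algebra $\mathfrak n$, it is itself $2$-step nilpotent; endowed with the restriction of $\langle\cdot\,,\cdot\rangle$ to $\mathfrak n_1$ (which is non-degenerate, as it must be for $N_1$ to be a pseudo-Riemannian submanifold), the group $N_1$ is a $2$-step nilpotent pseudo-Riemannian nilmanifold. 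Theorem~\ref{th_ps_totgeod} then requires exactly two ingredients: that the ambient space be GO (our standing assumption) and that $N_1$ be geodesically complete. If both hold, the theorem forces $N_1$ to be geodesic orbit, contradicting the hypothesis that it is not, and the corollary follows.

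Hence the one step that requires attention is the geodesic completeness of $N_1$. First I would recall that $N$ is geodesically complete: this is immediate from the GO assumption (as noted just before Theorem~\ref{th_ps_totgeod}), but it also follows directly from the explicit geodesic formula~\eqref{eq:one-0} for $2$-step nilpotent pseudo-Riemannian groups, whose right-hand sides are defined for every value of the affine parameter. Since $N_1$ is totally geodesic in $N$, every geodesic of $(N_1,g_1)$ is a geodesic of $(N,g)$ that remains inside $N_1$; being complete in $N$, it is therefore defined for all parameter values within $N_1$ as well. Thus $(N_1,g_1)$ is geodesically complete.

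With completeness of $N_1$ established, Theorem~\ref{th_ps_totgeod} applies verbatim and delivers the contradiction. The hard part, such as it is, is precisely the completeness of $N_1$; but since $N_1$ inherits the structure of a $2$-step nilpotent pseudo-Riemannian nilmanifold, the explicit geodesic description carries over and completeness is automatic, so the passage from the abstract Theorem~\ref{th_ps_totgeod} to this $H$-type reformulation is a pure specialization. I would close by noting that in practice the corollary is applied in the contrapositive direction stated: exhibiting a non-GO totally geodesic subgroup $N_1$ (for instance the copies of $N_{1,1}$ and $N_{0,2}$ sitting inside $N_{2,1}$ and $N_{0,3}$) immediately certifies that the larger group fails to be geodesic orbit.
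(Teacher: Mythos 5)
Your proposal is correct and follows the paper's intended route: the paper states this corollary as an immediate reformulation (contrapositive) of Theorem~\ref{th_ps_totgeod} combined with Theorem~\ref{pr_totgeod_h1}, giving no separate proof. The only content you add is the explicit verification that $N_1$ is geodesically complete — which the paper leaves implicit — and your justification (that $N_1$ is itself a $2$-step nilpotent group with a left-invariant pseudo-Riemannian metric, so the explicit geodesic formula~\eqref{eq:one-0} yields completeness) is sound.
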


\section{Pseudo-Riemannian $H$-type nilmanifolds $N_{r,s}$ with $r+s> 3$, $(r,s)\neq (3,4)$}\label{sec:mod4}

\subsection{Pseudo-Riemannian $H$-type nilmanifolds $N_{r,s}$ with $r+s=0\mod 4$}

We extend a result of~\cite{Kap83} to the pseudo $H$-type Lie algebras $\mathfrak n_{r,s}$.

\begin{lemma}\label{lem:even}
Let $D\in\mathfrak h$ be a skew-symmetric derivation of a pseudo $H$-type Lie algebra $\mathfrak n_{r,s}$, $r+s=0\mod 4$, see~\eqref{isotalg1}. If we write $D=(C,A)$, then $A$ satisfies
    $$
    A\prod_{i=1}^{r+s}J_{Z_i}=\prod_{i=1}^{r+s}J_{Z_i}A \quad\text{for}\ \ r+s=0\mod 4
    $$
    and any orthonormal basis $\{Z_1,\ldots,Z_{r+s}\}$ as in~\eqref{eq:brs}.
\end{lemma}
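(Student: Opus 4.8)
The plan is to compute the commutator $[A,\Omega]$ with $\Omega:=\prod_{i=1}^{r+s}J_{Z_i}$ directly from the defining relation of the isotropy algebra. By~\eqref{isotalg1}, the pair $D=(C,A)$ is a skew-symmetric derivation precisely when $[A,J_Z]=J_{C(Z)}$ for every $Z\in\mathfrak z$, with $C\in\mathfrak{so}(\mathfrak z,\langle\cdot\,,\cdot\rangle_{r,s})$. First I would apply the Leibniz rule $[A,BC]=[A,B]C+B[A,C]$ iteratively to the ordered product $\Omega$, which telescopes into $[A,\Omega]=\sum_{i=1}^{r+s}J_{Z_1}\cdots J_{Z_{i-1}}\,[A,J_{Z_i}]\,J_{Z_{i+1}}\cdots J_{Z_{r+s}}$. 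Substituting $[A,J_{Z_i}]=J_{C(Z_i)}=\sum_{j}C_{ji}J_{Z_j}$, where $C(Z_i)=\sum_j C_{ji}Z_j$, turns this into a double sum whose $(i,j)$ term is $C_{ji}$ times the operator $P_{ij}$ obtained from $\Omega$ by replacing its $i$-th factor $J_{Z_i}$ with $J_{Z_j}$.

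Next I would split this double sum into its diagonal ($j=i$) and off-diagonal ($j\neq i$) parts. The diagonal part equals $\big(\sum_i C_{ii}\big)\Omega=\trace(C)\,\Omega$, which vanishes because an operator skew-symmetric with respect to a nondegenerate (possibly indefinite) form has trace zero; equivalently, each $C_{ii}=\langle C(Z_i),Z_i\rangle_{r,s}/\varepsilon_i=0$, where $\varepsilon_i:=\langle Z_i,Z_i\rangle_{r,s}\in\{\pm1\}$ as in~\eqref{eq:brs}. For the off-diagonal terms, $P_{ij}$ contains the factor $J_{Z_j}$ twice while omitting $J_{Z_i}$; using the anticommutation relations $J_{Z_k}J_{Z_l}=-J_{Z_l}J_{Z_k}$ for $k\neq l$ (immediate from $J_Z^2(X)=-\langle Z,Z\rangle X$ by polarization) to bring the two copies of $J_{Z_j}$ together, together with $J_{Z_j}^2=-\varepsilon_j\Id$, I would reduce $P_{ij}$ to $(-1)^{|i-j|}\varepsilon_j\,\Omega_{\hat i\hat j}$, where $\Omega_{\hat i\hat j}$ is the ordered product of all $J_{Z_k}$ with $k\notin\{i,j\}$.

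The decisive step is then the pairwise cancellation of these off-diagonal contributions. Since $\Omega_{\hat i\hat j}=\Omega_{\hat j\hat i}$ and $(-1)^{|i-j|}$ is symmetric in $i$ and $j$, the terms indexed by $(i,j)$ and by $(j,i)$ combine to $(-1)^{|i-j|}\Omega_{\hat i\hat j}\big(C_{ji}\varepsilon_j+C_{ij}\varepsilon_i\big)$. The skew-symmetry of $C$ with respect to $\langle\cdot\,,\cdot\rangle_{r,s}$ reads exactly $C_{ji}\varepsilon_j=-C_{ij}\varepsilon_i$, so each pair cancels and one concludes $[A,\Omega]=0$, which is the assertion.

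The main obstacle I expect is pure bookkeeping: correctly tracking the two independent sources of signs — the anticommutation signs $(-1)^{|i-j|}$ produced when transporting operators past one another, and the signature signs $\varepsilon_i$ arising from $J_{Z_i}^2=-\varepsilon_i\Id$ — and recognizing that in the pseudo-Euclidean setting the skew-symmetry relation is the weighted identity $C_{ji}\varepsilon_j+C_{ij}\varepsilon_i=0$ rather than $C_{ji}+C_{ij}=0$. Conceptually, the entire computation amounts to the statement that the derivation $[A,\,\cdot\,]$ acts on the top filtration piece of the Clifford algebra — one-dimensional and isomorphic to $\Lambda^{r+s}\mathfrak z$ — by multiplication by $\trace(C)=0$; this also shows that the parity hypothesis $r+s\equiv 0\bmod 4$ is not needed for the commutation itself, and is relevant only to the further properties of $\Omega$ used elsewhere.
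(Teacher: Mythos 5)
Your proof is correct and takes essentially the same route as the paper's: a Leibniz expansion of $[A,\prod_i J_{Z_i}]$, substitution of $[A,J_{Z_i}]=J_{C(Z_i)}=\sum_j C_{ji}J_{Z_j}$, and pairwise cancellation of the $(i,j)$ and $(j,i)$ contributions via the weighted skew-symmetry $C_{ji}\varepsilon_j+C_{ij}\varepsilon_i=0$, which is precisely the identity $c_{ij}J_{Z_j}^2+c_{ji}J_{Z_i}^2=0$ that the paper invokes (the paper merely organizes it as explicit low-dimensional cases followed by induction). Your closing remark that the hypothesis $r+s\equiv 0\bmod 4$ is not needed for the commutation itself is also consistent with the paper, whose own base case is $r+s=2$.
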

\begin{proof}
Note that if $D=(C,A)$ is a skew-symmetric derivation as in~\eqref{isotalg1}, then
$$
A\prod_{i=1}^{r+s}J_{Z_i}=\prod_{i=1}^{r+s}J_{Z_i}A- \prod_{i=1}^{r+s}J_{Z_1}\ldots J_{C(Z_i)}\ldots J_{Z_{r+s}}.
$$
Thus we need to show that the last product on the right hand side vanishes.

Let $r+s=2$ and let $D=(C,A)$ be as in~\eqref{isotalg1}. Assume that $\{Z_1,Z_2\}$ is an orthonormal basis for $\mathfrak z$ and consider 3 possibilities.

If $\|Z_1\|^2=\|Z_2\|^2=\pm 1$ then any skew-symmetric map $C$ is given by $\pm a\begin{pmatrix}0&1\\-1&0\end{pmatrix}$, $a\in\mathbb R$.
    It implies
    \begin{eqnarray*}
        AJ_{Z_1}J_{Z_2}&=&J_{Z_1}J_{Z_2}A-J_{C(Z_1)}J_{Z_2}-J_{Z_1}J_{C(Z_2)}
        \\
        &=&
        J_{Z_1}J_{Z_2}A-a\Big(\mp J_{Z_1}^2\pm J_{Z_2}^2\Big)
      =
        J_{Z_1}J_{Z_2}A-a\Big(\Id-\Id\Big).
    \end{eqnarray*}

If $\|Z_1\|^2=-\|Z_2\|^2=1$ then any skew-symmetric map $C$ is given by $a\begin{pmatrix}0&1\\1&0\end{pmatrix}$, $a\in\mathbb R$, and analogous calculations show
$AJ_{Z_1}J_{Z_2}=J_{Z_1}J_{Z_2}A$.

We perform now the proof by induction of the dimension of the centre. Let us write a matrix $C=\{c_{ij}\}\subset \mathfrak{so}(r,s)$ with $r+s=4$. Then we will obtain
    \begin{eqnarray*}
        A\prod_{i=1}^4J_{Z_i}&=&\prod_{i=1}^4J_{Z_i}A
        \\
        &+&\Big(c_{12}J_{Z_2}^2+c_{21}J_{Z_1}^2\Big)J_{Z_3}J_{Z_4}
        -\Big(c_{13}J_{Z_3}^2+c_{31}J_{Z_1}^2\Big)J_{Z_2}J_{Z_4}
        +\Big(c_{14}J_{Z_4}^2+c_{41}J_{Z_1}^2\Big)J_{Z_2}J_{Z_3}
        \\
        &+&\Big(c_{23}J_{Z_3}^2+c_{32}J_{Z_2}^2\Big)J_{Z_1}J_{Z_4}
        -\Big(c_{24}J_{Z_4}^2+c_{42}J_{Z_2}^2\Big)J_{Z_1}J_{Z_3}
        +\Big(c_{34}J_{Z_4}^2+c_{43}J_{Z_3}^2\Big)J_{Z_1}J_{Z_2}
        \\
        &=&\prod_{i=1}^4J_{Z_i}A,
    \end{eqnarray*}
    since $c_{ij}J_{Z_j}^2+c_{ji}J_{Z_i}^2=0$ due to the skew symmetry in $\mathfrak{so}(r,s)$.
    By the induction of this arguments we obtain for any $r+s=4k$, $k=1,2,\ldots$
    $$
    \prod_{i=1}^{4k}J_{Z_1}\ldots J_{C(Z_i)}\ldots J_{Z_{4k}}=
    \sum_{i<j}(-1)^{j-i+1}\big(c_{ij}J_{Z_j}^2+c_{ji}J_{Z_i}^2\big)J_{Z_1}\ldots \hat J_{Z_i}\ldots \hat J_{Z_j}\ldots J_{Z_{4k}}=0,
    $$
    where $\hat J_{Z_i}$ denotes the omitted term in the product and $C=\{c_{ij}\}\in \mathfrak{so}(r,s)$, $r+s=0\mod 4$.
\end{proof}

\begin{prop}\cite[Proposition 3.3]{LowMich89}\label{prop:3page22}
    The volume element $\omega=\prod_{i=1}^{r+s}Z_i$ in $\Cl(\mathbb R^{r,s})$ has the following basic
    properties. Let $n=r+s$. Then
    \begin{equation}\label{eq:3.7}
        \omega^{2}=(-1)^{\frac{n(n+1)}{2}+s}.
    \end{equation}
    In particular, if $n=r+s$ is odd, then $Z\omega=\omega Z$ for all $Z\in \mathbb R^{r,s}$, and if $n=r+s$ is even, then $Z\omega=-\omega Z $ for all $Z\in \mathbb R^{r,s}$. Formula~\eqref{eq:3.7} can be also written as
\begin{equation}\label{eq:3.7-1}
\omega^2=
\begin{cases}
(-1)^s\quad&\text{if}\quad r+s=0\ \text{or}\ 3\mod 4,
\\
(-1)^{s+1}\quad&\text{if}\quad r+s=1\ \text{or}\ 2\mod 4.
\end{cases}
\end{equation}
\end{prop}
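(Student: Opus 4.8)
The plan is to argue directly from the two defining relations of $\Cl(\mathbb{R}^{r,s})$ for the orthonormal generators $Z_1,\dots,Z_n$ with $n=r+s$: namely $Z_iZ_j=-Z_jZ_i$ for $i\neq j$, and $Z_i^2=-\langle Z_i,Z_i\rangle_{r,s}$, so that $Z_i^2=-1$ for the $r$ positive basis vectors and $Z_i^2=+1$ for the $s$ negative ones. Every claim then reduces to bookkeeping of the signs produced by these relations, and I would treat the (anti)commutation property and the computation of $\omega^2$ separately.

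For the (anti)commutation of a single generator with $\omega=Z_1\cdots Z_n$, I would slide $Z_k$ through the product. Writing $\widehat{\omega}_k=Z_1\cdots\widehat{Z_k}\cdots Z_n$ for the product with the $k$-th factor omitted, moving the rightmost factor of $\omega Z_k$ leftward to position $k$ costs $n-k$ transpositions, giving $\omega Z_k=(-1)^{n-k}Z_k^2\,\widehat{\omega}_k$, while moving the leftmost factor of $Z_k\omega$ rightward to the same spot costs $k-1$ transpositions, giving $Z_k\omega=(-1)^{k-1}Z_k^2\,\widehat{\omega}_k$. Comparing the two yields $\omega Z_k=(-1)^{n-1}Z_k\omega$, and by linearity $Z\omega=(-1)^{n-1}\omega Z$ for all $Z\in\mathbb{R}^{r,s}$; this is central when $n$ is odd and anticommuting when $n$ is even, which is exactly the asserted dichotomy.

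For $\omega^2$ I would introduce the reversed product $\widetilde{\omega}=Z_n\cdots Z_1$. Reordering $\omega$ into $\widetilde{\omega}$ uses $\binom{n}{2}=n(n-1)/2$ transpositions of distinct generators, so $\omega=(-1)^{n(n-1)/2}\,\widetilde{\omega}$; on the other hand $\widetilde{\omega}\,\omega=Z_n\cdots Z_1Z_1\cdots Z_n$ telescopes, each central square $Z_i^2$ peeling off in turn, to $\prod_{i=1}^n Z_i^2=(-1)^r$. Hence $\omega^2=(-1)^{n(n-1)/2}(-1)^r=(-1)^{n(n-1)/2+r}$, and since this exponent differs from $n(n+1)/2+s$ by $n+s-r=2s$, which is even, the value equals $(-1)^{n(n+1)/2+s}$, establishing \eqref{eq:3.7}. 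The reformulation \eqref{eq:3.7-1} then comes from evaluating $n(n+1)/2\bmod 2$, which is even exactly for $n\equiv 0,3\pmod 4$ and odd for $n\equiv 1,2\pmod 4$, so that $(-1)^{n(n+1)/2}$ contributes $+1$ in the first case and $-1$ in the second, multiplying the factor $(-1)^s$.

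The computation is routine; the one genuinely delicate point, and thus the main thing to watch, is the sign convention. With the square convention $Z_i^2=-\langle Z_i,Z_i\rangle$ inherited from $J_Z^2=-\langle Z,Z\rangle$, the telescoping naturally produces the exponent $r$ rather than the $s$ appearing in \eqref{eq:3.7}; reconciling the two via the parity identity $n+s-r=2s$ is the step where a careless sign would derail the result.
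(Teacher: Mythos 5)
Your proof is correct. The paper gives no argument of its own for this proposition -- it simply cites \cite[Proposition 3.3]{LowMich89} -- and your computation is the standard one: the transposition counts $(n-k)$ versus $(k-1)$ give $\omega Z_k=(-1)^{n-1}Z_k\omega$, the reversal/telescoping argument gives $\omega^2=(-1)^{n(n-1)/2+r}$, and your reconciliation with the stated exponent via $n+s-r=2s$ correctly handles the one delicate point, namely that this paper's convention $Z_i^2=-\langle Z_i,Z_i\rangle_{r,s}$ makes the telescoped product equal $(-1)^r$ rather than $(-1)^s$. The mod-$4$ case analysis for \eqref{eq:3.7-1} is also right.
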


\begin{theorem}\label{th:42}
    If $r+s=0\mod 4$, $s=0\mod 2$, then the pseudo-Riemannian $H$-type nilmanifold $N_{r,s}$ is not geodesic orbit.
\end{theorem}
\begin{proof} Let $Z_1,\ldots, Z_{r+s}$ be an orthonormal basis of the centre as in~\eqref{eq:brs} of the pseudo $H$-type Lie algebra $\mathfrak n_{r,s}=\mathfrak z\oplus\mathfrak v$.
    Consider the volume form $\omega=\prod_{i=1}^{r+s}Z_i$ in $\Cl(\mathbb R^{r,s})$. Then $\omega^2=1$ by Proposition~\ref{prop:3page22} since
$\frac{n(n+1)}{2}+s$ is even. Then operator
$J_{\omega}\colon \mathfrak v\to \mathfrak v$ decomposes the module $\mathfrak v$ into the eigenspaces of $\omega$:
    $$
    \mathfrak v=\mathfrak v^+\oplus\mathfrak v^-.
    $$
Moreover, since $A\omega=\omega A$ by Lemma~\ref{lem:even}, any $A$ in $D=(C,A)$ will leave the spaces $\mathfrak v^{\pm}$ invariant.

Take any $X\in \mathfrak v^+$ and $Z\in\mathfrak z$ as initial vector of a geodesic in the group $N_{r,s}$ (we can assume that $Z=Z_1$). Note that since $n=r+s$ is even we have
$
J_{Z_1}\omega=-\omega J_{Z_1}$
by Proposition~\ref{prop:3page22}. It implies that
\begin{equation}\label{eq:J1}
J_{Z_1}\colon \mathfrak v^+\to \mathfrak v^-.
\end{equation}
Thus if a skew-symmetric derivation $D=(C,A)$ exists, then it should leave the spaces $\mathfrak v^{\pm}$ invariant. This contradicts to the behavior of the restriction of $D=(C,A)$ to the set $\mathfrak X=\spann\{Z_1,X,J_{Z_1}X\}$ by~\eqref{eq:J1}.
\end{proof}

\begin{corollary}\label{cor:mu-nu}
The pseudo $H$-type Lie groups $N_{\mu,\nu}$, $(\mu,\nu)\in\{(8,0),(0,8),(4,4)\}$ are not geodesic orbit.
\end{corollary}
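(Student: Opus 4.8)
The plan is to deduce this directly from Theorem~\ref{th:42}, which asserts that $N_{r,s}$ fails to be a geodesic orbit nilmanifold whenever $r+s\equiv 0\pmod 4$ and $s\equiv 0\pmod 2$. Thus the entire task reduces to checking that each of the three signatures $(\mu,\nu)\in\{(8,0),(0,8),(4,4)\}$ satisfies both congruences. First I would observe that in every case $\mu+\nu=8\equiv 0\pmod 4$, so the first hypothesis holds uniformly. Second, the values of $\nu$ are $0$, $8$, and $4$ respectively, each of which is even, so $\nu\equiv 0\pmod 2$ in all three cases. Hence Theorem~\ref{th:42} applies verbatim, and none of $N_{8,0}$, $N_{0,8}$, $N_{4,4}$ is geodesic orbit.

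For completeness I would recall why these two congruences are precisely what is needed, tracing them back to the mechanism in the proof of Theorem~\ref{th:42}. That argument hinges on the volume element $\omega=\prod_{i=1}^{8}Z_i$ satisfying $\omega^2=1$, so that $J_{\omega}$ splits $\mathfrak{v}$ into honest eigenspaces $\mathfrak{v}^{+}\oplus\mathfrak{v}^{-}$. By Proposition~\ref{prop:3page22} with $n=8$ one has $\omega^2=(-1)^{36+s}$, and since $36$ is even this equals $(-1)^s$; the condition $s\equiv 0\pmod 2$ then forces $\omega^2=1$. For the signatures at hand $s\in\{0,8,4\}$ are all even, confirming $\omega^2=1$ in each instance. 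The evenness of $n=8$ next gives $J_{Z_1}\omega=-\omega J_{Z_1}$, so $J_{Z_1}$ interchanges $\mathfrak{v}^{+}$ and $\mathfrak{v}^{-}$, whereas Lemma~\ref{lem:even} keeps the derivation component $A$ of any $D=(C,A)\in\mathfrak{h}$ block-diagonal with respect to this splitting; this is the incompatibility that obstructs the transitive normalizer condition of Proposition~\ref{gonil1n.2}.

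There is no genuine obstacle here: once Theorem~\ref{th:42} is available, the corollary is a one-line congruence verification. The only point deserving a moment's care is that the modules under consideration are the minimal admissible modules $\mathfrak{v}_{\mu,\nu}$ of dimension $16$ described in the preceding example, which is exactly the setting in which Theorem~\ref{th:42} was established, so the hypotheses of that theorem are met in the intended sense.
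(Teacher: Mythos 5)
Your proof is correct and matches the paper's intent exactly: Corollary~\ref{cor:mu-nu} is stated immediately after Theorem~\ref{th:42} precisely because it is the instance $r+s=8\equiv 0\pmod 4$, $s\in\{0,8,4\}$ even. The congruence check is all that is needed, and your recap of the volume-element mechanism is accurate but not required.
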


\begin{remark}
In the proof of Theorem~\ref{th:42} we implicitly used that for any non-null vector $Z\in\mathfrak z$ the map
$J_{Z}=(0,A)$ is a skew-symmetric derivation acting on the space $\mathfrak X=\spann_{\mathbb R}\{Z,X,J_{Z}X\}$ for any $X\in\mathfrak v$. Note also that for $\|X\|^2=\|Z\|^2=1$ the space $\mathfrak X$ is isometric to the Heisenberg algebra $\mathfrak n_{1,0}$, and for $\|X\|^2=\pm 1$, $\|Z\|^2=-1$ the space $\mathfrak X$ is isometric to the pseudo $H$-type algebra $\mathfrak n_{0,1}$.
\end{remark}

Before we proceed to show that $N_{r,s}$ is not geodesic orbit for $r+s=0\mod 4$, $s=1\mod 2$ we formulate a generalization of~\cite[Theorem 6]{Rie84} for the pseudo $H$-type Lie algebras.
Let us write $\mathfrak h=\mathfrak h_{r,s}$ in~\eqref{isotalg1} as
$$
\mathfrak h_{r,s}=(\mathfrak h_{r,s})_1\oplus (\mathfrak h_{r,s})_0,
$$
where
$$
(\mathfrak h_{r,s})_0=\{ D_0=(0,A),\ [A,J_{Z_j}]=0,\ \ j=1,\ldots,r+s \}
$$
is the Lie algebra of automorphisms of $\mathfrak n_{r,s}$ acting as identity on the center, and
$$
(\mathfrak h_{r,s})_1=\{D_1=(C,A),\ AJ_{Z}-J_{Z}A=J_{C^{\tau}(Z)} \}.
$$
According to formula~\eqref{eq:Phi} for an orthonormal basis $\{ Z_1,\ldots,Z_{r+s} \}$ for $\mathfrak z$ we have that a corresponding skew-symmetric derivation $D_1=(C,A)\in (\mathfrak h_{r,s})_1$ can be written as
$$
A(X)=J_{Z_i}J_{Z_j}(X),\quad C(Z)=2(\langle Z_i, Z \rangle_{r,s}Z_j-\langle Z_j, Z \rangle_{r,s}Z_i)=\ad_{Z_iZ_j}Z=[Z_iZ_j,Z].
$$
Let $Z_0\in\mathfrak z$
%and $X_0\in\mathfrak v$
be a non-null vector and
$\mathfrak z_0=\spann\{Z_0\}^{\perp}$ be the orthogonal complement in $\mathfrak z$. We also write $\Cl(\mathbb R^{r',s'})=\Cl(\mathfrak z_0)$, $r'+s'=r+s-1$ for the Clifford algebra generated by the space $(\mathfrak z_0,\langle\cdot\,,\cdot\rangle_{r,s})\vert_{\mathfrak z_0}$ and acting on the module $\mathfrak v$.

\begin{lemma}\label{lem:01}
Any extension of the skew-symmetric  derivation $(J_{Z_0},0)$, $\|Z_0\|\neq0$
to a skew-symmetric derivation $D=(C,A)\in \mathfrak h_{r,s}$  must belong to
    $$
    \mathfrak h_{r,s}=(\mathfrak h_{r',s'})_1+(\mathfrak h_{r,s})_0,\quad r'+s'=r+s-1.
    $$
\end{lemma}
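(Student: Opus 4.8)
The plan is to reduce the statement to the block decomposition $\mathfrak h_{r,s}=(\mathfrak h_{r,s})_1\oplus(\mathfrak h_{r,s})_0$ recorded above, together with the identification of $(\mathfrak h_{r,s})_1$ with $\mathfrak{so}(r,s)$ coming from Proposition~\ref{pr.triple.1}(6), and then to read off the single constraint that the datum $(0,J_{Z_0})$ imposes on the center. The only feature of ``extension of $(0,J_{Z_0})$'' that I will use is that such a $D=(C,A)$ acts trivially along $Z_0$: since $(0,J_{Z_0})$ lives on $\mathfrak x=\spann\{X,J_{Z_0}X,Z_0\}$ and is trivial in the $Z_0$-direction, any skew-symmetric derivation extending it must satisfy $C(Z_0)=0$.

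First I would make the identification precise. By Proposition~\ref{pr.triple.1}(6) we have $\mathbf N=[\mathbf V,\mathbf V]\oplus\mathbf Z$, and under the faithful representation $\rho$ this transports to $\mathfrak h_{r,s}=(\mathfrak h_{r,s})_1\oplus(\mathfrak h_{r,s})_0$ with $(\mathfrak h_{r,s})_1\cong[\mathbf V,\mathbf V]\cong\mathfrak{so}(r,s)$ and $(\mathfrak h_{r,s})_0\cong\mathbf Z$. The assignment $D=(C,A)\mapsto C$ annihilates $(\mathfrak h_{r,s})_0$ and restricts to a Lie-algebra isomorphism $(\mathfrak h_{r,s})_1\to\mathfrak{so}(r,s)$: injectivity holds because an element of $(\mathfrak h_{r,s})_1$ with $C=0$ lies in $(\mathfrak h_{r,s})_1\cap(\mathfrak h_{r,s})_0=0$, while surjectivity follows because the derivations $\Phi_{Z_i,Z_j}$ of~\eqref{eq:Phi} realize $C=\ad_{Z_iZ_j}$, and these span $\mathfrak{so}(r,s)$ (the dimensions agree by Proposition~\ref{pr.triple.1}).

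Next I would decompose $D=D_1+D_0$ with $D_1=(C,A_1)\in(\mathfrak h_{r,s})_1$ and $D_0=(0,A_0)\in(\mathfrak h_{r,s})_0$; the center-component of $D_1$ is exactly $C$ because $D_0$ contributes nothing there. The constraint $C(Z_0)=0$ with $Z_0$ non-null forces $C$ to preserve the non-degenerate orthogonal complement $\mathfrak z_0=\spann\{Z_0\}^\perp$: indeed, for $W\in\mathfrak z_0$ skew-symmetry gives $\langle C(W),Z_0\rangle=-\langle W,C(Z_0)\rangle=0$, so $C(\mathfrak z_0)\subset\mathfrak z_0$ and $C\in\mathfrak{so}(\mathfrak z_0)\cong\mathfrak{so}(r',s')$ with $r'+s'=r+s-1$. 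Translating back through the isomorphism of the previous step, the subalgebra $\mathfrak{so}(\mathfrak z_0)$ corresponds precisely to the span of the $\Phi_{Z_i,Z_j}$ with $Z_i,Z_j\in\mathfrak z_0$, that is, to $(\mathfrak h_{r',s'})_1$ attached to $\Cl(\mathfrak z_0)$. Hence $D_1\in(\mathfrak h_{r',s'})_1$ and $D=D_1+D_0\in(\mathfrak h_{r',s'})_1+(\mathfrak h_{r,s})_0$, as claimed.

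I expect the main obstacle to be the bookkeeping in the last step: verifying that the $Z_0$-stabilizer inside $(\mathfrak h_{r,s})_1$ is matched, under $(C,A)\mapsto C$, exactly with the copy $(\mathfrak h_{r',s'})_1$ generated by the products $J_{Z_i}J_{Z_j}$ with $Z_i,Z_j\perp Z_0$, and not with some larger subalgebra. This amounts to checking that $A_1$, which is forced by $C\in\mathfrak{so}(\mathfrak z_0)$, is itself a combination of such products and hence a derivation of the sub-Clifford structure $\Cl(\mathfrak z_0)$ acting on $\mathfrak v$; the faithfulness of $\rho$ together with the explicit generators $A=J_{Z_i}J_{Z_j}$, $C=\ad_{Z_iZ_j}$ makes this a routine but necessary consistency check.
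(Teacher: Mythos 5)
Your proof is correct and follows essentially the same route as the paper: decompose $D=D_1+D_0$ along $\mathfrak h_{r,s}=(\mathfrak h_{r,s})_1\oplus(\mathfrak h_{r,s})_0$, observe that $D_0$ is unconstrained, and use $C(Z_0)=0$ to force $D_1$ into $(\mathfrak h_{r',s'})_1$. The only cosmetic difference is that the paper verifies the last step by computing $C(Z_0)$ on the explicit generators $\Phi_{Z_k,Z_l}$ (those with $k=0$ give $C(Z_0)=2\|Z_0\|^2Z_l\neq 0$), whereas you identify the $Z_0$-stabilizer in $\mathfrak{so}(r,s)$ with $\mathfrak{so}(\mathfrak z_0)\cong\mathfrak{so}(r',s')$ abstractly; both are fine.
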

\begin{proof}
    Let us assume that $D=(C,A)\in\mathfrak h_{r,s}$ is an extension of $(J_{Z_0},0)$. Then $C(Z_0)=0$, where we used that the vector $(Z_0,0)$ is not null. Any $D\in(\mathfrak h_{r,s})_0$ will satisfy it. Let
    $Z_0,Z_1,\ldots, Z_{r+s-1}$ be an orthonormal basis of $\mathfrak z_{r,s}$. We write $\mathfrak z_{r',s'}=\spann\{Z_1,\ldots, Z_{r+s-1}\}$ then the set
    $$
    \{Z_kZ_l,\ \  0< k<l\leq r+s-1\}
    $$
    form a basis of $\mathfrak{so}(\mathfrak z_{r',s'})$ and $D_1=D_1(Z_kZ_l)=(C,A)$ generated by $Z_kZ_l$ satisfies
    $$
    A(X)=J_{Z_k}J_{Z_l}(X),\quad
    C(Z_0)=
    \begin{cases}
        0\quad&\text{if}\quad 0<k<l\leq r+s-1
        \\
        2\|Z_0\|^2 Z_l &\text{if}\quad 0=k<l\leq r+s-1.
    \end{cases}
    $$
    Thus $D_1(Z_kZ_l)=(C,A)\in (\mathcal D_{r',s'})_1$ and $C(Z_0)=0$.
\end{proof}

\begin{theorem}
    If $r+s=0\mod 4$, $s=1\mod 2$, then the pseudo $H$-type Lie group $N_{r,s}$ is not geodesic orbit manifold.
\end{theorem}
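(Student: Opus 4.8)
The plan is to run the argument of Theorem~\ref{th:42} again, the only change being a parity twist that forces an extra step. As before, everything revolves around the volume form $\omega=\prod_{i=1}^{r+s}Z_i\in\Cl(\mathbb R^{r,s})$ and its operator $J_\omega=\prod_{i=1}^{r+s}J_{Z_i}$ on $\mathfrak v$. First I would collect the three facts that survive verbatim: by Lemma~\ref{lem:even}, $r+s=0\mod 4$ makes every $A$ from a derivation $D=(C,A)\in\mathfrak h$ commute with $J_\omega$; since $n=r+s$ is even, Proposition~\ref{prop:3page22} gives $Z\omega=-\omega Z$, hence $J_ZJ_\omega=-J_\omega J_Z$ for all $Z\in\mathfrak z$; and formula~\eqref{eq:3.7-1} gives $\omega^2=(-1)^s$ for $r+s=0\mod 4$. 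The decisive difference from Theorem~\ref{th:42} is that now $s$ is odd, so $\omega^2=-1$ and $J_\omega^2=-\Id$. Thus $J_\omega$ is a complex structure, not an involution, and the real eigenspace splitting $\mathfrak v=\mathfrak v^+\oplus\mathfrak v^-$ exploited there is no longer available.

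To recover a real involution I would multiply $J_\omega$ by a well-chosen $J_{Z_1}$. Because $s\ge 1$ is odd, $\mathfrak z$ contains a negative unit vector $Z_1$ with $\langle Z_1,Z_1\rangle_{r,s}=-1$, whence $J_{Z_1}^2=-\langle Z_1,Z_1\rangle_{r,s}\,\Id=\Id$. Put $P:=J_{Z_1}J_\omega$. The anticommutation $J_{Z_1}J_\omega=-J_\omega J_{Z_1}$ together with $J_{Z_1}^2=\Id$ and $J_\omega^2=-\Id$ gives, in one line, $P^2=-J_{Z_1}^2J_\omega^2=\Id$; so $P$ is a genuine involution and $\mathfrak v=\mathfrak v_P^+\oplus\mathfrak v_P^-$ splits into its $\pm 1$-eigenspaces. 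The same anticommutation yields $PJ_{Z_1}=-J_{Z_1}P$, so $J_{Z_1}$ interchanges $\mathfrak v_P^+$ and $\mathfrak v_P^-$; since $J_{Z_1}$ is invertible, both eigenspaces are nonzero (indeed of equal dimension).

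It then remains to feed this into the criterion of Proposition~\ref{gonil1n.2}. Assuming $N_{r,s}$ geodesic orbit, apply it to $Z=Z_1$ and an arbitrary nonzero $X\in\mathfrak v_P^+$: there is $D=(C,A)\in\mathfrak h$ with $C(Z_1)=0$ and $A(X)=J_{Z_1}(X)$. From $C(Z_1)=0$ and~\eqref{isotalg1} we get $[A,J_{Z_1}]=J_{C(Z_1)}=0$, so $A$ commutes with $J_{Z_1}$; and $A$ commutes with $J_\omega$ by Lemma~\ref{lem:even}. Hence $A$ commutes with $P$ and preserves $\mathfrak v_P^+$, giving $A(X)\in\mathfrak v_P^+$. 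But $J_{Z_1}(X)\in\mathfrak v_P^-$ and is nonzero, so $A(X)=J_{Z_1}(X)$ would place a nonzero vector in $\mathfrak v_P^+\cap\mathfrak v_P^-=\{0\}$, a contradiction. Hence $N_{r,s}$ is not geodesic orbit.

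The one place where care is genuinely needed---and where both hypotheses are consumed---is the sign bookkeeping that makes $P$ square to $+\Id$ rather than $-\Id$: this relies on $\omega^2=(-1)^s=-1$ (oddness of $s$) and on choosing $Z_1$ negative so that $J_{Z_1}^2=+\Id$; a positive $Z_1$ would give $P^2=-\Id$ and destroy the real splitting. Once $P$ is constructed, the contradiction is formally the same as in Theorem~\ref{th:42}, with $P$ taking over the role that $J_\omega$ played there.
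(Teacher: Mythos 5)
Your argument is correct, and at its core it is the same mechanism as the paper's: split $\mathfrak v$ into the $\pm1$-eigenspaces of an involution built from the volume element, show that the operator $A$ of the putative derivation preserves the splitting while $J_{Z_1}$ interchanges the two pieces, and derive a contradiction from $A(X)=J_{Z_1}(X)$. In fact your involution is literally the paper's: with $Z_1$ negative and $J_{Z_1}^2=\Id$ one has $P=J_{Z_1}J_\omega=J_{\omega'}$, where $\omega'$ is the volume element of the codimension-one Clifford subalgebra $\Cl(\mathbb R^{r,s-1})$ generated by $Z_1^{\perp}$, and $(\omega')^2=+1$ because $r+s-1\equiv 3\pmod 4$ with $s-1$ even --- this is exactly the decomposition $\mathfrak v=\mathfrak v^+_{r,s'}\oplus\mathfrak v^-_{r,s'}$ used in the paper. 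Where you genuinely diverge is in how the invariance of the splitting is established: the paper routes this through the decomposition $\mathfrak h_{r,s}=(\mathfrak h_{r,s})_0\oplus(\mathfrak h_{r,s})_1$ and Lemma~\ref{lem:01} on extensions of $(0,J_{Z_0})$, arguing separately that each summand preserves $\mathfrak v^{\pm}_{r,s'}$, whereas you observe directly that $C(Z_1)=0$ forces $[A,J_{Z_1}]=0$ via \eqref{isotalg1} and that $[A,J_\omega]=0$ by Lemma~\ref{lem:even}, so $A$ commutes with $P$. Your route is shorter and consumes fewer structural inputs (it does not need Lemma~\ref{lem:01} at all), at the cost of hiding the representation-theoretic fact that $\mathfrak v^{\pm}_{r,s'}$ are the two inequivalent $\Cl(\mathbb R^{r,s-1})$-submodules, which the paper makes explicit. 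Your sign bookkeeping ($\omega^2=(-1)^s=-1$, $J_{Z_1}^2=+\Id$ for a negative $Z_1$, hence $P^2=\Id$) is exactly where both hypotheses $r+s\equiv0$ and $s\equiv1$ enter, and it is carried out correctly.
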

\begin{proof}
Let us write $s=s'+1$ and consider the  last vector $Z_{r+s}$, $\|Z_{r+s}\|^2=-1$ in the orthonormal basis $\{Z_1,\ldots, Z_{r+s}\}$ for $\mathfrak z_{r,s}$.
Note that $\omega^2=1$, where $\omega=\prod_{i=1}^{r+s}Z_i$ by~\eqref{eq:3.7-1}. The module $\mathfrak v_{r,s}$ of $\Cl(\mathbb R^{r,s})$ is decomposed into the direct sum of two subspaces
$$
\mathfrak v_{r,s}=\mathfrak v^+_{r,s'}+\oplus \mathfrak v^-_{r,s'},
$$
that are the eigenspaces of the volume form $\omega$. The spaces $\mathfrak v^{\pm}_{r,s'}$ are the non-equivalent modules of $\Cl(\mathbb R^{r,s'})=\Cl(\mathfrak z_{r,s}^{\perp})$, where $\mathfrak z_{r,s}^{\perp}$ is the orthogonal complement of $\spann\{Z_{r,s}\}$ in $\mathfrak z_{r,s}$.
%The spaces $\mathfrak v^{\pm}_{r,s'}$ are not submodules of $\Cl(\mathbb R^{r,s})$.
Due to $J_{Z_{r+s}}\omega=-\omega J_{z_{r+s}}$, the map $J_{Z_{r+s}}\colon
\mathfrak v^{+}_{r,s'}\to \mathfrak v^{-}_{r,s'}$ is an isomorphism  of vectors spaces.

If $D=(0,A)\in(\mathfrak h_{r,s})_0$, then
$$
[A,J_{Z_i}]=0\quad\Longrightarrow\quad A\omega=\omega A
$$
by Lemma~\ref{lem:even}. Therefore $A$ must preserve the spaces $\mathfrak v^{\pm}_{r,s'}$. If  $D=(C,A)\in(\mathfrak h_{r,s'})_1$, then $A$ also leaves spaces $\mathfrak v^{\pm}_{r,s'}$ invariant since $\mathfrak v^{\pm}_{r,s'}$ are submodules of the Clifford algebra $\Cl(\mathbb R^{r,s'})=\Cl(\mathfrak z_{r,s}^{\perp})$.

The extension $D=(0,A)$ of $(J_{Z_{r+s}},0)$ acting on the $\mathfrak X=\spann\{Z_{r+s},X,J_{Z_{r+s}}(X)\}$, $X\in \mathfrak v^+_{r,s'}$ must belong to $(\mathfrak h_{r,s'})_1+(\mathfrak h_{r,s})_0$ by Lemma~\ref{lem:01}. By the above arguments $A$ should preserve the submodule $\mathfrak v^+_{r,s'}$. But this contradicts to the fact that the restriction $A\vert_{\mathfrak X}=J_{Z_{r+s}}$ of $A$ to $\mathfrak X$ has the property $J_{Z_{r+s}}(X)\in\mathfrak v^-_{r,s'}$.
\end{proof}

\subsection{The geodesic orbit property and the periodicity property}

As another illustration of Theorem~\ref{pr_totgeod_h1}, Corollary~\ref{co.totgeod.1}, and the structure of admissible modules under the periodicity property,
we prove the following theorem.

\begin{theorem}\label{th:NGO1}
If a pseudo $H$-type Lie group $N_{r,s}$ is not geodesic orbit, then $N_{r+\mu,s+\nu}$ with $(\mu,\nu)\in\{(8,0),(0,8),(4,4)\}$ is also not geodesic orbit.
\end{theorem}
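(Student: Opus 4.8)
The plan is to exhibit $N_{r,s}$ as a (geodesically complete) totally geodesic submanifold of $N_{r+\mu,s+\nu}$ and then to invoke Corollary~\ref{co.totgeod.1}: since $N_{r,s}$ is assumed not to be geodesic orbit, $N_{r+\mu,s+\nu}$ cannot be geodesic orbit either. The embedding will be read off directly from the periodicity construction recalled in Section~\ref{se.rseudo.htnlg}. Working in $\mathfrak n_{r+\mu,s+\nu}=\mathfrak z\oplus\mathfrak v$, I would fix an orthonormal basis of the center of the form $\{W_1,\dots,W_{r+s}\}\cup\{\zeta_1,\dots,\zeta_8\}$, where $\zeta_1,\dots,\zeta_8$ are the eight new generators carrying the four positive involutions $p_1,p_2,p_3,p_4$ of the example in Section~\ref{se.rseudo.htnlg}, and $W_1,\dots,W_{r+s}$ are the remaining generators. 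I then set $\mathfrak z_1=\spann\{W_1,\dots,W_{r+s}\}$ (a non-degenerate subspace of signature $(r,s)$), $\mathfrak z_2=\spann\{\zeta_1,\dots,\zeta_8\}$, let $\mathfrak v_1=E^{+1}=\{X\in\mathfrak v:\ J_{p_k}X=X,\ k=1,2,3,4\}$ be the common $+1$-eigenspace, and $\mathfrak v_2=\mathfrak v_1^{\perp}$.

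The first task is to check the two hypotheses of part 2) of Theorem~\ref{pr_totgeod_h1}. Because $\mathcal S$ is a subgroup all of whose elements square to $1$, the operators $J_{p_k}$ are mutually commuting, self-adjoint involutions (self-adjointness following from $J_{\zeta}^{\tau}=-J_{\zeta}$ together with the fact that reversing four pairwise anticommuting factors contributes the sign $(-1)^{6}=1$); hence $\mathfrak v$ decomposes orthogonally into their common eigenspaces and, in particular, $\mathfrak v_1$ is non-degenerate. Each $W_j$ is a single generator distinct from every $\zeta_i$, so it commutes with each degree-four $p_k$ in $\Cl(\mathbb R^{r+\mu,s+\nu})$; thus $J_{W_j}$ commutes with every $J_{p_k}$ and preserves $\mathfrak v_1$, giving $J_Z(\mathfrak v_1)\subset\mathfrak v_1$ for all $Z\in\mathfrak z_1$. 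Conversely, every index $1,\dots,8$ appears in at least one $p_k$, so each $\zeta_i$ anticommutes with some $p_k$, whence $J_{\zeta_i}$ sends the $+1$-eigenspace of that $J_{p_k}$ into its $-1$-eigenspace; since these are orthogonal, $J_{\zeta_i}(\mathfrak v_1)\subset\mathfrak v_1^{\perp}=\mathfrak v_2$, and by linearity $J_Z(\mathfrak v_1)\subset\mathfrak v_2$ for all $Z\in\mathfrak z_2$. These are exactly the two conditions required, so $\mathfrak n_1:=\mathfrak z_1\oplus\mathfrak v_1$ generates a totally geodesic submanifold $N_1$ of $N_{r+\mu,s+\nu}$. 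All of these verifications are insensitive to the signs $\langle\zeta_i,\zeta_i\rangle=\pm1$, so a single argument handles the three cases $(\mu,\nu)\in\{(8,0),(0,8),(4,4)\}$ at once.

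It remains to identify $\mathfrak n_1$ with $\mathfrak n_{r,s}$, and this is the step I expect to demand the most care. The subalgebra of $\Cl(\mathbb R^{r+\mu,s+\nu})$ generated by $W_1,\dots,W_{r+s}$ is a copy of $\Cl(\mathbb R^{r,s})$ leaving $\mathfrak v_1=E^{+1}$ invariant, so $\mathfrak v_1$ is a $\Cl(\mathbb R^{r,s})$-module of dimension $\tfrac{1}{16}\dim\mathfrak v_{r+\mu,s+\nu}=\dim\mathfrak v_{r,s}$; by the converse periodicity statement of Section~\ref{se.rseudo.htnlg} this is precisely the minimal admissible module $\mathfrak v_{r,s}$, with $(\mathfrak z_1,\langle\cdot\,,\cdot\rangle)\cong\mathbb R^{r,s}$ and with admissible induced scalar product on $\mathfrak v_1$. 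The subtle point is that the restriction of $J_{W_j}$ to $\mathfrak v_1$ may differ from the intended $H$-type operator by a single global sign, arising from the action of the volume element $\omega_{\mu,\nu}$ on the one-dimensional space $E^{+1}_{\mu,\nu}$; since this sign squares to $1$, it preserves both $J_{W_j}^2=-\langle W_j,W_j\rangle\Id$ and the Clifford relations, so $\mathfrak n_1\cong\mathfrak n_{r,s}$ as pseudo $H$-type algebras and $N_1\cong N_{r,s}$. With $N_{r,s}$ totally geodesic in $N_{r+\mu,s+\nu}$ and not geodesic orbit, Corollary~\ref{co.totgeod.1} yields that $N_{r+\mu,s+\nu}$ is not geodesic orbit, as claimed.
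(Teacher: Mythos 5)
Your proposal is correct and follows essentially the same route as the paper: both realize $\mathfrak n_{r,s}$ as the subalgebra $\mathfrak z_1\oplus\mathfrak v_1$ of $\mathfrak n_{r+\mu,s+\nu}$ with $\mathfrak v_1$ the common $+1$-eigenspace of the four involutions $J_{p_k}$ (the paper writes this same subspace as $\mathfrak v_{r,s}\otimes E^{+1}_{\mu,\nu}$ in the tensor-product picture), verify the two hypotheses of Theorem~\ref{pr_totgeod_h1}, and conclude via Corollary~\ref{co.totgeod.1}. The only cosmetic difference is that you work intrinsically with the eigenspace decomposition and the converse periodicity statement, whereas the paper computes the brackets explicitly through the tensor-product construction; the substance is identical.
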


\begin{proof}
Consider a pseudo $H$-type Lie algebra $\mathfrak n_{r+\mu,s+\nu}$ with the minimal admissible module $\mathfrak v_{r+\mu,s+\nu}=\mathfrak v_{r,s}\otimes \mathfrak v_{\mu,\nu}$, the scalar product
$$
\langle\cdot\,,\cdot\rangle_{\mathfrak v_{r+\mu,s+\nu}}=\langle\cdot\,,\cdot\rangle_{\mathfrak v_{r,s}}\cdot\langle\cdot\,,\cdot\rangle_{\mathfrak v_{\mu,\nu}},
$$
and the basis $\{u_i\otimes v_k,\ i=1,\ldots,\dim(\mathfrak v_{r,s}),\ k=1,\ldots,16\}$. Here $\{u_i\}_{i=1}^{\dim(\mathfrak v_{r,s})}$
is an orthonormal invariant basis for $\mathfrak v_{r,s}$ generated by a vector $u\in E^{+1}_{r,s}$ with $\langle u, u\rangle_{\mathfrak v_{r,s}}=1$,
see Proposition~\ref{prop:basis}. Analogously, $\{v_k\}_{k=1}^{16}$ is an orthonormal invariant basis for $\mathfrak v_{\mu,\nu}$ from~\eqref{eq:basis-munu}
generated by a unit vector $v\in E^{+1}_{\mu,\nu}$. The orthonormal basis
\begin{equation}\label{eq:brsmn}
Z_1,\ldots,Z_{r+s},\zeta_1,\ldots,\zeta_{\mu+\nu}
\end{equation}
%{\color{magenta} Is this basis a union of two arbitrary bases for the centers of $\mathfrak n_{r,s}$ and $\mathfrak n_{\mu,\nu}$?}
for $\mathbb R^{r+\mu,s+\nu}$ is the union of the basis~\eqref{eq:brs} for $\mathbb R^{r,s}$ and the basis~\eqref{eq:zeta-basis} for $\mathbb R^{\mu,\nu}$. The basis~\eqref{eq:brsmn}
acts on
$\mathfrak v_{r+\mu,s+\nu}=\mathfrak v_{r,s}\otimes \mathfrak v_{\mu,\nu}$ by
$$
\tilde J_{Z_{l}}=J_{Z_{l}}\otimes\Id, \ l=1,\ldots,\dim{\mathfrak v_{r,s}},
\qquad \tilde J_{\zeta_{m}}=\Id\otimes J_{\zeta_{m}},\ m=1,\ldots,16.
$$
The commutators in $\mathfrak n_{r+\mu,s+\nu}$ satisfy the following relations
\begin{equation}\label{eq:111}
    [u_i\otimes v_k,u_j\otimes v_k]_{\mathfrak v_{r+\mu,s+\nu}}=[u_i,u_j]_{\mathfrak v_{r,s}}\|v_k\|^2,\quad i,j=1,\ldots,\dim{\mathfrak v_{r,s}},
\end{equation}
for any $k=1,\ldots,16$. Indeed, let $Z\in\mathbb R^{r,s}$, then
\begin{eqnarray*}
    \langle Z,[u_i\otimes v_k,u_j\otimes v_k]_{\mathfrak v_{r+\mu,s+\nu}}\rangle_{r+\mu,s+\nu}
    &=&
    \langle \tilde J_Z(u_i\otimes v_k),u_j\otimes v_k\rangle_{\mathfrak v_{r+\mu,s+\nu}}
    \\
    &=&\langle J_Z(u_i)\otimes v_k,u_j\otimes v_k\rangle_{\mathfrak v_{r+\mu,s+\nu}}
    \\
    &=&
    \langle J_Z(u_i),u_j\rangle_{\mathfrak v_{r,s}}\cdot\langle v_k,v_k\rangle_{\mathfrak v_{\mu,\nu}}
    \\
    &=&
    \langle Z, [u_i,u_j]_{\mathfrak v_{r,s}}\rangle_{r,s}\|v_k\|^2.
\end{eqnarray*}
since
$
\langle Z,\bullet\rangle_{r+\mu,s+\nu}=\langle Z,\bullet\rangle_{r,s}
$
for any $Z\in \mathbb R^{r,s}$. Analogously
\begin{equation}\label{eq:112}
    [u_k\otimes v_i,u_k\otimes v_j]_{\mathfrak v_{r+\mu,s+\nu}}=[v_i,v_j]_{\mathfrak v_{\mu,\nu}}\|u_k\|^2\quad i,j=1,\ldots,16,
\end{equation}
and any $k=1,\ldots,\dim{\mathfrak v_{r,s}}$.

We denote
$$\mathbb R^{r,s}=\mathfrak z_1=\spann\{Z_1,\ldots, Z_{r+s}\},\qquad\mathbb R^{\mu,\nu}=\mathfrak z_2=\spann\{\zeta_1,\ldots, \zeta_{\mu+\nu}\}.
$$
Equality~\eqref{eq:111} and Theorem~\ref{pr_totgeod_h1} show that the pseudo $H$-type Lie algebra $\mathfrak n_{r,s}=\mathbb R^{r,s}\oplus \mathfrak v_{r,s}$ is a subalgebra of $\mathfrak n_{r+\mu,s+\nu}=\mathbb R^{r+\mu,s+\nu}\oplus \mathfrak v_{r+\mu,s+\nu}$. We have 16 such subalgebras of the form
$$
\mathbb R^{r,s}\oplus\mathfrak v_{r,s}\otimes\spann\{v_k\},\quad v_k\in \mathfrak v_{\mu,\nu},\quad k=1,\ldots, 16.
$$

Analogously,~\eqref{eq:112} implies that there are $m=\dim(\mathfrak v_{r,s})$ subalgebras isomorphic to $\mathfrak n_{\mu,\nu}$ inside the Lie algebra $\mathfrak n_{r+\mu,s+\nu}=\mathbb R^{r+\mu,s+\nu}\oplus\mathfrak v_{r+\mu,s+\nu}$, and they have the form
$$
\mathbb R^{\mu,\nu}\oplus\spann\{u_k\}\otimes \mathfrak v_{\mu,\nu},\quad u_k\in \mathfrak v_{r,s},\quad k=1,\ldots, \dim(\mathfrak v_{r,s}).
$$

Now, we apply Corollary~\ref{co.totgeod.1}. We set
$$
\mathfrak v_1=\mathfrak v_{r,s}\otimes E^{+1}_{\mu,\nu},\quad \mathfrak v_2=\bigoplus_{j=1}^{8}\Big(\mathfrak v_{r,s}\otimes J_{\zeta_{j}}E^{+1}_{\mu,\nu}\Big)\bigoplus_{j=2}^{8}\Big(\mathfrak v_{r,s}\otimes J_{\zeta_1}J_{\zeta_j}E^{+1}_{\mu,\nu}\Big).
$$
Under this notation we obtain from~\eqref{eq:111}
$$
\mathfrak v_{r+\mu,s+\nu}=\mathfrak v_1\oplus \mathfrak v_2,\quad \mathfrak n_1=\mathfrak z_1\oplus\mathfrak v_1\cong\mathfrak n_{r,s}
$$
with $v_1=v\in E^{+1}_{\mu,\nu}$. Moreover
$$
J_{Z_i}(\mathfrak v_1)\subset\mathfrak v_1\ \ \text{for all}\ \ Z_i\in\mathfrak z_1,\qquad
J_{\zeta_j}(\mathfrak v_1)\subset\mathfrak v_2\ \ \text{for all}\ \ \zeta_j\in\mathfrak z_2.
$$
This implies that the pseudo $H$-type Lie algebra $\mathfrak n_1\cong\mathfrak n_{r,s}$ generates a totally geodesic subgroup $N_{r,s}$ in $N_{r+\mu,s+\nu}$. By the hypothesis of Theorem~\ref{th:NGO1} the nilmanifold $N_{r,s}$ is not geodesic orbit. Then Corollary~\ref{co.totgeod.1} implies that $N_{r+\mu,s+\nu}$ is also not geodesic orbit.
\end{proof}
\smallskip

\begin{theorem}
Pseudo-Riemannian $H$-type nilmanifolds $N_{r,s}$ with
$\max\{r,s\}\geq 8$ and $\min\{r,s\}\geq 4$ are not geodesic orbit.
\end{theorem}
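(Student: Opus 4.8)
The plan is to reduce every $N_{r,s}$ in the stated range to a pseudo $H$-type group already known to be non-geodesic-orbit, using two tools: the periodicity principle of Theorem~\ref{th:NGO1} and the totally geodesic reduction of Corollary~\ref{co.totgeod.1}. The case $r+s\equiv 0\pmod 4$ needs no reduction at all: there $N_{r,s}$ is non-GO directly by Theorem~\ref{th:42} (when $s$ is even) and by its companion theorem (when $s$ is odd). Thus the real content is the case $r+s\not\equiv 0 \pmod 4$. Here periodicity by itself cannot finish the job, because subtracting any block $(8,0)$, $(0,8)$, or $(4,4)$ leaves the pair $(r \bmod 4, s\bmod 4)$ unchanged, while the small non-GO examples $N_{1,1}, N_{0,2}, N_{2,1}, N_{0,3}$ realize only four of the residue classes.

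To cover the remaining classes I would produce, inside $N_{r,s}$, a totally geodesic subgroup isomorphic to a \emph{smaller} non-GO group of a convenient signature, and then invoke Corollary~\ref{co.totgeod.1}. The hypothesis $\min\{r,s\}\ge 4$ supplies a non-degenerate sub-center: selecting $4$ positive and/or $4$ negative vectors from the orthonormal basis~\eqref{eq:brs} gives a non-degenerate $\mathfrak z_1\subset \mathfrak z$ of any signature $(r_1,s_1)$ with $r_1\le r$, $s_1\le s$. Natural targets are $(4,4)$, non-GO by Corollary~\ref{cor:mu-nu}; $(0,4)$, non-GO by Theorem~\ref{th:42}; and $(4,0)$, non-GO by Riehm's Theorem~\ref{th.class.riemmannain}, since $m=4$ is not in the admissible list. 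Writing $\mathfrak z=\mathfrak z_1\oplus\mathfrak z_2$, by Theorem~\ref{pr_totgeod_h1}(2) it then suffices to exhibit a non-degenerate $\Cl(\mathfrak z_1)$-invariant subspace $\mathfrak v_1\subset\mathfrak v$ with $J_Z(\mathfrak v_1)\subset\mathfrak v_1$ for $Z\in\mathfrak z_1$ and $J_Z(\mathfrak v_1)\subset\mathfrak v_1^{\perp}$ for $Z\in\mathfrak z_2$; the subgroup generated by $\mathfrak z_1\oplus\mathfrak v_1$ is then the desired totally geodesic $N_{r_1,s_1}$. The role of $\max\{r,s\}\ge 8$ is to make $\dim\mathfrak v$ large enough for such a \emph{proper} invariant submodule to exist: it lets us write $\mathfrak v$ as a periodicity tensor product $\mathfrak v_{r',s'}\otimes\mathfrak v_{\mu,\nu}$ with $(\mu,\nu)\in\{(8,0),(0,8)\}$, inside which $\mathfrak v_1$ can be cut out using the invariant basis and the common eigenspaces of positive involutions of Proposition~\ref{prop:basis}, exactly as in the proof of Theorem~\ref{th:NGO1}.

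The main obstacle is the transversality requirement $J_Z(\mathfrak v_1)\subset\mathfrak v_1^{\perp}$ for $Z\in\mathfrak z_2$, together with the demand that the induced form on $\mathfrak v_1$ be non-degenerate of the signature appropriate to the chosen target. Finding a $\Cl(\mathfrak z_1)$-submodule is routine; arranging that it be orthogonal to all its images under the complementary Clifford generators is not, and indeed it \emph{must} fail for $N_{3,4}$, which is geodesic orbit. The crux of the proof is therefore to check that the two inequalities $\min\{r,s\}\ge 4$ and $\max\{r,s\}\ge 8$ are precisely what force the existence of a suitable $\mathfrak v_1$ — the first providing the sub-center and the second the extra module dimension — whereas the borderline group $N_{3,4}$, having $r=3<4$, admits no such construction.
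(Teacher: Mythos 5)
There is a genuine gap at the decisive step. Your reduction targets are the signatures $(4,4)$, $(0,4)$, $(4,0)$, reached by carving a $4$-dimensional sub-center $\mathfrak z_1$ out of $\mathfrak z$; but you then need a non-degenerate $\Cl(\mathfrak z_1)$-submodule $\mathfrak v_1$ with $J_Z(\mathfrak v_1)\subset\mathfrak v_1$ for $Z\in\mathfrak z_1$ and $J_Z(\mathfrak v_1)\perp\mathfrak v_1$ for $Z\in\mathfrak z_1^{\perp}$, and you explicitly identify this as ``the crux'' without constructing it -- you only assert that the hypotheses $\min\{r,s\}\ge 4$, $\max\{r,s\}\ge 8$ ``force'' its existence. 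That assertion is exactly what a proof would have to supply, and it is not obvious: for a generic $\Cl(\mathfrak z_1)$-submodule the images $J_Z(\mathfrak v_1)$, $Z\in\mathfrak z_1^{\perp}$, are not orthogonal to $\mathfrak v_1$. A secondary issue is that even if such a $\mathfrak v_1$ were found, the resulting $H$-type algebra $\mathfrak z_1\oplus\mathfrak v_1$ of signature $(0,4)$ or $(4,4)$ would in general carry a non-minimal admissible module, whereas the non-GO statements you invoke are formulated in the paper for the minimal-module groups $N_{r,s}$ (Theorem~\ref{th:42} happens not to use minimality, but you would need to say so).

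The paper sidesteps all of this by choosing a different reduction target: not a small-signature group determined by the residue of $(r,s)$ mod $4$, but the periodicity block $N_{\mu,\nu}$ itself, $(\mu,\nu)\in\{(8,0),(0,8),(4,4)\}$, which is non-GO by Corollary~\ref{cor:mu-nu}. Writing $(r',s')=(r+\mu,s+\nu)$ (possible precisely because $\max\ge 8$ and $\min\ge 4$) and $\mathfrak v_{r',s'}=\mathfrak v_{r,s}\otimes\mathfrak v_{\mu,\nu}$, one takes $\mathfrak v_1=\spann\{u\}\otimes\mathfrak v_{\mu,\nu}$ for a unit vector $u\in E^{+1}_{r,s}$ (Proposition~\ref{prop:basis}). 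Then $J_{\zeta_j}=\Id\otimes J_{\zeta_j}$ preserves $\mathfrak v_1$, while $J_{Z_i}=J_{Z_i}\otimes\Id$ sends $u\otimes w$ to $J_{Z_i}(u)\otimes w\perp\mathfrak v_1$, so the transversality conditions of Theorem~\ref{pr_totgeod_h1} hold automatically, and Corollary~\ref{co.totgeod.1} finishes the argument. In short, your observation that subtracting a block leaves the residue class unchanged led you away from the correct move: the subgroup to exhibit is the block, not the residue.
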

\begin{proof}
Let $N_{r',s'}$ be a pseudo $H$-type Lie group with $\max\{r,s\}\geq 8$ and $\min\{r,s\}\geq 4$. Then write $r'=r+\mu$, $s'=s+\nu$ for some $(\mu,\nu)\in\{(8,0),(0,8),(4,4)\}$. Arguing as in the proof of Theorem~\ref{th:NGO1}, we show that for
$$\mathbb R^{r,s}=\mathfrak z_1=\spann\{Z_1,\ldots, Z_{r+s}\},\qquad\mathbb R^{\mu,\nu}=\mathfrak z_2=\spann\{\zeta_1,\ldots, \zeta_{\mu+\nu}\},
$$
and
$$
\mathfrak v_1=\spann\{u\}\otimes \mathfrak v_{\mu,\nu},\quad
\mathfrak v_2=\mathfrak v_{r,s}\otimes E^{+1}_{\mu,\nu},
$$
where $u\in E^{+1}_{r,s}$ with $\langle u, u\rangle_{\mathfrak v_{r,s}}=1$, the Lie algebra
$$
\mathfrak n_2=\mathfrak z_2\oplus\mathfrak v_2\cong \mathfrak n_{\mu,\nu}
$$
is a subalgebra of $\mathfrak n_{r+\mu,s+\nu}$. Moreover
$$
J_{Z_i}(\mathfrak v_2)\subset\mathfrak v_2\ \ \text{for all}\ \ Z_i\in\mathfrak z_1,\qquad
J_{\zeta_j}(\mathfrak v_2)\subset\mathfrak v_1\ \ \text{for all}\ \ \zeta_j\in\mathfrak z_2.
$$
It implies that the pseudo $H$-type Lie algebra $\mathfrak n_2\cong\mathfrak n_{\mu,\nu}$ generates a totally geodesic subgroup $N_{\mu,\nu}$ in $N_{r+\mu,s+\nu}$.
By Corollary~\ref{cor:mu-nu}
the group $N_{\mu,\nu}$ is not geodesic orbit and Corollary~\ref{co.totgeod.1} implies that $N_{r+\mu,s+\nu}$ is also not geodesic orbit.
\end{proof}

\subsection{Pseudo-Riemannian $H$-type nilmanifolds $N_{r,s}$, for  $(r,1)$, $r\geq 3$ and $(0,s)$, $s>0$.}

\begin{theorem}\label{th:s>3}
    The Pseudo-Riemannian $H$-type nilmanifolds $N_{0,s}$ with $s\geq 4$ are not geodesic orbit.
\end{theorem}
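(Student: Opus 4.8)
The plan is to disprove the geodesic orbit property by exhibiting a single initial pair $(Z,X)$ violating the criterion of Proposition~\ref{gonil1n.2}. Fix an orthonormal basis $Z_1,\dots,Z_s$ of $\mathfrak z=\mathbb R^{0,s}$ as in~\eqref{eq:brs}, so every $J_{Z_i}$ is a skew-symmetric involution ($J_{Z_i}^2=\Id$) and the $J_{Z_i}$ pairwise anticommute. Since $s\ge 4$, the element $\omega=J_{Z_1}J_{Z_2}J_{Z_3}J_{Z_4}$ is a symmetric isometry with $\omega^2=\Id$, hence it grades $\mathfrak v=\mathfrak v^+\oplus\mathfrak v^-$ into its $\pm1$-eigenspaces; note $\omega$ anticommutes with $J_{Z_i}$ for $i\in\{1,2,3,4\}$ and commutes with $J_{Z_l}$ for $l\ge 5$. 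As $Z_1Z_2Z_3Z_4$ is a positive involution, I would take $v$ a non-null common $+1$-eigenvector (so $\omega v=v$, i.e. $v\in\mathfrak v^+$) for which $\{J_\sigma v\}_{\sigma\in\Sigma}$ is an orthogonal invariant basis of $\mathfrak v$ (Proposition~\ref{prop:basis}). The proposed bad pair is $Z=Z_1$, $X=v$; observe that $J_{Z_1}v\in\mathfrak v^-$.

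First I would translate the geodesic orbit condition at $(Z_1,v)$ using the normalizer decomposition $\mathbf N=[\mathbf V,\mathbf V]\oplus\mathbf Z$ of Proposition~\ref{pr.triple.1}. A skew-symmetric derivation realizing $(Z_1,v)$ corresponds to $A\in\mathbf N$ with $[A,J_{Z_1}]=0$ and $A(v)=J_{Z_1}(v)$, where $[A,J_{Z_1}]=0$ encodes $C(Z_1)=0$ via~\eqref{isotalg1}. Writing $A=A_0+\sum_{k<l}c_{kl}J_{Z_k}J_{Z_l}$ with $A_0\in\mathbf Z$ and $\sum c_{kl}J_{Z_k}J_{Z_l}\in[\mathbf V,\mathbf V]$, the relation $[A,J_{Z_1}]=0$ forces $c_{1l}=0$ for all $l$. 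Among the remaining summands, $A_0$ and every $J_{Z_k}J_{Z_l}$ with $\{k,l\}\subset\{2,3,4\}$ or $k,l\ge 5$ commute with $\omega$ and preserve the grading, while $J_{Z_k}J_{Z_l}$ with $k\in\{2,3,4\}$ and $l\ge5$ anticommutes with $\omega$ and interchanges $\mathfrak v^\pm$. Projecting the equation $A(v)=J_{Z_1}v$ onto $\mathfrak v^-$ then reduces the geodesic orbit condition to the single requirement
\[
J_{Z_1}v=\sum_{k\in\{2,3,4\},\,l\ge5}c_{kl}\,J_{Z_k}J_{Z_l}v .
\]

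The hard part, and the crux of the argument, is to show that this membership is impossible; this is where the pseudo-Euclidean signature enters. Each vector $J_{Z_k}J_{Z_l}v$ on the right is, up to sign, a member of the orthogonal invariant basis generated by $v$, and I claim it is never parallel to $J_{Z_1}v$: indeed $J_{Z_1}v=\pm J_{Z_k}J_{Z_l}v$ would give $J_{Z_1}J_{Z_k}J_{Z_l}v=\pm v$, i.e. $v$ would be a real eigenvector of $J_{Z_1Z_kZ_l}$, which is impossible because $Z_1Z_kZ_l$ is a product of three distinct negative generators, whence $J_{Z_1Z_kZ_l}^2=-\Id$ has no real eigenvalue. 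Consequently $J_{Z_1}v$ is orthogonal to every term on the right and cannot lie in their span; for $s=4$ the sum is empty and the identity reads $J_{Z_1}v=0$, which is equally impossible. In either case no admissible $A$ exists, so $(Z_1,v)$ violates Proposition~\ref{gonil1n.2} and $N_{0,s}$ is not geodesic orbit. I expect the only delicate points to be the bookkeeping of which products $J_{Z_k}J_{Z_l}$ reverse the $\omega$-grading and the verification that $J_{Z_1}v$ and the $J_{Z_k}J_{Z_l}v$ are genuinely distinct basis vectors; the sign $J_{Z_1Z_kZ_l}^2=-\Id$ is precisely what guarantees the latter and drives the whole obstruction.
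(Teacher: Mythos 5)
Your argument is correct, and it tests exactly the same pair as the paper --- $Z=Z_1$ together with a unit vector $v$ fixed by $J_p$ for the positive involution $p=Z_1Z_2Z_3Z_4$ --- but the mechanism for extracting the contradiction is genuinely different. The paper differentiates the identity $J_p(X)=X$ along the derivation $D=(C,A)$: writing $A(X)=A(J_pX)=J_pA(X)+J_{\dot p}(X)$ and using $J_pJ_{Z_1}=-J_{Z_1}J_p$, it arrives at $2J_{Z_1}(X)=Y$ with $Y$ a combination of even Clifford products applied to $X$, and the contradiction is the sign clash $\langle Y,Y\rangle\geq 0$ versus $\langle 2J_{Z_1}X,2J_{Z_1}X\rangle=-4$. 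You instead decompose the admissible operator via $\mathbf N=[\mathbf V,\mathbf V]\oplus\mathbf Z$ (Proposition~\ref{pr.triple.1}), use the $\pm1$-grading of $\mathfrak v$ under the symmetric involution $J_p$ to isolate the grading-reversing part $\sum_{k\in\{2,3,4\},\,l\geq 5}c_{kl}J_{Z_k}J_{Z_l}$, and then kill it by pairing against $J_{Z_1}v$, using the invariant basis of Proposition~\ref{prop:basis} together with $(J_{Z_1}J_{Z_k}J_{Z_l})^2=-\Id$ to get orthogonality; your final contradiction is $-1=\langle J_{Z_1}v,J_{Z_1}v\rangle=0$. Both routes ultimately rest on the same two facts --- $J_{Z_1}$ anticommutes with $J_p$, while $\langle J_{Z_1}v,J_{Z_1}v\rangle<0$ by \eqref{eq:JJJ} --- and both need the mutual orthogonality of distinct vectors $J_\sigma v$ from the invariant basis (the paper uses this implicitly when asserting $\langle Y,Y\rangle\geq 0$). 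What your version buys is a more transparent endgame (a single orthogonality relation rather than a sign estimate on a sum of norms) and no need to track the terms $J_{Z_1}\cdots J_{C(Z_i)}\cdots J_{Z_4}(X)$; the price is that you must invoke the structure of the normalizer and justify that the $c_{kl}$ are well defined (which Proposition~\ref{pr.triple.1} provides). Two small points worth making explicit in a write-up: the passage from ``$[A,J_{Z_1}]=0$'' to ``$c_{1l}=0$'' uses $[J_{Z_1}J_{Z_l},J_{Z_1}]=-2J_{Z_l}$ and the linear independence of the $J_{Z_l}$; and the step from ``$J_{Z_1}v$ is orthogonal to every $J_{Z_k}J_{Z_l}v$'' to ``$J_{Z_1}v$ is not in their span'' uses that $J_{Z_1}v$ is non-null, i.e. $\langle J_{Z_1}v,J_{Z_1}v\rangle=-\|v\|^2\neq 0$.
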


\begin{proof}
Let $\{Z_{i}\}_{i=1}^{s}$ be an orthonormal basis of the centre $\mathbb{R}^{0,s}$.
Consider a positive involution $p=Z_{1}Z_{2}Z_{3}Z_{4}$ and a vector $X\in\mathfrak v$, $\langle X,X\rangle_{\mathfrak v}=1$ such that $J_{p}(X)=X$.

Assume that a geodesic defined by an initial vector $(Z_{1},X)\in \mathfrak n_{0,s}$ is homogeneous.
Then there is $D=(C,A)\in\mathfrak h$ such that
$$
A(X)=J_{Z_{1}}(X),\quad  C(Z_{1})=0,\quad [A,J_{w}]=J_{C(w)}, \quad\text{for any}\quad w\in \mathbb{R}^{0,s},
$$
where we used $k=0$, since the vector $Z_{1},X$ is not a null vector. Then we obtain
\begin{eqnarray}\label{eq:YYY}
J_{Z_{1}}(X)&=&A(X)=A(J_{p}(X))
\\
&=&
J_{p}A(X)-J_{Z_1}J_{C(Z_2)}J_{Z_3}J_{Z_4}(X)-J_{Z_1}J_{Z_2}J_{C(Z_3)}J_{Z_4}(X)-J_{Z_1}J_{Z_2}J_{Z_3}J_{C(Z_4)}(X)\nonumber
\\
&=&
-J_{Z_{1}}(X)-
J_{Z_1}J_{C(Z_2)}J_{Z_3}J_{Z_4}(X)-J_{Z_1}J_{Z_2}J_{C(Z_3)}J_{Z_4}(X)-J_{Z_1}J_{Z_2}J_{Z_3}J_{C(Z_4)}(X),\nonumber
\end{eqnarray}
where in the last step we used
\begin{equation}\label{eq:Th10}
J_{p}A(X)=J_{p}J_{Z_{1}}(X)=-J_{Z_{1}}J_{p}(X)=-J_{Z_{1}}(X).
\end{equation}
 Hence $2J_{Z_{1}}(X)=Y$ with
$$
Y=-J_{Z_1}J_{C(Z_2)}J_{Z_3}J_{Z_4}(X)-J_{Z_1}J_{Z_2}J_{C(Z_3)}J_{Z_4}(X)-J_{Z_1}J_{Z_2}J_{Z_3}J_{C(Z_4)}(X).
$$
From one side $Y$ satisfies $\langle Y,Y\rangle>0$ by~\eqref{eq:JJJ}. From the other side $\langle 2J_{Z_{1}}(X),2J_{Z_{1}}(X)\rangle=-4$,
which is a contradiction.
\end{proof}

\begin{theorem}\label{th:s=1}
The Pseudo-Riemannian $H$-type nilmanifolds $N_{r,1}$, $r\geq 2$, are not geodesic orbit.
\end{theorem}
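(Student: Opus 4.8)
The plan is to handle the base case $r=2$ by citing Section~\ref{sec.example} (where $N_{2,1}$ is shown not to be geodesic orbit) and to treat every $r\ge 3$ by a single argument patterned on the proof of Theorem~\ref{th:s>3}. Fix an orthonormal basis $Z_1,\dots,Z_r,Z_{r+1}$ of $\mathfrak z=\mathbb R^{r,1}$ as in~\eqref{eq:brs}, with $Z_1,\dots,Z_r$ positive and $Z_{r+1}$ negative, and put $q=Z_1Z_2Z_3$; for $r\ge 3$ this is a positive involution of type $T_2$, so $q^2=1$ and $J_q^2=\Id$. The decisive preliminary remark is that the operators $J_Z$ with $Z\in\mathbb R^{r,0}:=\spann\{Z_1,\dots,Z_r\}$ are skew and satisfy $J_Z^2=-\langle Z,Z\rangle_{r,1}\Id$ with $\langle Z,Z\rangle_{r,1}>0$, so they make $\mathfrak v$ an admissible module over the \emph{definite} Clifford algebra $\Cl(\mathbb R^{r,0})$; since admissible modules for $s=0$ are of definite signature, $\mathfrak v$ splits orthogonally as $\mathfrak v=\mathfrak v^{+}\oplus\mathfrak v^{-}$ into a positive and a negative definite $\Cl(\mathbb R^{r,0})$-submodule, with $\mathfrak v^{+}\neq 0$. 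As $J_q\in\Cl(\mathbb R^{r,0})$ preserves this splitting and restricts to a symmetric involution of $\mathfrak v^{+}$, I would choose a unit eigenvector $X\in\mathfrak v^{+}$, say $J_q(X)=\varepsilon X$ with $\varepsilon\in\{\pm1\}$ and $\langle X,X\rangle_{\mathfrak v}=1$, and use $(X,Z_{r+1})$ as the initial data that will violate the geodesic orbit property.

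Assume $N_{r,1}$ is geodesic orbit. By Proposition~\ref{gonil1n.2} there is then $D=(C,A)\in\mathfrak h$ with $C(Z_{r+1})=0$ and $A(X)=J_{Z_{r+1}}(X)$, and, being a skew-symmetric derivation, $A$ satisfies $[A,J_W]=J_{C(W)}$ for all $W\in\mathfrak z$ by~\eqref{isotalg1}. Applying $A$ to $J_q(X)=\varepsilon X$ and expanding $AJ_q$ by iterating the derivation identity across the three factors of $J_q=J_{Z_1}J_{Z_2}J_{Z_3}$ gives
\[
\varepsilon A(X)=J_qA(X)+J_{C(Z_1)}J_{Z_2}J_{Z_3}(X)+J_{Z_1}J_{C(Z_2)}J_{Z_3}(X)+J_{Z_1}J_{Z_2}J_{C(Z_3)}(X).
\]
Because $q$ has odd length and does not involve $Z_{r+1}$, we have $J_qJ_{Z_{r+1}}=-J_{Z_{r+1}}J_q$, so $J_qA(X)=J_qJ_{Z_{r+1}}(X)=-\varepsilon J_{Z_{r+1}}(X)$; substituting $A(X)=J_{Z_{r+1}}(X)$ then yields $2\varepsilon J_{Z_{r+1}}(X)=Y$, where $Y$ is the sum of the three correction terms. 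Here the single negative direction is crucial: since $C\in\mathfrak{so}(r,1)$ is skew and $C(Z_{r+1})=0$, we get $\langle C(Z_j),Z_{r+1}\rangle_{r,1}=-\langle Z_j,C(Z_{r+1})\rangle_{r,1}=0$, whence every $C(Z_j)$ lies in the positive definite subspace $\mathbb R^{r,0}$.

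It follows that each term of $Y$ is obtained from $X$ by applying a product of operators $J_V$ with $V\in\mathbb R^{r,0}$, so $Y\in\Cl(\mathbb R^{r,0})\cdot X\subseteq\mathfrak v^{+}$, using $X\in\mathfrak v^{+}$ and the $\Cl(\mathbb R^{r,0})$-invariance of $\mathfrak v^{+}$. Consequently $\langle Y,Y\rangle_{\mathfrak v}\ge 0$; on the other hand, by~\eqref{eq:JJJ},
\[
\langle Y,Y\rangle_{\mathfrak v}=4\,\langle J_{Z_{r+1}}(X),J_{Z_{r+1}}(X)\rangle_{\mathfrak v}=4\,\langle Z_{r+1},Z_{r+1}\rangle_{r,1}\langle X,X\rangle_{\mathfrak v}=-4<0,
\]
a contradiction. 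Thus $N_{r,1}$ is not geodesic orbit for $r\ge 3$, which together with the case $N_{2,1}$ settles all $r\ge 2$. The step I expect to need the most care is the positivity $\langle Y,Y\rangle_{\mathfrak v}\ge 0$, i.e.\ the same phenomenon underlying Theorem~\ref{th:s>3}; here it becomes transparent because a single negative direction forces all the correction vectors $C(Z_j)$ into the definite subalgebra $\Cl(\mathbb R^{r,0})$, trapping $Y$ in the positive definite summand $\mathfrak v^{+}$ that contains $X$.
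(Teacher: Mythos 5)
Your proposal is correct and follows the same route as the paper's proof: the same test pair $(X,Z_{r+1})$ with $X$ a unit eigenvector of $J_q$ for a length-three positive involution $q$ not involving $Z_{r+1}$, the same expansion of $A J_q$ via the derivation identity, the same anticommutation $J_qJ_{Z_{r+1}}=-J_{Z_{r+1}}J_q$, and the same sign contradiction $-4=\langle Y,Y\rangle_{\mathfrak v}\geq 0$. Two points where you genuinely add something: first, you justify the key inequality $\langle Y,Y\rangle_{\mathfrak v}\geq 0$ by trapping $X$, hence $Y$, in a positive definite $\Cl(\mathbb R^{r,0})$-invariant summand $\mathfrak v^{+}$; the paper instead invokes \eqref{eq:JJJ}, which directly gives nonnegativity of each of the three terms of $Y$ but not, on its face, of their sum in the indefinite space $\mathbb R^{l,l}$, so your submodule argument is a cleaner and more airtight way to close that step (just note that the splitting $\mathfrak v=\mathfrak v^{+}\oplus\mathfrak v^{-}$ uses the decomposition of an admissible $\Cl(\mathbb R^{r,0})$-module into minimal, hence definite, submodules — it is the minimal modules, not arbitrary admissible ones, that are definite). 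Second, you explicitly peel off $r=2$, where $\ell(2,1)=0$ and no type-$T_2$ positive involution exists, and cite the direct computation for $N_{2,1}$ from Section~\ref{sec.example}; the paper's proof tacitly assumes such a $q$ is available and so really covers only $r\geq 3$, so this is a useful clarification rather than a deviation.
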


\begin{proof}
Fix a basis $\{Z_i\}_{i}^{r+1}$ as in~\eqref{eq:brs}. Let $\mathcal S$ be a maximal subgroup of  $G(B_{r,1})$ of positive involutions and let $PI_{r,1}$ be its generating set. Note that $PI_{r,1}=PI_{r,0}$ by the structure of involutions, see~\eqref{eqT1T2}, which says that none of positive involutions in $PI_{r,1}$  contains the basis vector $Z_{r+1}$. We assume that the set $PI_{r,1}$ contains an involution $q=Z_{i_1}\ldots J_{Z_{i_a}}$ of Type 2 as it was defined in~\eqref{eqT1T2}. This assumption always can be achieved by removing one of the basis vectors or multiplying some of the involutions.

Let us choose a geodesic $\gamma(t)=(z(t),x(t))$ with an initial vector $(Z_{r+1},X)$ such that $\langle X,X\rangle_{\mathfrak v}=-1$, and $J_q(X)=X$ and assume that the geodesic is homogeneous.
Then there is $D=(C,A)\in\mathfrak h$ such that
$$
A(X)=J_{Z_{r+1}}(X),\quad C(Z_{r+1})=0,\quad
[A,J_{W}]=J_{C^{\tau}(W)},\quad\text{for all}\quad W\in \mathbb{R}^{r,0},
$$
where we set $k=0$, since $(Z_{r+1},X)$ is not a null vector.
The condition $C(Z_{r+1})=0$ implies that $C$ has vanishing last column. Now as in~\eqref{eq:YYY} of Theorem~\ref{th:s>3} we have an equality
    \[
J_{Z_{r+1}}(X) =A(X)= A(J_q(X))=J_qA(X)-J_{\dot{q}}(X)
    =-J_{Z_{r+1}}(X)-J_{\dot{q}}(X)
    \]
    with
 \[
\dot{q}=C(Z_{i_1})\cdots Z_{i_a}+Z_{i_1}C(Z_{i_2})\cdots Z_{i_a}+
\ldots +Z_{i_1}Z_{i_2}\cdots C(Z_{i_a}).
\]
In the last equality we used $J_{q}J_{Z_{r+1}}(X)=-J_{Z_{r+1}}J_{q}(X)$ since Type 2 involution $q$ contains odd number of basis vectors and does not contain $Z_{r+1}$. Making calculations as in~\eqref{eq:Th10}, we obtain
$
2J_{Z_{r+1}}(X)=-J_{\dot{q}}(X)$.
Note that $\dot{q}$ does not include $Z_{r+1}$  because $C(Z_{i})=\sum_{j=1}^{r}c_{ij}Z_j$ with $c_{ij}=0$ for $j=r+1$. Therefore $\langle J_{\dot{q}}(X),J_{\dot{q}}(X)\rangle_{\mathfrak v}< 0$ by~\eqref{eq:JJJ}. From the other side $\langle 2J_{Z_{r+1}}(X),2J_{Z_{r+1}}(X)\rangle_{\mathfrak v}=4$, which is a contradiction.
\end{proof}

We summarize the results of Sections~\ref{sec.example}, \ref{se.tot.geod} and~\ref{sec:mod4} and show in Table~\ref{tab:1}
the cases of pseudo $H$-type Lie groups $N_{r,s}$ that need to be studied.

    \begin{table}[h] %%%%%%%%%%%%%%%%%%%%%%%%%%%%%%%%%%%
    \caption{{\small Pseudo-Riemannian $H$-type nilmanifolds, which need to be studied}}
%\scalebox{0.7}[0.7]{%
        \begin{tabular}{|c||c|c|c|c|c|c|c|c|c|c|c|c|c|c|c|c|c|c|c|c||}\hline
7 &  && $N_{2,7}$ & $N_{3,7}$ &&&&\\\hline
6  &  & $N_{1,6}$ & $$ & $N_{3,6}$ &&&& \\\hline
5  &  & $N_{1,5}$ & $N_{2,5}$ & $$ &&&& \\\hline
4  & & $N_{1,4}$ & $N_{2,4}$ & $N_{3,4}$ &&&&  \\\hline
3 & &  & $N_{2,3}$ & $N_{3,3}$ & $N_{4,3}$ & $$  &$N_{6,3}$ & $N_{7,3}$  \\\hline
2 &  & &  & $N_{3,2}$ & $N_{4,2}$ & $N_{5,2}$  &$$ & $N_{7,2}$ \\\hline
1 &  &  &  &  & &   & &  \\\hline
0 & & &  &  &  & & &   \\\hline\hline
s/r  & $0$ & $1$ & $2$ & $3$ & $4$ & $5$  &$6$ & $7$ \\\hline
        \end{tabular}
%}
    \label{tab:1}
\end{table}

\subsection{Relation between
    $\mathfrak{n}_{r,s}$, $\mathfrak{n}_{r+1,s}$, and  $\mathfrak{n}_{r,s+1}$}
In this section we present some of the arguments based on the structure of the generating set for the maximal group of positive involutions, which allows us to find out which of the groups in Table~\ref{tab:1} are not geodesic orbit.

Let $\ell(r,s)$ be  the number of the mutually commuting positive
involutions in the generating set $PI_{r,s}$ of the maximal group of positive involutions $\mathcal S$.
Recall that the number $\ell(r,s)$ is periodic with the three periods $(8,0),(4,4)$ and $(0,8)$; that is,
\[
\ell(r+8,s)=\ell(r,s+8)=\ell(r+4,s+4)=\ell(r,s)+4.
\]

Table~\ref{tab:2} shows the values $\ell(r,s)$ which are interesting for us.
\begin{table}[th] %%%%%%%%%%%%%%%%%%%%%%%%%%%%%%%%%%%
    \caption{{ The value $\ell(r,s)$ for $r+s\leq 16$}}
%   \scalebox{1.3}[1.2]{%
\begin{tabular}{|c||c|c|c|c|c|c|c|c|c|c|c|c|c|c|c|c|c|c|c|c||}\hline
7 & $3$ & $3$ & $3$ & $4$ & $$ & $$  &$$ & $$\\\hline
6  & $2$ & $3$ & $3$ & $4$ & $$ & $$  &$$ & $$\\\hline
5  & $1$ & $2$ & $3$ & $4$ & $$ & $$  &$$ & $$\\\hline
4  & $1$ & $2$ & $3$ & $4$ & $$ & $$  &$$ & $$\\\hline
3 & $0$ & $1$ & $2$ & $3$ & $3$ & $3$  &$3$ & $4$ \\\hline
2 & $0$ & $1$ & $1$ & $2$ & $2$ & $3$  &$3$ & $4$\\\hline
1 & $0$ & $0$ & $0$ & $1$ & $1$ & $2$  &$3$ & $4$\\\hline
0 & $0$ & $0$ & $0$ & $1$ & $1$ & $2$  &$3$ & $4$\\\hline\hline
s/r  & $0$ & $1$ & $2$ & $3$ & $4$ & $5$  &$6$ & $7$\\\hline
        \end{tabular}
%   }
    \label{tab:2}
\end{table}
In general there are two cases of the dimensions of minimal admissible modules $\mathfrak v_{r,s}$
according to the relations of the values $\ell(r,s)$, $\ell(r+1,s)$ and $\ell(r,s+1)$:
\[
\ell(r,s)\leq \ell(r+1,s)\leq \ell(r,s)+1~\text{and}~\ell(r,s)\leq \ell(r,s+1)\leq \ell(r,s)+1.
\]
Then
\begin{align*}
    & [1]\quad \ell(r,s)=\ell(r+1,s)~~~\text{implies}~~\dim \mathfrak v_{r+1,s}=2\dim \mathfrak v_{r,s},\\
    & [2]\quad \ell(r,s)+1=\ell(r+1,s)~~~\text{implies}~~\dim \mathfrak v_{r+1,s}=\dim \mathfrak v_{r,s},\\
    & [3]\quad \ell(r,s)=\ell(r,s+1)~~~\text{implies}~~\dim \mathfrak v_{r,s+1}=2\dim \mathfrak v_{r,s},\\
    & [4]\quad \ell(r,s)+1=\ell(r,s+1)~~~\text{implies}~~\dim \mathfrak v_{r,s+1}=\dim \mathfrak v_{r,s}.
\end{align*}

In cases $[1]$  (and $[3]$) a module $\mathfrak v_{r,s}$ is a submodule of $\mathfrak v_{r+1,s}$ ($\mathfrak v_{r,s+1}$), which is not true for cases $[2]$ and $[4]$. This can be shown as follows.

We fix a set $PI_{r,s}$, construct an orthonormal basis $\{X_i\}_{i=1}^{\dim \mathfrak v_{r,s}}$ for $\mathfrak v_{r,s}$ as in Proposition~\ref{prop:basis} generated by a vector $v$, $\langle v,v\rangle_{\mathfrak v_{r,s}}=1$ which is $1$-eigen vector for all the involutions in $PI_{r,s}$.
We restrict the proof to the case $\mathfrak n_{r,s+1}$, since for $\mathfrak n_{r+1,s}$ the arguments are similar.
The set $\mathfrak v_{r,s}=\spann\{{X_i,\ i=1,\ldots,\dim \mathfrak v_{r,s}}\}$ is a subset of $\mathfrak v_{r,s+1}$ and a module under the action of $\Cl(\mathbb R^{r,s})$. The map $J_{Z_{r+s+1}}$ is an orthogonal transformation on $\mathfrak v_{r,s+1}$ and therefore  $J_{Z_{r+s+1}}(\mathfrak v_{r,s})\subset \mathfrak v_{r,s+1}$ and moreover $J_{Z_{r+s+1}}(\mathfrak v_{r,s})$ is a minimal admissible module for $\Cl(\mathbb R^{r,s})$. Thus
$$
\mathfrak v_{r,s+1}=\mathfrak v_{r,s}\oplus J_{Z_{r+s+1}}(\mathfrak v_{r,s})
$$
is an orthogonal decomposition in two minimal admissible modules for $\Cl(\mathbb R^{r,s})$. The set $\{X_i,J_{Z_{r+s+1}}(X_i),\ i=1,\ldots,\dim(\mathfrak v_{r,s})\}$ is an orthonormal basis for $\mathfrak v_{r,s+1}$.

\begin{theorem}\label{th:1-3}
In the above notation in cases $[1]$ and $[3]$ the pseudo-Riemannian $H$-type nilmanifolds $N_{r,s}$ are totally geodesic submanifolds of $N_{r+1,s}$ and $N_{r,s+1}$, respectively.
\end{theorem}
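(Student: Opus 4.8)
The plan is to exhibit $\mathfrak{n}_{r,s}$ as a Lie subalgebra of $\mathfrak{n}_{r,s+1}$ (respectively $\mathfrak{n}_{r+1,s}$) on which part~2) of Theorem~\ref{pr_totgeod_h1} applies verbatim; the totally geodesic conclusion, and hence the identification $N_1=N_{r,s}$, then follows at once. I carry out case~$[3]$ in detail, case~$[1]$ being identical after replacing the negative generator $Z_{r+s+1}$ by the corresponding positive one (so that $J_{Z_{r+s+1}}^2=+\Id$ becomes $-\Id$, which only changes signs in~\eqref{eq:JJJ} and affects nothing below).

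First I would invoke the orthogonal decomposition
$$\mathfrak{v}_{r,s+1}=\mathfrak{v}_{r,s}\oplus J_{Z_{r+s+1}}(\mathfrak{v}_{r,s})$$
established just above from the equality $\ell(r,s)=\ell(r,s+1)$, in which $\mathfrak{v}_{r,s}$ sits as a genuine minimal admissible $\Cl(\mathbb{R}^{r,s})$-submodule and $\{X_i,J_{Z_{r+s+1}}(X_i)\}$ is an orthonormal basis. I then set
$$\mathfrak{z}_1=\mathbb{R}^{r,s}=\spann\{Z_1,\ldots,Z_{r+s}\},\quad \mathfrak{z}_2=\spann\{Z_{r+s+1}\},\quad \mathfrak{v}_1=\mathfrak{v}_{r,s},\quad \mathfrak{v}_2=J_{Z_{r+s+1}}(\mathfrak{v}_{r,s}).$$
Non-degeneracy of all four pieces is immediate from~\eqref{eq:JJJ}, so these are orthogonal splittings $\mathfrak{z}=\mathfrak{z}_1\oplus\mathfrak{z}_2$ and $\mathfrak{v}=\mathfrak{v}_1\oplus\mathfrak{v}_2$ of the kind required by Theorem~\ref{pr_totgeod_h1}.

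Next I would check the two invariance conditions of part~2). For $Z\in\mathfrak{z}_2$ one has $J_Z(\mathfrak{v}_1)=J_{Z_{r+s+1}}(\mathfrak{v}_{r,s})=\mathfrak{v}_2$, so $J_Z(\mathfrak{v}_1)\subset\mathfrak{v}_2$; for $Z\in\mathfrak{z}_1=\mathbb{R}^{r,s}$ the operator $J_Z$ is precisely the Clifford action of $\Cl(\mathbb{R}^{r,s})$, under which the submodule $\mathfrak{v}_1=\mathfrak{v}_{r,s}$ is invariant, giving $J_Z(\mathfrak{v}_1)\subset\mathfrak{v}_1$. Theorem~\ref{pr_totgeod_h1}, part~2), then produces a totally geodesic submanifold $N_1$ of $N_{r,s+1}$ generated by $\mathfrak{n}_1=\mathfrak{z}_1\oplus\mathfrak{v}_1$; and since the induced bracket satisfies $\langle[X,Y],Z\rangle=\langle J_Z X,Y\rangle$ with $J_Z$ ($Z\in\mathbb{R}^{r,s}$) the restricted Clifford action, one has $\mathfrak{n}_1\cong\mathfrak{n}_{r,s}$, i.e.\ $N_1=N_{r,s}$.

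I do not expect a genuine obstacle: the substantive content — that $\mathfrak{v}_{r,s}$ embeds orthogonally as a Clifford submodule of $\mathfrak{v}_{r,s+1}$, which is exactly what fails in cases~$[2]$ and~$[4]$ — has already been secured by the $\ell$-counting and the invariant-basis construction preceding the statement. The one point deserving a line of care is that the first invariance condition, $J_{Z_{r+s+1}}(\mathfrak{v}_1)\subset\mathfrak{v}_2$, is what forces $[\mathfrak{v}_1,\mathfrak{v}_1]\subset\mathfrak{z}_1$ (via $\langle J_Z X,Y\rangle=0$ for $Z\in\mathfrak{z}_2$, $X,Y\in\mathfrak{v}_1$, using $\mathfrak{v}_1\perp\mathfrak{v}_2$), so that $\mathfrak{n}_1$ is genuinely the $H$-type algebra $\mathfrak{n}_{r,s}$ rather than a larger subalgebra; this orthogonality is exactly the one recorded in the decomposition above, and everything else is bookkeeping.
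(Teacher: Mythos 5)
Your proposal is correct and follows essentially the same route as the paper: the same decomposition $\mathfrak{z}=\mathfrak{z}_1\oplus\spann\{Z_{r+s+1}\}$, $\mathfrak{v}=\mathfrak{v}_{r,s}\oplus J_{Z_{r+s+1}}(\mathfrak{v}_{r,s})$, the same two invariance checks, and the same appeal to Theorem~\ref{pr_totgeod_h1}. The extra remarks on non-degeneracy and on identifying $\mathfrak{n}_1$ with $\mathfrak{n}_{r,s}$ are correct elaborations of details the paper leaves implicit.
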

\begin{proof} We write for $\mathfrak n_{r,s+1}$
$$
\mathbb R^{r,s}=\mathfrak z_1=\spann\{Z_1,\ldots,Z_{r+s}\},\quad\mathfrak z_2=\spann\{Z_{r+s+1}\},\quad
%$$
%and
%$$
\mathfrak v_1=\mathfrak v_{r,s}\quad \mathfrak v_2=J_{Z_{r+s+1}}(\mathfrak v_{r,s}).
$$
Then we obtain that $
J_{Z_k}(\mathfrak v_1)\subset \mathfrak v_1$ for any $Z_k\in\mathfrak z_1$
since $\mathfrak v_1$ is a submodule of $\Cl(\mathbb R^{r,s})$. We also have
$
J_{Z_{r+s+1}}(\mathfrak v_1)\subset \mathfrak v_2$.
Applying Theorem~\ref{pr_totgeod_h1} and Corollary~\ref{co.totgeod.1} we finish the proof.
\end{proof}

\begin{corollary}
The pseudo-Riemannian $H$-type nilmanifolds
$$
N_{2,7},\quad N_{3,6},\quad N_{3,7}\quad N_{6,3},\quad N_{7,2},\quad N_{7,3}
$$
are not geodesic orbit
\end{corollary}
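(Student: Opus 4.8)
The plan is to reduce the corollary to Theorem~\ref{th:1-3} and Corollary~\ref{co.totgeod.1}. For each of the six groups I would exhibit a smaller pseudo $H$-type subgroup that (a) sits inside it as a totally geodesic submanifold and (b) is already known not to be geodesic orbit; Corollary~\ref{co.totgeod.1} then forces the ambient group to fail the geodesic orbit property. Since Theorem~\ref{th:1-3} produces such a totally geodesic embedding exactly in cases $[1]$ and $[3]$---that is, precisely when passing from $(r,s)$ to $(r+1,s)$ or to $(r,s+1)$ leaves $\ell$ unchanged---the first task is only to read off the relevant coincidences of $\ell$-values from Table~\ref{tab:2}.

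The coincidences I would use are
\begin{gather*}
\ell(2,6)=\ell(2,7)=3,\qquad \ell(3,5)=\ell(3,6)=4,\qquad \ell(3,6)=\ell(3,7)=4,\\
\ell(5,3)=\ell(6,3)=3,\qquad \ell(7,1)=\ell(7,2)=4,\qquad \ell(7,2)=\ell(7,3)=4,
\end{gather*}
all of which are case $[3]$ except $\ell(5,3)=\ell(6,3)$, which is case $[1]$. By Theorem~\ref{th:1-3} they give the totally geodesic inclusions $N_{2,6}\subset N_{2,7}$, $N_{3,5}\subset N_{3,6}$, $N_{3,6}\subset N_{3,7}$, $N_{5,3}\subset N_{6,3}$, $N_{7,1}\subset N_{7,2}$, and $N_{7,2}\subset N_{7,3}$.

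For the base cases I would invoke the non-geodesic-orbit results already proved: $N_{2,6}$ fails by Theorem~\ref{th:42} (as $r+s=8\equiv0\pmod{4}$ with $s$ even), while $N_{3,5}$ and $N_{5,3}$ fail by its companion theorem for $r+s\equiv0\pmod{4}$ with $s$ odd, and $N_{7,1}$ fails by Theorem~\ref{th:s=1}. Feeding the four embeddings $N_{2,6}\subset N_{2,7}$, $N_{3,5}\subset N_{3,6}$, $N_{5,3}\subset N_{6,3}$, $N_{7,1}\subset N_{7,2}$ into Corollary~\ref{co.totgeod.1} shows at once that $N_{2,7}$, $N_{3,6}$, $N_{6,3}$, and $N_{7,2}$ are not geodesic orbit. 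The remaining two groups are obtained by chaining: having established that $N_{3,6}$ and $N_{7,2}$ are not geodesic orbit, I would apply Corollary~\ref{co.totgeod.1} once more to $N_{3,6}\subset N_{3,7}$ and $N_{7,2}\subset N_{7,3}$. It therefore suffices to treat the groups in the order $N_{2,7},N_{6,3},N_{3,6},N_{7,2},N_{3,7},N_{7,3}$, so that every chained case is handled after its prerequisite.

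The only genuine point to watch is that each reduction really lands in case $[1]$ or $[3]$ and not in $[2]$ or $[4]$: in the latter two situations $\dim\mathfrak v$ does not double, the smaller module fails to embed as a submodule, and Theorem~\ref{th:1-3} produces no totally geodesic subgroup. The displayed equalities of $\ell$-values are exactly what rules this out, so I expect no serious obstacle beyond this bookkeeping. A tempting but invalid shortcut to avoid is using $N_{3,4}$: it too appears as a case-$[3]$ totally geodesic submanifold of $N_{3,5}$ because $\ell(3,4)=\ell(3,5)=4$, but since $N_{3,4}$ is geodesic orbit it yields no information, which is precisely why the argument must pass through the independent non-geodesic-orbit result for $N_{3,5}$ rather than through $N_{3,4}$.
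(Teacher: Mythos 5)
Your proposal is correct and follows essentially the same route as the paper: the paper's proof is exactly ``apply Theorem~\ref{th:1-3} together with Table~\ref{tab:2} and Corollary~\ref{co.totgeod.1}'', illustrated only by the chain $\ell(7,1)=\ell(7,2)=\ell(7,3)$, and you have simply written out all six reductions with the correct base cases ($N_{2,6}$, $N_{3,5}$, $N_{5,3}$, $N_{7,1}$, each already excluded by the $r+s\equiv 0\pmod 4$ theorems or Theorem~\ref{th:s=1}). Your remark that $N_{3,4}$ cannot serve as a base case is a sensible sanity check but not needed.
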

\begin{proof}
Applying Theorem~\ref{th:1-3} to the groups in Table~\ref{tab:1}  and using Table~\ref{tab:2}   we finish the proof. For instance since $N_{7,1}$ is not geodesic orbit and
$$
\ell(7,1)=\ell(7,2)=\ell(7,3),
$$
we conclude that $N_{7,2}$ and $N_{7,3}$ are not geodesic orbit.
\end{proof}
\begin{remark} One can show the following.
In the cases $[2]$ and $[4]$, the module $\mathfrak v_{r+1,s}$ (or $\mathfrak v_{r,s+1}$)
is also a minimal admissible module
of the Clifford algebra $\Cl(\mathbb R^{r,s})$. In these  cases the natural inclusion map
    \[
    \mathfrak{n}_{r,s}(\mathfrak v_{r,s})=
    \mathfrak{n}_{r,s}(\mathfrak v_{r+1,s})\subset  \mathfrak{n}_{r+1,s}(\mathfrak v_{r+1,s})
    \]
    is not a Lie algebra homomorphism
\end{remark}

\begin{theorem}
The pseudo $H$-type Lie groups $N_{r,s}$ for
$$
(r,s)\in\{(1,4),(1,5),(1,6),(2,3),(2,4),(2,5),(3,2),(3,3),(4,2),(4,3),(5,2)\}
$$
are not geodesic orbit.
\end{theorem}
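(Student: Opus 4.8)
The plan is to split the eleven pairs into those reachable from an already-settled non-geodesic-orbit group by a totally geodesic embedding, and a core of ``source'' pairs that must be treated by hand. The engine for the first part is Corollary~\ref{co.totgeod.1} together with Theorem~\ref{th:1-3}: whenever Table~\ref{tab:2} gives $\ell(r-1,s)=\ell(r,s)$ (case $[1]$) or $\ell(r,s-1)=\ell(r,s)$ (case $[3]$), the smaller group sits inside $N_{r,s}$ as a totally geodesic submanifold, so the failure of the geodesic orbit property for the smaller one forces it for the larger one. Reading off Table~\ref{tab:2}, the four pairs $(1,5),(2,5),(4,2),(4,3)$ each reduce in a single such step to $(1,4),(2,4),(3,2),(3,3)$ (with equal $\ell$-values $2,3,2,3$), so it suffices to deal with the seven pairs
$$
(1,4),\ (1,6),\ (2,3),\ (2,4),\ (3,2),\ (3,3),\ (5,2).
$$

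First I would record that none of these seven is the target of a case-$[1]$ or case-$[3]$ edge: for each one, both lower neighbours have strictly smaller $\ell$, so the convenient codimension-one embeddings of Theorem~\ref{th:1-3} are unavailable, and since every pair has $\min\{r,s\}<4$ the periodicity embeddings of Theorem~\ref{th:NGO1} do not apply either. Hence these are genuine base cases. For each I would verify directly that the transitive normalizer condition of Proposition~\ref{gonil1n.2} fails, exactly as in Section~\ref{sec.example} for $N_{1,1},N_{0,2},N_{2,1},N_{0,3}$: fix an orthonormal basis of $\mathfrak z$, compute the $J_{Z_k}$ on the minimal admissible module, assemble the normalizer $\mathbf N=[\mathbf V,\mathbf V]\oplus\mathbf Z$ via Proposition~\ref{pr.triple.1}, and then exhibit one $W=J_{Z_0}\in\mathbf V$ and one $X\in\mathfrak v$ for which no $B$ in $\mathbf N^{W}=\{B\in\mathbf N:[B,W]=0\}$ satisfies $B(X)=W(X)$. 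Concretely, $[B,J_{Z_0}]=0$ confines the $[\mathbf V,\mathbf V]\cong\mathfrak{so}(r,s)$ part of $B$ to the stabiliser $\mathfrak{so}(\{Z_0\}^{\perp})$, so $B(X)=W(X)$ becomes an explicit linear system in the remaining parameters whose inconsistency is exposed by comparing two of its rows, just as in the $N_{0,2}$ and $N_{2,1}$ computations.

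An alternative, less computational route for the base cases is to mimic the Clifford arguments of Theorems~\ref{th:s>3} and~\ref{th:s=1}: choose a non-null $Z_0$ and a positive involution $p$ with $J_pJ_{Z_0}=-J_{Z_0}J_p$ (a type-$T_2$ involution avoiding $Z_0$, or a type-$T_1$ involution containing $Z_0$), pick a unit $X$ with $J_p(X)=X$, assume the geodesic through $(X,Z_0)$ is homogeneous, and combine $[A,J_{Z_0}]=J_{C(Z_0)}=0$ with the derivation identity relating $AJ_p$ and $J_pA$ to obtain a relation $2J_{Z_0}(X)=J_{\dot p}(X)$, with $\dot p$ the $C$-derivative of $p$ exactly as there. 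One would then compare $\langle 2J_{Z_0}X,2J_{Z_0}X\rangle=4\langle Z_0,Z_0\rangle\langle X,X\rangle$ with the value of $\langle J_{\dot p}X,J_{\dot p}X\rangle$ forced by the admissibility identity~\eqref{eq:JJJ} to reach a contradiction.

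The hard part will be precisely this last sign bookkeeping in the mixed-signature base cases. In Theorem~\ref{th:s>3} ($r=0$) and Theorem~\ref{th:s=1} ($s=1$) the vectors $C(Z_i)$ are forced into a subspace on which $\langle\cdot\,,\cdot\rangle$ is definite, so $\langle J_{\dot p}X,J_{\dot p}X\rangle$ has a controlled sign; but here one has $r\ge1$ and $s\ge2$ at once, and the only constraints $C(Z_0)=0$ and skew-symmetry leave $C(Z_i)$ free to mix positive and negative directions, so neither the diagonal nor the cross terms of $\langle J_{\dot p}X,J_{\dot p}X\rangle$ carry an evident sign. Making this route work would require choosing $p$ and $Z_0$ adapted to each signature so that the surviving contributions are sign-definite; and it is exactly to sidestep this difficulty that I would, in the final write-up, fall back on the explicit normalizer computation of Proposition~\ref{gonil1n.2}, which is entirely mechanical once $\mathbf N$ is in hand.
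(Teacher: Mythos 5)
Your reduction of the four derived cases $(1,5),(2,5),(4,2),(4,3)$ to $(1,4),(2,4),(3,2),(3,3)$ via Theorem~\ref{th:1-3} is exactly what the paper does. The problem is the seven base cases. For these you do not give a proof: you state that you \emph{would} compute the normalizer $\mathbf N=[\mathbf V,\mathbf V]\oplus\mathbf Z$ and \emph{would} exhibit a pair $(W,X)$ for which the linear system $B(X)=W(X)$, $[B,W]=0$ is inconsistent, but no witness vectors, no linear systems, and no inconsistency checks are produced. That existence of a witness is precisely the content of the theorem; promising that a row comparison will expose a contradiction is not the same as exhibiting one, and for seven groups with modules of dimension $8$ or $16$ and normalizers with $10$ or more free parameters this is a substantial, non-routine amount of work. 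Your fallback route via the involution argument of Theorems~\ref{th:s>3} and~\ref{th:s=1} is honestly flagged by you as failing for mixed signature $r\ge 1$, $s\ge 2$ (correctly: the terms $J_{\dot q}(X)$ no longer have a controlled sign), so it cannot rescue the base cases either. As it stands the proposal is a plan, with a genuine gap where the argument for the base cases should be.

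You also draw a wrong inference when you declare the seven pairs ``genuine base cases'': the unavailability of the codimension-one embeddings of Theorem~\ref{th:1-3} and of the periodicity embeddings of Theorem~\ref{th:NGO1} does not preclude other totally geodesic submanifolds, and the paper's actual proof exploits exactly this. For each base case it chooses, via the positive involutions and the invariant basis of Proposition~\ref{prop:basis}, a $2$-dimensional subspace $\mathfrak z_1\subset\mathfrak z$ (e.g.\ $\mathfrak z_1=\spann\{Z_2,Z_4\}$ for $(1,4)$ and $(3,2)$) and a $4$-dimensional $\mathfrak v_1\subset\mathfrak v$ with $J_Z(\mathfrak v_1)\subset\mathfrak v_1$ for $Z\in\mathfrak z_1$ and $J_Z(\mathfrak v_1)\subset\mathfrak v_1^{\perp}$ for $Z\in\mathfrak z_1^{\perp}$, so that Theorem~\ref{pr_totgeod_h1} produces a totally geodesic copy of $N_{0,2}$ or $N_{1,1}$ inside $N_{r,s}$; Corollary~\ref{co.totgeod.1} then finishes, since these two groups were already shown in Section~\ref{sec.example} not to be geodesic orbit. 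This reduces every base case to computations already done, and is the idea your proposal is missing. If you want to complete your version instead, you must actually carry out the seven normalizer computations.
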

\begin{proof}
{\it Pseudo-Riemannian $H$-type nilmanifolds $N_{1,4}$ and $N_{3,2}$}.

We choose an orthonormal basis for centres as in~\eqref{eq:brs} and the generating set
$$
PI=\{p_1=Z_1Z_2Z_3,\quad p_2=Z_2Z_3Z_4Z_5\}
$$
for the maximal group of positive involutions $\mathcal S$.
Let $v\in \{X\in \mathfrak v:\ J_{p_1}(X)=J_{p_2}(X)=X\}$, $\|v\|^2=1$.
We construct an orthonormal invariant basis for $\mathfrak v$:
$$
\begin{array}{lllll}
& X_1=v,\quad & X_2=J_{Z_2}(v),\quad & X_3=J_{Z_4}(v),\quad & X_4=J_{Z_2}J_{Z_4}(v),\quad
\\
&X_5=J_{Z_1}(v),\quad &X_6=J_{Z_3}(v),\quad &X_7=J_{Z_5}(v),\quad &X_8=J_{Z_3}J_{Z_4}(v).
\end{array}
$$
We denote
$$
\mathfrak z_1=\spann\{Z_2,Z_4\},\quad \mathfrak z_2=\spann\{Z_1,Z_3,Z_5\},\quad \mathfrak z=\mathfrak z_1\oplus \mathfrak z_2,
$$
$$
\mathfrak v_1=\spann\{X_1,\ldots X_4\},\quad \mathfrak v_2=\spann\{X_5,\ldots ,X_8\},\quad \mathfrak v=\mathfrak v_1\oplus \mathfrak v_2.
$$
It is obvious that $J_{Z_k}(\mathfrak v_1)\subset \mathfrak v_1$ for $k=1,4$. If $(r,s)=(1,4)$, then the pseudo $H$-type Lie algebra $\mathfrak z_1\oplus \mathfrak v_1$ is isomorphic to $\mathfrak n_{0,2}$. If $(r,s)=(3,2)$, then the pseudo $H$-type Lie algebra $\mathfrak z_1\oplus \mathfrak v_1$ is isomorphic to $\mathfrak n_{1,1}$.

To show that $J_{Z_k}(\mathfrak v_1)\subset \mathfrak v_2$ for $k=2,3,5$ we observe that $p_1p_2=-Z_1Z_4Z_5$ and $J_{Z_1Z_4Z_5}(v)=-v$. Then it is easy to see the following
$$
\begin{array}{lllll}
&J_{Z_1} (X_1)=\pm X_5,\quad &J_{Z_1} (X_2)=\pm X_6,\quad &J_{Z_1} (X_3)=\pm X_7,\quad &J_{Z_1} (X_4)=\pm X_8,\quad
\\
&J_{Z_3} (X_1)=\pm X_6,\quad &J_{Z_3} (X_2)=\pm X_5,\quad &J_{Z_3} (X_3)=\pm X_8,\quad &J_{Z_3} (X_4)=\pm X_7,\quad
\\
&J_{Z_5} (X_1)=\pm X_7,\quad &J_{Z_5} (X_2)=\pm X_8,\quad &J_{Z_5} (X_3)=\pm X_5,\quad &J_{Z_5} (X_4)=\pm X_6.
\end{array}
$$
Theorem~\ref{pr_totgeod_h1} and Corollary~\ref{co.totgeod.1} imply that pseudo $H$-type Lie groups $N_{1,4}$ and $N_{3,2}$ are not geodesic orbit.

{\it Pseudo-Riemannian $H$-type nilmanifolds $N_{1,6}$ and $N_{5,2}$}.

We choose an orthonormal basis for the center as in~\eqref{eq:brs} and the generating set
$$
PI=\{p_1=Z_1Z_2Z_3,\quad p_2=Z_2Z_3Z_4Z_5, \quad p_2=Z_2Z_3Z_6Z_7\}
$$
for the maximal group of positive involutions.
Let $v\in \{X\in \mathfrak v:\ J_{p_1}(X)=J_{p_2}(X)=J_{p_3}(X)=X\}$, $\|v\|^2=1$.
We construct an orthonormal invariant basis for $\mathfrak v$:
$$
\begin{array}{lllll}
&X_1=v,\quad &X_2=J_{Z_2}(v),\quad &X_3=J_{Z_4}(v),\quad &X_4=J_{Z_2}J_{Z_4}(v),\quad
\\
&X_5=J_{Z_1}(v),\quad &X_6=J_{Z_3}(v),\quad &X_7=J_{Z_5}(v),\quad &X_8=J_{Z_3}J_{Z_4}(v),\quad
\\
&X_9=J_{Z_6}(v),\quad &X_{10}=J_{Z_7}(v),\quad &X_{11}=J_{Z_2}J_{Z_6}(v),\quad &X_{12}=J_{Z_2}J_{Z_7}(v),\quad
\\
&X_{13}=J_{Z_4}J_{Z_6}(v),\quad &X_{14}=J_{Z_4}J_{Z_7}(v),\quad &X_{15}=J_{Z_2}J_{Z_4}J_{Z_6}(v),\quad &X_{6}=J_{Z_2}J_{Z_4}J_{Z_7}(v).
\end{array}
$$
We denote
$$
\mathfrak z_1=\spann\{Z_2,Z_4\},\quad \mathfrak z_2=\spann\{Z_1,Z_3,Z_5,Z_6,Z_7\},\quad \mathfrak z_{1,6}=\mathfrak z_1\oplus \mathfrak z_2
$$
$$
\mathfrak v_1=\spann\{X_1,\ldots X_4\},\quad \mathfrak v_2=\spann\{X_5,\ldots ,X_{16}\},\quad \mathfrak v_{1,6}=\mathfrak v_1\oplus \mathfrak v_2
$$
As in the previous case we show that if $(r,s)=(1,6)$, then $\mathfrak z_1\oplus \mathfrak v_1$ is isomorphic to $\mathfrak n_{0,2}$ and if $(r,s)=(5,2)$, then $ \mathfrak z_1\oplus \mathfrak v_1$ is isomorphic to $\mathfrak n_{1,1}$.
Theorem~\ref{pr_totgeod_h1} and Corollary~\ref{co.totgeod.1} imply that the groups $N_{1,6}$ and $N_{5,2}$  are not geodesic orbit.

{\it Pseudo-Riemannian $H$-type nilmanifold $N_{2,3}$}.
We choose an orthonormal basis for the center as in~\eqref{eq:brs} and the generating set
$$
PI=\{p_1=Z_1Z_4Z_5,\quad p_2=Z_1Z_2Z_3Z_4\}.
$$
Let $v\in \{X\in \mathfrak v_{2,3}:\ J_{p_1}(X)=J_{p_2}(X)=X\}$, $\|v\|^2=1$.
We construct an orthonormal invariant basis as in the case $N_{3,2}$.
All other calculations are analogous to the case $N_{3,2}$.

{\it Pseudo-Riemannian $H$-type nilmanifolds $N_{2,4}$ and $N_{3,3}$}.
We choose an orthonormal basis for the center as in~\eqref{eq:brs} and the generating set
$$
PI=\{p_1=Z_1Z_4Z_5,\quad p_2=Z_1Z_2Z_3Z_4,\quad p_3=Z_1Z_2Z_5Z_5\}.
$$
Let $v\in \{X\in \mathfrak v:\ J_{p_1}(X)=J_{p_2}(X)=J_{p_3}(X)=X\}$, $\|v\|^2=1$.
We construct an orthonormal invariant basis for $\mathfrak v$:
$$
\begin{array}{lllll}
&X_1=v,\quad &X_2=J_{Z_5}(v),\quad &X_3=J_{Z_6}(v),\quad &X_4=J_{Z_5}J_{Z_6}(v),\quad
\\
&X_5=J_{Z_1}(v),\quad &X_6=J_{Z_2}(v),\quad &X_7=J_{Z_3}(v),\quad &X_8=J_{Z_4}(v).
\end{array}
$$
We denote
$$
\mathfrak z_1=\spann\{Z_5,Z_6\},\quad \mathfrak z_2=\spann\{Z_1,Z_2,Z_3,Z_4\},\quad \mathfrak z=\mathfrak z_1\oplus \mathfrak z_2,
$$
$$
\mathfrak v_1=\spann\{X_1,\ldots X_4\},\quad \mathfrak v_2=\spann\{X_5,\ldots ,X_{8}\},\quad \mathfrak v=\mathfrak v_1\oplus \mathfrak v_2.
$$
It is obvious that $J_{Z_k}(\mathfrak v_1)\subset \mathfrak v_1$ for $k=5,6$. The pseudo $H$-type Lie algebra $\mathfrak z_1\oplus\mathfrak v_1$ is isomorphic to $\mathfrak n_{0,2}$.
To show that $J_{Z_k}(\mathfrak v_1)\subset \mathfrak v_2$ for $k=1,2,3,4$ one can check the following
$$
\begin{array}{lllll}
&J_{Z_1} (X_1)=\pm X_5,\quad &J_{Z_1} (X_2)=\pm X_8,\quad &J_{Z_1} (X_3)=\pm X_7,\quad &J_{Z_1} (X_4)=\pm X_6,\quad
\\
&J_{Z_2} (X_1)=\pm X_6,\quad &J_{Z_2} (X_2)=\pm X_7,\quad &J_{Z_2} (X_3)=\pm X_8,\quad &J_{Z_2} (X_4)=\pm X_5,\quad
\\
&J_{Z_3} (X_1)=\pm X_7,\quad &J_{Z_3} (X_2)=\pm X_6,\quad &J_{Z_3} (X_3)=\pm X_5,\quad &J_{Z_3} (X_4)=\pm X_8,\quad
\\
&J_{Z_4} (X_1)=\pm X_8,\quad &J_{Z_4} (X_2)=\pm X_5,\quad &J_{Z_4} (X_3)=\pm X_6,\quad &J_{Z_4} (X_4)=\pm X_7.
\end{array}
$$
Theorem~\ref{pr_totgeod_h1} and Corollary~\ref{co.totgeod.1} imply that pseudo $H$-type Lie groups $N_{2,4}$ and $N_{3,3}$ are not geodesic orbit.

To finish the proof we apply Theorem~\ref{th:1-3} to the cases
$$
\ell(1,4)=\ell(1,5),\quad \ell(3,2)=\ell(4,2),\quad \ell(2,4)=\ell(2,5),\quad \ell(3,3)=\ell(4,3).
$$
\end{proof}

\section{Pseudo-Riemannian $H$-type nilmanifold ${N}_{3,4}$}\label{sec.N3.4}

It is known that  the Clifford algebra $\operatorname{Cl}(\mathbb{R}^{3,4})$ has two non-equivalent minimal admissible $8$-dimensional modules.
Moreover, the corresponding $15$-dimensional pseudo $H$-type Lie algebras are isomorphic and isometric, see~\cite[Theorem 12]{FuMa17}.
Therefore, it sufficient to check only one such pseudo $H$-type nilmanifold $N_{3,4}$.

The pseudo $H$-type nilmanifold $N_{3,4}$ has dimension $15$,
and the corresponding left invariant metric is generated by
the scalar product
$\langle \cdot\, , \cdot\rangle_{3,4}+\langle \cdot\, , \cdot\rangle_{4,4}$
on the Lie algebra $\mathfrak{n}_{3,4}=\mathbb R^{3,4}\oplus\mathbb R^{4,4}$.
We define the orthonormal basis $\{Z_1,\ldots,Z_7,V_1,\ldots, V_8\}$ by taking $v\in \mathbb R^{4,4}$ with $\|v\|^2=1$ and setting
$\|Z_1\|^2=\|Z_2\|^2=\|Z_3\|^2=1$, $\|Z_4\|^2=\|Z_5\|^2=\|Z_6\|^2=\|Z_7\|^2=-1$.
We use involutions $p_i$, $i=1,2,3,4$, where
\begin{equation}\label{eq:inv34}
    \begin{array}{lll}
&J_{p_1}(v)=J_{Z_1Z_2Z_4Z_5}(v)=v,\quad &J_{p_2}(v)=J_{Z_1Z_2Z_6Z_7}(v)=v,\quad
\\
 &J_{p_3}(v)=J_{Z_1Z_3Z_5Z_7}(v)=v,\quad &J_{p_4}(v)=J_{Z_1Z_2Z_3}(v)=v,
\end{array}
\end{equation}
with
$$
J_{Z_1}^2=J_{Z_2}^2=J_{Z_3}^2=-\Id,\quad J_{Z_4}^2=J_{Z_5}^2=J_{Z_6}^2=J_{Z_7}^2=\Id \,.
$$
%Let us recall some useful relations:
%$$
%P_1P_3P_4v=-J_1J_4J_7v=v,\quad P_2P_3P_4v=-J_1J_5J_6v=v,\quad P_1P_2P_3P_4v=-J_2J_4J_6v=v
%$$
%$$
%P_3P_4v=J_2J_5J_7v=v,\quad P_1P_4v=-J_3J_4J_5v=v,\quad P_2P_4v=-J_3J_6J_7v=v.
%$$
Note that
\begin{equation*}
    \begin{array}{llllllllll}
J_{Z_1}(v)&=&-J_{Z_2}J_{Z_3}(v)&=&J_{Z_4}J_{Z_7}(v)&=&J_{Z_5}J_{Z_6}(v),\quad
\\
J_{Z_2}(v)&=&J_{Z_1}J_{Z_3}(v)&=&J_{Z_4}J_{Z_6}(v)&=&-J_{Z_5}J_{Z_7}(v),\quad
\\
J_{Z_3}(v)&=&-J_{Z_1}J_{Z_2}(v)&=&J_{Z_4}J_{Z_5}(v)&=&J_{Z_6}J_{Z_7}(v),
\\
J_{Z_4}(v)&=&J_{Z_1}J_{Z_7}(v)&=&J_{Z_2}J_{Z_6}(v)&=&J_{Z_3}J_{Z_5}(v),
\\
J_{Z_5}(v)&=&-J_{Z_2}J_{Z_7}(v)&=&-J_{Z_3}J_{Z_4}(v)&=&J_{Z_1}J_{Z_6}(v),\quad
\\
J_{Z_6}(v)&=&-J_{Z_1}J_{Z_5}(v)&=&-J_{Z_2}J_{Z_4}(v)&=&J_{Z_3}J_{Z_7}(v),
\\
J_{Z_7}(v)&=&-J_{Z_1}J_{Z_4}(v)&=&J_{Z_2}J_{Z_5}(v)&=&-J_{Z_3}J_{Z_6}(v).
\end{array}
\end{equation*}
We construct a basis for  $\mathfrak{v}$ by setting $V_1=v$ and $V_i=J_{Z_{i-1}}(v)$.
%$$
%V_1=v,\quad V_2=J_1v,\quad V_3=J_2v,\quad V_4=J_3v,\quad
%V_5=J_4v,\quad V_6=J_5v,\quad V_7=J_6v,\quad V_8=J_7v,\quad
%$$

Then we have
$$
\langle V_k,V_k\rangle_{4,4}=-\langle V_l,V_l\rangle_{4,4}=1,\quad k=1,\ldots,4,\ \ l=5,\ldots,8,
$$
and the operators $J_{Z_k}$, $k=1,\ldots,8$ with respect to this basis take the form
%Note, that $\eta =\diag(1,1,1,1,-1-1,-1,-1)$.
\begin{equation}\label{eq:mat34}
    \begin{array}{lllll}
&J_{Z_1}=\left(\begin{smallmatrix}
0& -1& 0& 0& 0& 0& 0& 0\\
1& 0& 0& 0& 0& 0& 0& 0\\
0& 0& 0& 1& 0& 0& 0& 0\\
0& 0& -1& 0& 0& 0& 0& 0\\
0& 0& 0& 0& 0& 0& 0& 1\\
0& 0& 0& 0& 0& 0& 1& 0\\
0& 0& 0& 0& 0& -1& 0& 0\\
0& 0& 0& 0& -1& 0& 0& 0
\end{smallmatrix}\right),
&J_{Z_2}=
\left(\begin{smallmatrix}
 0& 0& -1& 0& 0& 0& 0& 0\\
0& 0& 0& -1& 0& 0& 0& 0\\
1& 0& 0& 0& 0& 0& 0& 0\\
0& 1& 0& 0& 0& 0& 0& 0\\
0& 0& 0& 0& 0& 0& 1& 0\\
0& 0& 0& 0& 0& 0& 0& -1\\
0& 0& 0& 0& -1& 0& 0& 0\\
0& 0& 0& 0& 0& 1& 0& 0
\end{smallmatrix}\right),
&J_{Z_3}=
\left(\begin{smallmatrix}
 0& 0& 0& -1& 0& 0& 0& 0\\
0& 0& 1& 0& 0& 0& 0& 0\\
0& -1& 0& 0& 0& 0& 0& 0\\
1& 0& 0& 0& 0& 0& 0& 0\\
0& 0& 0& 0& 0& 1& 0& 0\\
0& 0& 0& 0& -1& 0& 0& 0\\
0& 0& 0& 0& 0& 0& 0& 1\\
0& 0& 0& 0& 0& 0& -1& 0
\end{smallmatrix}\right),
\\
&J_{Z_4}=\left(\begin{smallmatrix}
0& 0& 0& 0& 1& 0& 0& 0\\
0& 0& 0& 0& 0& 0& 0& 1\\
0& 0& 0& 0& 0& 0& 1& 0\\
0& 0& 0& 0& 0& 1& 0& 0\\
1& 0& 0& 0& 0& 0& 0& 0\\
0& 0& 0& 1& 0& 0& 0& 0\\
0& 0& 1& 0& 0& 0& 0& 0\\
0& 1& 0& 0& 0& 0& 0& 0
\end{smallmatrix}\right),
&J_{Z_5}=
\left(\begin{smallmatrix}
0& 0& 0& 0& 0& 1& 0& 0\\
0& 0& 0& 0& 0& 0& 1& 0\\
0& 0& 0& 0& 0& 0& 0& -1\\
0& 0& 0& 0& -1& 0& 0& 0\\
0& 0& 0& -1& 0& 0& 0& 0\\
1& 0& 0& 0& 0& 0& 0& 0\\
0& 1& 0& 0& 0& 0& 0& 0\\
0& 0& -1& 0& 0& 0& 0& 0
\end{smallmatrix}\right),
&J_{Z_6}=\left(\begin{smallmatrix}
0& 0& 0& 0& 0& 0& 1& 0\\
 0& 0& 0& 0& 0& -1& 0& 0\\
 0& 0& 0& 0& -1& 0& 0& 0\\
 0& 0& 0& 0& 0& 0& 0& 1\\
 0& 0& -1& 0& 0& 0& 0& 0\\
 0& -1& 0& 0& 0& 0& 0& 0\\
 1& 0& 0& 0& 0& 0& 0& 0\\
 0& 0& 0& 1& 0& 0& 0& 0
\end{smallmatrix}\right),\quad
\\
&&J_{Z_7}=
\left(\begin{smallmatrix}
0& 0& 0& 0& 0& 0& 0& 1\\
 0& 0& 0& 0& -1& 0& 0& 0\\
 0& 0& 0& 0& 0& 1& 0& 0\\
 0& 0& 0& 0& 0& 0& -1& 0\\
 0& -1& 0& 0& 0& 0& 0& 0\\
 0& 0& 1& 0& 0& 0& 0& 0\\
 0& 0& 0& -1& 0& 0& 0& 0\\
 1& 0& 0& 0& 0& 0& 0& 0
\end{smallmatrix}\right).
\end{array}
\end{equation}

The operators $J_{Z_k}$, $k=1,\dots,7$, span a $7$-dimensional subspace in $\mathbf{V}\subset\mathfrak{so}(4,4)$, nevertheless the vector space $\mathbf{V}$ is not a Lie subalgebra of $\mathfrak{so}(4,4)$. Hence, $N_{3,4}$ is not a naturally reductive manifold.

\begin{table}[t]
\center\caption{Commutators in $\mathfrak n_{3,4}$}
\bigskip
\begin{tabular}{|c||c|c|c|c|c|c|c|c|}
\hline
&$ V_1$&$ V_2$&$ V_3$&$ V_4$&$ V_5$&$ V_6$&$ V_7$&$V_8$
\\
\hline
\hline
$V_1$&$0$&$Z_1$&$Z_2$&$Z_3$&$Z_4$&$Z_5$&$Z_6$&$Z_7$
\\
\hline
$V_2$ &$-Z_1$&$0$&$-Z_3$&$Z_2$&$-Z_7$&$-Z_6$&$Z_5$&$Z_4$
\\
\hline
$V_3$ &$-Z_2$&$Z_3$&$0$&$-Z_1$&$-Z_6$&$Z_7$&$Z_4$&$-Z_5$
\\
\hline
$V_4$&$-Z_3$&$-Z_2$&$Z_1$&$0$&$-Z_5$&$Z_4$&$-Z_7$&$Z_6$
\\
\hline
$V_5$&$-Z_4$&$Z_7$&$Z_6$&$Z_5$&$0$&$Z_3$&$Z_2$&$Z_1$
\\
\hline
$V_6$&$-Z_5$&$Z_6$&$-Z_7$&$-Z_4$&$-Z_3$&$0$&$Z_1$&$-Z_2$
\\
\hline
$V_7$&$-Z_6$&$-Z_5$&$-Z_4$&$Z_7$&$-Z_2$&$-Z_1$&$0$&$Z_3$
\\
\hline
$V_8$ &$-Z_7$&$-Z_4$&$Z_5$& $-Z_6$&$-Z_1$&$Z_2$&$-Z_3$&$0$
\\
\hline
\end{tabular}\label{t:dim}
\end{table}

The direct computations shows that the centralizer $\mathbf{Z}$ of $\mathbf{V}$ in the isotropy subalgebra is trivial.
Hence, the normalizer $\mathbf{N}=\mathbf{Z} \oplus [\mathbf{V},\mathbf{V}]$ of $\mathbf{V}$ is the linear span of all matrices of the following type: $J_{ik}:=[J_{Z_i},J_{Z_k}]$, $1\leq i \leq k \leq 7$. In particular, $\dim \mathbf{N}=21$ and any operator $B \in \mathbf{N}$ has the form
$B=\sum_{i,k} x_{ik} J_{ik}$ for some $x_{ik} \in \mathbb{R}$, $1\leq i <k \leq 7$.
An explicit form of the operator $B$ is as follows:

{\small
$$
2\left(\begin{smallmatrix}
0& x_{23}-x_{47}-x_{56}& -x_{13}-x_{46}+x_{57}& x_{12}-x_{45}-x_{67}& x_{17}+x_{26}+x_{35}& x_{16}-x_{27}-x_{34}& -x_{15}-x_{24}+x_{37}& -x_{14}+x_{25}-x_{36}\\
-x_{23}+x_{47}+x_{56}& 0& -x_{12}-x_{45}-x_{67}& -x_{13}+x_{46}-x_{57}& x_{14}+x_{25}-x_{36}& x_{15}-x_{24}+x_{37}& x_{16}+x_{27}+x_{34}& x_{17}-x_{26}-x_{35}\\
x_{13}+x_{46}-x_{57}& x_{12}+x_{45}+x_{67}& 0& -x_{23}-x_{47}-x_{56}& -x_{15}+x_{24}+x_{37}& x_{14}+x_{25}+x_{36}& -x_{17}+x_{26}-x_{35}& x_{16}+x_{27}-x_{34}\\
-x_{12}+x_{45}+x_{67}& x_{13}-x_{46}+x_{57}& x_{23}+x_{47}+x_{56}& 0& x_{16}-x_{27}+x_{34}& -x_{17}-x_{26}+x_{35}& -x_{14}+x_{25}+x_{36}& x_{15}+x_{24}+x_{37}\\
x_{17}+x_{26}+x_{35}& x_{14}+x_{25}-x_{36}& -x_{15}+x_{24}+x_{37}& x_{16}-x_{27}+x_{34}& 0& x_{12}+x_{45}-x_{67}& -x_{13}+x_{46}+x_{57}& x_{23}+x_{47}-x_{56}\\
x_{16}-x_{27}-x_{34}& x_{15}-x_{24}+x_{37}& x_{14}+x_{25}+x_{36}& -x_{17}-x_{26}+x_{35}& -x_{12}-x_{45}+x_{67}& 0& x_{23}-x_{47}+x_{56}& x_{13}+x_{46}+x_{57}\\
-x_{15}-x_{24}+x_{37}& x_{16}+x_{27}+x_{34}& -x_{17}+x_{26}-x_{35}& -x_{14}+x_{25}+x_{36}& x_{13}-x_{46}-x_{57}& -x_{23}+x_{47}-x_{56}& 0& x_{12}-x_{45}+x_{67}\\
-x_{14}+x_{25}-x_{36}& x_{17}-x_{26}-x_{35}& x_{16}+x_{27}-x_{34}& x_{15}+x_{24}+x_{37}& -x_{23}-x_{47}+x_{56}& -x_{13}-x_{46}-x_{57}& -x_{12}+x_{45}-x_{67}& 0
\end{smallmatrix}\right)
$$
}
\medskip

\begin{remark}\label{re.act.rank.1}
Recall that the matrices $J_{Z_i}$, $i=1,\dots,7$, as well as $B$ are skew-symmetric with respect the inner product $\langle \cdot\, , \cdot\rangle_{4,4}$.
Therefore, if $A$ is one of such matrices and $Y\in \mathfrak{v}=\mathbb{R}^{4,4}$, then the vector $\overline{Y}=A(Y)$ is such that
$$
\langle Y, \overline{Y}\rangle_{4,4}=y_1\overline{y}_1+y_2\overline{y}_2+y_3\overline{y}_3+y_4\overline{y}_4-y_5\overline{y}_5-y_6\overline{y}_6-y_7\overline{y}_7-y_8\overline{y}_8=0.
$$
This implies that for a given $Y$, the linear space $\mathbf{L}(Y)$ for $\mathbf{L}=\mathbf{V} \oplus [\mathbf{V},\mathbf{V}]$ has dimension $\leq 7$.
\end{remark}

The following result is useful for various computations.

\begin{lemma}\label{le.invarform.1}
If a matrix Lie group $G$ with the Lie algebra $\mathfrak{g}\subset \mathfrak{gl}(m,\mathbb{R})$ preserves the value $\sum_{ij} c_{ij} y_i y_j$ for some fixed
$Y=(y_1,\dots,y_m)$, where $c_{ij} \in \mathbb{R}$, then for any matrix $A\in \mathfrak{g}$ we have $\sum_{ij} c_{ij} \Bigl((A(Y))_i y_j+(A(Y))_j y_i\Bigr)=0$,
where $A(Y)$ is the image of $Y$ under the action of $A$.
\end{lemma}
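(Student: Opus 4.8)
The plan is to reduce the statement to a first-order (infinitesimal) version of the invariance hypothesis by differentiating along a one-parameter subgroup. Write $Q(y)=\sum_{ij}c_{ij}y_iy_j$ for the quadratic form determined by the coefficients $c_{ij}$. The hypothesis that $G$ preserves the value of $Q$ at the fixed vector $Y$ means precisely that $Q(g\cdot Y)=Q(Y)$ for every $g\in G$; in particular this equality holds along any smooth curve in $G$ issuing from the identity.

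First I would fix $A\in\mathfrak{g}$ and consider the one-parameter subgroup $g(t)=\exp(tA)\in G$, so that the curve $y(t):=g(t)\cdot Y$ satisfies $y(0)=Y$ and $\dot{y}(0)=A(Y)$. By the invariance hypothesis the scalar function $t\mapsto Q(y(t))$ is constant and equal to $Q(Y)$, hence its derivative vanishes identically in $t$.

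Next I would compute this derivative by the product rule. Since $Q(y(t))=\sum_{ij}c_{ij}y_i(t)y_j(t)$, we have
\[
\frac{d}{dt}Q(y(t))=\sum_{ij}c_{ij}\bigl(\dot{y}_i(t)\,y_j(t)+y_i(t)\,\dot{y}_j(t)\bigr).
\]
Evaluating at $t=0$ and substituting $y(0)=Y$, $\dot{y}(0)=A(Y)$ yields exactly $\sum_{ij}c_{ij}\bigl((A(Y))_i\,y_j+(A(Y))_j\,y_i\bigr)$, and this expression must equal zero because it is the value at $t=0$ of the derivative of the constant function $t\mapsto Q(y(t))$.

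There is essentially no serious obstacle here: the content is the standard passage from a Lie-group invariance to its Lie-algebra infinitesimal counterpart. The only point requiring a little care is the correct reading of the phrase ``$G$ preserves the value'' --- namely that it asserts constancy of $Q$ along the whole $G$-orbit of $Y$, and not merely the trivial fact that $Q$ takes the fixed value $Q(Y)$ at the single point $Y$ --- after which the differentiation along $\exp(tA)$ is immediate.
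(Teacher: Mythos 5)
Your proof is correct and follows essentially the same route as the paper: the paper expands $\exp(tA)=\Id+tA+o(t)$ and reads off the vanishing of the first-order term, which is precisely your differentiation of the constant function $t\mapsto Q(\exp(tA)\cdot Y)$ at $t=0$. No gaps.
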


\begin{proof} For any $Q\in G$ and any $Y\in \mathbb{R}^m$ we have $\sum_{ij} c_{ij} (Q(Y))_i (Q(Y))_j=\sum_{ij} c_{ij} y_i y_j$, where $G(Y)$ is the image of $Y$ under the action of the matrix $G$. If $Q=\exp(tA)=\Id +t A +o(t)$ when $t \to 0$ for some $A\in \mathfrak{g}$, then
\begin{eqnarray*}
\sum_{ij} c_{ij} (Q(Y))_i (Q(Y))_j&=&\sum_{ij} c_{ij} (Y+tA(Y))_i (Y+tA(Y))_j +o(t)\\
&=&\sum_{ij} c_{ij} y_i y_j +t\sum_{ij} c_{ij} \Bigl(A(Y)_i y_j+A(Y)_j y_i\Bigr) +o(t)
\end{eqnarray*}
when $t \to 0$, that proves the lemma.
\end{proof}

\medskip

If $N_{3,4}$ is geodesic orbit, then for any $Y\in \mathfrak{v}$ and any $Z\in \mathbf{V}$, there is some
$B \in \mathbf{N}$ such that $[B,Z]=0$ and $B(Y)=Z(Y)$. We aim to find such matrix $B$.
Without loss of generality we may take
\begin{equation}\label{eq.yn34.1}
Y=(y_1,y_2,y_3,y_4,y_5,y_6,y_7,y_8)=\sum_{j=1}^8y_jV_j\in \mathfrak{v}
\end{equation}
for $y_i \in \mathbb{R}$,  $i=1,\dots,8$.

\begin{theorem}\label{the.0case}
For any $Z \in \mathbf{V}$ and for any $Y \in \mathbb{R}^{4,4}$, it is possible to find $B\in \mathbf{N}= [\mathbf{V},\mathbf{V}]$ such that  $[B,Z]=0$ and $B(Y)=Z(Y)$. Hence, $N_{3,4}$ is geodesic orbit pseudo nilmanifold.
\end{theorem}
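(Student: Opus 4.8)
The plan is to verify the transitive normalizer condition of Proposition~\ref{gonil1n.2} directly, using the reductions already set up in Section~\ref{sec.example} to cut the verification down to a finite list of representative pairs $(Z,Y)$. First I would reduce the choice of $Z\in\mathbf{V}\cong\mathbb{R}^{3,4}$. Since $[\mathbf{V},\mathbf{V}]\cong\mathfrak{so}(3,4)$ (Proposition~\ref{pr.triple.1}) acts on $\mathbf{V}$ as the standard representation and $O(3,4)$ is transitive on each hyperquadric, conjugation by $\exp([\mathbf{V},\mathbf{V}])$ preserves $\mathbf{N}$ and moves $Z$ within its orbit while transforming the condition $\{[B,Z]=0,\ B(Y)=Z(Y)\}$ covariantly, exactly as observed in Section~\ref{sec.example}; together with the scaling invariance $Z\mapsto\lambda Z$, $B\mapsto\lambda B$ this lets me assume $Z$ is one of the three representatives $J_{Z_1}$ (where $J_{Z_1}^2=-\Id$), $J_{Z_4}$ (where $J_{Z_4}^2=+\Id$), or the null element $J_{Z_1}+J_{Z_4}$. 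In the last case $J_{Z_1}$ and $J_{Z_4}$ anticommute (as $Z_1\perp Z_4$), so $(J_{Z_1}+J_{Z_4})^2=-\Id+\Id=0$.

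Next, for each representative $Z$ I would read off the stabilizer subalgebra $\mathbf{N}^Z=\{B\in\mathbf{N} : [B,Z]=0\}$ from the explicit $21$-parameter matrix form of $B$ displayed above. For $Z=J_{Z_1}$ the stabilizer is spanned by the products $J_{Z_i}J_{Z_k}$ with $2\le i<k\le 7$ and is isomorphic to $\mathfrak{so}(2,4)$; for $Z=J_{Z_4}$ one gets $\mathfrak{so}(3,3)$; and for the null $Z$ one gets the (parabolic, non-reductive) centralizer of a nilpotent element. In these three cases $Z$ acts on $\mathfrak{v}=\mathbb{R}^{4,4}$ respectively as a complex structure, a product structure, and a square-zero nilpotent. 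I would then use $\exp(\mathbf{N}^Z)$ to bring an arbitrary $Y$ to a short list of orbit representatives, the orbits being separated by the natural $\mathbf{N}^Z$-invariants (the leading one being $\langle Y,Y\rangle_{4,4}$), with the degenerate value $\langle Y,Y\rangle_{4,4}=0$ producing the extra null representatives. Here Lemma~\ref{le.invarform.1} is the tool for identifying which functions of $Y$ are preserved by $\exp(\mathbf{N}^Z)$.

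Finally, for each of the resulting finitely many representative pairs $(Z,Y)$, solving $[B,Z]=0$ together with $B(Y)=Z(Y)$ is a concrete linear system in the coordinates $x_{ik}$ of $B$, handled exactly as in the worked examples $N_{1,1}$, $N_{0,2}$, $N_{2,1}$, $N_{0,3}$ of Section~\ref{sec.example} and with the commutators of Table~\ref{t:dim}; I would exhibit a solution in each case. The consistency of these systems is ultimately forced by the isotypic identity $\omega=J_{Z_1}\cdots J_{Z_7}=\pm\Id$, which holds because $\omega^2=1$ by Proposition~\ref{prop:3page22} and the minimal module is irreducible; this is the precise analogue of case~3) of Riehm's Theorem~\ref{th.class.riemmannain}, where the same volume-element condition makes the Riemannian $H$-type group with $m=7$ geodesic orbit.

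The main obstacle will be the null case $Z=J_{Z_1}+J_{Z_4}$, which has no Riemannian counterpart: here $J_Z^2=0$, so $\operatorname{im}J_Z\subseteq\ker J_Z$, the stabilizer $\mathbf{N}^Z$ is not reductive, and the $\exp(\mathbf{N}^Z)$-orbits on $\mathbb{R}^{4,4}$ — especially those of null $Y$ — are the most delicate to enumerate. On these null orbits the linear system $B(Y)=Z(Y)$ can become rank-deficient, so the crux is to check that the right-hand side $Z(Y)$ nonetheless lies in the column space of the associated coefficient matrix. To control this I would lean on the bound $\dim\mathbf{L}(Y)\le 7$ from Remark~\ref{re.act.rank.1} together with the explicit structure constants in Table~\ref{t:dim}, confirming solvability and hence, by Proposition~\ref{gonil1n.2}, that $N_{3,4}$ is a geodesic orbit pseudo-Riemannian nilmanifold.
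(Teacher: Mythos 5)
Your overall architecture coincides with the paper's: reduce $Z$ to the three representatives $J_{Z_1}$, $J_{Z_4}$, $J_{Z_1}+J_{Z_4}$ via the transitive action of $O(3,4)$ on hyperquadrics, read off $\mathbf{N}^Z$ from the explicit $21$-parameter form of $B$, and treat $B(Y)=Z(Y)$ as a linear system in the $x_{ik}$ whose solvability is controlled by the rank bound of Remark~\ref{re.act.rank.1}. Where you diverge is in how the $Y$-dependence is handled: you propose to classify the $\exp(\mathbf{N}^Z)$-orbits on $\mathbb{R}^{4,4}$ and solve on a finite list of representatives, whereas the paper never classifies these orbits. It instead shows by explicit $7\times 7$ minors that the system has rank $7$ (hence is solvable, by Kronecker--Capelli and the rank bound) for generic $Y$, then uses Lemma~\ref{le.invarform.1} and an elimination-ideal computation (Lemmas~\ref{le.2case.0}--\ref{le.2case.2}, Lemma~\ref{le.3case}) to pin down the exceptional vectors exactly --- for $Z=J_{Z_4}$ these are the two families $Y=(y_1,y_2,y_3,y_4,\pm y_1,\pm y_4,\pm y_3,\pm y_2)$ --- and finally writes down explicit solutions $B$ on those families, with further subcases. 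A full orbit classification, especially under the non-reductive $\mathbf{N}^Z$ in the null case, would be at least as much work and you have not carried it out.

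The genuine gap is in your closing claim that ``the consistency of these systems is ultimately forced by the isotypic identity $\omega=J_{Z_1}\cdots J_{Z_7}=\pm\Id$.'' This cannot be the operative mechanism: by Proposition~\ref{prop:3page22} one has $\omega^2=(-1)^s$ for $r+s=7$, so the same identity $\omega^2=1$ (hence $J_\omega=\pm\Id$ on the minimal module) holds for $N_{5,2}$ and $N_{1,6}$, both of which the paper proves are \emph{not} geodesic orbit. So the volume-element condition, which does characterize the $m=7$ Riemannian case in Theorem~\ref{th.class.riemmannain}, does not transfer to signature $(3,4)$ as a sufficient criterion, and invoking it does not discharge the consistency of the linear systems on the degenerate $Y$. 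That consistency is precisely the hard content of the theorem: for $Z=J_{Z_4}$ the coefficient matrix genuinely drops rank on a nontrivial algebraic set, and solvability there is established in the paper only by exhibiting explicit solutions case by case (including further splittings such as $y_1y_3-y_2y_4=0$ versus $\neq 0$). Your plan names the right tools for the generic stratum but leaves the degenerate stratum --- the crux --- unproved.
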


\begin{remark}
In particular, $N_{3,4}$ is the first example of pseudo $H$-type geodesic orbit manifolds that is not naturally reductive.
Moreover, it is the first pseudo $H$-type geodesic orbit manifold such that the space $\mathbf{V}$
satisfies the {\it strong transitive normalizer condition}; %(compare with Definition~\ref{???});
that is
for every $Y\in \mathfrak{v}=\mathbb{R}^{l,l}$ and every $Z\in \mathbf{V}=J(\mathfrak{z})$
    there is some $B \in [\mathbf{V}, \mathbf{V}]$ such that
    $[B,Z]_{\mathfrak{so}(l,l)}=0$ and $B(Y)=Z(Y)$.
\end{remark}

\begin{remark}\label{re.dusec.conj}
It should be especially emphasized that all homogeneous vectors in Theorem \ref{the.0case} are constructed for $k=0$ (see Proposition \ref{gonil1n.2}),
hence, the obtained pseudo $H$-type geodesic orbit manifold supports Conjecture 4.4 in \cite{Du2009}
for geodesic orbit pseudo-Riemannian spaces. Recall that this conjecture for almost geodesic orbit pseudo-Riemannian spaces was refuted in
\cite{Bar}, see Remark \ref{re.N_1.1}.
On the other hand, Theorem \ref{the.0case} refutes Conjecture 4.3  in \cite{Du2009}, which states that
any geodesic orbit pseudo-Riemannian reductive homogeneous space $G/H$ with noncompact
isotropy group $H$ is naturally reductive.
\end{remark}

The proof of Theorem \ref{the.0case} is based on Propositions \ref{pr.1case}, \ref{pr.2case}, and \ref{pr.3case}.

We also use the fact that we have only three classes in the center of $\mathfrak{n}_{3,4}$, up to similarity and the action of the isotropy subgroup.
They are represented by $Z_1$ (positive vector), $Z_4$ (negative vector), and $Z_1+Z_4$ (null vector).

Indeed, the group $O(3,4)$ with the Lie algebra $\mathfrak{so}(3,4)$, that is a linear span of the operators $[J_{Z_i},J_{Z_j}]$, $1\leq i <j \leq 7$,
acts naturally by automorphisms on the center $\mathfrak{z}$. It is well known that $O(3,4)$ acts transitively on every hyperquadric
$Q(r)=\{z \in \mathfrak{z} \,|\,\langle z, z\rangle_{3,4}=r\}$ with $r\neq 0$, see e.g.~\cite[page~239 ]{ONeill} or~\cite[Theorem 2.4.4 ]{Wolf2011}.

Now, let us suppose that $z,\overline{z} \in \mathfrak{z}$ and $\langle z, z\rangle_{3,4}=\langle \overline{z}, \overline{z} \rangle_{3,4}=0$. If
$\langle z_p, z_p\rangle_{3,4}=\langle \overline{z}_p, \overline{z}_p \rangle_{3,4}=\rho\neq 0$ or, equivalently,
$\langle z_n, z_n\rangle_{3,4}=\langle \overline{z}_n, \overline{z}_n \rangle_{3,4}=-\rho\neq 0$, where the subscripts $p$ and $n$ mean the components
of vectors in $\spann(Z_1,Z_2,Z_3)$ and $\spann(Z_4,Z_5,Z_6,Z_7)$ respectively.
Then it easy to see  that there is $Q\in O(3)\cdot O(4) \subset O(3,4)$ such that $Q(z_p)=\overline{z}_p$ and $Q(z_n)=\overline{z}_n$.

Hence, up to a similarity, it suffices to consider only one point in every of the following hyperquadrics: $Q(1)$, $Q(-1)$, $Q(0)$ (non-trivial in the last case).
For instance, we can take $Z$ satisfying one of the following  possibilities:
$$
Z= J_{Z_1}\in \mathbf{V}, \quad Z=J_{Z_4}\in \mathbf{V}, \quad Z=J_{Z_1}+J_{Z_4}=J_{Z_1+Z_4}\in \mathbf{V},
$$
which corresponds to positive, negative or zero length of $Z$.

\begin{remark}
It would be interesting to find shorter and more conceptual proof of Theorem~\ref{the.0case}. In our proof we have used some standard results on Lie algebras,
on representations of Lie groups, as well as classical results in the linear algebra and properties of polynomial ideals.
\end{remark}

It is clear that for trivial $Y$ (i.e. $y_i=0$ for all $i=1,\dots,8)$ and $Z$ as above, we can take the trivial (zero) operator $B$ in order to get equalities
$B(Y)=Z(Y)$ and $[B,Z]=0$. In what follows, it suffices to check only non-trivial $Y$.

\subsection{The case $Z= J_{Z_1}$}

The first condition $[B,Z]=0$ implies $x_{12}=x_{13}=x_{14}=x_{15}=x_{16}=x_{17}=0$.
Let us consider the condition $B(Y)=Z(Y)$ for a non-trivial $B$.
Let us assume in addition that
$x_{23}=x_{24}=x_{25}=x_{26}=x_{27}=x_{34}=x_{35}=x_{36}=x_{37}=0$.
Then we get the following explicit solutions:
\smallskip

If $y_1^2+y_2^2+y_3^2+y_4^2\neq 0$ and $y_5^2+y_6^2+y_7^2+y_8^2=0$, then we can take $x_{56}=x_{57}=x_{67}=0$ and
\begin{equation*}
x_{45}=-\frac{y_1y_3-y_2y_4}{y_1^2+y_2^2+y_3^2+y_4^2},\quad x_{46}=\frac{y_1y_4+y_2y_3}{y_1^2+y_2^2+y_3^2+y_4^2},\quad x_{47}=\frac{y_1^2+y_2^2-y_3^2-y_4^2}{2(y_1^2+y_2^2+y_3^2+y_4^2)}.
\end{equation*}
If $y_5^2+y_6^2+y_7^2+y_8^2\neq 0$ and $y_1^2+y_2^2+y_3^2+y_4^2=0$, then we can take $x_{56}=x_{57}=x_{67}=0$ and
\begin{equation*}
x_{45}=-\frac{y_5y_7-y_6y_8}{y_5^2+y_6^2+y_7^2+y_8^2},\quad x_{46}=\frac{y_5y_6+y_7y_8}{y_5^2+y_6^2+y_7^2+y_8^2},\quad x_{47}=\frac{y_5^2-y_6^2-y_7^2+y_8^2}{2(y_5^2+y_6^2+y_7^2+y_8^2)}.
\end{equation*}
Finally, if $y_1^2+y_2^2+y_3^2+y_4^2\neq 0$ and $y_5^2+y_6^2+y_7^2+y_8^2\neq 0$, then we can take
\begin{eqnarray*}
x_{45}&=&-\frac{1}{2}\left(\frac{y_1y_3-y_2y_4}{y_1^2+y_2^2+y_3^2+y_4^2}+\frac{y_5y_7-y_6y_8}{y_5^2+y_6^2+y_7^2+y_8^2}\right),\\
x_{46}&=&\frac{1}{2}\left(\frac{y_1y_4+y_2y_3}{y_1^2+y_2^2+y_3^2+y_4^2}+\frac{y_5y_6+y_7y_8}{y_5^2+y_6^2+y_7^2+y_8^2}\right),\\
x_{47}&=&\frac{(y_1^2+y_2^2)(y_5^2+y_8^2)-(y_3^2+y_4^2)(y_6^2+y_7^2)}{2(y_1^2+y_2^2+y_3^2+y_4^2)(y_5^2+y_6^2+y_7^2+y_8^2)},\\
x_{56}&=&\frac{(y_1^2+y_2^2)(y_6^2+y_7^2)-(y_3^2+y_4^2)(y_5^2+y_8^2)}{2(y_1^2+y_2^2+y_3^2+y_4^2)(y_5^2+y_6^2+y_7^2+y_8^2)},\\
x_{57}&=&\frac{1}{2}\left(-\frac{y_1y_4+y_2y_3}{y_1^2+y_2^2+y_3^2+y_4^2}+\frac{y_5y_6+y_7y_8}{y_5^2+y_6^2+y_7^2+y_8^2}\right),\\
x_{67}&=&\frac{1}{2}\left(\frac{-y_1y_3+y_2y_4}{y_1^2+y_2^2+y_3^2+y_4^2}+\frac{y_5y_7+y_6y_8}{y_5^2+y_6^2+y_7^2+y_8^2}\right).\\
\end{eqnarray*}

Hence, we have  the following result.

\begin{prop}\label{pr.1case}
For any $Y \in \mathbb{R}^{4,4}$, it is possible to find $B\in \mathbf{N}= [\mathbf{V},\mathbf{V}]$ of $\mathbf{V}$
such that  $[B,Z]=[B,J_{Z_1}]=0$ and $B(Y)=Z(Y)=J_{Z_1}(Y)$.
\end{prop}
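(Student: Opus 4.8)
The plan is to verify the strong transitive normalizer condition directly for the distinguished positive vector $Z=J_{Z_1}$ by producing, for each $Y\in\mathbb{R}^{4,4}$, an explicit operator $B\in[\mathbf{V},\mathbf{V}]$ satisfying the two required equations. First I would impose the commutation constraint $[B,J_{Z_1}]=0$. Writing $B=\sum_{i<k}x_{ik}J_{ik}$ with $J_{ik}=[J_{Z_i},J_{Z_k}]$, this constraint is a linear condition on the coefficients $x_{ik}$; a short computation of the brackets $[J_{ik},J_{Z_1}]$ (equivalently, using $J_{[D,Z]}=[\rho(D),J_Z]$ from the discussion before Corollary \ref{cor.isom.1}) shows that it forces exactly $x_{12}=x_{13}=x_{14}=x_{15}=x_{16}=x_{17}=0$. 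This leaves the operators $J_{ik}$ with $2\le i<k\le 7$ as the admissible ones, giving a $15$-dimensional search space inside $\mathbf{N}$.

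Next I would attempt the ansatz that the full solution already lives in the even smaller subspace spanned by $\{J_{45},J_{46},J_{47},J_{56},J_{57},J_{67}\}$, i.e. additionally setting $x_{23}=x_{24}=x_{25}=x_{26}=x_{27}=x_{34}=x_{35}=x_{36}=x_{37}=0$. The motivation is structural: the four negative basis vectors $Z_4,\dots,Z_7$ generate an $\mathfrak{so}(4)\cong\mathfrak{su}(2)\oplus\mathfrak{su}(2)$ acting on $\mathbb{R}^{4,4}$, and this six-dimensional algebra is large enough to realize the action of $J_{Z_1}$ on a generic orbit while commuting with it. With this ansatz, the equation $B(Y)=J_{Z_1}(Y)$ becomes a concrete linear system in the six unknowns $x_{45},\dots,x_{67}$, where the right-hand side $J_{Z_1}(Y)$ and the matrix entries of $B$ are read off from \eqref{eq:mat34} and the explicit form of the general operator in $\mathbf{N}$.

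The main work — and the main obstacle — is solving this linear system uniformly in $Y$, since the coefficient matrix degenerates precisely on the null-cone strata of $\mathbb{R}^{4,4}$. I would therefore split into the three cases recorded in the statement according to whether the ``positive block'' $y_1^2+y_2^2+y_3^2+y_4^2$ and the ``negative block'' $y_5^2+y_6^2+y_7^2+y_8^2$ vanish. In the two cases where exactly one block vanishes, the system collapses and one reads off $x_{45},x_{46},x_{47}$ directly (with the remaining coefficients set to zero); in the generic case where both blocks are nonzero, all six coefficients are needed and one obtains the closed-form expressions displayed above. The verification that these formulas indeed satisfy $B(Y)=J_{Z_1}(Y)$ is a routine but lengthy matrix computation; the only genuine check is that the denominators $y_1^2+y_2^2+y_3^2+y_4^2$ and $y_5^2+y_6^2+y_7^2+y_8^2$ are exactly the quantities that are nonzero in each case, so the solution is well defined throughout. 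Note that the case $y_1^2+y_2^2+y_3^2+y_4^2=y_5^2+y_6^2+y_7^2+y_8^2=0$ is not separately needed here, since such $Y$ is handled by continuity or reduces to the already-treated strata; this completes the verification for the positive representative $Z=J_{Z_1}$.
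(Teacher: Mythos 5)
Your proposal is correct and follows essentially the same route as the paper: impose $[B,J_{Z_1}]=0$ to force $x_{12}=\cdots=x_{17}=0$, further restrict to the span of $J_{45},\dots,J_{67}$, and solve the resulting linear system explicitly in the three strata determined by the vanishing of $y_1^2+y_2^2+y_3^2+y_4^2$ and $y_5^2+y_6^2+y_7^2+y_8^2$. The only remark is that the residual case where both blocks vanish needs no continuity argument: since these are sums of squares of real numbers, it forces $Y=0$, which is handled trivially by $B=0$.
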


\subsection{The case $Z= J_{Z_4}$}

The first condition $[B,Z]=0$ implies $x_{14}=x_{24}=x_{34}=x_{45}=x_{46}=x_{47}=0$.
Let us consider a subalgebra $\mathbf{N}_1 \subset \mathbf{N}$ that is generated by the matrices of the type $[J_{Z_k},J_{Z_l}]$, where $k,l \in \{1,2,3,5,6,7\}$.
It is clear that $[J_{Z_4}, \mathbf{N}_1]=0$.

Let us consider the condition $B(Y)=Z(Y)$. The above arguments show that we need to consider only $B\in  \mathbf{N}_1$.
It is more convenient to consider the linear system $2B(Y)=Z(Y)=J_{Z_4}(Y)$ which allows to find $B=\{x_{ij}\}$.
Since $J_{Z_4}(Y)=(y_5,y_8,y_7,y_6,y_1,y_4,y_3,y_2)$, then we have a system of linear equations with respect to the variables
$$
x_{12}, x_{13}, x_{15}, x_{16}, x_{17}, x_{23}, x_{25}, x_{26},x_{27},x_{35},x_{36}, x_{37}, x_{56},x_{57},x_{67}
$$
with the following
extended matrix (the last column is the column of free terms of our system of linear equations):
$$
{\small ME}:=
{\Large\left(
\begin{smallmatrix}
-y_6&y_7&y_3&-y_4&-y_1&-y_8&-y_2&-y_1&y_4&-y_1&y_2&-y_3&y_8&-y_7&y_6&-y_1\\
y_7& y_6& -y_4& -y_3& -y_2& y_5& -y_1& y_2& -y_3& y_2& y_1& -y_4& -y_5& y_6& y_7& -y_2\\
-y_8& -y_5& y_1& -y_2& y_3& y_6& -y_4& -y_3& -y_2& y_3& -y_4& -y_1& y_6& y_5& -y_8& -y_3\\
y_5& -y_8& -y_2& -y_1& y_4& -y_7& -y_3& y_4& y_1& -y_4& -y_3& -y_2& -y_7& -y_8& -y_5& -y_4\\
-y_4& y_3& y_7& -y_6& -y_5& -y_2& -y_8& -y_5& y_6& -y_5& y_8& -y_7& y_2& -y_3& y_4& -y_5\\
y_1& -y_2& -y_8& -y_5& y_6& -y_3& -y_7& y_6& y_5& -y_6& -y_7& -y_8& -y_3& -y_2& -y_1& -y_6\\
-y_2& -y_1& y_5& -y_8& y_7& y_4& -y_6& -y_7& -y_8& y_7& -y_6& -y_5& y_4& y_1& -y_2& -y_7\\
y_3& y_4& -y_6& -y_7& -y_8& y_1& -y_5& y_8& -y_7& y_8& y_5& -y_6& -y_1& y_4& y_3& -y_8
\end{smallmatrix}
\right).}
$$

By the Kronecker--Capelli theorem, this system has a solution if and only if the rank of $ME$ coincides with the rank of the matrix $M$, which is
obtained from $ME$ by deleting the last column (the column of free terms).

It is easy to see that the product of the vector $(-y_5, -y_8, -y_7, -y_6, y_1, y_4, y_3, y_2)$ and $ME$ is a vector with $16$ zero entries.
Hence, $\rank(ME)\leq 7$ (see also Remark \ref{re.act.rank.1}).
Therefore, if $\rank(M)=7$, then the system $2B(Y)=J_{Z_4}(Y)$ has a solution.
By symbol $M_{\,[i_1, i_2, \dots,i_s]}^{[j_1, j_2, \dots,j_s]}$, we denote the minor of $M$ which is determined by the rows with the numbers $i_1,i_2,\dots,i_s$ and columns with the numbers $j_1, j_2, \dots,j_s$.
It is easy to check that
\begin{eqnarray*}
M_{\,[1, 2, 3, 4, 6, 7, 8]}^{[9, 10, 11, 12, 13, 14, 15]}= y_1 \cdot (y_1^2 + y_2^2 - y_3^2 - y_4^2 - y_5^2 + y_6^2 + y_7^2 - y_8^2)\times \\
\times \Bigl((y_1^2 + y_2^2 + y_3^2 + y_4^2 - y_5^2 - y_6^2 - y_7^2 - y_8^2)^2 + 4(y_1y_6 - y_2y_7 + y_3y_8 - y_4y_5)^2\Bigr).
\end{eqnarray*}
Hence, for {\it almost all} $Y\in\mathfrak v$,  we obtain $M_{\,[1, 2, 3, 4, 6, 7, 8]}^{[1, 2, 3, 4, 6, 7, 8]}\neq 0$ and $\rank(M)=\rank(ME)=7$, that implies that
we have a solution of the corresponding system.

It is possible to compute all other minors of $M$ of order $7$. For example,
\begin{eqnarray*}
M_{\,[1, 2, 3, 4, 6, 7, 8]}^{[1, 3, 7, 8, 10, 11, 15]}= -8y_1 \cdot (y_1y_3 + y_2y_4 - y_5y_7 - y_6y_8)\times \\
\Bigl((y_1^2 + y_2^2 + y_5^2 + y_8^2)(y_3y_7 + y_4y_6) - (y_3^2 + y_4^2 + y_6^2 + y_7^2)(y_1y_5 + y_2y_8)\Bigr).
\end{eqnarray*}

The set of $Y\in\mathfrak v$ with the property $\rank(M)<7$ is the zero set of several polynomials, hence, it determines a polynomial ideals.
It is interesting to describe this ideal completely.

More precisely, let $M(7)$ be the set of all minors of size 7 of the matrix $M$ (any such minor is a $7$-form in coordinates of $Y$).
There are $8\cdot C_{15}^7=51480$ minors of size 7 of the matrix $M$.
Now, let us consider the following set:
$$
ZM(7)=\left\{Y\in \mathbb{R}^8\,|\, f(Y)=0 \mbox{  for any } f\in M(7)\right\}.
$$
If $Y \not \in ZM(7)$, then there is a minor $f$ of size 7 of the matrix $M$ such that $f(y)\neq 0$. Therefore, $\rank(M)=7$, hence,
the corresponding linear system for the vector $Y$ has a solution. If $Y \in ZM(7)$ then $\rank(M)\leq 6$, but this does not mean that the corresponding linear system has no solution! But all possible ``bad'' vectors $Y$ are in the set $ZM(7)$. Note that $ZM(7)$ is a closed subset of zero measure in $\mathbb{R}^8$.
Moreover, $ZM(7)$ is an algebraic set. On the other hand, this approach demands a lot of computations. Hence, we are going to apply some other ideas.

Let us show that we may assume (without loss of generality) that
$y_1=\langle Y,  V_1\rangle_{4,4}\neq 0$.
Indeed, if $y_1=0$, then there is $y_i \neq 0$ for some $i=2,\dots,8$. We can choose a matrix of the type $Q=\exp(A)$, where $A\in\mathbf{N}_1$
such that $\langle QY,  V_1\rangle_{4,4}\neq 0$. Indeed, if $i\in\{2,3,4,5,6,7,8\}$, then we can take $Q=\exp\bigl(t [J_{Z_k},J_{Z_l}]\bigr)$
such that $(k,l)\in\{(2,3), (1,3), (1,2), (1,7), (1,6), (1,5), (2,5)\}$ for respective $i$, and $t$ is a small positive number.
For instance, $\langle QY,  V_1\rangle_{4,4}=\cos(2t)y_1+\sin(2t)y_2$ if  $Q=\exp\bigl(t[J_{Z_2},J_{Z_3}]\bigr)$ and
$\langle QY,  V_1\rangle_{4,4}=\cosh (2t)y_1+\sinh (2t)y_5=\frac{e^{2t}+e^{-2t}}{2}y_1+\frac{e^{2t}-e^{-2t}}{2}y_5$ if  $Q=\exp(t[J_{Z_1},J_{Z_7}])$.

Recall that $k\neq 4$ and $l\neq 4$ in the above pairs. In particular, $[J_{Z_3},[J_{Z_k},J_{Z_l}]]=[J_{Z_4},Q]=0$ for all these pairs $(k,l)$
and $QJ_{Z_4}Q^{-1}=J_{Z_4}$. The equality $B(Y)=J_{Z_4}(Y)$ is equivalent to $QBQ^{-1}(QY)=QJ_{Z_4}Q^{-1}(QY)=J_{Z_4}(QY)$.
Since $[QBQ^{-1},J_{Z_4}]=0$ if  $[B,J_{Z_4}]=0$, we can find a suitable $B \in \mathbf{N}$ for the pair $(J_{Z_4},Y)$ if and only if we can find a suitable
$\widetilde{B} \in \mathbf{N}$  for the pair $(J_{Z_4},QY)$, where $Q\in \mathbf{N}$ such that $[Q,J_{Z_4}]=0$.

In what follows, we assume that $y_1\neq 0$ (by the above arguments).

\begin{lemma}\label{le.2case.0}
If a vector $Y=(y_1,y_2,y_3,y_4,y_5,y_6,y_7,y_8)$ satisfies the equalities $U_i=0$, $i=1,\dots, 8$, where
\begin{eqnarray}\label{eq.vspom.im} \notag
U_1&=&y_1y_8 - y_2y_5 - y_3y_6 + y_4y_7,\\ \notag
U_2&=&y_1y_7 + y_2y_6 - y_3y_5 - y_4y_8,\\ \notag
U_3&=&y_1y_4+y_2y_3-y_5y_6-y_7y_8, \\ \notag
U_4&=&y_1y_4-y_2y_3-y_5y_6+y_7y_8,\\
U_5&=&y_1y_3 - y_2y_4 - y_5y_7 + y_6y_8,\\ \notag
U_6&=&y_1y_2 + y_3y_4 - y_5y_8 - y_6y_7,\\ \notag
U_7&=&y_1^2+y_2^2-y_3^2-y_4^2-y_5^2+y_6^2+y_7^2-y_8^2,\\ \notag
U_8&=&y_1^2 - y_2^2 + y_3^2 - y_4^2 - y_5^2 + y_6^2 - y_7^2 + y_8^2, \notag
\end{eqnarray}
then one of the following two cases holds:
\begin{eqnarray*}
1)\,\,\, Y=(y_1,y_2,y_3,y_4,y_1,y_4,y_3,y_2),  \quad
2)\,\,\, Y=(y_1,y_2,y_3,y_4,-y_1,-y_4,-y_3,-y_2),
\end{eqnarray*}
for any $y_1,y_2,y_3,y_4 \in \mathbb{R}$. In particular, in both cases we have $y_1^2+y_2^2+y_3^2+y_4^2-y_5^2-y_6^2-y_7^2-y_8^2=1$ and
$y_1y_6-y_2y_7+y_3y_8-y_4y_5=0$.
\end{lemma}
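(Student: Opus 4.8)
The plan is to identify the two normal forms with the eigenspaces of the involution $J_{Z_4}$ and then to reduce the eight quadratic conditions to a single scalar identity. Since $\langle Z_4,Z_4\rangle_{3,4}=-1$, the defining relation $J_Z^2=-\langle Z,Z\rangle_{3,4}\Id$ gives $J_{Z_4}^2=\Id$, so $J_{Z_4}$ is an involution and $\mathfrak v$ splits as $\mathfrak v^+\oplus\mathfrak v^-$ into its $(\pm1)$-eigenspaces. Reading the action off from~\eqref{eq:mat34} one finds $J_{Z_4}(Y)=(y_5,y_8,y_7,y_6,y_1,y_4,y_3,y_2)$, so the condition $J_{Z_4}(Y)=\varepsilon Y$, $\varepsilon=\pm1$, holds exactly for the vectors $(y_1,y_2,y_3,y_4,\varepsilon y_1,\varepsilon y_4,\varepsilon y_3,\varepsilon y_2)$; these are precisely the two families in the statement. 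Each $\mathfrak v^{\pm}$ is moreover totally null, since for $J_{Z_4}X=\pm X$ one has $\langle X,X\rangle_{4,4}=\langle J_{Z_4}X,J_{Z_4}X\rangle_{4,4}=\langle Z_4,Z_4\rangle_{3,4}\langle X,X\rangle_{4,4}=-\langle X,X\rangle_{4,4}$ by~\eqref{eq:JJJ}, forcing $\langle X,X\rangle_{4,4}=0$. Thus it suffices to prove that $U_1=\dots=U_8=0$ forces $J_{Z_4}(Y)=\pm Y$.

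To make this computational, I would write $Y=Y^++Y^-$ with $Y^{\pm}=\tfrac12(\Id\pm J_{Z_4})Y\in\mathfrak v^{\pm}$, so that $Y\in\mathfrak v^+\cup\mathfrak v^-$ if and only if $Y^+=0$ or $Y^-=0$. Because $J_{Z_4}$ merely permutes the coordinates, it is orthogonal for the positive-definite Euclidean product, and a short expansion with $P=y_1^2+y_2^2+y_3^2+y_4^2$, $Q=y_5^2+y_6^2+y_7^2+y_8^2$, and $S=y_1y_5+y_2y_8+y_3y_7+y_4y_6$ yields $\|Y^{\pm}\|^2=\tfrac12(P+Q\pm 2S)$, where each of $P+Q\pm 2S$ is a sum of four squares. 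Since the Euclidean norm is definite, $Y^+=0$ or $Y^-=0$ is equivalent to $\|Y^+\|^2\,\|Y^-\|^2=0$, i.e. to the single identity
\begin{equation*}
(P+Q)^2-4S^2=(P+Q-2S)(P+Q+2S)=0,
\end{equation*}
the vanishing of the factor $P+Q-2S$ (resp. $P+Q+2S$) singling out the family with $\varepsilon=+1$ (resp. $\varepsilon=-1$). So the lemma is equivalent to the assertion that $(P+Q)^2-4S^2$ vanishes on the common zero locus of $U_1,\dots,U_8$.

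The core of the argument, and the step I expect to be the main obstacle, is exactly this last vanishing. The eight quadrics are far from generic---their common zero locus is $4$-dimensional---so no dimension count is available and the relation must be produced explicitly. I would do this in one of two ways. The conceptually cleaner route is to exhibit quadratic multipliers $g_1,\dots,g_8$ with $(P+Q)^2-4S^2=\sum_{i} g_iU_i$, a finite Gr\"obner-basis-style verification. The more hands-on route uses the hypothesis $y_1\neq0$ together with the only two conditions that are linear in $(y_5,y_6,y_7,y_8)$, namely $U_1=U_2=0$: solving these for $y_7$ and $y_8$ as rational expressions in $y_5,y_6$ and $y_1,\dots,y_4$, substituting into $U_3,\dots,U_8$, and checking that the reduced system in $(y_5,y_6)$ admits only the solutions $(y_5,y_6)=\pm(y_1,y_4)$, whence $Y\in\mathfrak v^{\pm}$. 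Either way the work is routine but heavily computational, which is where I expect essentially all the effort to lie.

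Finally, once $Y$ is known to have one of the two forms, the two concluding identities are obtained at once by substituting that form into the relevant expressions.
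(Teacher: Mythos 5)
Your outline is correct, but it takes a genuinely different route from the paper. The paper's own proof is a pure computer-algebra argument: it forms the ideal generated by $U_1,\dots,U_8$ and computes (in Maple) an elimination ideal, reading off the two components directly, with no structural explanation. You instead recognize the two families as the $(\pm 1)$-eigenspaces of the involution $J_{Z_4}$ and reduce the whole lemma to a single polynomial identity, which is more conceptual and removes the dependence on a black-box elimination. The one step you leave unexecuted --- producing the multipliers $g_i$ --- does go through, and in fact more cleanly than you anticipate: setting $u=(y_1-y_5,\,y_2-y_8,\,y_3-y_7,\,y_4-y_6)$ and $v=(y_1+y_5,\,y_2+y_8,\,y_3+y_7,\,y_4+y_6)$ (the coordinates of $2Y^{\mp}$), every $U_i$ becomes a bilinear form in $(u,v)$, one has $(P+Q)^2-4S^2=\|u\|^2\|v\|^2$ for the Euclidean norms, and direct expansion yields the sum-of-squares identity
\begin{equation*}
(P+Q)^2-4S^2 \;=\; 2\sum_{i=1}^{6}U_i^2+\tfrac12\bigl(U_7^2+U_8^2\bigr),
\end{equation*}
so $U_1=\dots=U_8=0$ forces $u=0$ or $v=0$, i.e.\ exactly the two families. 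With this identity your argument is complete and arguably preferable to the paper's. Two small caveats: your fallback route of solving $U_1=U_2=0$ for $y_7,y_8$ requires $y_1^2+y_4^2\neq 0$, whereas the lemma as stated carries no hypothesis $y_1\neq 0$ (that assumption is only introduced later, where the lemma is applied), so the sum-of-squares route is the one to use; and your own observation that $\mathfrak v^{\pm}$ are totally null shows that the first ``in particular'' identity should read $y_1^2+y_2^2+y_3^2+y_4^2-y_5^2-y_6^2-y_7^2-y_8^2=0$ --- the ``$=1$'' in the statement is a typo, and the value $0$ is what the subsequent Lemma~\ref{le.2case.1} actually needs, since it requires $U_0=0$.
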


\begin{proof}
The polynomials   $U_i$, $i=1,\dots, 8$, generates an ideal $\mathfrak{I}$.
Now, if we compute the elimination ideal $\mathfrak{I}_1$ from $\mathfrak{I}$ with respect to the variables $y_1, y_2, y_3, y_4, y_5, y_6, y_7$ (it can be done, say, by Maple), then we get that
$\mathfrak{I}_1$ has two component:
\begin{eqnarray*}
S_1&=&\{(y_1,y_2,y_3,y_4,y_5,y_6,y_7)\,|\, y_1=y_5,y_2=y_8,y_3=y_7, y_4=y_6\},\\
S_2&=&\{(y_1,y_2,y_3,y_4,y_5,y_6,y_7)\,|\, y_1=-y_5,y_2=-y_8,y_3=-y_7, y_4=-y_6\}.
\end{eqnarray*}
This proves the lemma.
\end{proof}

\begin{lemma}\label{le.2case.1}
If $\rank({M})<7$, then $y_1^2 + y_2^2 + y_3^2 + y_4^2 = y_5^2 + y_6^2 + y_7^2 + y_8^2$ and $y_1y_6  + y_3y_8= y_2y_7 + y_4y_5$.
\end{lemma}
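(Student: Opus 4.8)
The plan is to read off the two asserted identities from the single factored minor computed above and then remove the one spurious factor it produces. Write $P:=y_1^2+y_2^2+y_3^2+y_4^2-y_5^2-y_6^2-y_7^2-y_8^2=\langle Y,Y\rangle_{4,4}$ and $R:=y_1y_6-y_2y_7+y_3y_8-y_4y_5$; the two relations claimed in the lemma are exactly $P=0$ and $R=0$, and since $P,R\in\mathbb R$ these hold simultaneously if and only if $P^2+4R^2=0$. The hypothesis $\rank(M)<7$ means that every order-$7$ minor of $M$ vanishes, and throughout we may assume $y_1\neq 0$, as arranged just before the lemma.

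First I would use the already computed value $M_{[1,2,3,4,6,7,8]}^{[9,10,11,12,13,14,15]}=y_1\cdot U_7\cdot(P^2+4R^2)$, where $U_7=y_1^2+y_2^2-y_3^2-y_4^2-y_5^2+y_6^2+y_7^2-y_8^2$. Since this minor vanishes and $y_1\neq0$, we get $U_7(P^2+4R^2)=0$. If $P^2+4R^2=0$ we are done, so the entire difficulty is to rule out the spurious branch $U_7=0,\ P^2+4R^2\neq0$; equivalently, to show that $\rank(M)=7$ at every point where $P^2+4R^2\neq0$.

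For that I would identify $\rank(M)$ with an orbit dimension. The columns of $M$ are the vectors $[J_{Z_k},J_{Z_l}](Y)$ with $k,l\in\{1,2,3,5,6,7\}$, so the column space is $\mathbf N_1(Y)$ and $\rank(M)=\dim\mathbf N_1(Y)$. By the reasoning of Proposition~\ref{pr.triple.1} applied to the sub-pseudo-Euclidean space $\spann\{Z_1,Z_2,Z_3,Z_5,Z_6,Z_7\}\cong\mathbb R^{3,3}$, the algebra $\mathbf N_1$ is isomorphic to $\mathfrak{so}(3,3)\cong\mathfrak{sl}(4,\mathbb R)$; and since $J_{Z_4}$ commutes with $\mathbf N_1$ and $J_{Z_4}^2=\Id$, its $(\pm1)$-eigenspaces $\mathfrak v^{\pm}$ -- which are precisely the two subspaces appearing in Lemma~\ref{le.2case.0} -- are complementary totally isotropic $\mathbf N_1$-submodules. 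Writing $Y=Y^++Y^-$ accordingly, this exhibits $\mathfrak v\cong\mathbb R^4\oplus(\mathbb R^4)^{*}$ as the sum of the two half-spinor modules of $\mathrm{SL}(4,\mathbb R)$, with $P=2\langle Y^+,Y^-\rangle$ the invariant pairing. A direct stabilizer computation for this action shows that the orbit of $Y$ has dimension $7$ unless $Y^+=0$ or $Y^-=0$, i.e. unless $Y\in\mathfrak v^+\cup\mathfrak v^-$; and on $\mathfrak v^{\pm}$ one checks immediately that $P=0$ and $R=0$. Hence $\rank(M)<7$ forces $Y\in\mathfrak v^+\cup\mathfrak v^-$, so $P=R=0$, which proves the lemma; in particular no $Y$ with $P^2+4R^2\neq0$ can satisfy $\rank(M)<7$, so the spurious branch is empty.

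The main obstacle is exactly this last step: the factored minor alone only yields $U_7(P^2+4R^2)=0$, and eliminating the nuisance quadric $U_7$ cannot be done from that one identity, since $U_7$ is an indefinite form sharing many zeros with the open set $P^2+4R^2\neq0$. One clean way around it is the representation-theoretic orbit count above; the purely computational alternative, in the spirit of Lemma~\ref{le.2case.0}, is to adjoin finitely many further order-$7$ minors (for instance the second minor $M_{[1,2,3,4,6,7,8]}^{[1,3,7,8,10,11,15]}$ computed above) and to verify, by an elimination or Gröbner-basis computation, that the ideal they generate, saturated at $y_1$, lies in the radical of $(P,R)$. Either route rests on the same underlying fact, that the locus $\rank(M)<7$ is only $\mathfrak v^+\cup\mathfrak v^-$, which sits strictly inside $\{P=R=0\}$.
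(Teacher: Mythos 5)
Your argument is correct, but it proves the lemma by a genuinely different route from the paper. The paper's proof is entirely computational: it evaluates the eight order-$7$ minors $D(i)={M}_{\,[1,2,4,5,6,7,8]}^{[i,10,11,12,13,14,15]}$, $i=2,\dots,9$, each of which factors as $\pm c\,y_1\cdot U_j\cdot U_0$ with the \emph{same} factor $U_0=P^2+4R^2$ but with the eight \emph{different} cofactors $U_1,\dots,U_8$ of \eqref{eq.vspom.im}; if $U_0\neq 0$ then $\rank(M)<7$ forces $U_1=\dots=U_8=0$, and Lemma~\ref{le.2case.0} (an elimination-ideal computation) then shows $Y\in\mathfrak v^{+}\cup\mathfrak v^{-}$, whence $U_0=0$ after all -- a contradiction. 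You instead observe that $\rank(M)=\dim\mathbf N_1(Y)$ and compute orbit dimensions for $\mathbf N_1\cong\mathfrak{so}(3,3)\cong\mathfrak{sl}(4,\mathbb R)$ acting on $\mathfrak v=\mathfrak v^{+}\oplus\mathfrak v^{-}\cong V\oplus V^{*}$; the stabilizer count (which you defer, and which must be done separately for $\langle Y^{+},Y^{-}\rangle\neq0$ and $\langle Y^{+},Y^{-}\rangle=0$, giving an $8$-dimensional stabilizer in both cases) shows the degeneracy locus $\rank(M)<7$ is exactly $\mathfrak v^{+}\cup\mathfrak v^{-}$, on which $P=R=0$. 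This is more conceptual, avoids the Gr\"obner-basis step behind Lemma~\ref{le.2case.0}, and yields a sharper statement that essentially subsumes Lemma~\ref{le.2case.2} as well (if $\rank(M)=7$ the system is automatically consistent since $\rank(ME)\leq 7$); the price is that the identification of $\mathfrak v^{\pm}$ with the standard $\mathfrak{sl}(4,\mathbb R)$-module and its dual, and the faithfulness/surjectivity of $\mathbf N_1\to\mathfrak{sl}(\mathfrak v^{+})$, must be justified, which your sketch does adequately. Your opening paragraph built around the single minor $M_{\,[1,2,3,4,6,7,8]}^{[9,\dots,15]}$ is, as you yourself note, not sufficient on its own and in fact becomes redundant once the orbit argument is in place.
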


\begin{proof}
For $i=2,\ldots,9$, we denote by $D(i)$ the minor ${M}_{\,[1, 2, 4, 5, 6, 7, 8]}^{[i, 10, 11, 12, 13, 14, 15]}$. Then we get the following equalities:
\begin{equation*}
\begin{array}{llllll}
&D(2)=-2y_1\cdot U_1 \cdot U_0,\qquad &D(6)=-2y_1\cdot U_2 \cdot  U_0,\\
&D(3)=-y_1\cdot U_6 \cdot  U_0,\qquad &D(7)=2y_1\cdot U_5 \cdot  U_0,\\
&D(4)=-2y_1\cdot U_8 \cdot  U_0,\qquad &D(8)=2y_1\cdot U_3 \cdot  U_0,\\
&D(5)=2y_1\cdot U_4 \cdot  U_0,\qquad &D(9)=y_1\cdot U_7 \cdot  U_0,
\end{array}
\end{equation*}
where $U_i$, $1\leq i \leq 8$, are given in \eqref{eq.vspom.im} and
\begin{eqnarray*}
U_0&=&(y_1^2 + y_2^2 + y_3^2 + y_4^2 - y_5^2 - y_6^2 - y_7^2 - y_8^2)^2 + 4(y_1y_6 - y_2y_7 + y_3y_8 - y_4y_5)^2.
\end{eqnarray*}
If $U_0=0$, then the assertion of the theorem follows.

Let us suppose that $U_0\neq 0$.
If $U_i \neq 0$ for some $i=1,\dots,8$, then $\rank({M})=7$. On the other hand, $U_1=U_2=U_3=U_4=U_5=U_6=U_7=U_8=0$ implies $U_0=0$
by Lemma \ref{le.2case.0}.
Hence, the lemma is proved.
\end{proof}
\smallskip

Note that all matrices from $\exp(\mathbf{N})$ preserves the quadratic form $y_1^2 + y_2^2 + y_3^2 + y_4^2-y_5^2 - y_6^2-y_7^2- y_8^2$.
On the other hand that, it is not true for the quadratic form $y_1y_6 - y_2y_7 + y_3y_8- y_4y_5$.

\begin{lemma}\label{le.2case.2}
If the vector $Y=(y_1,y_2,y_3,y_4,y_5,y_6,y_7,y_8)$ with $y_1 \neq 0$ is such that there is no $B\in \mathbf{N}_1$ satisfying $B(Y)=J_{Z_4}(Y)$,
then $Y$ has one of the following two forms:
\begin{eqnarray*}
1)\,\,\, Y=(y_1,y_2,y_3,y_4,y_1,y_4,y_3,y_2), \quad  2)\,\,\, Y=(y_1,y_2,y_3,y_4,-y_1,-y_4,-y_3,-y_2),
\end{eqnarray*}
where $y_1,y_2,y_3,y_4 \in \mathbb{R}$.
\end{lemma}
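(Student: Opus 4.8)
The plan is to prove the contrapositive: if $Y$ with $y_1\neq 0$ is \emph{not} of form 1) or 2), then the system $2B(Y)=J_{Z_4}(Y)$ does admit a solution $B\in\mathbf N_1$. I would first reduce everything to a rank statement. By Remark~\ref{re.act.rank.1} the rank of the extended matrix $ME$ never exceeds $7$, so the Kronecker--Capelli theorem makes the system solvable as soon as $\rank(M)=7$. Hence unsolvability forces $\rank(M)<7$, and Lemma~\ref{le.2case.1} then yields $y_1^2+y_2^2+y_3^2+y_4^2=y_5^2+y_6^2+y_7^2+y_8^2$ together with $q(Y)=0$, where $q(Y)=y_1y_6-y_2y_7+y_3y_8-y_4y_5$. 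Since $U_0=\langle Y,Y\rangle^2+4q(Y)^2$, this says exactly $U_0=0$. Thus the whole problem is confined to the cone $\{U_0=0\}$, and it remains to show that on this cone the only unsolvable vectors are those of forms 1) and 2).

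Next I would exploit the involution $J_{Z_4}$. Because $\|Z_4\|^2=-1$ we have $J_{Z_4}^2=\Id$, and as $J_{Z_4}$ is skew it is an \emph{anti}-isometry; therefore its $\pm1$-eigenspaces $\mathfrak v^{\pm}$ are totally isotropic, $4$-dimensional, and dually paired. A direct check shows that $\mathfrak v^{+}$ consists precisely of the vectors of form 1) and $\mathfrak v^{-}$ of those of form 2), so the target conclusion is exactly $Y\in\mathfrak v^{+}\cup\mathfrak v^{-}$. Writing $Y=Y_++Y_-$ with coordinate vectors $a,b\in\mathbb R^{4}$ in adapted bases, the constraint $\langle Y,Y\rangle=0$ becomes $a^{t}b=0$, while $q(Y)$ becomes a \emph{fixed} nondegenerate skew (symplectic) pairing $a^{t}\Omega b$. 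The subalgebra $\mathbf N_1$, which is the centralizer of $J_{Z_4}$ in $\mathbf N$ and is isomorphic to $\mathfrak{so}(3,3)$ by Proposition~\ref{pr.triple.1}, preserves the splitting $\mathfrak v=\mathfrak v^{+}\oplus\mathfrak v^{-}$ and acts on $\mathfrak v^{+}$ as the full special linear algebra $\mathfrak{sl}(4,\mathbb R)$, with the dual action $B|_{\mathfrak v^{-}}=-S^{t}$ forced by skew-symmetry whenever $B|_{\mathfrak v^{+}}=S$.

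The decisive step is to show that whenever \emph{both} components are nonzero the system is solvable; combined with the previous two paragraphs this gives the lemma. Here I would use that solvability is invariant under conjugation by any $Q\in\exp(\mathbf N_1)$ (these commute with $J_{Z_4}$, exactly as in the $y_1\neq0$ reduction preceding Lemma~\ref{le.2case.0}) and, as noted just before the lemma, that $\exp(\mathbf N)$ preserves $\langle\cdot,\cdot\rangle$ but \emph{not} the form $q$. Differentiating $t\mapsto q(e^{tB}Y)$ at $t=0$ gives $\trace\!\big(S^{t}[\Omega,ba^{t}]\big)$, which can be made nonzero by a suitable traceless $S$ precisely when $[\Omega,ba^{t}]\neq0$; since $\Omega$ is a nondegenerate skew form it has no real eigenvectors, so $[\Omega,ba^{t}]\neq0$ as soon as $a\neq0$ and $b\neq0$. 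For small $t\neq0$ the vector $Y'=e^{tB}Y$ then has $q(Y')\neq0$, hence $U_0(Y')\neq0$, while $y_1(Y')\neq0$ by continuity. By Lemma~\ref{le.2case.0} the simultaneous vanishing of all $U_j(Y')$ would force $Y'$ into form 1)/2) and hence $U_0(Y')=0$, a contradiction; so some $U_j(Y')\neq0$, one of the minors $D(i)$ of Lemma~\ref{le.2case.1} is nonzero, $\rank(M(Y'))=7$, and $Y'$ is solvable. Conjugation-invariance then makes $Y$ solvable, contradicting unsolvability; therefore $Y\in\mathfrak v^{+}\cup\mathfrak v^{-}$.

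The main obstacle is the structural input of the third paragraph: one must pin down the $\mathbf N_1$-action on the eigenspaces of $J_{Z_4}$ finely enough to evaluate the variation of $q$, and confirm that the only locus of $\{U_0=0\}$ that cannot be pushed off $\{q=0\}$ is exactly $\mathfrak v^{+}\cup\mathfrak v^{-}$ (equivalently, that $[\Omega,ba^{t}]=0$ with $a^{t}b=0$ forces $a=0$ or $b=0$). A purely computational alternative, matching the style of the surrounding arguments, is to bypass the group action entirely and instead isolate, among the $7\times7$ minors of $M$ that do \emph{not} carry the factor $U_0$, a family whose simultaneous vanishing on $\{U_0=0\}$ already forces $Y$ into form 1) or 2), after which Lemma~\ref{le.2case.0} concludes. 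I expect the bookkeeping of those minors, rather than any conceptual point, to be the real labor in that route.
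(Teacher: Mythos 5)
Your proposal is correct, and its skeleton coincides with the paper's: both arguments observe that unsolvability propagates along the orbit of $Y$ under conjugation by $\exp(\mathbf{N}_1)$, use Lemma~\ref{le.2case.1} to force that whole orbit into the zero set of $q(Y)=y_1y_6-y_2y_7+y_3y_8-y_4y_5$, and then differentiate $q$ along the $\mathbf{N}_1$-action via Lemma~\ref{le.invarform.1}. Where you genuinely part ways is in how the resulting infinitesimal condition is exploited. The paper evaluates the derivative on all fifteen generators $[J_{Z_k},J_{Z_l}]$, extracts the eight explicit quadrics $U_1=\dots=U_8=0$ of \eqref{eq.vspom.im}, and disposes of that polynomial system by an elimination-ideal computation in Maple (Lemma~\ref{le.2case.0}). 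You instead identify forms 1) and 2) with the $\pm1$-eigenspaces $\mathfrak{v}^{\pm}$ of $J_{Z_4}$, note that $\mathbf{N}_1\cong\mathfrak{so}(3,3)\cong\mathfrak{sl}(4,\mathbb{R})$ acts on the isotropic summand $\mathfrak{v}^{+}$ as the full traceless algebra (forced by simplicity and a dimension count) and on $\mathfrak{v}^{-}$ by the negative transpose, and reduce the derivative condition to $\trace\bigl(S^{t}[\Omega,ba^{t}]\bigr)=0$ for all traceless $S$, i.e.\ $[\Omega,ba^{t}]=0$. The points you flag as needing confirmation do check out: in the adapted dual bases one computes $q(Y)=2(a_4b_1-a_1b_4+a_2b_3-a_3b_2)=2\,a^{t}\Omega b$ with $\Omega$ skew-symmetric and nondegenerate, so $[\Omega,ba^{t}]=0$ with $a\neq0$, $b\neq0$ would force $\Omega b=\lambda b$ and hence $\lambda\, b^{t}b=b^{t}\Omega b=0$, a contradiction; likewise $\langle Y,Y\rangle_{4,4}=4a^{t}b$. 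Your route buys a conceptual replacement for the computer-algebra step --- Lemma~\ref{le.2case.0} becomes unnecessary --- at the price of some representation-theoretic bookkeeping, whereas the paper's route is heavier on computation but needs no structure theory of the $\mathbf{N}_1$-module $\mathfrak{v}$.
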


\begin{proof}
Let us consider any $Q\in \exp( \mathbf{N}_1)$. If $Q$ does not preserve the equality $y_1y_6 - y_2y_7 + y_3y_8- y_4y_5=0$,
then the matrix $M$ for the vector $\overline{Y}=Q(Y)$ has rank $7$ by Lemma \ref{le.2case.1}.
Therefore, we have some $\overline{B}\in \mathbf{N}_1$ such that $\overline{B}(\overline{Y})=J_{Z_4}(\overline{Y})$. Consequently,
$B=Q^{-1}\overline{B}Q \in \mathbf{N}_1$ satisfies $B(Y)=J_{Z_4}(Y)$, that is impossible by the assumptions.
Hence, any $Q\in \exp( \mathbf{N}_1)$ preserves the equality $y_1y_6  + y_3y_8= y_2y_7 + y_4y_5=0$.

By Lemma \ref{le.invarform.1} we have
\begin{equation}\label{eq.invar.2}
\overline{y}_1y_6+ y_1\overline{y}_6+  \overline{y}_3y_8+  y_3\overline{y}_8 - \overline{y}_2y_7 - y_2\overline{y}_7- \overline{y}_4y_5- y_4\overline{y}_5=0,
\end{equation}
where $\overline{Y}=A(Y)$ and $A$ is any matrix in $\mathbf{N}_1$. Taking various $A=[J_{Z_k},J_{Z_l}]$ for $k,l \in \{1,2,3,5,6,7\}$, we obtain the
equations $U_i=0$, $1\leq i \leq 8$, where $U_i$ are given in  \eqref{eq.vspom.im}.
Now, it suffices to apply Lemma \eqref{le.2case.0}.
\end{proof}

\begin{prop}\label{pr.2case}
For any $Y \in \mathbb{R}^{4,4}$, it is possible to find $B\in \mathbf{N}= [\mathbf{V},\mathbf{V}]$
such that
\begin{equation}\label{eq:B34}
[B,Z]=[B,J_{Z_4}]=0\quad\text{ and }\quad B(Y)=Z(Y)=J_{Z_4}(Y).
\end{equation}
\end{prop}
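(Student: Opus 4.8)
The plan is to feed Lemma~\ref{le.2case.2} into an eigenspace analysis of $J_{Z_4}$ and to treat separately the two exceptional families of vectors that the lemma isolates. Two preliminary reductions are already available: by the isotropy argument preceding the lemmas we may assume $y_1=\langle Y,V_1\rangle_{4,4}\neq 0$; and for $B\in\mathbf N=[\mathbf V,\mathbf V]$ the condition $[B,J_{Z_4}]=0$ is \emph{equivalent} to $B\in\mathbf N_1$, since it forces $x_{14}=x_{24}=x_{34}=x_{45}=x_{46}=x_{47}=0$ and the surviving generators $J_{ik}$ with $i,k\in\{1,2,3,5,6,7\}$ span precisely $\mathbf N_1$. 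Thus producing the $B$ required in~\eqref{eq:B34} amounts to producing $B\in\mathbf N_1$ with $B(Y)=J_{Z_4}(Y)$.

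For every admissible $Y$ (so $y_1\neq 0$) that is \emph{not} one of the two vectors listed in Lemma~\ref{le.2case.2}, the contrapositive of that lemma already furnishes $B\in\mathbf N_1$ with $B(Y)=J_{Z_4}(Y)$, and we are done. Hence the entire content of the proof lies in the two exceptional families
$$
Y=(y_1,y_2,y_3,y_4,\ \varepsilon y_1,\varepsilon y_4,\varepsilon y_3,\varepsilon y_2),\qquad \varepsilon\in\{+1,-1\}.
$$
Substituting into $J_{Z_4}(Y)=(y_5,y_8,y_7,y_6,y_1,y_4,y_3,y_2)$ shows that these are exactly the $\varepsilon$-eigenvectors of $J_{Z_4}$, i.e.\ $J_{Z_4}(Y)=\varepsilon Y$, so $Y$ lies in the eigenspace $\mathfrak v^{\varepsilon}=\{X:J_{Z_4}(X)=\varepsilon X\}$; since $J_{Z_4}^2=\Id$ and $\trace J_{Z_4}=0$, each of $\mathfrak v^{+},\mathfrak v^{-}$ is $4$-dimensional.

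The decisive structural input is that $\mathbf N_1$ centralizes $J_{Z_4}$, hence preserves $\mathfrak v^{+}$ and $\mathfrak v^{-}$, together with the identification $\mathbf N_1\cong\mathfrak{so}(3,3)$: indeed $\mathbf N_1$ is the copy of $\mathfrak{so}$ attached to $\operatorname{span}\{Z_1,Z_2,Z_3,Z_5,Z_6,Z_7\}$ of signature $(3,3)$ by the construction in Proposition~\ref{pr.triple.1}. Under the exceptional isomorphism $\mathfrak{so}(3,3)\cong\mathfrak{sl}(4,\mathbb R)$, the restriction of $\mathfrak v$ to $\mathbf N_1$ is the Dirac spinor and $\mathfrak v^{+},\mathfrak v^{-}$ are its two half-spinor summands, namely the standard representation $\mathbb R^4$ and its dual; both are absolutely irreducible, which re-explains why $J_{Z_4}$ is scalar on each. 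Since $\mathbf N_1$ is simple of dimension $15$, its (faithful) image in $\mathfrak{gl}(\mathfrak v^{\varepsilon})\cong\mathfrak{gl}(4,\mathbb R)$ lies in, and by dimension equals, $\mathfrak{sl}(4,\mathbb R)$. Consequently, for the given nonzero $Y\in\mathfrak v^{\varepsilon}$ I complete $Y$ to a basis $\{Y,f_2,f_3,f_4\}$ of $\mathfrak v^{\varepsilon}$ and take the trace-free endomorphism $B_0$ determined by $B_0(Y)=\varepsilon Y$, $B_0(f_2)=-\varepsilon f_2$, $B_0(f_3)=B_0(f_4)=0$; its unique lift $B\in\mathbf N_1$ then satisfies $B(Y)=\varepsilon Y=J_{Z_4}(Y)$ and $[B,J_{Z_4}]=0$, establishing~\eqref{eq:B34}.

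The main obstacle is exactly these exceptional families, where $\rank(M)<7$ and the Kronecker--Capelli/minor reasoning behind Lemmas~\ref{le.2case.1}--\ref{le.2case.2} no longer forces solvability on its own. The representation-theoretic fact that the centralizer $\mathbf N_1$ acts on each $J_{Z_4}$-eigenspace as the \emph{full} $\mathfrak{sl}(4,\mathbb R)$ is what resolves the deadlock; alternatively, one may simply solve the (consistent) linear system $B(Y)=\varepsilon Y$ within $\mathbf N_1$ by hand, in the same elementary manner as the explicit formulas produced in the case $Z=J_{Z_1}$.
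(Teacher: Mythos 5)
Your proposal is correct, and for the essential part of the argument it takes a genuinely different route from the paper. Both proofs share the same skeleton: reduce to $y_1\neq 0$, identify $[B,J_{Z_4}]=0$ with $B\in\mathbf N_1$, and invoke Lemma~\ref{le.2case.2} to confine the possible failures to the two families $Y=(y_1,y_2,y_3,y_4,\pm y_1,\pm y_4,\pm y_3,\pm y_2)$. The paper then disposes of these families by exhibiting explicit matrices $B$, with a further case split on $y_1y_3-y_2y_4$ (resp. $y_1y_4+y_2y_3$) being zero or not. You instead observe that these families are precisely the $\pm1$-eigenspaces of $J_{Z_4}$ (which squares to $\Id$), that $\mathbf N_1\cong\mathfrak{so}(3,3)$ preserves each $4$-dimensional eigenspace, and that by simplicity and the dimension count $\dim\mathbf N_1=15=\dim\mathfrak{sl}(4,\mathbb R)$ the image of $\mathbf N_1$ in $\End(\mathfrak v^{\varepsilon})$ is exactly the trace-free endomorphisms; any trace-free $B_0$ with $B_0(Y)=\varepsilon Y$ then lifts to the required $B$. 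This is cleaner and more conceptual, at the price of one point you should make explicit: the dimension count needs the restriction $\mathbf N_1\to\End(\mathfrak v^{\varepsilon})$ to be injective, which follows from simplicity of $\mathfrak{so}(3,3)$ once one checks the restriction is nontrivial --- e.g. $[J_{Z_1},J_{Z_2}]=2J_{Z_1}J_{Z_2}$ squares to $-4\,\Id$ on $\mathfrak v^{\varepsilon}$, so it cannot vanish there. The identification of $\mathfrak v^{\pm}$ with the half-spinor summands is a pleasant sanity check but is not actually needed once this faithfulness is in hand. The paper's computational proof buys explicit formulas for $B$; yours buys brevity and an explanation of \emph{why} the exceptional vectors are exceptional (they are eigenvectors of $J_{Z_4}$, where the generic rank argument degenerates).
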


\begin{proof} Suppose that for some $Y=(y_1,y_2,y_3,y_4,y_5,y_6,y_7,y_8) \in \mathbb{R}^8$, there is no $B\in \mathbf{N}$ satisfying~\eqref{eq:B34}. The discussion above shows that (without loss of generality) we may assume that
$y_1\neq 0$. Lemma~\ref{le.2case.2} implies that we have one of the following two possibility:
$$
{\rm 1)}\,\,\, Y=(y_1,y_2,y_3,y_4,y_1,y_4,y_3,y_2),\quad {\rm 2)}\,\,\, Y=(y_1,y_2,y_3,y_4,-y_1,-y_4,-y_3,-y_2),
$$
where $y_1,y_2,y_3,y_4 \in \mathbb{R}$. We will find an explicit form of $B \in \mathbf{N}$  satisfying~\eqref{eq:B34} for both cases.

Let  us start from the case {\rm 1)}.

If $y_1y_3-y_2y_4 \neq 0$, then we have the solution $B$ with the following entries:
$$
x_{12} = -\frac{y_1y_2+y_3y_4}{y_1y_3-y_2y_4}, \quad x_{13} = -\frac{y_1^2-y_2^2-y_3^2+y_4^2}{2(y_1y_3-y_2y_4)},\quad
x_{15} = -\frac{y_1^2+y_2^2+y_3^2+y_4^2}{2(y_1y_3-y_2y_4)},
$$
and $x_{16}=x_{17}=x_{23}=x_{25}=x_{26}=x_{27}=x_{35}=x_{36}=x_{37}=x_{56}=x_{57}=x_{67}=0$.

If $y_1y_3-y_2y_4 =0$ and $y_4 \neq 0$, then $y_3=\frac{y_2y_4}{y_1}$ and we can take the matrix $B$ with the following entries:
$$
x_{12} = \frac{(y_1^2-y_2^2)(y_1^2-y_4^2)}{2y_1y_4(y_1^2+y_2^2)}, \quad x_{13} = -\frac{y_2(y_1^2-y_4^2)}{y_4(y_1^2+y_2^2)}, \quad x_{15} = 0, \quad
x_{16} = \frac{y_1^2+y_4^2}{2y_1y_4},
$$
and $x_{17}=x_{23}=x_{25}=x_{26}=x_{27}=x_{35}=x_{36}=x_{37}=x_{56}=x_{57}=x_{67}=0$.

If $y_1y_3-y_2y_4 =0$ and $y_4=0$, then $y_3=0$. Therefore, we can take $B$ with $x_{17}=1$, while $x_{ij}=0$ for all pairs $(i,j)\neq (1,7)$.
It is easy to verify using the extended matrix  $ME$ (recall that $y_7=y_3$ and $y_6=y_4$, hence, $y_3=y_4=y_6=y_7=0$).

Now we are going to consider the case {\rm 2)}.

If $y_1y_4+y_2y_3 \neq 0$, then we have the solution $B$ with the following entries:
$$
x_{12} = -\frac{y_1^2-y_2^2+y_3^2-y_4^2}{2(y_1y_4+y_2y_3)},\quad x_{13} = \frac{y_1y_2-y_3y_4}{y_1y_4+y_2y_3},\quad x_{15} = 0,\quad x_{16} = \frac{y_1^2+y_2^2+y_3^2+y_4^2}{2(y_1y_4+y_2y_3)},
$$
and $x_{17}=x_{23}=x_{25}=x_{26}=x_{27}=x_{35}=x_{36}=x_{37}=x_{56}=x_{57}=x_{67}=0$.

If $y_1y_4+y_2y_3 =0$ and $y_4 \neq 0$, then $y_2 \neq 0$, $y_3\neq 0$, $y_3=-\frac{y_2y_4}{y_1}$,  and we can take  $B$ with the following entries:
$$
x_{12} = -\frac{y_1(y_2^2-y_4^2)}{y_4(y_1^2+y_2^2)}, \quad x_{13} = -\frac{(y_1^2-y_2^2)(y_2^2-y_4^2)}{2y_2y_4(y_1^2+y_2^2)}, \quad x_{15} = \frac{y_2^2+y_4^2}{2y_2y_4},
$$
and $x_{16}=x_{17}=x_{23}=x_{25}=x_{26}=x_{27}=x_{35}=x_{36}=x_{37}=x_{56}=x_{57}=x_{67}=0$.

If $y_1y_4+y_2y_3 =0$ and $y_4 =0$, then $y_2y_3=0$. If $y_3=0$, then we can take $B$ with $x_{17}=1$, while $x_{ij}=0$ for all pairs $(i,j)\neq (1,7)$.
It is easy to verify using the extended matrix  $ME$ (recall that $y_7=-y_3$ and $y_6=-y_4$, hence, $y_3=y_4=y_6=y_7=0$).

Finally, if $y_4 =0$ and $y_2=0$, then we can take $B$ with $x_{26}=1$, while $x_{ij}=0$ for all pairs $(i,j)\neq (2,6)$.
It is easy to verify using the extended matrix  $ME$ (recall that $y_8=-y_2$ and $y_6=-y_4$, hence, $y_2=y_4=y_6=y_8=0$).
The proposition is proved.
\end{proof}

\subsection{The case $Z= J_{Z_1}+J_{Z_4}$}

The condition $[B,Z]=0$ implies
$x_{12}=-x_{24}$, $x_{13}=x_{34}$, $x_{14}=0$, $x_{15}=x_{45}$, $x_{16}=x_{46}$, $x_{17}=x_{47}$.
For the condition $B(Y)=Z(Y)$ we consider the liner equation system $2B(Y)=Z(Y)=J_{Z_1}(Y)+J_{Z_4}(Y)$ (any solution of this system is just one half of a solution of $B(Y)=Z(Y)$).
Since $J_{Z_1}(Y)+J_{Z_4}(Y)=(y_5-y_2,y_1+y_8,y_4+y_7,y_6-y_3,y_1+y_8,y_4+y_7,y_3-y_6,y_2-y_5)$,
then we have a system of linear equations with respect to the following  variables
$$
x_{23}, x_{25}, x_{24}, x_{26}, x_{27}, x_{34}, x_{35}, x_{36}, x_{37}, x_{45}, x_{46}, x_{47}, x_{56}, x_{57}, x_{67}
$$
with the following
extended matrices (the last column is the column of free terms of our system of linear equations):
$$
{\tiny \widetilde{ME}}{\small:=
\left(
\begin{smallmatrix}
y_1& -y_3 + y_6& -y_5& y_8& -y_7& -y_4 - y_7& y_8& y_5& -y_6& y_3 - y_6& -y_4 - y_7& -y_1 - y_8& -y_1& y_4& y_3& -y_1 - y_8\\
-y_8& -y_3 + y_6& -y_2& -y_1& y_4& -y_4 - y_7& -y_1& y_2& -y_3& y_3 - y_6& -y_4 - y_7& -y_1 - y_8& y_8& -y_7& y_6& -y_1 - y_8\\
-y_2& y_4 + y_7& -y_8& -y_5& y_6& -y_3 + y_6& -y_5& y_8& -y_7& y_4 + y_7& y_3 - y_6& y_2 - y_5& y_2& -y_3& y_4& y_2 - y_5\\
y_5& -y_4 - y_7& -y_1& y_2& -y_3& y_3 - y_6& y_2& y_1& -y_4& -y_4 - y_7& -y_3 + y_6& -y_2 + y_5& -y_5& y_6& y_7& -y_2 + y_5\\
-y_3& -y_1 - y_8& -y_7& y_6& y_5& y_2 - y_5& -y_6& -y_7& -y_8& -y_1 - y_8& y_2 - y_5& -y_3 + y_6& -y_3& -y_2& -y_1& y_3 - y_6\\
y_6& y_1 + y_8& -y_4& -y_3& -y_2& -y_2 + y_5& y_3& -y_4& -y_1& y_1 + y_8& -y_2 + y_5& y_3 - y_6& y_6& y_5& -y_8& -y_3 + y_6\\
-y_7& y_2 - y_5& -y_3& y_4& y_1& y_1 + y_8& -y_4& -y_3& -y_2& -y_2 + y_5& -y_1 - y_8& y_4 + y_7& -y_7& -y_8& -y_5& -y_4 - y_7\\
y_4& y_2 - y_5& -y_6& -y_7& -y_8& y_1 + y_8& y_7& -y_6& -y_5& -y_2 + y_5& -y_1 - y_8& y_4 + y_7& y_4& y_1& -y_2& -y_4 - y_7
\end{smallmatrix}
\right).}
$$

By the Kronecker--Capelli theorem, this system has a solution if and only if the rank of $\widetilde{ME}$ coincides with the rank of the matrix $\widetilde{M}$, which is
obtained from $\widetilde{ME}$ by deleting the last column (the column of free terms).
It is easy to see that the product of the vector $(-y_2, y_5, -y_1, y_8, -y_4, y_7, y_6, -y_3)$
and the matrix $\widetilde{ME}$ is a zero vector with $16$ entries.
Hence, $\rank(\widetilde{ME})\leq 7$  (see also Remark~\ref{re.act.rank.1}).
Therefore, if $\rank(\widetilde{M})=7$, then the system $2B(Y)=J_{Z_1}(Y)+J_{Z_4}(Y)$ has a solution.
By symbol $\widetilde{M}_{\,[i_1, i_2, \dots,i_s]}^{[j_1, j_2, \dots,j_s]}$, we denote the minor of $\widetilde{M}$ which is determined by the rows with the numbers $i_1,i_2,\dots,i_s$ and columns with the numbers $j_1, j_2, \dots,j_s$.
It is easy to check that
\begin{eqnarray*}
\widetilde{M}_{\,[1, 2, 3, 4, 5, 6, 7]}^{[9, 10, 11, 12, 13, 14, 15]}= 2y_3\cdot \bigl((y_1 + y_8)(y_4 + y_7) - (y_2 - y_5)(y_3 - y_6)\bigr)\times\\
\times \bigl((y_1 + y_8)^2 + (y_2 - y_5)^2 + (y_3 - y_6)^2 + (y_4 + y_7)^2\bigr)^2.
\end{eqnarray*}
Hence, for {\it almost all} $Y\in\mathfrak v$ we have $\widetilde{M}_{\,[1, 2, 3, 4, 5, 6, 7]}^{[9, 10, 11, 12, 13, 14, 15]}\neq 0$ and $\rank(\widetilde{M})=\rank(\widetilde{ME})=7$, that implies the existence of a solution of the corresponding system.

Moreover, we can change the above minor, removing any line instead of the $8$th one, and  this new minor is distinct from the given one exactly in the first multiple ($2y_3$ will change to $\pm 2y_i$ according to the entries of the vector $(-y_2, y_5, -y_1, y_8, -y_4, y_7, y_6, -y_3)$). Therefore,
{\it if $(y_1 + y_8)(y_4 + y_7) \neq (y_2 - y_5)(y_3 - y_6)$, then $\rank(\widetilde{M})=\rank(\widetilde{ME})=7$, hence,
there is a solution of the corresponding linear system}. Furthermore, the following is true

\begin{lemma}\label{le.3case}
If $(y_1 + y_8)^2 + (y_2 - y_5)^2 + (y_3 - y_6)^2 + (y_4 + y_7)^2\neq 0$, then
$\rank(\widetilde{M})=7$. If $(y_1 + y_8)^2 + (y_2 - y_5)^2 + (y_3 - y_6)^2 + (y_4 + y_7)^2= 0$, then
the last column of the matrix $\widetilde{ME}$ (the column of free terms) has only zero entries.
\end{lemma}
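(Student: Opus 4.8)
The plan is to treat the two alternatives of the dichotomy separately: the second is immediate, and the first reduces, after dispatching the generic case, to a single degenerate locus.

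First I would pass to the combinations $a=y_1+y_8$, $b=y_2-y_5$, $c=y_3-y_6$, $d=y_4+y_7$, so that the hypothesis reads $\sigma:=a^2+b^2+c^2+d^2\neq 0$ (resp.\ $\sigma=0$). Inspecting the last column of $\widetilde{ME}$ shows that the free-term column is exactly $(-a,-a,b,-b,c,-c,-d,-d)^{t}$. Over $\mathbb R$ the equation $\sigma=0$ forces $a=b=c=d=0$; hence in that case the free-term column vanishes, which is the second assertion. (Equivalently $a=b=c=d=0$ is precisely $Z(Y)=J_{Z_1}(Y)+J_{Z_4}(Y)=0$, so $B=0$ solves the system trivially.)

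For the first assertion, observe that $\sigma\neq 0$ in particular gives $Y\neq 0$, so some $y_i\neq 0$. The computation preceding the lemma already shows that each of the eight minors obtained from columns $9$--$15$ by deleting a single row equals $\pm 2y_{i}\,(ad-bc)\,\sigma^{2}$ for a corresponding index $i$, and as the deleted row ranges over $1,\dots,8$ the index $i$ runs through all of $1,\dots,8$ (in accordance with the left null vector $(-y_2,y_5,-y_1,y_8,-y_4,y_7,y_6,-y_3)$ of Remark~\ref{re.act.rank.1}). Consequently, as soon as $ad\neq bc$, one of these minors is nonzero and $\rank(\widetilde M)=7$. It therefore remains to treat the degenerate locus $\mathcal D=\{\,Y:\ ad=bc,\ \sigma\neq 0\,\}$, on which all of the above minors vanish identically; this is the crux.

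On $\mathcal D$ I would produce supplementary $7\times 7$ minors whose factorizations do \emph{not} contain $ad-bc$. Concretely, I would select column-sets other than $\{9,\dots,15\}$, compute the resulting minors (they factor, as in Lemma~\ref{le.2case.0} and in the case $Z=J_{Z_4}$, into $\pm y_i$ times forms in $a,b,c,d$ and the complementary variables $y_1-y_8,\ y_2+y_5,\ y_3+y_6,\ y_4-y_7$), and then, imitating the elimination-ideal computation of Lemma~\ref{le.2case.0}, show that the simultaneous vanishing of this finite family together with $ad=bc$ already forces $\sigma=0$. Since $\rank(\widetilde M)\le 7$ always, this yields $\rank(\widetilde M)=7$ on all of $\mathcal D$ and finishes the proof; the lemma then feeds directly into Proposition~\ref{pr.3case} via the Kronecker--Capelli criterion. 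To curtail this computation I would exploit equivariance: any $Q\in\exp(\mathbf N)$ with $[Q,Z]=0$ satisfies $Z(QY)=Q\,Z(Y)$ and conjugates $\mathbf N^{Z}=\{B\in\mathbf N:[B,Z]=0\}$ into itself, so that the column space of $\widetilde M$, namely $\mathbf N^{Z}(Y)$, is carried to $Q\cdot\mathbf N^{Z}(Y)$; hence $\rank(\widetilde M)$ is constant along $\exp(\mathbf N^{Z})$-orbits, and normalizing $(a,b,c,d)$ by such $Q$ reduces $\mathcal D$ to a low-dimensional family of representatives for which a single minor computation suffices. The main obstacle is exactly the locus $ad=bc$: there the geometrically natural minors die, and the difficulty is to find, and to verify the nonvanishing of, an alternative family of minors — most cleanly through an elimination-ideal (Gröbner) argument in the spirit of Lemma~\ref{le.2case.0}.
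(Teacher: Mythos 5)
Your treatment of the second assertion and of the generic part of the first assertion is correct and agrees with the paper: the free-term column of $\widetilde{ME}$ is indeed $(-a,-a,b,-b,c,-c,-d,-d)^{t}$ with $a=y_1+y_8$, $b=y_2-y_5$, $c=y_3-y_6$, $d=y_4+y_7$, so $\sigma:=a^2+b^2+c^2+d^2=0$ kills it; and the eight minors on columns $9,\dots,15$, being $\pm 2y_i(ad-bc)\sigma^2$ with $i$ running over all indices, settle the case $ad\neq bc$.

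However, on the locus $ad=bc$, $\sigma\neq 0$ --- which you yourself identify as the crux --- your proposal stops at a plan: you say you \emph{would} find supplementary minors not divisible by $ad-bc$ and \emph{would} show via an elimination-ideal or orbit-normalization argument that their common vanishing forces $\sigma=0$, but you neither exhibit such minors nor verify the implication. That is a genuine gap: the whole content of the first assertion beyond the generic case is exactly the existence of such minors, and nothing in what precedes the lemma guarantees it. The paper closes this gap concretely and more simply than your outline anticipates. After normalizing $y_1\neq 0$, it computes the four minors $D(i)=\widetilde{M}_{\,[1,2,4,5,6,7,8]}^{[i,10,11,12,13,14,15]}$ for $i=3,4,7,8$ and finds $D(i)=\pm y_1\, W_j\,\sigma^2$, where $W_1=a^2-b^2-c^2+d^2$, $W_2=a^2-b^2+c^2-d^2$, $W_3=ab-cd$, $W_4=ab+cd$ are quadratic forms in $a,b,c,d$ \emph{alone} (not in the complementary combinations $y_1-y_8$, etc., as you conjectured). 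The elementary observation that $W_1=W_2=W_3=W_4=0$ gives $a^2=b^2$, $c^2=d^2$, $ab=cd=0$, hence $a=b=c=d=0$, i.e.\ $\sigma=0$, finishes the argument; no Gr\"obner-basis computation or equivariance reduction is required. Until you produce and verify an explicit family of minors with this property, your proof of the first assertion is incomplete.
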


\begin{proof}
Suppose (without loss of generality) that $y_1\neq 0$.
Now, we suppose that $(y_1 + y_8)^2 + (y_2 - y_5)^2 + (y_3 - y_6)^2 + (y_4 + y_7)^2\neq 0$. Let us prove that $\rank(\widetilde{M})=7$ in this case.

For $i=1,\dots,9$, we denote by $D(i)$ the minor $\widetilde{M}_{\,[1, 2, 4, 5, 6, 7, 8]}^{[i, 10, 11, 12, 13, 14, 15]}$. Then we get the following equalities:
\begin{eqnarray*}
D(3)&=&-y_1\cdot W_1 \cdot \bigl((y_1 + y_8)^2 + (y_2 - y_5)^2 + (y_3 - y_6)^2 + (y_4 + y_7)^2\bigr)^2,\\
D(4)&=&2y_1\cdot W_3 \cdot \bigl((y_1 + y_8)^2 + (y_2 - y_5)^2 + (y_3 - y_6)^2 + (y_4 + y_7)^2\bigr)^2,\\
D(7)&=&2y_1\cdot W_4 \cdot \bigl((y_1 + y_8)^2 + (y_2 - y_5)^2 + (y_3 - y_6)^2 + (y_4 + y_7)^2\bigr)^2,\\
D(8)&=&y_1\cdot W_2 \cdot \bigl((y_1 + y_8)^2 + (y_2 - y_5)^2 + (y_3 - y_6)^2 + (y_4 + y_7)^2\bigr)^2.
\end{eqnarray*}
where
\begin{eqnarray*}
W_1&=&(y_1+y_8)^2  - (y_2-y_5)^2 - (y_3-y_6)^2 + (y_4+y_7)^2,\\
W_2&=&(y_1+y_8)^2  - (y_2-y_5)^2 + (y_3-y_6)^2 - (y_4+y_7)^2,\\
W_3&=&(y_1+y_8)(y_2-y_5)-(y_3-y_6)(y_4+y_7),\\
W_4&=&(y_1+y_8)(y_2-y_5) + (y_3-y_6)(y_4 + y_7).
\end{eqnarray*}

If $W_i \neq 0$ for any $i=1,\dots,4$, then $\rank(\widetilde{M})=7$. Let us suppose that $W_1=W_2=W_3=W_4=0$.
Then $W_1=W_2=0$ implies $(y_1+y_8)^2=(y_2-y_5)^2$ and $(y_3-y_6)^2 = (y_4+y_7)^2$.
From $W_3=W_4=0$ we get $(y_1+y_8)(y_2-y_5)=(y_3-y_6)(y_4+y_7)=0$.
Therefore, we easily obtain that $(y_1+y_8)=(y_2-y_5)=(y_3-y_6)=(y_4+y_7)=0$ that is impossible by our assumption.
Hence, $\rank(\widetilde{M})=7$ if $(y_1 + y_8)^2 + (y_2 - y_5)^2 + (y_3 - y_6)^2 + (y_4 + y_7)^2\neq 0$.

The second assertion is obvious. The lemma is proved.
\end{proof}

\begin{corollary}\label{pr.3case}
The linear system with the extended matrix $\widetilde{ME}$ has a solution for any $Y \in \mathbb{R}^{4,4}$.
Hence, for any $Y \in \mathbb{R}^{4,4}$, it is possible to find $B\in\mathbf{N}= [\mathbf{V},\mathbf{V}]$
such that  $[B,Z]=[B,J_{Z_1}+J_{Z_4}]=0$ and $B(Y)=Z(Y)=J_{Z_1}(Y)+J_{Z_4}(Y)$.
\end{corollary}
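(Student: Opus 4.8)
The plan is to read the claim purely as a solvability question for the inhomogeneous linear system $2B(Y)=(J_{Z_1}+J_{Z_4})(Y)$ whose extended matrix is $\widetilde{ME}$. Note that the condition $[B,J_{Z_1}+J_{Z_4}]=0$ has already been used to express the six entries $x_{12},x_{13},x_{14},x_{15},x_{16},x_{17}$ through the remaining ones, so that the $21$ parameters of $\mathbf{N}=[\mathbf{V},\mathbf{V}]$ are cut down to exactly the $15$ free variables labelling the columns of $\widetilde{M}$. Thus any solution of the system automatically yields a $B\in[\mathbf{V},\mathbf{V}]$ commuting with $J_{Z_1}+J_{Z_4}$ and satisfying $B(Y)=(J_{Z_1}+J_{Z_4})(Y)$. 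By the Kronecker--Capelli theorem, solvability is equivalent to $\rank(\widetilde{M})=\rank(\widetilde{ME})$, so the whole corollary reduces to controlling these two ranks for every $Y$.

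First I would set $S:=(y_1+y_8)^2+(y_2-y_5)^2+(y_3-y_6)^2+(y_4+y_7)^2$ and split into the two cases governed by $S$. If $S\neq 0$, then Lemma~\ref{le.3case} gives $\rank(\widetilde{M})=7$. On the other hand, the vector $(-y_2,y_5,-y_1,y_8,-y_4,y_7,y_6,-y_3)$ annihilates $\widetilde{ME}$ from the left, whence $\rank(\widetilde{ME})\leq 7$; since $\widetilde{M}$ is obtained from $\widetilde{ME}$ by deleting the last column, we also have $\rank(\widetilde{ME})\geq\rank(\widetilde{M})$. Combining these inequalities forces $\rank(\widetilde{M})=\rank(\widetilde{ME})=7$, and the system is solvable.

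The remaining case $S=0$ is where the only genuine subtlety lies, and it is exactly the point I expect to be the main obstacle: here $\rank(\widetilde{M})$ may drop below $7$, which a priori could destroy solvability. The saving observation, supplied by the second half of Lemma~\ref{le.3case}, is that $S=0$ forces $y_1=-y_8$, $y_2=y_5$, $y_3=y_6$, $y_4=-y_7$, so that the column of free terms of $\widetilde{ME}$ is identically zero; equivalently $(J_{Z_1}+J_{Z_4})(Y)=0$ on this locus. Consequently the system is homogeneous and the trivial choice $B=0$ already solves it. In both cases a solution exists, and reconstructing $B$ from the $15$ free variables together with the relations imposed by $[B,J_{Z_1}+J_{Z_4}]=0$ completes the proof of the corollary. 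Together with Propositions~\ref{pr.1case} and~\ref{pr.2case}, which handle the representatives $J_{Z_1}$ and $J_{Z_4}$ of the positive and negative length classes, this settles the null class $J_{Z_1}+J_{Z_4}$ and thereby finishes Theorem~\ref{the.0case}. The heavy lifting has already been absorbed into Lemma~\ref{le.3case} and the explicit minor computations preceding it, so the corollary itself is essentially a bookkeeping argument built on the Kronecker--Capelli criterion.
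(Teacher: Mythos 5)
Your proposal is correct and follows essentially the same route as the paper: Kronecker--Capelli plus the two alternatives of Lemma~\ref{le.3case}, with the rank bound $\rank(\widetilde{ME})\leq 7$ coming from the left annihilating vector $(-y_2,y_5,-y_1,y_8,-y_4,y_7,y_6,-y_3)$. Your observation that in the degenerate case $S=0$ one has $(J_{Z_1}+J_{Z_4})(Y)=0$ so that $B=0$ works is a slightly more explicit phrasing of the paper's remark that the free-term column vanishes, but the argument is the same.
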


\begin{proof}
If $\rank(\widetilde{M})=7$ then we get a solution due to $\rank(\widetilde{ME})=7$. If $\rank(\widetilde{M})<7$, then
the column of free terms of the matrix $\widetilde{ME}$ (the column of free terms) has only zero entries
by Lemma~ \ref{le.3case}. This means  $\rank(\widetilde{M})=\rank(\widetilde{ME})$ and we also have a solution.
\end{proof}
\smallskip

It is possible to compute all minors of the matrix $\widetilde{M}$ of order $7$.
The set of $Y$ with the property $\rank(\widetilde{M})<7$ is the zero set of several polynomials, hence, it determines a polynomial ideals.
In this case, such a description follows from Lemma~\ref{le.3case}.

\section{Acknowledgements}

K. Furutani was partially supported by JSPS KAKENHI Grant Number 24K06784 and the Osaka Central Advanced Mathematical Institute,
Osaka Metropolitan University (MEXT Promotion of Distinctive Joint Research Center Program JPMXP0723833165). K.~Furutani and I.~Markina
express their gratitude to the Osaka Central Advanced Mathematical Institute, Osaka Metropolitan University, for the hospitality,
where the main progress in writing this work was achieved during I. Markina's research stay in December 2024. I. Markina was partially
supported by the L. Meltzer University Foundation, which provided travel support for her visit to the Osaka Central Advanced Mathematical Institute,
Osaka Metropolitan University, Japan.

%\vspace{14mm}

\bibliographystyle{amsunsrt}

\begin{thebibliography}{[99]}

\bibitem{AFF}
I.~Agricola,  A.C.~Ferreira, T.~Friedrich,
{\sl The classification of naturally reductive homogeneous spaces in dimensions $n\leq 6$},
Differential Geom. Appl., 39  (2015), 52--92, {\bf MR}3319112, {\bf Zbl.}1435.53040.


\bibitem{Arv17}
A.~Arvanitoyeorgos,
{\sl Homogeneous manifolds whose geodesics are orbits. Recent results and some open problems}, Irish Math. Soc. Bulletin,
79 (2017), 5--29, {\bf MR}3701182, {\bf Zbl.}1380.53056.

\bibitem{ABS64}
M.~S.~Atiyah, R.~Bott, A.~Shapiro,
{\sl Clifford modules},
Topology 3 (1964), 3--38. {\bf MR}0167985 , {\bf Zbl.}0146.19001.

\bibitem{AFMV}
C. Autenried, K. Furutani, I. Markina, A. Vasil'ev,
{\sl Pseudo-metric 2-step nilpotent Lie algebras},
Adv. Geom. 18(2) (2018),  237--263, {\bf MR}3785424, {\bf Zbl.}1388.17005.

\bibitem{Bar}
V. del Barco, {\sl Homogeneous geodesics in pseudo-Riemannian nilmanifolds},
Adv. Geom., 16(2) (2016), 175--187, {\bf MR}3489594,  {\bf Zbl.}1338.53076.


\bibitem{DBO}
V. del Barco, G.P.~Ovando,
{\sl Isometric actions on pseudo-Riemannian nilmanifolds},
Ann. Global Anal. Geom. \textbf{45(2)} (2014), 95--110, {\bf MR}3165476, {\bf Zbl.}1295.53081.


\bibitem{BerNik08}
V. N. Berestovskii, Yu. G. Nikonorov,
{\sl On $\delta$-homogeneous Riemannian manifolds},
Differential Geometry and its Applications, 26:5 (2008), 514--535, {\bf MR}2458278, {\bf Zbl.}1155.53022.



\bibitem{BerNik09}
V.N~Berestovskii, Yu.G.~Nikonorov,
{\sl Clifford--Wolf homogeneous Riemannian manifolds,}
J. Differ. Geom. 82(3) (2009), 467--500, {\bf MR}2534986, {\bf Zbl.}1179.53043.

\bibitem{BerNik20}
V.N.~Berestovskii, Yu.G.~Nikonorov,
{\sl Riemannian manifolds and homogeneous geodesics}.
Springer Monographs in Mathematics. Springer, Cham, 2020, {\bf MR}4179589, {\bf Zbl.}1460.53001.


\bibitem{CP2009}
L.A. Cordero, P.E. Parker,
{\sl Isometry groups of pseudoriemannian 2-step nilpotent Lie groups},
Houston J. Math. 35(1) (2009), 49--72, {\bf MR}2491866, {\bf Zbl.}1170.53048.


\bibitem{CN2019}
Z. Chen, Yu.G. Nikonorov,
{\sl Geodesic orbit Riemannian spaces with two isotropy summands. I,}
Geometriae Dedicata, \textbf{203} (2019), 163--178, {\bf MR}4027590, {\bf Zbl.}1428.53063.

\bibitem{CNN2023}
Z. Chen, Y. Nikolayevsky, Yu.G. Nikonorov,
{\sl Compact geodesic orbit spaces with a simple isotropy group},
Annals of Global Analysis and Geometry, 63(1) (2023), Paper No. 7, 34 pp., {\bf MR}4507105, {\bf Zbl.}07615656.

\bibitem{CNWZ2024}
Z. Chen, Y. Nikolayevsky, J.A. Wolf, S. Zhang,
{\sl Pseudo-Riemannian geodesic orbit nilmanifolds of signature $(n-2,2)$},
J. Geom. Anal., 34(5) (2024), Paper No. 132, 21 pp., {\bf MR}4719980, {\bf Zbl.}1541.53067.



\bibitem{CWZ2022}
Z. Chen, J.A.~Wolf, S.~Zhang,
{\sl On the geodesic orbit property for Lorentz manifolds},
J. Geom. Anal., 32(3) (2022), Paper No. 81, 14 pp., {\bf MR}4363754, {\bf Zbl.}1490.53070.


\bibitem{Cia}
P. Ciatti,
{\sl Scalar products on Clifford modules and pseudo-H-type Lie algebras},
Ann. Mat. Pura Appl., IV. Ser. 178 (2000), 1--31, {\bf MR}1849376, {\bf Zbl.}1027.17008.


\bibitem{CiaCo}
P.~Ciatti, M.G. Cowling,
{\sl On derivations of subalgebras of real semisimple Lie algebras},
Ann. Mat. Pura Appl. (4) 197(1)  (2018), 233--259, {\bf MR}3747530, {\bf Zbl.}1420.17018.




\bibitem{DZ}
J.~E.~D'Atri, W.~Ziller,
{\sl Naturally Reductive Metrics and Einstein Metrics on Compact Lie Groups},
Memoirs Amer. Math. Soc. \textbf{19} (1979), no. 215, {\bf MR}0519928, {\bf Zbl.}0404.53044.


\bibitem{Du2009}
Z. Du\v{s}ek,
{\sl Almost g.o. spaces in dimensions 6 and 7},
Adv. Geom. 9(1) (2009), 99--110, {\bf MR}2493264, {\bf Zbl.}1161.53035.



\bibitem{Du1}
Z.~Du\v{s}ek,
{\sl Homogeneous geodesics and g.o. manifolds}, Note Mat. \textbf{38} (2018), 1--15, {\bf MR}3809649, {\bf Zbl.}1401.53041.



\bibitem{DK2007}
Z. Du\v{s}ek, O. Kowalski,
{\sl Light-like homogeneous geodesics and the geodesic lemma for any signature},
Publ. Math. Debr. 71(1-2) (2007), 245--252, {\bf MR}2340046, {\bf Zbl.}1135.53316.


\bibitem{DK2007n}
Z. Du\v{s}ek, O. Kowalski,
{\sl On six-dimensional pseudo-Riemannian almost g.o. spaces},
J. Geom. Phys. 57(10) (2007), 2014--2023, {\bf MR}2348276, {\bf Zbl.}1126.53026.


\bibitem{Eber94.1}
P.~Eberlein,
{\it Geometry of $2$-step nilpotent groups with a left invariant metric}, Ann. Sci. \'{E}cole Norm. Sup. (4) 27(5) (1994),  611--660.
{\bf MR}1296558, {\bf Zbl.}0820.53047.

\bibitem{Eber94.2}
P.~Eberlein,
{\it Geometry of $2$-step nilpotent groups with a left invariant metric. II}, Trans. Amer. Math. Soc. 343(2) (1994),  805--828. {\bf MR}1250818, {\bf Zbl.}0830.53039.

\bibitem{FuMa16}
K.~Furutani, I.~Markina,
{\sl Structure constants of pseudo H-type algebras in some integral bases},
Preprint, 2016,  arXiv:1604.01570.

\bibitem{FuMa17}
K.~Furutani, I.~Markina,
{\sl Complete classification of pseudo H-type Lie algebras. I,}
Geom. Dedicata 190 (2017), 23--51, {\bf MR}3704811, {\bf Zbl.}1428.17016.

\bibitem{FuMa19}
K.~Furutani, I.~Markina,
{\sl Complete classification of pseudo H-type Lie algebras. II,}
Geom. Dedicata 202 (2019), 233--264, {\bf MR}4001816, {\bf Zbl.}1428.17017.

\bibitem{FuMa21}
K.~Furutani, I.~Markina,
{\sl Automorphism groups of pseudo H-type algebras,}
J. Algebra 568 (2021), 91--138, {\bf MR}4166053, {\bf Zbl.}1504.17025.

\bibitem{FuMa23}
K.~Furutani, I.~Markina,
{\sl Invariant integral structures in pseudo H-type Lie algebras: construction and classification},
Preprint, 2023,  arXiv:arXiv:2308.02806.


\bibitem{GKM13}
M.~Godoy Molina, A.~Korolko, I.~Markina,
{\sl Sub-semi-Riemannian geometry of general $H$-type groups},
Bull. Sci. Math. 137 (2013), no. 6, 805--833,
{\bf MR}3102160, {\bf Zbl.}1304.53071.

\bibitem{GMKMV}
M. Godoy Molina, B. Kruglikov, I. Markina, A. Vasil'ev,
{\sl Rigidity of 2-step Carnot groups},
J. Geom. Anal. 28(2) (2018), 1477--1501, {\bf MR}3790508, {\bf Zbl.}1441.17011.


\bibitem{Gor85}
C.~Gordon,
{\sl Naturally reductive homogeneous Riemannian manifolds,}
Canad. J. Math., 37(3) (1985), 467--487, {\bf MR}0787113, {\bf Zbl.}0554.53035.



\bibitem{Gor96}
C.~Gordon,
{\sl Homogeneous Riemannian manifolds whose geodesics are orbits,} 155--174.
In: Progress
in Nonlinear Differential Equations. V.~20. Topics in geometry: in
memory of Joseph D'Atri. Birkh{\"a}user, 1996, {\bf MR}1390313, {\bf Zbl.}0861.53052.



\bibitem{GorNik2018}
C.~Gordon, Yu.G.~Nikonorov,
{\sl Geodesic orbit Riemannian structures on $\mathbb{R}^n$,}
J.~Geom. Phys., 134 (2018), 235--243, {\bf MR}3886938, {\bf Zbl.}1407.53032.

\bibitem{Kap80}
A.~Kaplan,
{\sl Fundamental solutions for a class of hypoelliptic PDE generated by composition of quadratic forms},
Trans. Amer. Math. Soc. 258 (1980), no. 1, 147--153,
{\bf MR} 0554324, {\bf Zbl.}0393.35015.

\bibitem{Kap81}
A.~Kaplan,
{\sl Riemannian nilmanifolds attached to Clifford modules},
Geom. Dedic., 11 (1981), 127--136,  {\bf MR}0621376, {\bf Zbl.}0495.53046.


\bibitem{Kap83}
A.~Kaplan,
{\sl On the geometry of groups of  Heisenberg type},
Bull. London Math. Soc., 15 (1983), 35--42, {\bf MR}0686346, {\bf Zbl.}0521.53048.


\bibitem{KapTir}
A.~Kaplan, A.~Tiraboschi,
{\sl Automorphisms of non-singular nilpotent Lie algebras},
J. Lie Theory, 23(4) (2013), 1085--1100, {\bf MR}3185213, {\bf Zbl.}1362.17019.



\bibitem{KV85}
O.~Kowalski, L.~Vanhecke,
{\sl Classification of five-dimensional naturally reductive spaces,}
Math. Proc. Cambridge Philos. Soc. 97(3) (1985),  445--463, {\bf MR}0778679, {\bf Zbl.}0555.53024.

\bibitem{KV}
O.~Kowalski, L.~Vanhecke,
{\sl Riemannian manifolds with homogeneous geodesics,}
Boll. Un. Mat. Ital. B (7),  5(1)  (1991),  189--246, {\bf MR}1110676, {\bf Zbl.}0731.53046.



\bibitem{Lee}
J.M.~Lee,
{\sl Introduction to Riemannian manifolds}, 2nd edition.
Graduate Texts in Mathematics 176, Cham: Springer,  2018, {\bf MR}3887684, {\bf Zbl.}1409.53001.


\bibitem{LowMich89}
H.B~Lawson Jr, M.L~Michelsohn,
{\sl Spin geometry},
Princeton Math. Ser., 38
Princeton University Press, Princeton, NJ, 1989, 427 pp.
{\bf MR}1031992, {\bf Zbl.}0688.57001.


\bibitem{NN2019}
Y.~Nikolayevsky, Yu.G.~Nikonorov,
On invariant Riemannian metrics on Ledger-Obata spaces, Manuscripta Mathematica, 158:34 (2019), 353--370,
{\bf MR}3914954, {\bf Zbl.}1410.53052.

\bibitem{NW23}
Y.~Nikolayevsky,  J.A.~Wolf,
{\sl The structure of geodesic orbit Lorentz nilmanifolds},
J. Geom. Anal., 33(3) (2023), Paper No. 82, 12 pp., {\bf MR}4531059, {\bf Zbl.}1509.53064.


\bibitem{NikZilW25}
Y.~Nikolayevsky,  W.~Ziller,
{\sl Non-singular geodesic orbit nilmanifolds},
Annali di Matematica (2026), \newline
\url{https://doi.org/10.1007/s10231-026-01661-9}.



\bibitem{Nik2017}
Yu.G. Nikonorov,
{\sl On the structure of geodesic orbit Riemannian spaces},
Ann. Glob. Anal. Geom., \textbf{52} (2017), 289--311,  {\bf MR}3711602, {\bf Zbl.}1381.53088.


\bibitem{Nik2024}
Y.G. Nikonorov,
{\sl On geodesic orbit nilmanifolds},
J.~Geom. Phys., 203 (2024), Article ID 105257, 12 pp., {\bf MR}4768302, {\bf Zbl.}07904726.


\bibitem{ONeill}
B. O'Neill,
{\sl Semi-Riemannian geometry. With applications to relativity},
Pure and Applied Mathematics, 103,  Academic Press, New York, 1983, {\bf MR}0719023,  {\bf Zbl.}0531.53051.



\bibitem{Ov2013}
G.P.~Ovando,
{\sl Naturally reductive pseudo-Riemannian 2-step nilpotent Lie groups},
Houston J. Math. 39(1) (2013), 147--167, {\bf MR}3056434, {\bf Zbl.}1277.53071.


\bibitem{Rie82}
C. Riehm,
{\sl The automorphism group of a composition of quadratic forms},
Trans. Am. Math. Soc. 269(2) (1982), 403--414, {\bf MR}0637698, {\bf Zbl.}0483.10021.


\bibitem{Rie84}
C. Riehm,
{\sl Explicit spin representations and Lie algebras of Heisenberg type},
J. Lond. Math. Soc. (2), 29(1) (1984), 49--62,  {\bf MR}0734990, {\bf Zbl.}0542.15010.

\bibitem{Saal96}
L. Saal,
{\sl The automorphism group of a Lie algebra of Heisenberg type},
Rend. Semin. Mat. (Torino), 54(2) (1996), 101--113,  {\bf MR}1490017, {\bf Zbl.}0889.17018.


\bibitem{Storm19}
R.~Storm,
{\sl Structure theory of naturally reductive spaces},
Differ. Geom. Appl. 64 (2019), 174--200, {\bf MR}3921240, {\bf Zbl.}1416.53049.

\bibitem{Storm20}
R.~Storm,
{\sl The classification of 7- and 8-dimensional naturally reductive spaces},
Can. J. Math. 72(5) (2020), 1246--1274,  {\bf MR}4152540, {\bf Zbl.}1448.53059.

\bibitem{Tam03}
H.~Tamaru,
{\sl Two-step nilpotent Lie groups and homogeneous fiber bundles,}
Ann. Global Anal. Geom. 24(1) (2003), 53--66 (2003), {\bf MR}1990085, {\bf Zbl.}1038.53049.

\bibitem{Wolf2011}
J.A. Wolf,
{\sl Spaces of constant curvature. 6th ed.},
Providence, RI: AMS Chelsea Publishing, 2011, {\bf MR}2742530,  {\bf Zbl.}1216.53003.

\bibitem{WC22}
J.A.~Wolf, Z.~Chen,
{\sl Weakly symmetric pseudo-Riemannian nilmanifolds},
J. Differ. Geom., 121(3) (2022), 541--572, {\bf MR}4489823, {\bf Zbl.}1508.53024.

\bibitem{Zil96}
W.~Ziller,
{\sl Weakly symmetric spaces,} 355--368. In: Progress
in Nonlinear Differential Equations. V.~20. Topics in geometry: in
memory of Joseph D'Atri. Birkh{\"a}user, 1996, {\bf MR}1390324, {\bf Zbl.}0860.53030.

\end{thebibliography}

\vspace{14mm}

\end{document}